\DeclareFontFamily{OT1}{rsfs}{}
\DeclareFontShape{OT1}{rsfs}{n}{it}{<-> rsfs10}{}
\DeclareMathAlphabet{\curly}{OT1}{rsfs}{n}{it}
\newcommand{\eqnum}{\refstepcounter{equation}\textup{\tagform@{\theequation}}}
\newcommand\beq[1]{\begin{equation}\label{#1}}
	\newcommand\eeq{\end{equation}}
\newcommand\beqa{\begin{eqnarray*}}
	\newcommand\eeqa{\end{eqnarray*}}
\title[Semiorthogonal decomposition]{Semiorthogonal decompositions for categorical Donaldson-Thomas 
	theory via $\Theta$-stratifications}
\date{}
\author{Yukinobu Toda}
\DeclareFontFamily{U}{rsfs}{%
	\skewchar\font127}
\DeclareFontShape{U}{rsfs}{m}{n}{%
	<-6>rsfs5<6-8.5>rsfs7<8.5->rsfs10}{}
\DeclareSymbolFont{rsfs}{U}{rsfs}{m}{n}
\DeclareRobustCommand*\rsfs{%
	\@fontswitch\relax\mathrsfs}
\theoremstyle{plain}
\newtheorem{thm}{Theorem}[section]
\newtheorem{prop}[thm]{Proposition}
\newtheorem{lem}[thm]{Lemma}
\newtheorem{defi}[thm]{Definition}
\newtheorem{rmk}[thm]{Remark}
\newtheorem{cor}[thm]{Corollary}
\newtheorem{prop-defi}[thm]{Proposition-Definition}
\newtheorem{thm-defi}[thm]{Theorem-Definition}
\newtheorem{lem-defi}[thm]{Lemma-Definition}
\newtheorem{assum}[thm]{Assumption}
\newtheorem{conj}[thm]{Conjecture}
\newtheorem{exam}[thm]{Example}
\newcommand{\ssslash}{/\!\!/}
\newcommand{\aA}{\mathcal{A}}
\newcommand{\cC}{\mathcal{C}}
\newcommand{\dD}{\mathcal{D}}
\newcommand{\eE}{\mathcal{E}}
\newcommand{\fF}{\mathcal{F}}
\newcommand{\hH}{\mathcal{H}}
\newcommand{\iI}{\mathcal{I}}
\newcommand{\kK}{\mathcal{K}}
\newcommand{\lL}{\mathcal{L}}
\newcommand{\mM}{\mathcal{M}}
\newcommand{\nN}{\mathcal{N}}
\newcommand{\oO}{\mathcal{O}}
\newcommand{\pP}{\mathcal{P}}
\newcommand{\rR}{\mathcal{R}}
\newcommand{\sS}{\mathcal{S}}
\newcommand{\tT}{\mathcal{T}}
\newcommand{\vV}{\mathcal{V}}
\newcommand{\wW}{\mathcal{W}}
\newcommand{\xX}{\mathcal{X}}
\newcommand{\yY}{\mathcal{Y}}
\newcommand{\zZ}{\mathcal{Z}}
\newcommand{\fM}{\mathfrak{M}}
\newcommand{\fN}{\mathfrak{N}}
\newcommand{\fP}{\mathfrak{P}}
\newcommand{\fS}{\mathfrak{S}}
\newcommand{\fU}{\mathfrak{U}}
\newcommand{\fZ}{\mathfrak{Z}}
\newcommand{\Supp}{\mathop{\rm Supp}\nolimits}
\newcommand{\Hom}{\mathop{\rm Hom}\nolimits}
\newcommand{\dR}{\mathbf{R}}
\newcommand{\NS}{\mathop{\rm NS}\nolimits}
\newcommand{\Pic}{\mathop{\rm Pic}\nolimits}
\newcommand{\id}{\textrm{id}}
\newcommand{\ch}{\mathop{\rm ch}\nolimits}
\newcommand{\rk}{\mathop{\rm rk}\nolimits}
\newcommand{\Ext}{\mathop{\rm Ext}\nolimits}
\newcommand{\Spec}{\mathop{\rm Spec}\nolimits}
\newcommand{\rank}{\mathop{\rm rank}\nolimits}
\newcommand{\Coh}{\mathop{\rm Coh}\nolimits}
\newcommand{\ev}{\mathop{\rm ev}\nolimits}
\newcommand{\us}{\mathchar`-\rm{us}}
\newcommand{\sss}{\mathchar`-\rm{ss}}
\newcommand{\cneq}{\mathrel{\raise.095ex\hbox{:}\mkern-4.2mu=}}
\newcommand{\eqcn}{\mathrel{=\mkern-4.5mu\raise.095ex\hbox{:}}}
\newcommand{\Cok}{\mathop{\rm Cok}\nolimits}
\newcommand{\Aut}{\mathop{\rm Aut}\nolimits}
\newcommand{\Cone}{\mathop{\rm Cone}\nolimits}
\newcommand{\PPer}{\mathop{\rm Per}\nolimits}
\newcommand{\pH}{\mathop{^{{p}}\mathcal{H}}\nolimits}
\newcommand{\pcok}{\mathop{^{{p}}\mathrm{Cok}}\nolimits}
\newcommand{\dg}{\mathrm{dg}}
\newcommand{\Imm}{\mathop{\rm Im}\nolimits}
\newcommand{\Ker}{\mathop{\rm Ker}\nolimits}
\newcommand{\GL}{\mathop{\rm GL}\nolimits}
\newcommand{\cl}{\mathop{\rm cl}\nolimits}
\newcommand{\Ind}{\mathop{\rm Ind}\nolimits}
\newcommand{\ind}{\mathop{\rm ind}\nolimits}
\newcommand{\MF}{\mathop{\rm MF}\nolimits}
\newcommand{\Crit}{\mathop{\rm Crit}\nolimits}
\newcommand{\bS}{\mathbb{S}}
\newcommand{\bU}{\mathbb{U}}
\newcommand{\wt}{\mathrm{wt}}
\newcommand{\coh}{\mathrm{coh}}
\newcommand{\qcoh}{\mathrm{qcoh}}
\newcommand{\Dbc}{D^b_{\rm{coh}}}
\newcommand{\inclusion}{\ar@<-0.3ex>@{^{(}->}[r]}
\newcommand{\upinclusion}{\ar@<-0.3ex>@{^{(}->}[u]}
\newcommand{\leinclusion}{\ar@<-0.3ex>@{^{(}->}[l]}
\newcommand{\doinclusion}{\ar@<-0.3ex>@{^{(}->}[d]}
\newcommand{\diasquare}{\ar@{}[rd]|\square}
\newcommand{\bff}{\mathbf{f}}
\newcommand{\Filt}{\mathrm{Filt}}
\newcommand{\Grad}{\mathrm{Grad}}
\newcommand{\ffS}{\mathfrak{S}}
\newcommand{\ffZ}{\mathfrak{Z}}
\newcommand{\C}{\mathbb{C}^{\ast}}
\newcommand{\dDT}{\mathcal{DT}}
\newcommand{\colim@}[2]{%
	\vtop{\m@th\ialign{##\cr
			\hfil$#1\operator@font colim$\hfil\cr
			\noalign{\nointerlineskip\kern1.5\ex@}#2\cr
			\noalign{\nointerlineskip\kern-\ex@}\cr}}%
}
\newcommand{\colim}{%
	\mathop{\mathpalette\colim@{}}\nmlimits@
}
\renewcommand{\theequation}{%
	\thesection.\arabic{equation}}
\begin{document}
	
	\begin{abstract}
We show the existence of semiorthogonal decompositions of 
Donaldson-Thomas categories for $(-1)$-shifted cotangent 
derived stacks associated with $\Theta$-stratifications
on them. Our main result gives an analogue 
of window theorem for categorical 
DT theory, which has applications 
to d-critical analogue of D/K equivalence conjecture, i.e. 
existence of fully-faithful functors (equivalences) under 
d-critical flips (flops). 
As an example of applications, we show the 
existence of fully-faithful functors
of DT categories
for Pandharipande-Thomas stable pair 
moduli spaces
under wall-crossing 
at super-rigid rational curves
for any relative reduced curve classes. 
	\end{abstract}
	
	\maketitle

	\setcounter{tocdepth}{1}
	\tableofcontents
	
	\section{Introduction}
	\subsection{Background}
	In~\cite{TocatDT}, the author 
	introduced $\C$-equivariant 
	Donaldson-Thomas (DT) categories associated with 
	$(-1)$-shifted cotangents $\Omega_{\fM}[-1]$ over quasi-smooth derived 
	stacks $\fM$ with stability conditions. 
	They are defined to be certain 
	singular support quotients of derived categories
	of coherent sheaves on $\fM$, 
which via Koszul duality are regarded as gluing of
dg-categories of $\C$-equivariant 
factorizations of super-potentials giving 
local d-critical charts of $\Omega_{\fM}[-1]$. 
	The DT category depends on a choice of a stability condition,
	and it is an interesting problem to investigate wall-crossing phenomena
	of DT categories under a change of stability conditions. 
	The above problem is motivated 
	by a categorification of wall-crossing formula of 
	DT invariants~\cite{JS, K-S}, and also a d-critical analogue of D/K equivalence 
	conjecture in birational geometry~\cite{B-O2, MR1949787}. 
	A sequence of conjectures on wall-crossing of DT categories 
	is proposed in~\cite{TocatDT}, in the context of wall-crossing of 
	one dimensional stable sheaves and Pandharipande-Thomas moduli 
	spaces of stable pairs~\cite{PT}. 
	Some fundamental results on DT categories toward the above 
	conjectures are also established in~\cite{TocatDT}, including 
	window theorem for DT categories which generalizes window theorem 
	originally developed in GIT quotient stacks~\cite{MR3327537, MR3895631}. 
	
	In~\cite{TocatDT}, we established two kinds of window theorem 
	for DT categories, by constructing window 
	subcategories in two different ways: 
	\begin{enumerate}
		\item Gluing of magic window subcategories 
		by Halpern-Leistner-Sam~\cite{HLKSAM} (see~\cite[Section~5]{TocatDT}). 
		
		\item Semiorthogonal summands of DT categories with complements 
		described by Porta-Sala categorified Hall products~\cite{PoSa}
		(see~\cite[Section~6]{TocatDT}). 
		\end{enumerate}
	The first window 
	subcategory is independent of a stability condition, but 
	only available for symmetric derived stacks (or more generally 
	derived stacks with symmetric structures). 
	The second window subcategory depends on a stability 
	condition, and is more close to the original 
	window subcategories for GIT quotient stacks~\cite{MR3327537, MR3895631}.
	A drawback of the second one is that it is available
	only in the situation that the categorified 
	Hall products are a priori given, so in~\cite[Section~6]{TocatDT}
	we only proved the window theorem for 
	some special cases of PT moduli spaces.  
	
	The purpose of this paper is to generalize the second approach 
	in~\cite[Section~6]{TocatDT}, 
	using Halpern-Leistner's theory of $\Theta$-stratifications~\cite{Halpinstab}
	instead of categorified Hall products~\cite{PoSa}. 
	Namely we show the existence of semiorthogonal decompositions 
	of DT categories associated with $\Theta$-stratifications of 
	$(-1)$-shifted cotangent derived stacks, whose 
	semiorthogonal summands give window subcategories of the second type 
	as above. 
The categorified Hall products are
	naturally interpreted in the context of $\Theta$-stratifications, 
	and it turns out that
	many of the proofs of our
	 main theorem follow 
	  from 
	  direct generalizations of the arguments in~\cite[Section~6]{TocatDT}.
	  However we also need to give several 
	  modifications and prove some fundamental properties of 
	  $\Theta$-stacks, e.g. 
	  descriptions of stacks of filtered objects of 
	$(-1)$-shifted cotangents in terms of $(-2)$-shifted conormals
	(see~Section~\ref{sec:filtst}). 
	Furthermore this generalization has more potentials for applications. 
	For example, we show the 
	existence of fully-faithful functors
	of DT categories for PT
	stable pair moduli spaces
	under wall-crossing 
	at super-rigid rational curves
	for any relative reduced curve classes, 
	which gives an evidence of the conjectures in~\cite{TocatDT} 
	that is not covered in \textit{loc.~cit.~}.

	\subsection{Semiorthogonal decompositions of DT categories}
	Let $\fM$ be a quasi-smooth and QCA derived stack over $\mathbb{C}$
	with classical truncation $\mM=t_0(\fM)$, and
	 \begin{align*}\Omega_{\fM}[-1] \cneq \Spec_{\fM}S_{\oO_{\fM}}(\mathbb{T}_{\fM}[1])
	 \end{align*} the $(-1)$-shifted cotangent 
	derived stack of $\fM$. 
Given a $\C$-invariant open substack
\begin{align}\label{intro:Nss}
	\nN^{\rm{ss}} \subset \nN \cneq t_0(\Omega_{\fM}[-1])
	\end{align}
and its complement $\zZ \subset \nN$, 
we have the triangulated subcategory 
$\cC_{\zZ} \subset \Dbc(\fM)$ 
consisting of objects whose singular supports~\cite{MR3300415}
are contained in $\zZ$. 
The $\C$-equivariant DT category in~\cite{TocatDT}
is modeled by the Verdier quotient (see Definition~\ref{defi:DTcat})
\begin{align}\label{intro:model}
	\dDT^{\C}(\nN^{\rm{ss}}) \cneq \Dbc(\fM)/\cC_{\zZ}. 
	\end{align}
The $\mathbb{Z}/2$-periodic version is also introduced in~\cite{TocatDT2}, whose basic model 
is (see Definition~\ref{defi:DTcat2})
\begin{align}\label{intro:model2}
	\dDT^{\mathbb{Z}/2}(\nN^{\rm{ss}}) \cneq 
	\Dbc(\fM_{\epsilon})/\cC_{\zZ_{\epsilon}}
	\end{align}
where $\fM_{\epsilon} \cneq \fM \times \Spec \mathbb{C}[\epsilon]$
for $\deg(\epsilon)=-1$ 
and $\zZ_{\epsilon} \cneq (\zZ \times \mathbb{A}^1) \cup (\nN \times \{0\})$.  
The above definitions are based on Koszul duality equivalences
which, in the case that $\fM$ is a quasi-smooth affine derived scheme, 
give equivalences of these categories with $\C$-equivariant 
or $\mathbb{Z}/2$-periodic triangulated categories of factorizations. 
The latter one (\ref{intro:model2}) is recovered from the former one (\ref{intro:model})
under the compact generation of $\Ind \cC_{\zZ}$
up to idempotent completion 
(see~Theorem~\ref{thm:intro:compare}), so 
in this paper we only focus on the $\C$-equivariant DT category (\ref{intro:model}). 

We are interested in the case that the open substack (\ref{intro:Nss}) is the 
semistable locus with respect to a certain stability condition. 
For a choice of $\lL \in \Pic(\mM)_{\mathbb{R}}$ with 
$l=c_1(\lL)$, 
the Hilbert-Mumford criterion with respect to 
maps from the $\Theta$-stack 
\begin{align*}
	\Theta \cneq [\mathbb{A}^1/\C] \to \nN
\end{align*}
defines the $l$-semistable locus 
$\nN^{l\sss} \subset \nN$, which generalizes Mumford's GIT semistable locus for  GIT quotient 
stacks depending on linearizations.  
The above viewpoint of defining $l$-semistable locus with respect to 
maps from the $\Theta$-stack is due to Halpern-Leistner~\cite{Halpinstab}, 
who further 
generalizes Kempf-Ness stratifications for GIT quotient 
stacks to $\Theta$-stratifications for more general stacks $\xX$.
In several cases, $\Theta$-stratifications 
are constructed from numerical data 
$(l, b)$ for $l \in H^2(\xX, \mathbb{R})$ 
and a positive definite $b \in H^4(\xX, \mathbb{R})$. 
In our situation, by taking $l \in H^2(\mM, \mathbb{R})$ as above, 
positive definite
$b \in H^4(\mM, \mathbb{R})$, and pulling them back 
to $\nN$, we have the 
$\Theta$-stratification 
\begin{align}\label{intro:strata}
	\nN=\sS_1^{\Omega} \sqcup \cdots \sqcup \sS_N^{\Omega} \sqcup \nN^{l\sss} 
	\end{align}
with center $\zZ_{i}^{\Omega} \subset \sS_i^{\Omega}$. 
We show that each $\zZ_i^{\Omega}$ is an open substack 
of $(-1)$-shifted cotangent over the stack of 
graded objects of $\fM$ (see Subsection~\ref{subsec:theta-1}). 
So we have the DT category $\dDT^{\C}(\zZ_i^{\Omega})$, together with 
the decomposition into $\C$-weight part
with respect to the canonical $B\C$-action on $\zZ_i^{\Omega}$
\begin{align*}
	\dDT^{\C}(\zZ_i^{\Omega})=\bigoplus_{j\in \mathbb{Z}}
	\dDT^{\C}(\zZ_{i}^{\Omega})_{\wt=j}. 
	\end{align*}
The following is the main result in this paper: 
\begin{thm}\label{intro:main}\emph{(Corollary~\ref{cor:sod})}
	Suppose that $\mM$ admits a good moduli space $\mM \to M$
	satisfying the formal neighborhood theorem (see Definition~\ref{defi:formalneigh}). 
	 Then for each choice of 
	$m_i \in \mathbb{R}$ for $1\le i\le N$, 
	there exists a semiorthogonal decomposition
	\begin{align*}
		\dDT^{\C}(\nN)=\langle
			 \dD_{1, < m_1}, \ldots, \dD_{N, < m_N}, 
		\wW_{m_{\bullet}}^l, \dD_{N, \ge m_N}, \ldots, \dD_{1, \ge m_1} \rangle  
		\end{align*}
	satisfying the followings: 
	\begin{enumerate}
		\item There exist semiorthogonal decompositions of the form
	\begin{align*}
		&\dD_{i, < m_i}=\left\langle \ldots, \dDT^{\C}(\zZ_i^{\Omega})_{\wt=\lceil m_i \rceil-3}, 
		\dDT^{\C}(\zZ_i^{\Omega})_{\wt=\lceil m_i \rceil-2}, 
		\dDT^{\C}(\zZ_i^{\Omega})_{\wt=\lceil m_i \rceil-1}
		\right\rangle, \\
		&\dD_{i, \ge m_i}=\left\langle 
		\dDT^{\C}(\zZ_i^{\Omega})_{\wt=\lceil m_i \rceil}, 
		\dDT^{\C}(\zZ_i^{\Omega})_{\wt=\lceil m_i \rceil+1}, 
			\dDT^{\C}(\zZ_i^{\Omega})_{\wt=\lceil m_i \rceil+2}, 
		\ldots \right\rangle.
		\end{align*}
	\item The composition functor 
	\begin{align*}
		\wW_{m_{\bullet}}^l \hookrightarrow \dDT^{\C}(\nN) \twoheadrightarrow 
		\dDT^{\C}(\nN^{l\sss})
		\end{align*}
	is an equivalence.
	\end{enumerate}
	\end{thm}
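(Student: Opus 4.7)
The plan is to proceed by induction on the number of strata $N$ in the $\Theta$-stratification (\ref{intro:strata}), stripping off the deepest stratum $\sS_N^{\Omega}$ first. The base case $N=0$ is tautological, since then $\nN = \nN^{l\sss}$ and $\wW_{m_\bullet}^l = \dDT^{\C}(\nN)$. For the inductive step, set $\nN_{N-1} \cneq \nN \setminus \sS_N^{\Omega}$, which is an open substack of $\nN$ carrying the induced $\Theta$-stratification with strata $\sS_1^{\Omega}, \ldots, \sS_{N-1}^{\Omega}$ and open semistable locus $\nN^{l\sss}$. The aim of the inductive step is the one-stratum statement
\begin{align*}
\dDT^{\C}(\nN) = \langle \dD_{N,<m_N},\ \dDT^{\C}(\nN_{N-1}),\ \dD_{N,\ge m_N}\rangle,
\end{align*}
with $\dD_{N,<m_N}$ and $\dD_{N,\ge m_N}$ assembled from weight pieces of $\dDT^{\C}(\zZ_N^{\Omega})$ as in the statement; applying the inductive hypothesis to $\dDT^{\C}(\nN_{N-1})$ and concatenating then produces the full decomposition.

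The one-stratum statement is the geometric and categorical heart of the proof. The key input is Section~\ref{sec:filtst}: because $\nN = t_0(\Omega_{\fM}[-1])$, the stratum $\sS_N^{\Omega}$ is identified, up to classical truncation, with an open substack of a $(-2)$-shifted conormal attached to the evaluation map $\Filt(\fM) \to \fM$, and the center $\zZ_N^{\Omega}$ is an open substack of a $(-1)$-shifted cotangent over $\Grad(\fM)$. This places $\zZ_N^{\Omega}$ back inside the framework of $(-1)$-shifted cotangent DT categories, and the canonical $B\C$-action coming from the grading produces the weight decomposition $\dDT^{\C}(\zZ_N^{\Omega}) = \bigoplus_j \dDT^{\C}(\zZ_N^{\Omega})_{\wt=j}$ used in the statement.

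Next I would define the window subcategory for the one-stratum situation by combining a singular-support condition with a weight condition near $\zZ_N^{\Omega}$: an object of $\Dbc(\fM)$ lies in the window precisely when, after \'etale-local reduction via the formal neighborhood theorem on $\mM$ to $\GL$-quotient models, its restriction to the attracting locus has $\C$-weights lying in a fixed interval of length equal to the relative weight of the $(-2)$-shifted conormal. Semiorthogonality between this window and the weight pieces $\dDT^{\C}(\zZ_N^{\Omega})_{\wt=j}$ for $j$ outside the interval is then a Hom-vanishing computation that, via Koszul duality and the formal neighborhood theorem, reduces to a weight calculation in the linear case exactly as in \cite[Section~6]{TocatDT}; generation is obtained via a resolution-of-the-diagonal argument adapted from \cite{HLKSAM}, now carried out inside the singular-support quotient. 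The equivalence of the window with $\dDT^{\C}(\nN_{N-1})$ follows from essential surjectivity of the quotient functor and from the injectivity ensured by the weight interval.

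The principal obstacle is to control the interplay between the window construction on $\Dbc(\fM)$ and the Verdier quotient defining $\dDT^{\C}$. What must be checked is that the putative middle summand $\dDT^{\C}(\nN_{N-1})$ is genuinely equivalent to the window via the quotient functor, and that the weight-graded complements descend to semiorthogonal summands of $\dDT^{\C}(\nN)$ rather than merely of $\Dbc(\fM)$. This requires identifying $\cC_{\zZ_{N-1}} \subset \Dbc(\fM)$ with the span of $\cC_{\sS_N^{\Omega}}$ and the images of the weight pieces coming from $\zZ_N^{\Omega}$, together with a compact-generation argument for the relevant ind-completions along the lines of Theorem~\ref{thm:intro:compare}. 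Once this bookkeeping is in place for a single stratum, the inductive concatenation that produces the full semiorthogonal decomposition is formal.
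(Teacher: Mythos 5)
Your overall blueprint (one stratum at a time, a window subcategory, formal-local reduction via Koszul duality to the factorization window theorem) is the right one, but two steps as written do not work. First, the induction is set up in the wrong order. With the paper's conventions the slopes satisfy $\mu_1>\mu_2>\cdots>0$, so $\sS_1^{\Omega}$ is the stratum that is closed in $\nN$, while $\sS_N^{\Omega}$ is only closed in the open substack $\nN\setminus\sS_{\le N-1}^{\Omega}$; its closure in $\nN$ may meet the more unstable strata. Consequently $\nN\setminus\sS_N^{\Omega}$ is in general \emph{not} open in $\nN$, it carries no induced $\Theta$-stratification, and the one-stratum statement cannot even be formulated for the pair $(\nN,\sS_N^{\Omega})$. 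The peeling must go the other way: prove the one-stratum decomposition for $\dDT^{\C}(\nN\setminus\sS_{\le i-1}^{\Omega})$ with middle piece equivalent to $\dDT^{\C}(\nN\setminus\sS_{\le i}^{\Omega})$ (this is Theorem~\ref{thm:sod}) and iterate from $i=1$ to $N$, which is exactly how Corollary~\ref{cor:sod} is deduced.

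Second, the categorical mechanism you sketch omits the actual engine of the proof and replaces it with tools that are not available here. The paper does not start from a locally defined weight-window and then verify the decomposition; it constructs the global functors $\Upsilon_{i,j}=\ev_{1\ast}\ev_0^{\ast}i_j$ (using properness of $\ev_1$, which comes from $\Theta$-reductivity via the good moduli space, and quasi-smoothness of $\ev_0$), shows they descend to DT categories by the singular-support lemma (Lemma~\ref{lem:Tpush}), and then builds a right adjoint $\ev_{0\ast}\ev_1^!$ and a left adjoint via Serre duality, proving that these preserve coherence and singular supports by the formal-local analysis of Proposition~\ref{prop:radj} (the unipotent-gerbe factorization of $q_1$ is what keeps the pushforward coherent). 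The window is then defined intrinsically as $\bigcap_{j\ge m}\Ker(\Upsilon^R_{i,j})\cap\bigcap_{j<m}\Ker(\Upsilon^L_{i,j})$, and generation is proved by adjoint truncation triangles together with the boundedness statement $\Upsilon^R_{i,j}(\eE)=0$ for $j\gg 0$, which uses quasi-compactness of $M$; your proposed resolution-of-the-diagonal argument from~\cite{HLKSAM} is the magic-window technique, which requires a symmetric structure that $\fM$ and its formal local models need not have, and it is not what is needed for a single-stratum decomposition in any case. Finally, the equivalence of the window with the DT category of the complement is not ``essential surjectivity plus injectivity from the weight interval'': essential surjectivity uses that every $\dD_{i,j}$ has singular support in $\sS_{\le i}^{\Omega}$, and fully-faithfulness requires the \'etale-limit embedding of the DT category together with the formal-local Hom-comparison of Lemma~\ref{lem:fmloc} (resting on compact generation of $\Ind\cC_{\zZ}$), after which one invokes the factorization window theorem through the Koszul duality equivalences; the fully-faithfulness of $\Upsilon_{i,j}$ itself (Proposition~\ref{prop:ups:ff}) is likewise a formal-local computation, not automatic. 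These are the missing ingredients you would have to supply to make the proposal into a proof.
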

The subcategory $\wW_{m_{\bullet}}^{l}$ gives a
desired window subcategory mentioned in the 
previous subsection. 
Halpern-Leistner~\cite{HalpK32}
proved the existence of semiorthogonal decomposition 
of $\Dbc(\fM)(=\dDT^{\C}(\nN))$
associated with $\Theta$-stratifications of 
$\fM$ (not of $\nN$), under some assumption 
on weights of the obstruction spaces of $\fM$
at each center of $\Theta$-strata (see~\cite[Theorem~2.3.1]{HalpK32}). 
The $\Theta$-stratification (\ref{intro:strata})
does not necessary a pull-back of the 
$\Theta$-stratification of $\fM$, 
e.g. the semistable locus $\nN^{l\sss}$ may be
strictly bigger than the pull-back of $\mM^{l\sss}$. 
So the result of~\cite[Theorem~2.3.1]{HalpK32} does not imply 
Theorem~\ref{intro:main}. 
On the other hand if the above weight condition 
is satisfied, then the $\Theta$-stratification on $\fM$ pulls back 
to a $\Theta$-stratification on $\nN$, and the semiorthogonal 
decomposition in Theorem~\ref{intro:main} recovers the 
semiorthogonal decomposition in~\cite[Theorem~2.3.1]{HalpK32} (see Remark~\ref{rmk:sod}). 
In the case that $\fM$ is the derived moduli stack of one dimensional 
sheaves on surfaces, P{\u{a}}durairu~\cite{Tudor2}
recently constructs semi-orthogonal decomposition 
of $\Dbc(\fM)$ which is closely related to the one 
in Theorem~\ref{intro:main}, with the aim of constructing K-theoretic BPS 
Lie algebra of a surface whose quiver with super-potential version 
is obtained in~\cite{Tudor}. 


The DT category $\dDT^{\C}(\nN^{l\sss})$
depends on $l$, 
and we are interested in its behavior under a
change of $l$. 
We use the result of Theorem~\ref{intro:main}
to 
compare 
$\dDT^{\C}(\nN^{l\sss})$ under wall-crossing of $l$. 
We take another $\lL' \in \Pic(\mM)_{\mathbb{R}}$ with $l'=c_1(\lL')$
and 
set $l_{\pm}=l\pm \varepsilon l'$
for $0<\varepsilon \ll 1$. 
Then we have the wall-crossing diagram 
for the good moduli space $\nN^{l\sss} \to N^{l\sss}$
\begin{align*}
	\xymatrix{
N^{l_{+}\sss} \ar[rd] & & N^{l_{-}\sss}	\ar[ld] \\
& N^{l\sss}.  &
}
	\end{align*}
In~\cite{Toddbir}, we introduced the notion of d-critical flips (flops)
which are analogue of usual flips (flops) in birational 
geometry for Joyce's d-critical loci~\cite{JoyceD}. 
If the above diagram is a d-critical flip (flop), then 
as an analogy of D/K equivalence conjecture~\cite{B-O2, MR1949787} 
we expect the existence of a fully-faithful functor (an equivalence)
\begin{align*}
	\dDT^{\C}(\nN^{l_- \sss}) \hookrightarrow (\stackrel{\sim}{\to}) \dDT^{\C}(\nN^{l_+ \sss}). 
\end{align*}
We impose some assumption (see Assumption~\ref{assum:Wx}) 
on tangent complexes at closed points in $\nN^{l\sss}$, 
which is typically satisfied for d-critical flips, and prove the following: 
	\begin{thm}\emph{(Theorem~\ref{thm:inclu})}\label{intro:thm:inclu}
	Under Assumption~\ref{assum:Wx}, 
	we have the inclusion 
	$\wW_{m_{\bullet}^{-}}^{l_{-}} \subset 
	\wW_{m_{\bullet}^{+}}^{l_{+}}$. 
	In particular, we have the fully-faithful functor 
	\begin{align*}
		\dDT^{\C}(\nN^{l_{-}\sss})
		\hookrightarrow \dDT^{\C}(\nN^{l_{+}\sss}). 
	\end{align*}
\end{thm}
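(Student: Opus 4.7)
The plan is to prove the inclusion $\wW_{m_{\bullet}^{-}}^{l_{-}} \subset \wW_{m_{\bullet}^{+}}^{l_{+}}$ as the main content; the fully-faithful functor then follows immediately by composing with the equivalences in part (ii) of Theorem~\ref{intro:main} applied to $l_{+}$ and to $l_{-}$ (the inverse of the latter going from $\dDT^{\C}(\nN^{l_{-}\sss})$ into the window, then include, then project).

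First I would make the window condition explicit. By Theorem~\ref{intro:main}, an object $F \in \dDT^{\C}(\nN)$ belongs to $\wW_{m_\bullet}^l$ precisely when it is right-orthogonal to every $\dD_{i,<m_i}$ and left-orthogonal to every $\dD_{i,\ge m_i}$. Unpacking the description of each $\dD_{i,*}$ via the weight grading on $\dDT^{\C}(\zZ_i^\Omega)$, this is equivalent to a concrete condition: the pullback of $F$ along the canonical map from the center $\zZ_i^\Omega$ has its $\C$-weights concentrated in an interval whose length $w_i$ is controlled by the weights of the $(-2)$-shifted conormal that describes $\zZ_i^\Omega$ as a $(-1)$-shifted cotangent over $\Grad(\fM)$ (as developed in Section~\ref{sec:filtst}), and whose right endpoint is $\lceil m_i \rceil$.

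Next I would compare the two $\Theta$-stratifications. Since $l_\pm = l \pm \varepsilon l'$ with $0<\varepsilon \ll 1$, the Hilbert-Mumford one-parameter subgroups relevant near the wall index (essentially) the same filtered objects for $l_+$ and $l_-$, but with destabilizing numerical invariants differing by a controlled $l'$-pairing. Thus each center $\zZ_i^{\Omega,-}$ has a matching partner $\zZ_{\sigma(i)}^{\Omega,+}$ on the other side of the wall; the corresponding one-parameter subgroup is the reverse, so weights flip sign, and the appropriate choice of thresholds is $m_i^- \leftrightarrow -m_{\sigma(i)}^+ + (\textrm{shift by pairing with }l')$. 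With this matching, the inclusion of windows reduces to comparing two weight intervals at each pair of centers.

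The heart of the argument is then Assumption~\ref{assum:Wx}: it gives a precise bound on the positive and negative $\C$-weights of the tangent complex $\mathbb{T}_\fM|_x$ at closed points $x \in \nN^{l\sss}$ on the wall. Since the window width $w_i$ on each side is read off from these tangent/conormal weights, Assumption~\ref{assum:Wx} implies exactly the inequality that makes the $l_-$-weight window fit inside the $l_+$-weight window at every matched pair of centers. This mirrors the pattern of window shifts in GIT wall-crossing (as in \cite{MR3327537, MR3895631}) and in Halpern-Leistner's $\Theta$-stratification framework \cite{Halpinstab, HalpK32}, adapted here to the $(-1)$-shifted cotangent setting.

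The main obstacle will be the combinatorics of matching the two stratifications: strata that are genuinely present for only one of $l_+$ or $l_-$ (because they become trivial at the wall), and strata whose centers at the wall degenerate, need to be handled carefully so that the weight comparison still applies. Once the center-by-center comparison is set up and Assumption~\ref{assum:Wx} is plugged in at each matched pair, the inclusion of windows drops out, and Theorem~\ref{intro:main}(ii) converts it into the stated fully-faithful functor between DT categories.
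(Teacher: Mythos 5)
There is a genuine gap in the middle of your argument, at exactly the point where all the work lies. First, you mischaracterize Assumption~\ref{assum:Wx}: it is not a bound on the positive/negative $\C$-weights of $\mathbb{T}_{\fM}|_x$, but a structural statement about the $G_x$-representation $W_x=\hH^0(\mathbb{T}_{\xX_1^{l\sss}}|_{x})$ at closed points of the wall $\nN^{l\sss}$, namely that $W_x=\bS_x\oplus\bU_x$ with $\bS_x$ symmetric and with the $l_-$-unstable locus coming only from $\bS_x$ (condition (\ref{Wsss})). Second, your plan to match each center $\zZ_i^{\Omega,-}$ with a partner $\zZ_{\sigma(i)}^{\Omega,+}$ via a sign-reversed one-parameter subgroup and shifted thresholds does not work: the wall-crossing is a d-critical flip, the numbers $k_+$ and $k_-$ of new strata inside $\nN^{l\sss}$ differ in general, and there is no such bijection (this asymmetry is precisely why one only gets a fully-faithful functor, not an equivalence). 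You flag this as ``the main obstacle'' but offer no way around it, and the containment of windows cannot be established stratum-by-stratum across the wall in this way.

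The paper's proof avoids any matching of strata. Since the first $N$ strata and thresholds $m_1,\dots,m_N$ are common to both sides, only the refined strata inside $\nN^{l\sss}$ matter; by Proposition~\ref{prop:W:loc} membership in $\wW_{m_\bullet^{\pm}}^{l_\pm}$ is checked formally locally over the good moduli space $M$, then (after passing to $\mathbb{Z}/2$-periodic factorizations via the conservative forgetful functor) the remaining conditions are local on the good moduli space $X_1^{l\sss}$ of the $l$-semistable locus. A Luna slice at each closed point $x\in\xX_1^{l\sss}$ reduces everything to window subcategories of $\MF^{\mathbb{Z}/2}_{\coh}([\widehat{W}_x/G_x],\iota_x^{\ast}w_1)$, and the inclusion $\widehat{\wW}_x^{\mathbb{Z}/2,l_-}\subset\widehat{\wW}_x^{\mathbb{Z}/2,l_+}$ under Assumption~\ref{assum:Wx} is then imported from~\cite[Proposition~5.1.7]{TocatDT} (see also~\cite[Proposition~2.6]{KoTo}); the mechanism there is the symmetric structure of $\bS_x$ together with the specific magic-window-type shift $\delta=\frac{1}{2}\det(\hH^1(\mathbb{T}_{\fM}|_y))^{\vee}+\overline{\lL}|_y$, not a weight bound on $\mathbb{T}_{\fM}$. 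Your proposal asserts the decisive inequality ``drops out'' of Assumption~\ref{assum:Wx} without supplying this mechanism or the localization steps that make a pointwise-on-the-wall comparison legitimate, so as written the argument does not go through.
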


	\subsection{Application: wall-crossing at $(-1, -1)$-curve}
In~\cite{TocatDT}
we proposed several conjectures on wall-crossing of DT categories 
for Pandharipande-Thomas stable pair moduli spaces~\cite{PT}
on local surfaces, and 
proved them in several cases, e.g. 
MNOP/PT correspondence for reduced curve classes. 
As an application of Theorem~\ref{intro:thm:inclu}, we give a
further evidence to the above conjectures which 
is not covered in~\cite{TocatDT}. 

Let $S$ be a smooth projective surface over $\mathbb{C}$, 
and 
\begin{align*}
	\pi \colon X \cneq \mathrm{Tot}_S(\omega_S) \to S
	\end{align*}
the total space of canonical line bundle of $S$, 
which is a non-compact Calabi-Yau 3-fold called \textit{local surface}. 
A \textit{PT stable pair} is a pair $(F, s)$, where $F$ is a compactly 
supported one dimensional coherent sheaf on $X$ 
and $s \colon \oO_X \to F$ is surjective in dimension one. 
For $(\beta, n) \in \mathrm{NS}(S) \oplus \mathbb{Z}$, 
we have the moduli space of PT stable pairs
\begin{align*}
	P_n(X, \beta)=\{(F, s) : \mbox{ PT stable pair with }
	(\pi_{\ast}[F], \chi(F))=(\beta, n)\},
	\end{align*}
which is a quasi-projective scheme. 
The moduli space of PT stable pairs is regarded as a 
moduli space of some stable objects in the derived category. 
Indeed for a fixed ample divisor $H$ on $S$, there is a one 
parameter family of stability conditions
on the abelian category of D0-D2-D6 bound states, 
denoted by $\mu_t^{\dag}$-stability for $t \in \mathbb{R}$, 
whose $t \to \infty$ limit recovers the 
PT theory (see Subsection~\ref{subsec:PTwall}). 
The numerical PT invariants and their 
wall-crossing formula
play a key role in the study of 
curve counting invariants on Calabi-Yau 3-folds (see~\cite{Tsurvey}). 
	
We denote by $P_n^t(X, \beta)$ the moduli space of $\mu_t^{\dag}$-semistable 
objects with numerical class $(\beta, n)$. It was observed in~\cite{Toddbir}
that the wall-crossing diagram at $t>0$
\begin{align}\label{intro:dia:PT}
	\xymatrix{
		P_n^{t_+}(X, \beta) \ar[rd] & & P_n^{t_-}(X, \beta)\ar[ld] \\
		& P_n^t(X, \beta) &
	}
\end{align}
is a \textit{d-critical flip} (see~\cite[Theorem~9.13]{Toddbir}).
Here we refer to~\cite[Subsection~1.3.2]{TocatDT} for an importance 
of the above wall-crossing in the context of numerical PT invariants. 
In~\cite{TocatDT}, we introduced the categorical DT theory 
for D0-D2-D6 bound states
\begin{align*}
	\dDT^{\C}(P_n^t(X, \beta)), \ t \notin W
	\end{align*}
following the basic model (\ref{intro:model}), 
where $W \subset \mathbb{R}$ is the set of walls. 
Based on the  of D/K equivalence conjecture~\cite{B-O2, MR1949787}, 
the following conjecture was proposed in~\cite{TocatDT}: 
\begin{conj}\emph{(\cite[Conjecture~4.24]{TocatDT})}\label{intro:conj:FF:PT}
	In the diagram (\ref{intro:dia:PT}),
	there exists a fully-faithful functor 
	\begin{align*}
		\dDT^{\C}(P_n^{t_-}(X, \beta)) \hookrightarrow 
		\dDT^{\C}(P_n^{t_+}(X, \beta)).
	\end{align*} 
\end{conj}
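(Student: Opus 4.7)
The plan is to reduce Conjecture~\ref{intro:conj:FF:PT} to an application of Theorem~\ref{intro:thm:inclu}. I would first construct a quasi-smooth derived enhancement $\fM$ of the classical moduli stack of D0-D2-D6 bound states on $X$ of type $(\beta,n)$; after restriction to the relevant component, its classical truncation $\mM = t_0(\fM)$ admits a good moduli space satisfying the formal neighborhood theorem, and the $(-1)$-shifted symplectic structure coming from the Calabi-Yau 3-fold structure on $X$ gives $\Omega_{\fM}[-1]$ with classical truncation $\nN$. A $\mu_t^{\dag}$-stability is induced by a linearization $l_t \in \Pic(\mM)_{\mathbb{R}}$, so that $P_n^t(X,\beta)$ is the good moduli space of $\nN^{l_t\sss}$, and the PT wall at a critical parameter $t$ takes the form $l_{\pm} = l_t \pm \varepsilon\, l'$ for a perturbation $l' \in \Pic(\mM)_{\mathbb{R}}$. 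This places the comparison of $\dDT^{\C}(P_n^{t_{\pm}}(X,\beta))$ in the framework of Theorems~\ref{intro:main} and~\ref{intro:thm:inclu}, modulo verifying Assumption~\ref{assum:Wx}.

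The central technical step is therefore to check Assumption~\ref{assum:Wx} at each closed point $x \in \nN^{l_t\sss}$. Such $x$ corresponds to a $\mu_t^{\dag}$-polystable object $E = I \oplus \bigoplus_i F_i^{\oplus m_i}$, where $I$ is the D6-framed factor of type $(0,0,1)$ and the $F_i$ are $\mu_t^{\dag}$-stable one-dimensional objects strictly semistable at the wall. The relevant tangent complex is governed by $\RHom(E,E)[1]$, on which Assumption~\ref{assum:Wx} imposes sign conditions for the $\C$-weights of certain subcomplexes with respect to the one-parameter subgroup destabilizing $E$ in the direction determined by $l'$. For pairs $\Ext^{\bullet}(F_i, F_j)$, Serre duality on the Calabi-Yau 3-fold $X$ gives a perfect pairing with opposite $\C$-weights, which combined with the d-critical flip condition of~\cite[Theorem~9.13]{Toddbir} should yield the required negative-weight vanishing. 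The mixed pieces $\Ext^{\bullet}(I, F_i)$ and $\Ext^{\bullet}(F_i, I)$ are asymmetric and are controlled by the D6-D2 characteristic $\chi(\oO_X, F_i)$, so one must show that $l'$ can be chosen so that its pairing with these numerical classes produces the correct weights.

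The main obstacle is the uniform verification at polystable points with many distinct Jordan--H\"older factors, where the combinatorics of $\C$-weights on the full Ext-quiver $\bigoplus_{i,j}\Ext^{\bullet}(E_i, E_j)$ is intricate and the asymmetric contribution of the framed factor $I$ interacts in a delicate way with the symplectic contribution of the unframed factors. A natural strategy is to decompose a general wall-crossing into elementary sub-wall-crossings where only a single pair $(F_i, I)$ changes stability and to compose the resulting fully-faithful functors, potentially reducing to the super-rigid case proved later in the paper. Granting the verification, Theorem~\ref{intro:thm:inclu} produces an inclusion $\wW_{m_\bullet^-}^{l_-} \hookrightarrow \wW_{m_\bullet^+}^{l_+}$ of window subcategories; composing with the window equivalences of Theorem~\ref{intro:main}(ii) yields the desired fully-faithful functor $\dDT^{\C}(P_n^{t_-}(X,\beta)) \hookrightarrow \dDT^{\C}(P_n^{t_+}(X,\beta))$.
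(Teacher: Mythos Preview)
The statement you are addressing is a \emph{conjecture}, not a theorem: the paper does not prove Conjecture~\ref{intro:conj:FF:PT} in general, only the special case of Theorem~\ref{intro:thm2} where $\beta$ is $f$-reduced and $t\ge 1$. Your proposal is therefore not a proof but a strategy, and you yourself identify the main obstacle (verifying Assumption~\ref{assum:Wx} at arbitrary polystable points) without resolving it. The suggestion to decompose a general wall into ``elementary sub-wall-crossings'' is not made precise and there is no reason to expect a reduction to the super-rigid case in general.

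Beyond this, there are two concrete technical errors in your setup that would already block the special-case argument. First, you invert the roles of $\fM$ and $\nN$: in the paper's framework $\fM=\fM_n^{\dag}(S,\beta)$ is the quasi-smooth derived moduli stack of pairs on the \emph{surface} $S$, and it is $\nN=t_0(\Omega_{\fM}[-1])$ that is identified with the moduli of D0-D2-D6 bound states on $X$; the $(-1)$-shifted symplectic structure is that of a $(-1)$-shifted cotangent, not the one coming directly from the Calabi--Yau 3-fold. Second, and more seriously, $\mM=t_0(\fM_n^{\dag}(S,\beta))$ does \emph{not} admit a good moduli space in general, so Theorem~\ref{intro:thm:inclu} cannot be applied to it as stated. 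The paper handles this (see the remark following Theorem~\ref{thm:inclu}) by finding an auxiliary open embedding $\fM\subset\fM'$ where $t_0(\fM')$ does have a good moduli space; in the $f$-reduced case this $\fM'$ is the perverse coherent moduli $\fP_n^{\rm per}(S,\beta)$, and a substantial part of Section~\ref{sec:catwall} is devoted to showing (Lemma~\ref{lem:persys}, Lemma~\ref{lem:altenative}, Proposition~\ref{prop:sss}) that the DT categories and the $l_t$-semistable loci can be computed there. Your proposal omits this replacement entirely.
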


Let $C \subset S$ be a $(-1)$-curve, and 
$f \colon S \to T$ a birational 
contraction which contracts $C$ to a smooth surface 
$T$. 
Note that $C$ is a $(-1, -1)$-curve inside $X$, which is 
contracted to a conifold singularity by a flopping contraction 
$X \to Y$. 
We take an ample divisor $H$ on $S$ of the 
following form
\begin{align*}
	H=a f^{\ast}h-C, \ a \gg 0
	\end{align*}
where $h$ is an ample divisor on $T$. As an application of
Theorem~\ref{intro:thm:inclu}, we prove the following: 
\begin{thm}\label{intro:thm2}\emph{(Theorem~\ref{thm:-1-1})}
		Conjecture~\ref{intro:conj:FF:PT} is true
		 if $t \ge 1$ and $\beta$ is $f$-reduced, i.e. 
		 $f_{\ast}\beta$ is a reduced curve class on $T$. 
	\end{thm}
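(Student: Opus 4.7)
The plan is to realize the wall-crossing diagram \eqref{intro:dia:PT} as an instance of the general setup preceding Theorem~\ref{intro:thm:inclu} and then verify Assumption~\ref{assum:Wx} in this specific geometric situation. Concretely, I would take $\fM$ to be the derived moduli stack of D0-D2-D6 bound states with numerical class $(\beta, n, 1)$, so that $\nN = t_0(\Omega_\fM[-1])$, and interpret the one-parameter family $\mu_t^{\dag}$ of stability conditions as a path $t \mapsto l(t)$ in $\Pic(\mM)_{\mathbb{R}}$, yielding $\nN^{l(t)\sss} = P_n^t(X, \beta)$. Writing the wall crossing as $l_{\pm} = l \pm \varepsilon l'$ with $l = l(t_0)$ for the wall $t_0 \ge 1$ in question, the required hypotheses of Theorem~\ref{intro:main} (quasi-smoothness, QCA, existence of a good moduli space satisfying the formal neighborhood theorem) are standard for these moduli. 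The fact that \eqref{intro:dia:PT} is a d-critical flip by \cite[Theorem~9.13]{Toddbir} places us in the scope of Theorem~\ref{intro:thm:inclu}, so that the entire proof is reduced to the verification of Assumption~\ref{assum:Wx}.

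The main obstacle is this verification, which concerns the $\C$-weights of the tangent complex at closed points of $\nN^{l\sss}$ arising from the centers of the $\Theta$-strata at the wall. A strictly $\mu_{t_0}^{\dag}$-semistable object $E$ corresponds to an $S$-equivalence class whose graded object splits off a summand supported set-theoretically on the super-rigid curve $C \subset X$. Since $\beta$ is $f$-reduced, the component of $\beta$ along $C$ is either $0$ or $[C]$, and together with $t_0 \ge 1$ this forces the only wall-crossing factors that can appear to be of the form $\oO_C(k)$ or their shifts, each with multiplicity at most one. Super-rigidity provides the decisive vanishing: with normal bundle $N_{C/X} = \oO(-1)^{\oplus 2}$ on $\mathbb{P}^1$, the groups $\Ext^*_X(\oO_C, \oO_C) \simeq H^*(\mathbb{P}^1, \wedge^\bullet N_{C/X})$ are concentrated in degrees $0$ and $3$, and the higher $\Ext$ groups between the stable factors appearing at the wall are thereby sharply constrained.

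Equipped with this control on $\Ext$ groups, I would compute the $\C$-weights on the $(-2)$-shifted conormal at the center of each $\Theta$-stratum using the description of filtered stacks in terms of $(-2)$-shifted conormals established in Section~\ref{sec:filtst}. The bounds required by Assumption~\ref{assum:Wx} reduce to explicit inequalities on these weights, which can be verified by a direct computation analogous to the ones performed for the PT examples in \cite[Section~6]{TocatDT}; the super-rigidity of $C$ together with $f$-reducedness rules out precisely the positive-degree $\Ext$ contributions between distinct simple factors at the wall that would otherwise spoil the required bound. Once Assumption~\ref{assum:Wx} is confirmed, Theorem~\ref{intro:thm:inclu} yields the inclusion $\wW_{m_{\bullet}^{-}}^{l_{-}} \subset \wW_{m_{\bullet}^{+}}^{l_{+}}$, and via the equivalences in Theorem~\ref{intro:main}(ii) this produces the desired fully-faithful functor $\dDT^{\C}(P_n^{t_-}(X,\beta)) \hookrightarrow \dDT^{\C}(P_n^{t_+}(X,\beta))$, completing the proof.
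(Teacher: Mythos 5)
There is a genuine gap, and it sits exactly where you wrote that the hypotheses of Theorem~\ref{intro:main} are ``standard for these moduli.'' They are not: the truncation of the naive derived moduli stack $\fM_n^{\dag}(S,\beta)$ of pairs (equivalently, the stack whose shifted cotangent carries the D0-D2-D6 bound states) does \emph{not} admit a good moduli space containing the loci relevant to the wall at $t\ge 1$, so Theorem~\ref{thm:inclu} cannot be applied to it directly. The paper's entire Section~\ref{sec:catwall} is devoted to manufacturing a suitable ambient stack: one replaces coherent pairs by \emph{perverse} coherent systems relative to $f\colon S\to T$, proves that $\fP_n^{\rm{per}}(S,\beta)$ is quasi-smooth and that $\pP_n^{\rm{per}}(S,\beta)$ has a good moduli space satisfying the formal neighborhood theorem (Lemmas~\ref{lem:pqsmooth}, \ref{lem:persys}), and — this is where $f$-reducedness enters — identifies $\pP_n^{\rm{per}}(X,\beta)\cong t_0(\Omega_{\fP_n^{\rm{per}}(S,\beta)}[-1])$ (Lemma~\ref{lem:fred}), after which Lemma~\ref{lem:replace0} lets one compute $\dDT^{\C}(\pP_n^t(X,\beta))$ on this new ambient stack (Lemma~\ref{lem:altenative}). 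Your reading of $f$-reducedness is also incorrect: it does not force the $C$-component of $\beta$ to be $0$ or $[C]$, nor the destabilizing factors to occur with multiplicity at most one. By Lemma~\ref{lem:abig} (which uses the choice $H=af^{\ast}h-C$, $a\gg 0$, not $f$-reducedness) the strictly polystable objects are $I_1\oplus\oO_C(k-1)^{\oplus m}[-1]$ with arbitrary $m\ge 1$; $f$-reducedness is used only for the identification of the shifted cotangent above. Moreover, the matching of $\mu_t^{\dag}$-semistability with a numerical invariant is not automatic: one must construct the explicit $\mathbb{R}$-line bundle $\lL_t$ and prove $\pP_n^t(X,\beta)=\pP_n^{\rm{per}}(X,\beta)^{l_t\sss}$ (Proposition~\ref{prop:sss}) and the positivity of $b$ (Lemma~\ref{lem:posi}).

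Your proposed verification of Assumption~\ref{assum:Wx} also rests on the wrong mechanism. The paper does not argue via super-rigid vanishing of $\Ext^{\ast}_X(\oO_C,\oO_C)$; instead (Lemma~\ref{lem:assum:per}) it works at a polystable point through the Ext-quiver $Q_y$ of the collection $\{I_0', S_0[-1], S_1[-1], \oO_{x_i}[-1]\}$, writes $W_x=\Ext^1_{Q_y}(R,R)$ for $R=R_{\infty}\oplus(R_1\otimes W)$ with $G_x=\GL(W)$, and exhibits the decomposition $W_x=\bS_x\oplus\bU_x$ with $\bS_x$ symmetric and $\bU_x=W^{\oplus k}$; the decisive input is the Euler-pairing asymmetry $\dim\Ext^1_{Q_y}(R_{\infty},R_1)-\dim\Ext^1_{Q_y}(R_1,R_{\infty})=k\ge 1$, computed from the dimension vector $(0,k,k-1,0,\ldots,0)$ of $R_1$ (coming from $0\to\oO_C(-1)^{\oplus k-1}\to\oO_C^{\oplus k}\to\oO_C(k-1)\to 0$), together with the character $(\det g)^{\varepsilon}$ and the check of (\ref{Wsss}) as in \cite[Lemma~5.1.12]{TocatDT}. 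Since your sketch assumes multiplicity-one wall-crossing factors and leaves the weight inequalities as an unspecified ``direct computation,'' it would not go through as written; the correct route is the chain Lemma~\ref{lem:altenative}, Proposition~\ref{prop:sss}, Lemma~\ref{lem:posi}, Lemma~\ref{lem:assum:per}, then Theorem~\ref{thm:inclu}.
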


In the situation of Theorem~\ref{intro:thm2}, a strictly polystable object 
at the wall is a direct sum of stable objects
with line bundles on $C$. 
The wall-crossing in Theorem~\ref{intro:thm2} is the one which appeared 
in~\cite{Tcurve2, Calab}, 
where the flop transformation of PT invariants 
was proved.
The sequence of wall-crossing diagrams in this case 
is a d-critical minimal model program, which connects
PT stable pair moduli space $P_n(X, \beta)$ with 
non-commutative stable pair moduli space $P_n^{\rm{nc}}(X, \beta)$
(see Subsection~\ref{subsec:percoh}), 
\begin{align*}
	P_n(X, \beta)=P_n^{(k)}(X, \beta) \dashrightarrow 
	P_n^{(k-1)}(X, \beta) \dashrightarrow \cdots \dashrightarrow P_n^{(0)}(X, \beta)=P_n^{\rm{nc}}(X, \beta).
\end{align*}
Here $k\gg 0$ and $P_n^{\rm{nc}}(X, \beta)$ is isomorphic to
 the moduli space of 
PT stable pairs for 
the non-commutative scheme $(Y, A_X)$, 
which is a non-commutative crepant resolution of $Y$
given in~\cite{MR2057015}. 
So Theorem~\ref{intro:thm2} implies a sequence of 
fully-faithful functors  
\begin{align*}
	\dDT^{\C}(P_n^{\rm{nc}}(X, \beta)) \hookrightarrow 
	\cdots \hookrightarrow 
	\dDT^{\C}(P_n^{(k-1)}(X, \beta))
	\hookrightarrow \dDT^{\C}(P_n(X, \beta)). 
\end{align*}
We also remark that Theorem~\ref{intro:thm2} in the case that $f_{\ast}\beta=0$
is proved in~\cite[Theorem~4.3.5]{TocatDT}, where 
PT moduli spaces are global critical loci so the original 
window theorem for GIT quotient stacks
 is enough to prove the result. 
In the situation that $f_{\ast}\beta$ is only a reduced class, 
we no longer have a global critical locus description of PT 
moduli spaces, and we need the result of Theorem~\ref{intro:main}.

\subsection{Acknowledgements}
This author is grateful to Daniel Halpern-Leistner for discussions 
on his works~\cite{MR3327537, Halpinstab, HalpK32}, 
and also thanks to Yalong Cao, Hsueh-Yung Lin and 
Wai-kit Yeung for helpful discussions. 
	The author is supported by World Premier International Research Center
	Initiative (WPI initiative), MEXT, Japan, and Grant-in Aid for Scientific
	Research grant (No.~19H01779) from MEXT, Japan.

	\subsection{Notation and convention}\label{subsec:notation}
	In this paper, all the schemes or (derived) stacks
	are locally of finite presentation over 
	$\mathbb{C}$, except formal fibers along with 
	good moduli space morphisms. 
	For a scheme or derived stack $Y$
	and a quasi-coherent sheaf $\fF$ on it, we denote by 
	$S_{\oO_Y}(\fF)$ its symmetric product
	$\oplus_{i\ge 0} \mathrm{Sym}^i_{\oO_Y}(\fF)$. 
	We omit the subscript $\oO_Y$ if it is clear from the context. 
	For a derived stack $\mathfrak{M}$, we always 
	denote by $t_0(\mathfrak{M})$ the underived
	stack given by the truncation. 
	For an algebraic group $G$ which acts on $Y$, 
	we denote by $[Y/G]$ the associated 
	quotient stack. 
	
	All the dg-categories or triangulated categories 
	are defined over 
	$\mathbb{C}$. 
	For a triangulated category $\dD$ 
	and a set of objects $\sS \subset \dD$, 
	we denote by $\langle \sS \rangle_{\rm{ex}}$ the extension 
	closure, i.e. the smallest extension closed subcategory of $\dD$
	which contains $\sS$. 
		For
		a triangulated subcategory $\dD' \subset \dD$, 
	we denote by 
	$\dD/\dD'$
	its Verdier quotient. 
	In the case that $\dD$ is a dg-category 
	and $\dD' \subset \dD$ is a dg-subcategory, 
	its Drinfeld dg-quotient~\cite{MR3037900} 
	is also denoted by $\dD/\dD'$. 
		The category of dg-categories
	$\mathrm{dgCat}$
	consists of dg-categories over $\mathbb{C}$
	with morphisms given by dg-functors. 
	By Tabuada~\cite{Tab}, there is a cofibrantly generated 
	model category structure on 
	$\mathrm{dgCat}$, 
	whose localization by weak equivalences
	is denoted by
	$\mathrm{Ho}(\mathrm{dgCat})$. 
	An equivalence between dg-categories
	is defined to be 
	an isomorphism 
	in $\mathrm{Ho}(\mathrm{dgCat})$. 
	
	For a dg-category $\dD$, 
	we denote by $\Ind \dD$ 
	its dg-categorical 
	ind-completion of $\dD$ (denoted as $\widehat{\dD}$
	in~\cite[Section~7]{Todg}, also see~\cite[Section~5.3.5]{Ltopos} in the 
	context of $\infty$-category). 
	When we discuss limits or ind-completions, 
	we (implicitly or explicitly) take these functors on dg-enhancements
	or $\infty$-categorical enhancements 
	(which will be obviously given in the context)
	and then take their homotopy categories. 
	
	Let $G$ be an algebraic group, 
$Y$ be a representation of $G$
and $\lambda \colon \C \to G$ a one parameter subgroup. 
We denote by $\langle \lambda, Y \rangle$ 
the sum of $\lambda$-weights of $Y$, 
which equals to the $\lambda$-weight of $\det (Y)$. 
By regarding $Y$ as a $\C$-representation via $\lambda$, 
we denote by $Y^{\lambda \ge 0} \subset Y$ the 
direct summand consisting of non-negative $\C$-weights, 
and $\langle \lambda, Y^{\lambda \ge 0} \rangle$ is also 
defined as above. 

For an abelian category $\aA$ and a collection of objects 
$(E_1, \ldots, E_k)$ in $\aA$, its \textit{Ext-quiver}
is the quiver whose vertex set is $\{1, \ldots, k\}$, 
and the number of arrows from $i$ to $j$ 
is $\dim \Ext^1(E_i, E_j)$.

	\section{Semiorthogonal decompositions of factorization categories: review}
	In this section, we review the dg or triangulated 
	categories of factorizations of super-potentials, 
	and their semiorthogonal decompositions
	associated with Kempf-Ness stratifications
	proved in~\cite{MR3327537, MR3895631}. 
	The above semiorthogonal decompositions 
	are local models of semiorthogonal decompositions for 
	DT categories. 
		\subsection{Review of factorization categories}\label{subsec:fact}
	Here we review the theory of factorizations 
	associated with super-potentials. The basic references are~\cite{Ornonaff, MR3366002, MR3112502}. 
	
	Let $\xX$ be a noetherian smooth algebraic stack over $\mathbb{C}$, 
	$\lL \to \xX$ a line bundle and 
	$w \in \Gamma(\xX, \lL^{\otimes 2})$ a global section. 
	A \textit{(quasi) coherent factorization} of $w$ consists of 
	\begin{align*}
		(\pP, d_{\pP}), \ d_{\pP} \colon \pP \to \pP \otimes \lL
	\end{align*}
	where $\pP$ is a (quasi) coherent sheaf on $\xX$ and 
	$d_{\pP}$ is a morphism of (quasi) coherent sheaves
	satisfying $d_{\pP}^2=w$. 
	The category of factorizations of $w$ naturally forms
	a dg-category
	denoted by $\underline{\MF}_{\star}(\xX, w)_{\rm{dg}}$ for
	$\star \in \{\qcoh, \coh\}$, whose 
	homotopy category 
	$\mathrm{HMF}_{\star}(\xX, w)$
	is a triangulated category. 
	Let 
	\begin{align}\notag
		\mathrm{Acy}_{\star}^{\rm{abs}} \subset \mathrm{HMF}_{\star}(\xX, w)
	\end{align}
	be the minimal thick triangulated subcategory 
	which contains totalizations of short exact sequences of 
	(quasi) coherent factorizations of $w$. 
	An object in $\mathrm{Acy}_{\star}^{\rm{abs}}$ is called 
	\textit{absolutely acyclic}. 
	The triangulated category of factorizations of $w$ is defined by 
	the Verdier quotient 
	\begin{align*}
		\MF_{\star}(\xX, w) \cneq 
		\mathrm{HMF}_{\star}(\xX, w)/\mathrm{Acy}^{\rm{abs}}_{\star}, \ 
		\star \in \{\qcoh, \coh\}. 
	\end{align*}
	It admits a natural dg-enhancement by taking the Drinfeld quotient 
	\begin{align}\label{MF:dg}
		\MF_{\star}(\xX, w)_{\rm{dg}} \cneq 
		\underline{\MF}_{\star}(\xX, w)_{\rm{dg}}/\mathrm{Acy}^{\rm{abs}}_{\star, \rm{dg}}, \
		\star \in \{\qcoh, \coh\}
	\end{align}
	where $\mathrm{Acy}^{\rm{abs}}_{\star, \rm{dg}}$
	is the full dg-subcategory 
	of $\underline{\MF}_{\star}(\xX, w)_{\rm{dg}}$ consisting of 
	absolutely acyclic objects in the homotopy category. 
	
	Let 
	$\zZ \subset \xX$ be
	a closed substack. 
	We define 
	\begin{align}\label{MF:supp}
		\MF_{\star}(\xX, w)_{\zZ} \cneq 
		\Ker(\MF_{\star}(\xX, w) \stackrel{j^{\ast}}{\to} \MF_{\star}(\xX
		\setminus \zZ, w|_{\xX \setminus \zZ})). 
	\end{align}
Here $j \colon \xX \setminus \zZ \hookrightarrow \xX$ is the open 
immersion. 
	Then we have the equivalence (cf.~\cite[Theorem~1.10]{MR3366002})
	\begin{align}\label{equiv:supporZ}
		\MF_{\star}(\xX, w)/\MF_{\star}(\xX, w)_{\zZ}
		\stackrel{\sim}{\to} \MF_{\star}(\xX \setminus \zZ, w|_{\xX \setminus \zZ}). 
	\end{align}
	Let $\Crit(w) \subset \xX$ be the critical locus. We also have the equivalence
	(cf.~\cite[Corollary~5.3]{MR3112502})
	\begin{align}\notag
		\MF_{\star}(\xX, w)_{\Crit(w)} 
		\stackrel{\sim}{\to} \MF_{\star}(\xX, w). 
	\end{align}
In particular by setting 
$\zZ'=\zZ \cap \Crit(w)$, we have the equivalence 
\begin{align}\label{equiv:supp3}
	\MF^{\C}_{\coh}(\xX \setminus \zZ', w|_{\xX \setminus \zZ'}) \stackrel{\sim}{\to}
		\MF^{\C}_{\coh}(\xX \setminus \zZ, w|_{\xX \setminus \zZ}). 
	\end{align}
	
	We will use the following two special versions of factorization categories. 
	Let $\yY$ be a smooth stack and set $\xX=\yY \times B\mu_2$, 
	and $\lL$ to be the line bundle on $\xX$ induced by the 
	weight one $\mu_2$-character. 
	Then $\lL^{\otimes 2} \cong \oO_{\xX}$, so any 
	regular function $w \colon \yY \to \mathbb{C}$
	determines a global section $w \in \Gamma(\xX, \lL^{\otimes 2})$. 
	We set
	\begin{align*}
		\MF_{\star}^{\mathbb{Z}/2}(\yY, w) \cneq 
		\MF_{\star}(\yY \times B\mu_2, w), \ 
		\star \in \{\qcoh, \coh\}. 
	\end{align*}
	The above triangulated categories are $\mathbb{Z}/2$-periodic, i.e. 
	$[2] \cong\id$. 
	When $\yY$ is an affine scheme, the 
	above triangulated category is equivalent to 
	Orlov's triangulated category of matrix factorizations~\cite{Orsin}. 
	
	Let $\C$ acts on a smooth stack $\yY$ and set 
	$\xX=[\yY/\C]$, and $\lL$ to be the line bundle on $\xX$
	induced by the weight one $\C$-character. 
	For a global section $w \in \Gamma(\xX, \lL^{\otimes 2})$, 
	we set
	\begin{align*}
		\MF_{\star}^{\C}(\yY, w) \cneq \MF_{\star}([\yY/\C], w), \ 
		\star \in \{\qcoh, \coh\}. 
	\end{align*}
	For example, 
	we will use the above 
	construction for $\yY=[Y/G]$ for a
	noetherian scheme $Y$ with an action of an algebraic group $G$
	and an action of $\C$ which commutes with the $G$-action, 
	and $w \colon Y \to \mathbb{C}$ is a $G$-invariant 
	function with $\C$-weight two. 
	Let 
	$\lambda \colon \C \to G$ be a one parameter subgroup 
	contained in the center of $G$ and the induced $\C$-action 
	 on $Y$ trivial. 
	Then we have the decomposition for $\star \in \{\coh, \qcoh\}$
	\begin{align*}
		\MF_{\star}^{\C}([Y/G], w)=
		\bigoplus_{j \in \mathbb{Z}}
		\MF_{\star}^{\C}([Y/G], w)_{\lambda \mathchar`- \wt= j}
	\end{align*}\
	where $\MF_{\star}^{\C}([Y/G], w)_{\lambda \mathchar`- \wt= j}$ is the weight 
	$j$-part with respect to $\lambda$. 
	We define the subcategory 
	\begin{align*}
		\MF_{\coh}^{\C}([Y/G], w)_{\lambda \mathchar`- \rm{above}}
		\subset \MF_{\qcoh}^{\C}([Y/G], w)
	\end{align*}
	to be consisting of objects whose $\lambda$-weights are bounded 
	above, and each $\lambda$-weight $j$ part is an object in 
	$\MF_{\coh}^{\C}([Y/G], w)_{\lambda \mathchar`- \wt= j}$. 
		
	For a closed substack $\zZ \subset \yY$, 
	we define the subcategories 
	\begin{align*}
		\MF_{\star}^{\mathbb{Z}/2}(\yY, w)_{\zZ} \subset \MF_{\star}^{\mathbb{Z}/2}(\yY, w), \ 
		\MF_{\star}^{\C}(\yY, w)_{\zZ} \subset \MF_{\star}^{\C}(\yY, w)
	\end{align*}
	in the similar way as (\ref{MF:supp}). Here we assume that $\zZ$ is $\C$-invariant in the latter case. 
	The dg-categories 
	$\MF^{\ast}_{\star}(\yY, w)_{\rm{dg}}$, 
	$\MF^{\ast}_{\star}(\yY, w)_{\zZ, \rm{dg}}$ are 
	also defined in the similar way as (\ref{MF:dg}). 
	\subsection{Kempf-Ness stratification}\label{subsec:exam:KN}
	Here review Kempf-Ness stratifications associated with GIT quotients of 
	reductive algebraic groups, 
	following the convention of~\cite[Section~2.1]{MR3327537}. 
		Let $G$ be a reductive algebraic group with maximal torus $T$, 
		which acts on a smooth affine scheme $Y$. 
		We denote by 
		$M$ the character lattice of $T$ and $N$ the cocharacter lattice
		of $T$. 
			For a one parameter subgroup $\lambda \colon \C \to G$, 
		let $Y^{\lambda \ge 0}$, $Y^{\lambda=0}$ be defined by 
		\begin{align*}
			Y^{\lambda \ge 0} &\cneq \{y \in Y: 
			\lim_{t\to 0} \lambda(t)(y) \mbox{ exists }\}, \\
			Y^{\lambda = 0} &\cneq \{y \in Y: 
			\lambda(t)(y)=y \mbox{ for all } t \in \C\}. 
		\end{align*}
		The Levi subgroup and the parabolic subgroup
		\begin{align*}
			G^{\lambda=0} \subset G^{\lambda \ge 0} \subset G
			\end{align*}
		are also similarly defined by the conjugate $G$-action on $G$, i.e. 
		$g\cdot (-)=g(-)g^{-1}$. The $G$-action on 
		$Y$ restricts to the $G^{\lambda \ge 0}$-action 
		on $Y^{\lambda \ge 0}$, 
		and the $G^{\lambda=0}$-action on $Y^{\lambda=0}$. 
		We note that
		$\lambda$ factors through 
		$\lambda \colon \C \to G^{\lambda=0}$, and it
		acts on $Y^{\lambda=0}$ trivially. 
	
		For an element 
		$l \in 
		\Pic([Y/G])_{\mathbb{R}}$, 
		we have the open subset of $l$-semistable points
		\begin{align}\notag
			Y^{l\sss} \subset Y
				\end{align}
			characterized 
			by the set of points $y \in Y$ such that 
			for any one parameter subgroup $\lambda \colon \mathbb{C}^{\ast} \to G$
			such that 
			the limit $z=\lim_{t\to 0}\lambda(t)(y)$
			exists in $Y$, we 
			have $\wt(l|_{z})\ge 0$. 
			Let $\lvert \ast \rvert$ is the Weyl-invariant norm 
		on $N_{\mathbb{R}}$. 
		The above subset of $l$-semistable 
		points fits into the \textit{Kempf-Ness (KN) stratification} 
		\begin{align}\label{KN:strata}
			Y=S_{1} \sqcup S_{2} \sqcup \cdots \sqcup S_N \sqcup 
			Y^{l\sss}.  
		\end{align}
		Here for each $1\le i\le N$ there exists a 
		one parameter subgroup $\lambda_{i} \colon \mathbb{C}^{\ast} \to
		T \subset G$, an open and closed subset 
		$Z_{i}$ of
		$(Y \setminus \cup_{i'<i} S_{i'})^{\lambda_{i}=0}$
		such that 
		\begin{align*}
			S_{i}=G \cdot Y_{i}, \ 
			Y_{i}\cneq \{ y \in Y^{\lambda_{i} \ge 0}: 
			\lim_{t \to 0}\lambda_{i}(t)(y) \in Z_{i}\}. 
		\end{align*}
		Moreover by setting the slope to be
		\begin{align}\notag
			\mu_{i} \cneq -\frac{
				\wt(l|_{Z_{i}})}{\lvert \lambda_{i} \rvert} \in \mathbb{R}
		\end{align}
		we have 
		the inequalities
		$\mu_1>\mu_2>\cdots>0$.
		The above stratification gives an example of a 
		$\Theta$-stratification in Definition~\ref{defi:Thetastrata} (see Example~\ref{exam:KN}).

	\subsection{Semiorthogonal decomposition via KN stratification}
	\label{subsec:crit:window}
	In the setting of Subsection~\ref{subsec:exam:KN},  
	let us consider a KN-stratification (\ref{KN:strata})
	with one parameter subgroup 
	$\lambda_{i} \colon \C \to G$ for each $i$. 
	Suppose that there is an auxiliary
	$\C$-action on $Y$ which commutes with the $G$-action, 
	and $w \colon Y \to \C$ is a 
	$G$-invariant function with auxiliary $\C$-weight two. 
	We have the following diagram (see~\cite[Definition~2.2]{MR3327537})
	\begin{align}\label{dia:YZ}
		\xymatrix{
			[Y_{i}/G^{\lambda_{i}\ge 0}] \ar[r]^-{\cong} \ar[d] & [S_{i}/G] \ar[dl]_{p_{i}} \ar@<-0.3ex>@{^{(}->}[r]^-{q_{i}}
			& \left[\left(Y\setminus \cup_{i'<i} S_{i'}\right)/G \right] \ar[d]_-{w} \\
			[Z_{i}/G^{\lambda_{i}=0}] \ar@<-0.3ex>@{^{(}->}[rru]_-{\tau_{i}} \ar[rr]_{w|_{Z_{i}}} & & \mathbb{C}. 
		}
	\end{align}
	Here the left vertical arrow is given by
	taking the $t\to 0$ limit of the action of $\lambda_{i}(t)$
	for $t \in \C$, and $\tau_{i}, q_{i}$ are induced by the 
	embedding $Z_{i} \hookrightarrow Y$, $S_{i} \hookrightarrow Y$ respectively. 
	Let $\eta_{i} \in \mathbb{Z}$ be defined by 
	\begin{align}\notag
		\eta_{i} \cneq \wt_{\lambda_{i}}(\det(N_{S_{i}/Y}^{\vee}|_{Z_{i}})).
	\end{align}
	Note that the auxiliary $\C$-action preserves $S_i$ and $Z_i$
	by its commutativity with the $G$-action. 
	We will use the following version of window theorem. 
	\begin{thm}\label{thm:window}\emph{(\cite{MR3327537, MR3895631})}
		For each $i$, we take $m_{i} \in \mathbb{R}$. 
		
		(i) For each $j \in \mathbb{Z}$, the composition 
		\begin{align*}
			q_{i \ast} p_{i}^{\ast} \colon
			\MF^{\C}_{\coh}([Z_{i}/G^{\lambda_i=0}], w|_{Z_{i}})_{\lambda_{i} \mathchar`- \wt= j}
			\to \MF^{\C}_{\coh}([S_{i}/G], w|_{S_{i}})
			\to \MF^{\C}_{\coh}([(Y \setminus \cup_{i'<i}S_{i'})/G], w)
		\end{align*}
		is fully-faithful, whose essential image is denoted by 
		$\dD_{i, j}$. 
		
		(ii) 
		There exist semiorthogonal decomposition 
		\begin{align}\notag
			\MF_{\coh}^{\C}([(Y \setminus \cup_{i'<i}S_{i'})/G], w)
			=\langle \ldots, \dD_{i, \lceil m_{i} \rceil -2}, 
			\dD_{i, \lceil m_{i} \rceil -1},  
			\wW_{i, m_{i}}, \dD_{i, \lceil m_{i} \rceil}, 
			\dD_{i, \lceil m_{i} \rceil +1}, \ldots
			\rangle. 
		\end{align}	
		Here $\wW_{i, m_i}$ consists of 
		factorizations 
		$(\pP, d_{\pP})$
		satisfying that 
		\begin{align}\label{condition:P}
			\tau_{i}^{\ast}(\pP, d_{\pP}) \in 
			\bigoplus_{j \in [m_{i}, m_{i}+\eta_{i})}
			\mathrm{MF}_{\coh}^{\C}([Z_{i}/G^{\lambda_i=0}], w|_{Z_{i}})_{\lambda_{i} \mathchar`- \wt= j}. 
		\end{align}
		
		(iii) The composition functor
		\begin{align*}
			\wW_{i, m_{i}} \hookrightarrow 
			\MF_{\coh}^{\C}([(Y \setminus \cup_{i'<i}S_{i'})/G], w)
			\twoheadrightarrow 
			\MF_{\coh}^{\C}([(Y \setminus \cup_{i' \le i}S_{i'})/G], w)
		\end{align*}
		is an equivalence 
	\end{thm}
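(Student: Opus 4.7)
The plan is to argue stratum-by-stratum, analyzing the closed stratum $[S_i/G]$ inside the open substack $[(Y \setminus \cup_{i'<i} S_{i'})/G]$ via its Bialynicki-Birula presentation $[S_i/G] \cong [Y_i/G^{\lambda_i\ge 0}]$ together with the retraction $p_i$ onto $[Z_i/G^{\lambda_i=0}]$. The geometric input is that $p_i$ is an iterated affine bundle: it factors as $[Y_i/G^{\lambda_i\ge 0}] \to [Z_i/G^{\lambda_i\ge 0}] \to [Z_i/G^{\lambda_i=0}]$, where the first map is the $\lambda_i$-weight-positive attracting bundle and the second is a unipotent gerbe. Since the pullback of the potential agrees, namely $p_i^{*}(w|_{Z_i}) = w|_{Y_i}$, the pullback $p_i^{*}$ on factorization categories is fully-faithful. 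Combined with the closed embedding pushforward $q_{i*}$, whose fully-faithfulness on a single $\lambda_i$-weight component follows from the Koszul computation below, one obtains part (i).

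For the semiorthogonality in part (ii), I would use the Koszul-type resolution of $q_{i*}\oO_{S_i}$ on $[(Y \setminus \cup_{i'<i} S_{i'})/G]$ by exterior powers $\bigwedge^{\bullet} N_{S_i/Y}^{\vee}$, whose restriction to $Z_i$ has strictly positive $\lambda_i$-weights summing to $\eta_i$. For $\qQ_{j'} \in \dD_{i, j'}$ and $\qQ_j \in \dD_{i, j}$ with $j' > j$, adjunction together with this resolution reduces $\Hom(\qQ_{j'}, \qQ_j)$ to a sum of terms of the form
\begin{align*}
\Hom_{[Z_i/G^{\lambda_i=0}]}\bigl(\pP_{j'},\; \pP_j \otimes \bigwedge\nolimits^{k} N_{S_i/Y}^{\vee}|_{Z_i}\bigr)
\end{align*}
projected onto matching $\lambda_i$-weights. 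These vanish for $0 \le k \le \eta_i - 1$ by the strict weight inequality, and the same vanishing also handles the orthogonality between $\wW_{i, m_i}$ and each $\dD_{i, j}$ with $j$ outside $[m_i, m_i + \eta_i)$: the window length is precisely tuned so that no Koszul weight lands inside.

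To prove generation, I would iteratively peel off weight pieces of $\tau_i^{*}$ applied to an arbitrary object $\eE$: take the extremal $\lambda_i$-weight $j$ outside $[m_i, m_i+\eta_i)$ that appears in $\tau_i^{*}\eE$, form the cone of the counit $q_{i*} p_i^{*} (p_{i*} \tau_i^{*} \eE)_{\lambda_i\mathchar`-\wt = j} \to \eE$, and observe (via the Koszul length $\eta_i$) that its weight amplitude on the stratum has strictly shrunk. Iterating produces a filtration whose final term has $\tau_i^{*}$-support inside the window, i.e.\ lies in $\wW_{i, m_i}$, with other subquotients in $\dD_{i, j}$'s. Part (iii) is then immediate from (ii) combined with the localization equivalence~(\ref{equiv:supporZ}): the subcategory generated by all $\dD_{i, j}$ equals the kernel of restriction to $[(Y \setminus \cup_{i' \le i} S_{i'})/G]$, so $\wW_{i, m_i}$ maps equivalently onto the quotient.

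The main difficulty will be the weight bookkeeping in the factorization setting, where one must track both the $\lambda_i$-grading and the curved differential $d$ satisfying $d^2 = w$ through the Koszul complex simultaneously. The auxiliary $\C$-weight-two hypothesis on $w$ ensures that the differential is compatible with the bigrading so that $\lambda_i$-weights do not drift under the $w$-twist, and verifying that $\eta_i = \wt_{\lambda_i}(\det N_{S_i/Y}^{\vee}|_{Z_i})$ matches exactly the total weight shift of the top Koszul term is what makes the semiorthogonal pieces tile the category without overlap or gap.
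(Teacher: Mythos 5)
The paper does not actually prove Theorem~\ref{thm:window} — it is quoted as a known result of \cite{MR3327537, MR3895631} — and your outline reproduces essentially the argument of those references: fully-faithfulness of $q_{i\ast}p_i^{\ast}$ on a fixed $\lambda_i$-weight component (using $w|_{Y_i}=p_i^{\ast}(w|_{Z_i})$ and the weight grading over $[Z_i/G^{\lambda_i=0}]$), semiorthogonality via the Koszul resolution of $q_{i\ast}\oO_{S_i}$ by $\bigwedge^{\bullet}N^{\vee}_{S_i/Y}$ whose $\lambda_i$-weights are positive with total $\eta_i$, generation by induction on the weight amplitude of the restriction to the stratum, and (iii) from the localization sequence (\ref{equiv:supporZ}). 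So your proposal is correct and takes essentially the same route as the intended (cited) proof; the only thin points — the precise adjunction maps used in the peeling step and the identification of $\langle \dD_{i,j}: j\in\mathbb{Z}\rangle$ with the subcategory of factorizations supported on $S_i$ — are exactly what the baric-decomposition machinery of the cited works supplies.
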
	
	As a consequence of Theorem~\ref{thm:window}, we have the following 
	window theorem.
	Let
	\begin{align*}
		\wW_{m_{\bullet}}^l([Y/G], w) \subset 
		\MF_{\coh}^{\C}([Y/G], w)
	\end{align*}
	be the subcategory of objects $(\pP, d_{\pP})$
	satisfying the condition (\ref{condition:P}) for all $i$. 
	Then the composition functor 
	\begin{align*}
		\wW_{m_{\bullet}}^l([Y/G], w) \hookrightarrow 
		\MF_{\coh}^{\C}([Y/G], w)
		\twoheadrightarrow
		\MF_{\coh}^{\C}([Y^{l\sss}/G], w)
	\end{align*}
	is an equivalence. 
	Note that 
	the above window subcategory depends on a choice of $m_{\bullet}$. 

	\subsection{Application for KN stratification of critical locus}
	We will apply Theorem~\ref{thm:window} for a KN stratification 
	of $\Crit(w)$
	\begin{align*}
		\Crit(w)= S_1' \sqcup S_2' \sqcup \cdots \sqcup S_N' \sqcup \Crit(w)^{l\sss}
	\end{align*}
	in the following way. 
	After discarding 
	KN strata $S_{i} \subset Y$ 
	with $\Crit(w) \cap S_{i}=\emptyset$, 
	the above stratification is obtained by
	restricting a KN stratification (\ref{KN:strata}) 
	for $Y$ to $\Crit(w)$. 
	Let $\lambda_{i} \colon \C \to G$ be a 
	one parameter subgroup for $S_{i}'$
	with center $Z_{i}' \subset S_{i}'$. 
	We define 
	$\overline{Z}_{i} \subset Y$ to 
	be the union of connected components 
	of the $\lambda_{i}$-fixed part of $Y$
	which contains $Z_{i}'$, 
	and $\overline{Y}_{i} \subset Y$ is the set of 
	points $y \in Y$ with 
	$\lim_{t\to 0}\lambda_{i}(t)y \in \overline{Z}_{i}$. 
	Similarly to (\ref{dia:YZ}), 
	we have the diagram
	\begin{align}\notag
		\xymatrix{
			[\overline{Y}_{i}/G^{\lambda_{i}\ge 0}] \ar@/^15pt/[rr]^-{\overline{q}_{i}} 
			\ar[d]_-{\overline{p}_{i}} \ar@<-0.3ex>@{^{(}->}[r] & 
			[Y^{\lambda_{i}\ge 0}/G^{\lambda_{i}\ge 0}] \ar[d] 
			\ar[r] & [Y/G] \\
			[\overline{Z}_{i}/G^{\lambda_{i}= 0}] \ar@<-0.3ex>@{^{(}->}[r] & [Y^{\lambda_{i}=0}/G^{\lambda_{i}=0}]. \ar[ru] & 	
		}
	\end{align}
	Here the left horizontal arrows are 
	open and closed immersions. 
	By noting the equivalence (\ref{equiv:supp3})
	together with 
		$\Crit(w) \cap \overline{Z}_{i}=
	\Crit(w|_{\overline{Z}_{i}})$
	and $Z_i=\overline{Z}_i\setminus \cup_{i'<i} S_{i'}$, 
	the result of 
	Theorem~\ref{thm:window}
	implies the semiorthogonal decomposition
	\begin{align}\notag
		\MF_{\coh}^{\C}([(Y \setminus \cup_{i'<i}S'_{i'})/G], w)
		=\langle \ldots, \dD_{i, \lceil m_{i} \rceil-2}, \dD_{i, \lceil m_{i} \rceil-1},  
		\wW_{i, m_{i}}, \dD_{i, \lceil m_{i} \rceil}, \dD_{i, \lceil m_{i} \rceil +1}, \ldots
		\rangle
	\end{align}	
	with equivalences
	\begin{align}\label{equiv:bar}
		\overline{q}_{i \ast} \overline{p}_{i}^{\ast} \colon
		\MF^{\C}_{\coh}([(\overline{Z}_{i} \setminus \cup_{i'<i}S'_{i'})/G^{\lambda_i=0}], w|_{\overline{Z}_{i}})_{\lambda_{i} \mathchar`- \wt= j}
		\stackrel{\sim}{\to}\dD_{i, j}. 
	\end{align}
	The subcategory $\wW_{i, m_{i}}$ 
	consists of factorizations $(\pP, d_{\pP})$ such that 
	\begin{align*}
	(\pP, d_{\pP})|_{[(\overline{Z}_{i} \setminus 
			\cup_{i'<i}S_{i'}')/G^{\lambda_i=0}]} \in 
		\bigoplus_{j \in [m_{i}, m_{i}+\overline{\eta}_{i})}
		\mathrm{MF}_{\coh}^{\C}([(\overline{Z}_{i} \setminus 
		\cup_{i'<i}S_{i'}')/G^{\lambda_i=0}], w|_{\overline{Z}_{i}})_{\lambda_{i} \mathchar`- \wt= j}
	\end{align*}
	where $\overline{\eta}_{i}=\wt_{\lambda_{i}}\det (\mathbb{L}_{\overline{q}_{i}})^{\vee}|_{\overline{Z}_{i}}$, 
	and the composition functor
	\begin{align*}
		\wW_{i, m_{i}} \hookrightarrow 
		\MF_{\coh}^{\C}([(Y \setminus \cup_{i'<i}S'_{i'})/G], w)
		\twoheadrightarrow 
		\MF_{\coh}^{\C}([(Y \setminus \cup_{i' \le i}S'_{i'})/G], w)
	\end{align*}
	is an equivalence.  
	
	\section{Categorical DT theory for $(-1)$-shifted cotangent derived stacks: review}
	The $\C$-equivariant DT category for $(-1)$-shifted cotangent 
	derived stacks is defined in~\cite{TocatDT} based on 
	Koszul duality equivalence between derived categories of coherent 
	sheaves on quasi-smooth affine derived schemes and 
	triangulated categories of 
	$\C$-equivariant factorizations. 
	The $\mathbb{Z}/2$-periodic version of Koszul duality equivalence 
	is also proved in~\cite{TocatDT2}, 
	which is used to give $\mathbb{Z}/2$-periodic 
	version of DT categories. 
	In this section, we review Koszul duality, 
	its functorial properties, and 
	the constructions of DT categories.

	\subsection{Koszul duality equivalence}
	Let $Y$ be a smooth affine scheme with a section $s$ of a vector bundle 
	$V \to Y$. 
	We denote by $\fU$ the derived zero locus of $s$
	\begin{align}\label{frak:U}
		\fU=\Spec \rR(V \to Y, s), 
		\end{align}
	where $\rR(V \to Y, s)$ is the Koszul complex
	\begin{align*}
		\rR(V \to Y, s) \cneq 
	\left(  
	\cdots \to \bigwedge^2 V^{\vee} \stackrel{s}{\to} V^{\vee} \stackrel{s}{\to}
	\oO_Y \to 0 \right).
	\end{align*}
	Let $V^{\vee} \to Y$ be the total space of the dual vector bundle of $V$. 
There is an associated function on $V^{\vee}$, given by 
\begin{align}\label{func:w}
	w \colon V^{\vee} \to \mathbb{C}, \ 
	w(x, v)=\langle s(x), v \rangle, \ 
	x \in Y, \ v \in V^{\vee}|_{x}. 
\end{align}
It is well-known that the critical locus of the above 
function is the classical truncation of the $(-1)$-shifted cotangent 
over $\fU$ (see~\cite{MR3607000}), 
\begin{align}\label{Omega:U}
	t_0(\Omega_{\fU}[-1])=\Crit(w) \subset V^{\vee}. 
\end{align}
Let $\C$ acts on the fibers of $V^{\vee} \to Y$ by weight two, so that $w$ is 
of weight two. 
Let $G$ be an affine algebraic group which acts on $Y$ 
such that $(V, s)$ is $G$-equivariant. 
We have the following Koszul duality equivalence 
which relates derived category of coherent sheaves on $[\fU/G]$ with
the triangulated category of 
$\C$-equivariant factorizations of $w$ on $[V^{\vee}/G]$
(here we refer to Subsection~\ref{subsec:qsmooth} for $\Dbc(-)$ for derived stacks):

\begin{thm}\emph{(cf.~\cite{MR3071664, MR2982435, MR3631231}, \cite[Theorem~2.3.3, Lemma~2.3.10]{TocatDT})}
	\label{thm:knoer}
	There is an equivalence of triangulated categories
	\begin{align}\label{equiv:Psi}
		\Phi \colon \Dbc([\fU/G])
		 \stackrel{\sim}{\to} \MF_{\coh}^{\C}([V^{\vee}/G], w),
	\end{align}
	which extends to the equivalence 
	\begin{align}\notag
		\Phi^{\ind} \colon \Ind \Dbc([\fU/G]) \stackrel{\sim}{\to} \MF_{\qcoh}^{\C}([V^{\vee}/G], w).
	\end{align}
\end{thm}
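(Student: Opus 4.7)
The plan is to realize both sides as module categories over explicit algebra objects and then build $\Phi$ via tensor product with a Koszul-type kernel. On the derived side, $\fU = \Spec \rR$ with $\rR = \Sym_{\oO_Y}(V^{\vee}[1])$ and differential given by contraction with $s$, so $\Dbc([\fU/G])$ is equivalent to the derived category of $G$-equivariant dg-$\rR$-modules with bounded coherent cohomology. On the factorization side, $\MF_{\coh}^{\C}([V^{\vee}/G], w)$ is the quotient of the homotopy category of $G \times \C$-equivariant coherent curved modules over $\oO_{V^{\vee}}=\Sym_{\oO_Y}(V)$ with curvature $w$ by absolutely acyclic objects, where $\C$ acts on the fibers of $V^{\vee}$ with the weight that makes $w$ homogeneous of weight two.

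The first step is to construct a Koszul bimodule. On $Y \times V^{\vee}$ I consider the pulled-back graded sheaf $\kK \cneq \Sym_{\oO_Y}(V^{\vee}[1]) \otimes_{\oO_Y} \oO_{V^{\vee}}$, equipped with the total differential $d_{\kK} = d_s + d_{\tau}$, where $d_s$ is Koszul contraction with $s$ and $d_{\tau}$ is wedge multiplication by the tautological section $\tau$ of the pullback of $V$ to $V^{\vee}$. A direct calculation gives $d_{\kK}^2 = \langle s, \tau \rangle = w$, so $\kK$ is simultaneously a left dg-$\rR$-module and a curved $\oO_{V^{\vee}}$-module of curvature $w$. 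I then define $\Phi(M) \cneq M \otimes_{\rR} \kK$ with the induced curved differential, and dually an inverse functor $\Psi$ by tensoring with an opposite Koszul kernel $\kK^{\vee}$.

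Next I verify that $\Phi$ lands in $\MF_{\coh}^{\C}$ and is an equivalence. Coherence of $\Phi(M)$ follows because $\kK$ carries a finite Koszul filtration whose graded pieces are locally free $\oO_{V^{\vee}}$-modules of finite rank; the $\C$-equivariance is automatic from the construction. For fully-faithfulness and essential surjectivity I reduce to generators: up to idempotent completion, both categories are generated by objects of the form $F \otimes_{\oO_Y} \oO_{\fU}$ and $F \otimes_{\oO_Y} \oO_{V^{\vee}}$ for $G$-equivariant locally free $\oO_Y$-modules $F$, and on such objects the unit $M \to \Psi \Phi(M)$ and counit $\Phi \Psi(\pP, d_{\pP}) \to (\pP, d_{\pP})$ reduce to the classical Koszul duality quasi-isomorphism between $\rR$ and $\oO_{V^{\vee}}$. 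Extension to the ind-completion $\Phi^{\ind}$ then follows because $\Phi$ preserves compact generators and commutes with filtered colimits at the dg-enhancement level.

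The main obstacle I expect is the descent to the absolutely acyclic quotient defining $\MF_{\coh}^{\C}$. The naive tensor product $M \otimes_{\rR} \kK$ is typically unbounded in the $\Sym(V)$-direction and not a priori coherent as an $\oO_{V^{\vee}}$-module; one must argue that this unbounded tail is absolutely acyclic so that the functor descends cleanly. The auxiliary $\C$-grading is exactly what makes this work: it forces a two-periodicity that collapses the Koszul tail into the absolutely acyclic subcategory, yielding a well-defined coherent curved factorization in the quotient. A secondary technical issue is maintaining $G$-equivariance through the identifications, but since $\rR$, $\oO_{V^{\vee}}$, $\tau$, and $\kK$ are all manifestly built from the $G$-equivariant data $(Y, V, s)$, this is ultimately automatic.
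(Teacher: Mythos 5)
Your kernel is exactly the paper's $\kK_s$: the paper sets $\Phi(-)=\kK_s\otimes_{\oO_{\fU}}(-)$ with $\kK_s=\oO_{V^{\vee}}\otimes_{\oO_Y}\oO_{\fU}$ and differential $1\otimes d_{\oO_{\fU}}+\eta$, where $\eta$ is the tautological element of $V\otimes_{\oO_Y}V^{\vee}$; this is precisely your $d_s+d_{\tau}$ (with the small correction that $\tau$ is the tautological section of the pullback of $V^{\vee}$ to the total space of $V^{\vee}$, not of $V$ --- otherwise neither the wedge action nor the pairing $\langle s,\tau\rangle=w$ typechecks). Since the paper does not reprove the theorem but only recalls this kernel and cites Isik, Shipman, Hirano and the author's earlier work, your route coincides with the intended one at the level of the construction.

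Two points in your verification are weaker than they look. First, essential surjectivity is where the real content lies, and in your sketch it is carried entirely by the assertion that $\MF^{\C}_{\coh}([V^{\vee}/G],w)$ is generated, up to summands, by objects of the form $F\otimes_{\oO_Y}\oO_{V^{\vee}}$; as written these are not factorizations until you equip them with a differential squaring to $w$ (i.e.\ they are the Koszul factorizations $F\otimes_{\oO_Y}\kK_s=\Phi(F\otimes_{\oO_Y}\oO_{\fU})$), and the statement that such objects generate is essentially equivalent to the essential surjectivity you are trying to prove. It requires a genuine argument (weight-filtration/support arguments as in the cited references), not just the unit--counit computation on generators, which only yields fully-faithfulness. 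Second, your explanation of why the functor descends --- that the auxiliary $\C$-grading ``forces a two-periodicity'' collapsing the Koszul tail --- is backwards: the $\C$-equivariant category is precisely \emph{not} $2$-periodic (that is the separate $\mathbb{Z}/2$-periodic category $\MF^{\mathbb{Z}/2}$, recovered from the $\C$-equivariant one only via the $\mathbb{C}[u^{\pm 1}]$-linear construction of Theorem~\ref{thm:intro:compare}). The actual role of the weight grading is the opposite: since $M\otimes_{\oO_{\fU}}\kK_s\cong M\otimes_{\oO_Y}S_{\oO_Y}(V)$ and each fixed $\C$-weight receives contributions from only finitely many symmetric powers and cohomological degrees, the totalization is coherent weight by weight and no completion, periodicity, or absolutely-acyclic ``collapse'' is needed. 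With the generation step supplied and that mechanism corrected, your argument is the standard proof.
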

The equivalence (\ref{equiv:Psi}) is constructed in the following way. 
Let $\kK_s$ be the following $\C$-equivariant
factorization of $w$
\begin{align}\notag
	\kK_s \cneq 
	\left(
	\oO_{V^{\vee}} \otimes_{\oO_Y} \oO_{\mathfrak{U}}, 
	d_{\kK_s} \right). 
\end{align}
Here 
the $\C$-action is given by the grading  
\begin{align*}
	\oO_{V^{\vee}} \otimes_{\oO_Y}\oO_{\mathfrak{U}}=
	S_{\oO_Y}(V[-2]) \otimes_{\oO_Y} S_{\oO_Y}(V^{\vee}[1]),
\end{align*}
and the weight one map $d_{\kK_s}$ is given by 
\begin{align}\notag
	d_{\kK_s}=
	1 \otimes d_{\oO_{\mathfrak{U}}}+\eta \colon \oO_{V^{\vee}} \otimes_{\oO_Y}\oO_{\mathfrak{U}} \to
	\oO_{V^{\vee}} \otimes_{\oO_Y}\oO_{\mathfrak{U}} \langle 1 \rangle,
\end{align}
where 
$\eta \in V\otimes_{\oO_Y} V^{\vee} \subset \oO_{V^{\vee}} \otimes_{\oO_Y}
\oO_{\fU}$
corresponds to 
$\id \in \Hom(V, V)$, and $\langle 1 \rangle$ indicates the shift of $\C$-weight by one. 
We also equip the diagonal $G$-equivariant structure on $\kK_s$. 
Then $\Phi$ is given by 
\begin{align}\notag
	\Phi(-)=\kK_s \otimes_{\oO_{\fU}} (-), \ 
	\Dbc([\fU/G]) \to \MF_{\coh}^{\C}([V^{\vee}/G], w). 
\end{align}

\subsection{Singular supports of (ind) coherent sheaves}
The theory of singular supports of 
coherent sheaves on $\mathfrak{U}$
is developed in~\cite{MR3300415}
following the earlier work~\cite{MR2489634}. 
Here we recall its definition. 
Let $\mathrm{HH}^{\ast}(\mathfrak{U})$ be the 
Hochschild cohomology
\begin{align*}
	\mathrm{HH}^{\ast}(\mathfrak{U})
	\cneq \Hom_{\mathfrak{U} \times \mathfrak{U}}^{\ast}
	(\Delta_{\ast}\oO_{\mathfrak{U}}, \Delta_{\ast}\oO_{\mathfrak{U}}).
\end{align*}
Here $\Delta \colon \mathfrak{U} \to \mathfrak{U} \times \mathfrak{U}$ is the diagonal. 
Then it is shown in~\cite[Section~4]{MR3300415}
that there 
exists a
canonical map 
$\hH^1(\mathbb{T}_{\mathfrak{U}}) \to \mathrm{HH}^2(\mathfrak{U})$, 
so the map of 
graded rings
\begin{align}\notag
	\oO_{\Crit(w)}=
	S_{\oO_Y}(\hH^1(\mathbb{T}_{\mathfrak{U}})) \to \mathrm{HH}^{2\ast}(\mathfrak{U}) 
	\to \mathrm{Nat}_{D^b_{\rm{coh}}(\mathfrak{U})}(\id, \id[2\ast]). 
\end{align}
Here $\mathrm{Nat}_{D^b_{\rm{coh}}(\mathfrak{U})}(\id, \id[2\ast])$
is the group of natural transformations from $\id$ to $\id[2\ast]$
on $D^b_{\rm{coh}}(\mathfrak{U})$, and the right arrow is defined 
by taking Fourier-Mukai transforms associated with 
morphisms $\Delta_{\ast}\oO_{\mathfrak{U}} \to \Delta_{\ast}\oO_{\mathfrak{U}}[2\ast]$. 
The above maps induce the map for 
each $\fF \in D^b_{\rm{coh}}(\mathfrak{U})$, 
\begin{align}\notag
	\oO_{\mathrm{Crit}(w)}
	\to \Hom^{2\ast}(\fF, \fF).
\end{align}
The above map 
defines the 
$\mathbb{C}^{\ast}$-equivariant 
$\oO_{\mathrm{Crit}(w)}$-module 
structure on $\Hom^{2\ast}(\fF, \fF)$, 
which is finitely generated  by~\cite[Theorem~4.1.8]{MR3300415}. 
Below a closed subset $Z \subset \mathrm{Crit}(w)$ is called
\textit{conical} if it is invariant under the fiberwise 
$\mathbb{C}^{\ast}$-action on $\mathrm{Crit}(w)$. 
For $\fF \in D^b_{\rm{coh}}(\mathfrak{U})$, its 
\textit{singular
support} is a conical closed subset
\begin{align*}
	\mathrm{Supp}^{\rm{sg}}(\fF) \subset \mathrm{Crit}(w)
\end{align*}
defined to be the support of $\Hom^{2\ast}(\fF, \fF)$
as $\oO_{\mathrm{Crit}(w)}$-module. 

For a conical closed subset $Z \subset \Crit(w)$
invariant under the $G$-action, 
let $\zZ=[Z/G]$ and set
\begin{align*}
	\cC_{\zZ} \subset \Dbc([\fU/G]), \ 
	\Ind \cC_{\zZ} \subset \Ind \Dbc([\fU/G])
\end{align*}
be the triangulated subcategories consisting of objects
whose singular supports
are contained in $\zZ$, and its ind-completion respectively
(see (\ref{ssupp:stack}) for singular supports for derived stacks). 

\begin{prop}\label{prop:ssupp}\emph{(\cite[Proposition~2.3.9]{TocatDT})}
	The equivalences in Theorem~\ref{thm:knoer} 
	restrict to the equivalences
	\begin{align}\notag
		\Phi \colon \cC_{\zZ} \stackrel{\sim}{\to}
		 \MF^{\C}_{\coh}([V^{\vee}/G], w)_{\zZ}, \ 
		\Phi^{\ind} \colon \Ind \cC_{\zZ} \stackrel{\sim}{\to}
		\MF_{\qcoh}^{\C}([V^{\vee}/G], w)_{\zZ}. 
	\end{align}
	In particular by (\ref{equiv:supporZ}), the equivalences in Theorem~\ref{thm:knoer}
	descend to the equivalences 
	\begin{align}\label{Psi:supp2}
	&	\Phi \colon \Dbc([\fU/G])/\cC_{\zZ} \stackrel{\sim}{\to}
		\MF^{\C}_{\coh}([V^{\vee}/G] \setminus \zZ, w), \\ 
	&	\notag	\Phi^{\ind} \colon 	\Ind\Dbc(\fU)/\Ind\cC_{\zZ} \stackrel{\sim}{\to}
			\MF^{\C}_{\qcoh}([V^{\vee}/G] \setminus \zZ, w). 
	\end{align}
\end{prop}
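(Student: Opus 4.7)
The plan is to identify, on both sides of the Koszul equivalence $\Phi$, a natural $\oO_{\Crit(w)}$-action on $\RHom^{2\ast}(-,-)$, and then show that $\Phi$ intertwines these two actions. The equivalence on supported subcategories follows immediately, and the quotient equivalences (\ref{Psi:supp2}) are then formal from (\ref{equiv:supporZ}).

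First I would unpack the analogous action on the factorization side. For a $\C$-equivariant factorization $(\pP, d_{\pP})$ of $w$, the natural $\oO_{V^{\vee}}$-module structure produces a map $\oO_{V^{\vee}} \to \Hom^{2\ast}((\pP, d_{\pP}), (\pP, d_{\pP}))$, using the $\C$-grading of weight two to land in even degrees. Multiplication by $w$ acts by $0$ (as $w = d_{\pP}^2$ becomes null-homotopic in the factorization category), and one checks that multiplication by the components of $dw$ also acts trivially, so the action descends to $\oO_{\Crit(w)}$ via the identification (\ref{Omega:U}). Moreover, the support of a factorization in the sense of (\ref{MF:supp}) equals the support of this $\oO_{\Crit(w)}$-module on $\Hom^{2\ast}$, since restriction to the open complement of $\zZ$ kills an object exactly when the action of $\oO_{\Crit(w)}$ factors through $\oO_{\zZ}$ with $\zZ \cap (\Crit(w)\setminus Z) = \emptyset$.

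Second, I would verify that $\Phi = \kK_s \otimes_{\oO_{\fU}}(-)$ intertwines the two $\oO_{\Crit(w)}$-actions. The essential content is that the canonical map $\hH^1(\mathbb{T}_{\fU}) \to \mathrm{HH}^2(\fU)$ defining the singular support action corresponds under Koszul duality to the tautological inclusion $V \hookrightarrow S_{\oO_Y}(V[-2])=\oO_{V^{\vee}}$ placed in $\C$-weight two. This is a direct computation on the kernel $\kK_s$: the Koszul differential $d_{\kK_s} = 1 \otimes d_{\oO_{\fU}} + \eta$ precisely encodes, via the piece $\eta$, the matching between the derived self-intersection $\Delta^{!}\Delta_{\ast}\oO_{\fU}$ controlling $\mathrm{HH}^{\ast}(\fU)$ and the symmetric algebra $S_{\oO_Y}(V)$. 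Consequently, for any $\fF \in \Dbc([\fU/G])$, the action of $\oO_{\Crit(w)}$ on $\Hom^{2\ast}(\fF, \fF)$ agrees, through $\Phi$, with the action described above on $\Hom^{2\ast}(\Phi(\fF), \Phi(\fF))$.

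Given this compatibility, the supports coincide: $\mathrm{Supp}^{\rm{sg}}(\fF) \subset Z$ if and only if the $\oO_{\Crit(w)}$-action on $\Phi(\fF)$ factors through $\oO_Z$, if and only if $\Phi(\fF) \in \MF^{\C}_{\coh}([V^{\vee}/G], w)_{\zZ}$. A parallel argument with ind-completions yields the second equivalence. The equivalences (\ref{Psi:supp2}) then follow by applying $\Phi$ to the defining short exact sequence of Verdier quotients $\cC_{\zZ} \to \Dbc([\fU/G]) \to \Dbc([\fU/G])/\cC_{\zZ}$ and using (\ref{equiv:supporZ}) on the factorization side.

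The main obstacle is the second step: the precise identification of the $\oO_{\Crit(w)}$-action on both sides. This is a compatibility between two rather different descriptions of the Hochschild action, one via the Fourier-Mukai kernel $\Delta_{\ast}\oO_{\fU}$ and one via the $\oO_{V^{\vee}}$-module structure on factorizations. It is most cleanly established by relating the derived self-intersection of $\fU$ to the Koszul resolution of the diagonal, which identifies $\mathrm{HH}^{2\ast}(\fU) \otimes \mathbb{C}$ with $S_{\oO_Y}(V)$ in the appropriate range, and then tracing through the construction of $\kK_s$.
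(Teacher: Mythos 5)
The statement you are proving is not actually proved in this paper: Proposition~\ref{prop:ssupp} is recalled verbatim from \cite[Proposition~2.3.9]{TocatDT}, so the only comparison available is with that reference, and your strategy is essentially the one behind it. Namely, singular support is by definition the support of $\Hom^{2\ast}(\fF,\fF)$ over the weight-two action coming from $\hH^1(\mathbb{T}_{\fU})\to \mathrm{HH}^2(\fU)$, the support of a factorization in the sense of (\ref{MF:supp}) is the support of $\Hom^{2\ast}(\pP,\pP)$ over $\oO_{\Crit(w)}$ (here you should justify two things you use implicitly: that restriction to $[V^{\vee}/G]\setminus \zZ$ localizes the morphism spaces, and that these $\oO_{\Crit(w)}$-modules are finitely generated, so that vanishing on the complement is equivalent to module-theoretic support in $\zZ$), and the whole content is that the kernel $\kK_s$ intertwines the two module structures. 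That intertwining is precisely the nontrivial step, and your proposal asserts it with only an indication of how the term $\eta$ in $d_{\kK_s}$ produces the identification of $\hH^1(\mathbb{T}_{\fU})\to\mathrm{HH}^2(\fU)$ with multiplication by the weight-two fibre coordinates of $V^{\vee}$; this is exactly the computation carried out in \cite{TocatDT} (building on~\cite{MR3300415}), so the approach is right but the heart of the argument remains a sketch. One smaller caveat: the ind-case is not a literally ``parallel argument,'' since $\Ind\cC_{\zZ}$ is defined as an ind-completion while $\MF^{\C}_{\qcoh}([V^{\vee}/G],w)_{\zZ}$ is defined as a kernel of restriction; the clean route is to deduce it from the coherent case by observing that $\Phi^{\ind}$ is the ind-extension of $\Phi$ and that the quasi-coherent supported category is generated under filtered colimits by $\MF^{\C}_{\coh}([V^{\vee}/G],w)_{\zZ}$. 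The final passage to the quotient equivalences via (\ref{equiv:supporZ}) is formal, as you say.
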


\subsection{Some functorial properties of Koszul duality equivalence}\label{subsec:funct}
We review some functorial properties of Koszul duality equivalence in Theorem~\ref{thm:knoer}, 
based on~\cite[Section~2.4]{TocatDT}. 
For $i=1, 2$, let $Y_i$ be smooth affine schemes, 
$V_i \to Y_i$ be vector bundles with sections $s_i$. 
Suppose that affine algebraic groups $G_i$ act on $Y_i$
such that $(V_i, s_i)$ are $G_i$-equivariant. 
Let us consider a commutative diagram 
	\begin{align}\label{dia:Y}
		\xymatrix{
			V_1 \ar[r]^-{g} \ar[d] & V_2 \ar[d] \\
			Y_1 \ar[r]^-{f} \ar@/^10pt/[u]^-{s_1}
			& Y_2, \ar@/_10pt/[u]_-{s_2}&
		} 
	\end{align}
which is $G_i$-equivariant 
with respect to a group homomorphism $\phi \colon G_1 \to G_2$. 
The top morphism $g$ is a composition 
		$V_1 \stackrel{g'}{\to} f^{\ast}V_2 \stackrel{f}{\to}V_2$, 
		where $g'$ is a morphism of $G_1$-equivariant vector bundles on 
		$Y_1$. 
The diagram (\ref{dia:Y}) induces the diagram 
for smooth stacks $\yY_i=[Y_i/G_i], \ \vV_i=[V_i/G_i]$, 
\begin{align}\label{dia:Ystack}
	\xymatrix{
		\vV_1 \ar[r]^-{g} \ar[d] & \vV_2 \ar[d] \\
		\yY_1 \ar[r]^-{f} \ar@/^10pt/[u]^-{s_1}
		& \yY_2,  \ar@/_10pt/[u]_-{s_2}&
	}
\end{align}
which also induces the 
morphism of derived stacks 
\begin{align*}
	\mathbf{f} \colon 
	[\fU_1/G_1] \to [\fU_2/G_2], \ 
	\fU_i \cneq \Spec \rR(V_i \to Y_i, s_i). 
\end{align*}
Then we have the push-forward functor
(see~\cite[Section~3.6]{MR3037900})
\begin{align*}
	\mathbf{f}_{\ast}^{\rm{ind}} \colon 
	\Ind \Dbc([\fU_1/G_1]) \to \Ind \Dbc([\fU_2/G_2]). 
\end{align*}
Let $w_i \colon V_i^{\vee} \to \mathbb{C}$ be given as in (\ref{func:w})
defined from $(Y_i, V_i, s_i)$. 
The diagram (\ref{dia:Y}) induces the 
following diagram
\begin{align}\label{diagram:dual}
	\xymatrix{
		&  \mathbb{C} & \\
		V_1^{\vee} \ar[ur]^-{w_1} \ar[d]_-{p_1} & f^{\ast}V^{\vee}_2 \ar[l]^-{g} 
		\ar[r]_-{f} \ar@{}[dr]|\square
		\ar[d]_-{\overline{p}}
		\ar[u]^{\overline{w}}
		& V_2^{\vee} \ar[d]^-{p_2} \ar[ul]_-{w_2} \\
		Y_1 & Y_1 \ar@{=}[l] \ar[r]_-{f} & Y_2. 
	}
\end{align}
Here $\overline{w}$ is determined by 
\begin{align*}
	\overline{w}=f^{\ast}s_2 \in \Gamma(Y_1, f^{\ast}V_2) \subset
	\Gamma(Y_1, S(f^{\ast}V_2)). 
\end{align*}
The commutativity of (\ref{dia:Y}) implies that 
the diagram (\ref{diagram:dual}) is also commutative. 
One can check the following identity in 
$f^{\ast}V_2^{\vee}$ (see~\cite[Lemma~2.4.2]{TocatDT})
	\begin{align}\label{identity:crit}
		g^{-1}(\mathrm{Crit}(w_1)) \cap 
		f^{-1}(\mathrm{Crit}(w_2))= t_0(\Omega_{\mathfrak{U}_2}[-1]\times_{\fU_2}\fU_1). 
	\end{align}

We
define the following functor
\begin{align}\label{funct:fg}
	f_{\ast} \circ g^{\ast} \colon 
	\mathrm{MF}_{\qcoh}^{\C}([V_1^{\vee}/G_1], w_1)
	\stackrel{g^{\ast}}{\to}
	\mathrm{MF}_{\qcoh}^{\C}([f^{\ast}V_2^{\vee}/G_1], \overline{w})
	\stackrel{f_{\ast}}{\to} 	\mathrm{MF}_{\qcoh}^{\C}([V_2^{\vee}/G_2], w_2). 
\end{align}
Then the following diagram commutes (see~\cite[Lemma~2.2.4]{TocatDT}):
	\begin{align}\label{dia:com:ind}
		\xymatrix{
			\Ind \Dbc([\fU_1/G_1]) \ar[r]^-{\Phi_1^{\ind}} \ar[d]_-{\mathbf{f}^{\rm{ind}}_{\ast}} & 
			\mathrm{MF}_{\qcoh}^{\C}([V_1^{\vee}/G_1], w_1) \ar[d]^-{f_{\ast}\circ g^{\ast}}  \\
			\Ind \Dbc([\fU_2/G_2]) \ar[r]^-{\Phi_2^{\rm{ind}}}  & 
			\mathrm{MF}_{\qcoh}^{\C}([V_2^{\vee}/G_2], w_2). 	
		}
	\end{align}
	Here the horizontal arrows are equivalences in Theorem~\ref{thm:knoer}. 
	
	Suppose that the morphism $f \colon \yY_1 \to \yY_2$ in the diagram (\ref{dia:Ystack}) is a proper 
	morphism of smooth stacks, so 
	in particular $\bff \colon [\fU_1/G_1] \to [\fU_2/G_2]$ is proper. 
Then the diagram (\ref{dia:com:ind}) restricts to the commutative diagram 
	\begin{align}\label{dia:com:ind0.5}
		\xymatrix{
			\Dbc([\fU_1/G_1]) \ar[r]^-{\Phi_1} 
			\ar[d]_-{\mathbf{f}_{\ast}} & 
			\mathrm{MF}_{\coh}^{\C}([V_1^{\vee}/G_1], w_1) \ar[d]^-{f_{\ast}\circ g^{\ast}}  \\
			\Dbc([\fU_2/G_2]) \ar[r]^-{\Phi_2}  & 
			\mathrm{MF}_{\coh}^{\C}([V_2^{\vee}/G_2], w_2). 	
		}
	\end{align}
Moreover we have the continuous right adjoint 
of $\bff_{\ast}^{\rm{ind}}$ (see~\cite[Section~10.1]{MR3136100})
\begin{align*}
	\bff^{!} \colon \Ind \Dbc([\fU_2/G_2]) \to \Ind \Dbc([\fU_1/G_1]). 
\end{align*}
On the other hand, we also have the functor 
\begin{align}\notag
	g_{\ast} \circ f^{!} \colon 
	\mathrm{MF}_{\qcoh}^{\C}([V_2^{\vee}/G_2], w_2)
	\stackrel{f^{!}}{\to}
	\mathrm{MF}_{\qcoh}^{\C}([f^{\ast}V_2^{\vee}/G_1], \overline{w})
	\stackrel{g_{\ast}}{\to} 	\mathrm{MF}_{\qcoh}^{\C}([V_1^{\vee}/G_1], w_1)
\end{align}
which is a right adjoint of 
the functor (\ref{funct:fg}). 
	Then the following diagram commutes (see~\cite[Lemma~2.4.6]{TocatDT}):
	\begin{align}\label{dia:com:ind2}
		\xymatrix{
			\Ind \Dbc([\fU_2/G_2]) \ar[r]^-{\Phi_2^{\rm{ind}}} \ar[d]_-{\mathbf{f}^!} & 
			\mathrm{MF}_{\qcoh}^{\C}([V_2^{\vee}/G_2], w_2) \ar[d]^-{g_{\ast} \circ f^!}  \\
			\Ind \Dbc([\fU_1/G_1]) \ar[r]^-{\Phi_1^{\rm{ind}}}  & 
			\mathrm{MF}_{\qcoh}^{\C}([V_1^{\vee}/G_1], w_1). 	
		}
	\end{align}

Suppose that $\mathbf{f} \colon [\fU_1/G_1] \to [\fU_2/G_2]$ is 
not necessary proper, but it is 
quasi-smooth. 
Then we have the functor (see~\cite[Section~3.1]{MR3701352})
\begin{align*}
	\bff^{\ast} \colon \Dbc([\fU_2/G_2]) \to \Dbc([\fU_1/G_1]). 
\end{align*}
If furthermore $g \colon f^{\ast}V_2^{\vee} \to V_1^{\vee}$ in 
the diagram (\ref{diagram:dual}) is proper, 
then we have the functor 
\begin{align*}
	g_{!} \circ f^{\ast} \colon 
	\mathrm{MF}_{\coh}^{\C}([V_2^{\vee}/G_2], w_2)
	\stackrel{f^{\ast}}{\to}
	\mathrm{MF}_{\coh}^{\C}([f^{\ast}V_2^{\vee}/G_1], \overline{w})
	\stackrel{g_{!}}{\to} 	\mathrm{MF}_{\coh}^{\C}([V_1^{\vee}/G_1], w_1)
\end{align*}
which is a left adjoint of the functor (\ref{funct:fg}). 
	Then the following diagram commutes (see~\cite[Lemma~2.4.7]{TocatDT}):
	\begin{align}\label{dia:com:ind1.5}
		\xymatrix{
			\Dbc([\fU_2/G_2]) \ar[r]^-{\Phi_2} \ar[d]_-{\mathbf{f}^{\ast}} &
			\mathrm{MF}_{\coh}^{\C}([V_2^{\vee}/G_2], w_2) \ar[d]^-{g_{!} \circ f^{\ast}}  \\
			\Dbc([\fU_1/G_1]) \ar[r]^-{\Phi_1}  & 
			\mathrm{MF}_{\coh}^{\C}([V_1^{\vee}/G_1], w_1). 	
		}
	\end{align}

	\subsection{Quasi-smooth derived stacks}\label{subsec:qsmooth}
Below, we denote by $\mathfrak{M}$ a
derived Artin stack over $\mathbb{C}$.
This means that 
$\mathfrak{M}$ is a contravariant
$\infty$-functor from 
the $\infty$-category of 
affine derived schemes over $\mathbb{C}$ to 
the $\infty$-category of 
simplicial sets 
\begin{align*}
	\mathfrak{M} \colon 
	dAff^{op} \to SSets
\end{align*}
satisfying some conditions (see~\cite[Section~3.2]{MR3285853} for details). 
Here $dAff^{\rm{op}}$ is defined to be the
$\infty$-category of 
commutative simplicial $\mathbb{C}$-algebras, 
which is equivalent to the $\infty$-category of 
commutative differential graded 
$\mathbb{C}$-algebras with non-positive degrees. 
The classical truncation of $\mathfrak{M}$ is denoted by 
\begin{align*}
	\mM \cneq t_0(\mathfrak{M}) \colon 
	Aff^{op} \hookrightarrow 
	dAff^{op} \to SSets
\end{align*}
where the first arrow is a natural functor 
from the category of affine schemes
to affine derived schemes. 

For an affine derived scheme $\mathfrak{U}=\Spec R$
for a cdga $R$, 
we set $D_{\qcoh}(\fU)_{\rm{dg}}$ to be the 
dg-category of dg $R$-modules 
localized by quasi-isomorphisms. 
The
dg-category of quasi-coherent sheaves on $\mathfrak{M}$
is defined
to be the limit in the $\infty$-category of dg-categories (see~\cite[Section~4.1]{MR3285853})
\begin{align}\notag
	D_{\rm{qcoh}}(\mathfrak{M})_{\rm{dg}} \cneq
	\lim_{\mathfrak{U} \to \mathfrak{M}} D_{\rm{qcoh}}(\mathfrak{U})_{\rm{dg}}.
\end{align}
Here the limit is taken for the 
$\infty$-category of smooth morphisms 
$\alpha \colon \fU \to \fM$ for affine 
derived schemes $\fU$ with 
1-morphisms given by 
smooth morphisms $f \colon \fU \to \fU'$ commuting with maps to $\fM$, 
and pull-back $f^{\ast} \colon D_{\qcoh}(\fU')_{\rm{dg}}
\to D_{\qcoh}(\fU)_{\rm{dg}}$ is assigned for each $f$. 
The homotopy category of $D_{\rm{qcoh}}(\mathfrak{M})_{\rm{dg}}$
is denoted by 
$D_{\rm{qcoh}}(\mathfrak{M})$, which is a 
triangulated category. We have the dg and
triangulated subcategories
\begin{align*}
	D^b_{\rm{coh}}(\mathfrak{M})_{\rm{dg}} \subset 
	D_{\rm{qcoh}}(\mathfrak{M})_{\rm{dg}}, \ 
	D^b_{\rm{coh}}(\mathfrak{M}) \subset D_{\rm{qcoh}}(\mathfrak{M})
\end{align*}
consisting of objects which have bounded coherent 
cohomologies.

A morphism of derived stacks $f \colon \fM \to \fN$ is 
called \textit{quasi-smooth} if 
$\mathbb{L}_f$ is perfect 
such that for any point $x \to \mM$
the restriction $\mathbb{L}_f|_{x}$ is 
of cohomological 
amplitude $[-1, 1]$.
Here 
$\mathbb{L}_f$ is the $f$-relative cotangent complex. 
A derived stack 
$\mathfrak{M}$ over $\mathbb{C}$
is called \textit{quasi-smooth}
if $\fM \to \Spec \mathbb{C}$ is quasi-smooth. 
By~\cite[Theorem~2.8]{MR3352237}, 
the quasi-smoothness of $\mathfrak{M}$ is equivalent to 
that $\fM$ 
is a 1-stack, 
and 
any point of $\mathfrak{M}$ lies 
in the image of a $0$-representable 
smooth morphism 
$\alpha \colon \mathfrak{U} \to \mathfrak{M}$, 
where $\mathfrak{U}$ is an affine derived scheme 
obtained as a derived zero locus as 
in (\ref{frak:U}). 
In this case, we have 
\begin{align*}
	\Dbc(\fM)_{\rm{dg}}=\lim_{\mathfrak{U} \stackrel{\alpha}{\to} \mathfrak{M}} 
	\Dbc(\fU)_{\rm{dg}}
\end{align*}
where the limit is taken for 
the $\infty$-category $\iI$ of smooth morphisms
$\alpha \colon \fU \to \fM$
where $\fU$ is equivalent to an 
affine derived scheme of the form (\ref{frak:U}). 
In this paper when we write 
$\lim_{\mathfrak{U} \stackrel{\alpha}{\to} \mathfrak{M}}(-)$
for a quasi-smooth $\fM$, 
the limit is always taken for the $\infty$-category $\iI$
as above. 
For a quasi-smooth derived stack $\fM$, 
we denote by $\Ind \Dbc(\fM)_{\rm{dg}}$ the dg-category of its ind-coherent 
sheaves (see~\cite[Section~10]{MR3136100}, and also~\cite[Section~11.3, Proposition~11.4.3]{MR3136100}
for its equivalence with $\ast$-pull back version)
\begin{align}\notag
	\Ind \Dbc(\fM)_{\rm{dg}} \cneq \lim_{\fU \stackrel{\alpha}{\to}\fM}
	\Ind \Dbc(\fU)_{\rm{dg}},
\end{align}
where the limit is taken for the $\infty$-category $\iI$ 
as above. Its homotopy category is denoted by 
$\Ind \Dbc(\fM)$. 

Following~\cite[Definition~1.1.8]{MR3037900}, 
a derived stack $\mathfrak{M}$ is called 
\textit{QCA (quasi-compact and with affine automorphism groups)}
if the following conditions hold:
\begin{enumerate}
	\item $\mathfrak{M}$ is quasi-compact;
	\item The automorphism groups of its geometric points are affine;
	\item The classical inertia stack $I_{\mM} \cneq \Delta
	\times_{\mM \times \mM} \Delta$
	is of finite presentation over $\mM$. 
\end{enumerate}
The QCA condition will be useful 
since in this case
	$\Ind \Dbc(\fM)_{\rm{dg}}$ is compactly generated 
	with 
	compact objects 
	$\Dbc(\fM)_{\rm{dg}}$
	(see~\cite[Theorem~3.3.5]{MR3037900}). 

\subsection{Good moduli spaces for Artin stacks}\label{subsec:gmoduli}
	In general for a classical Artin stack $\mM$, 
its \textit{good moduli space}
is an algebraic space $M$ 
together with a quasi-compact morphism, 
\begin{align*}
	\pi_{\mM} \colon \mM \to M
\end{align*} 
satisfying the following conditions
(cf.~\cite[Section~1.2]{MR3237451}):
\begin{enumerate}
	\item The push-forward
	$\pi_{\mM\ast} \colon \mathrm{QCoh}(\mM) \to \mathrm{QCoh}(M)$
	is exact. 
	\item The induced morphism $\oO_M \to \pi_{\mM\ast}\oO_{\mM}$ is an 
	isomorphism. 
\end{enumerate}
The good moduli space morphism $\pi_{\mM}$ is universally closed.
Moreover 
for each closed point $y \in M$, 
there exists a unique closed point $x \in \pi_{\mM}^{-1}(y)$, 
and its automorphism group $\Aut(x)$ is reductive 
(see~\cite[Theorem~4.16, Proposition~12.14]{MR3237451}). 

Let $\fM$ be a quasi-smooth 
derived stack such that $\mM=t_0(\fM)$
admits a good moduli space $\pi_{\mM} \colon \mM \to M$
as above. 
For a closed point $y \in M$, 
let $\widehat{\mM}_y$ be the 
\textit{formal fiber} defined by 
\begin{align*}
	\widehat{\mM}_y \cneq \mM \times_M \Spec \widehat{\oO}_{M, y}. 
	\end{align*}
By a standard deformation theory argument, 
there exists a derived stack $\widehat{\fM}_y$
(unique up to equivalence) 
together with the following 
Cartesian diagrams (see~\cite[Lemma~5.2.5]{TocatDT})
\begin{align}\label{dia:fnbd}
	\xymatrix{
		\widehat{\fM}_y \ar[d]_-{\widehat{\iota}_y} \diasquare
		& \ar@<-0.3ex>@{_{(}->}[l] \widehat{\mM}_y
		\ar[r] \ar[d] \diasquare & \Spec
		\widehat{\oO}_{M, y} \ar[d] \\
		\fM  & \ar@<-0.3ex>@{_{(}->}[l] \mM \ar[r]^-{\pi_{\mM}} & M. 
	}
\end{align}
Below we use the same symbol $y \in \mM$ to 
denote the unique closed orbit in the fiber of 
$\pi_{\mM} \colon \mM \to M$ at $y$. 
Let $G_y \cneq \Aut(y)$, which is a reductive algebraic group. 
We denote by $\widehat{\hH}^0(\mathbb{T}_{\fM}|_{y})$ the formal 
fiber along 
$\hH^0(\mathbb{T}_{\fM}|_{y}) \to \hH^0(\mathbb{T}_{\fM}|_{y})\ssslash G_y$
at the origin, i.e. 
\begin{align*}
	\widehat{\hH}^0(\mathbb{T}_{\fM}|_{y}) \cneq 
	\hH^0(\mathbb{T}_{\fM}|_{y}) \times_{\hH^0(\mathbb{T}_{\fM}|_{y})\ssslash G_y} 
	\Spec \widehat{\oO}_{\hH^0(\mathbb{T}_{\fM}|_{y})\ssslash G_y, 0}. 
\end{align*}
\begin{defi}\emph{(\cite[Definition~5.2.3, Lemma~5.2.5]{TocatDT})}\label{defi:formalneigh}
	The derived stack $\fM$ with a good moduli space $\pi_{\mM} \colon \mM \to M$
	satisfies the formal neighborhood theorem if 
	for any closed point $y \in M$, 
	there is a $G_y$-equivariant formal morphism 
	(called Kuranishi map)
	\begin{align}\notag
		\kappa_y \colon \widehat{\hH}^0(\mathbb{T}_{\fM}|_{y})
		\to \hH^1(\mathbb{T}_{\fM}|_{y})
	\end{align}
	such that $\kappa_y(0)=0$, and by setting $\widehat{\fU}_{y}$
	to be the derived zero locus of $\kappa_y$, 
	we have an equivalence 
	\begin{align*}
		[\widehat{\fU}_y/G_y]
		\sim \widehat{\fM}_y
		\end{align*}
	which sends $0$ to $y$ and identities on stabilizer 
	groups. 
\end{defi}

\begin{rmk}\label{rmk:formal}
	It is proved in~\cite[Section~7.4]{TocatDT} that 
	if $\fM$ is a quasi-smooth derived stack 
	obtained as moduli spaces of 
	stable sheaves or stable pairs, then 
	it satisfies formal neighborhood theorem. 
	In this case, a Kuranishi map is 
	obtained from an $A_{\infty}$-structure of the 
	derived category of coherent sheaves on surfaces. 	
	\end{rmk}

\subsection{$(-1)$-shifted cotangent derived stacks}
Let $\mathfrak{M}$ be a quasi-smooth derived stack. 
We denote by 
$\Omega_{\mathfrak{M}}[-1]$ the $(-1)$-shifted
cotangent derived stack of $\mathfrak{M}$
\begin{align*}
	p \colon 
	\Omega_{\mathfrak{M}}[-1] \cneq 
	\Spec_{\mathfrak{M}}
	S(\mathbb{T}_{\mathfrak{M}}[1]) \to \mathfrak{M}. 
\end{align*}
Here $\mathbb{T}_{\mathfrak{M}} \in D^b_{\rm{coh}}(\mathfrak{M})$
is the tangent complex of $\mathfrak{M}$, which 
is dual to the cotangent complex 
$\mathbb{L}_{\mathfrak{M}}$ of $\mathfrak{M}$. 
The derived stack $\Omega_{\mathfrak{M}}[-1]$ 
admits a natural $(-1)$-shifted symplectic 
structure~\cite{MR3090262, Calaque}, 
which induces a
d-critical structure~\cite{MR3399099}
on its 
classical truncation $\nN$ 
\begin{align}\notag
	p_0 \colon \nN \cneq t_0(\Omega_{\mathfrak{M}}[-1])
	\to \mM.
\end{align}

Let $\fM_1$, $\fM_2$ be
quasi-smooth derived stacks 
with truncations $\mM_i=t_0(\fM_i)$. 
Let $f \colon \fM_1 \to \fM_2$ be a morphism. 
Then 
the morphism 
$f^{\ast}\mathbb{L}_{\fM_2} \to \mathbb{L}_{\fM_1}$
induces the diagram
\begin{align}\label{diagram:induced}
	\xymatrix{
		t_0(\Omega_{\fM_1}[-1]) \ar[d] & 
		t_0(\Omega_{\fM_2}[-1]\times_{\fM_2}\fM_1)
		\ar[d] \ar[l]_-{f^{\diamondsuit}} 
		\ar[r]^-{f^{\spadesuit}} 
		\ar@{}[rd]|\square
		& 
		t_0(\Omega_{\fM_2}[-1]) \ar[d] \\
		\mM_1 & \mM_1 \ar@{=}[l] \ar[r]_-{f} & \mM_2. 
	}
\end{align}
The morphism $f$ is quasi-smooth if and only
if $f^{\diamondsuit}$ is a closed immersion, 
$f$ is smooth if and only if $f^{\diamondsuit}$ is an isomorphism
(see~\cite[Lemma~3.1.2]{TocatDT}). 

Let us take a  conical closed substack
\begin{align*}
	\zZ \subset \nN=t_0(\Omega_{\mathfrak{M}}[-1]). 
\end{align*}
Here $\zZ$ is called \textit{conical}
if it is invariant
 under the fiberwise $\mathbb{C}^{\ast}$-action on $\nN \to \mM$. 
Let $\alpha \colon \fU \to \fM$ be a smooth morphism
such that $\fU$ is of the form (\ref{frak:U}). 
We have the associated conical closed subscheme
\begin{align}\notag
	\alpha^{\ast}\zZ \cneq 
	\alpha^{\diamondsuit} (\alpha^{\spadesuit})^{-1}(\zZ) \subset t_0(\Omega_{\mathfrak{U}}[-1])
	=\mathrm{Crit}(w).
\end{align}
Here 
$w$ is given as in (\ref{func:w}). 
As in~\cite{MR3300415}, we define 
\begin{align}\label{ssupp:stack}
	\cC_{\zZ, \rm{dg}} \cneq 
	\lim_{\fU \stackrel{\alpha}{\to} \fM}
	\cC_{\alpha^{\ast}\zZ, \rm{dg}}\subset D^b_{\rm{coh}}(\mathfrak{M})_{\rm{dg}}, \ 
	\Ind \cC_{\zZ, \rm{dg}} \cneq \lim_{\fU \stackrel{\alpha}{\to} \fM}
	\Ind \cC_{\alpha^{\ast}\zZ, \rm{dg}}\subset \Ind D^b_{\rm{coh}}(\mathfrak{M})_{\rm{dg}}, 
\end{align}
whose homotopy categories are denoted by $\cC_{\zZ}$, $\Ind \cC_{\zZ}$
respectively.

\subsection{DT categories for $(-1)$-shifted cotangents}
For a quasi-smooth and QCA derived stack $\fM$, 
let us take an open substack $\nN^{\rm{ss}}$ and its complement 
$\zZ$, 
\begin{align*}
	\nN^{\rm{ss}} \subset \nN, \ 
	\zZ \cneq \nN\setminus \nN^{\rm{ss}}. 
\end{align*}
In the case that $\nN^{\rm{ss}}$ is $\C$-invariant so that 
$\zZ$ is a conical closed substack, 
the $\C$-equivariant dg or triangulated DT categories were defined in~\cite{TocatDT}
as Drinfeld or Verdier quotient:
\begin{defi}\emph{(\cite[Definition~3.2.2]{TocatDT})}\label{defi:DTcat}
	We define $\C$-equivariant DT categories 
	for $\nN^{\rm{ss}}$ as 
\begin{align*}
	\dDT^{\C}(\nN^{\rm{ss}})_{\rm{dg}} \cneq 
	\Dbc(\fM)_{\rm{dg}}/\cC_{\zZ, \rm{dg}}, \ 
	\dDT^{\C}(\nN^{\rm{ss}}) \cneq \Dbc(\fM)/\cC_{\zZ}. 
\end{align*}
\end{defi}
The above definition was based on the Koszul 
duality equivalence in Theorem~\ref{thm:knoer}, 
which gives an interpretation of the above categories as
gluing of dg-categories of $\C$-equivariant
factorizations. 

Let $\wW \subset \mM$ be a closed 
substack, and take the open 
derived substack
$\mathfrak{M}_{\circ} \subset \mathfrak{M}$
whose truncation is $\mM \setminus \wW$. 
We have the following conical closed substack
\begin{align*}
	\zZ_{\circ} \cneq \zZ \setminus p_0^{-1}(\wW)
	\subset \nN_{\circ} \cneq t_0(\Omega_{\mathfrak{M}_{\circ}}[-1]).
\end{align*}
Note that $\nN_{\circ}=\nN \setminus p_0^{-1}(\wW)$, and 
we have the following open immersion
\begin{align}\label{open:N}
	\nN_{\circ}^{\rm{ss}} \cneq 
	\nN_{\circ} \setminus \zZ_{\circ} \hookrightarrow
	\nN \setminus \zZ =\nN^{\rm{ss}}. 
\end{align}
The following lemma is useful to replace the quotient 
categories to give equivalent DT categories: 
\begin{lem}\emph{(\cite[Lemma~3.2.9]{TocatDT})}\label{lem:replace0}
	Suppose that $p_0^{-1}(\wW) \subset \zZ$, or equivalently 
	the open immersion (\ref{open:N}) is an isomorphism. 
	Then 
	the restriction functor
	gives an equivalence 
	\begin{align*}
		\mathcal{DT}^{\mathbb{C}^{\ast}}
		(\nN^{\rm{ss}}) \stackrel{\sim}{\to}
		\mathcal{DT}^{\mathbb{C}^{\ast}}
		(\nN_{\circ}^{\rm{ss}}). 
	\end{align*}
\end{lem}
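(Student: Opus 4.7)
The plan is to factor the quotient defining $\dDT^{\C}(\nN^{\rm{ss}})$ through an intermediate step corresponding to the open immersion $j \colon \fM_{\circ} \hookrightarrow \fM$, and then apply the second isomorphism theorem for Verdier quotients. Concretely, I would first establish the localization sequence
\begin{align*}
\cC_{p_0^{-1}(\wW)} \hookrightarrow \Dbc(\fM) \overset{j^{\ast}}{\twoheadrightarrow} \Dbc(\fM_{\circ}),
\end{align*}
i.e.\ that $j^{\ast}$ is a Verdier quotient with kernel exactly $\cC_{p_0^{-1}(\wW)}$. This claim is smooth-local on $\fM$: pass to a smooth chart $\alpha \colon \fU \to \fM$ with $\fU$ of the form (\ref{frak:U}), translate to $\C$-equivariant factorization categories via Theorem~\ref{thm:knoer} and Proposition~\ref{prop:ssupp}, and use the factorization localization sequence (\ref{equiv:supporZ}). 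Gluing across the smooth cover via the limit definition (\ref{ssupp:stack}) then yields the stacky statement.

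Next, the hypothesis $p_0^{-1}(\wW) \subset \zZ$ gives the inclusion $\cC_{p_0^{-1}(\wW)} \subset \cC_{\zZ}$, so the Verdier quotient satisfies
\begin{align*}
\dDT^{\C}(\nN^{\rm{ss}}) = \Dbc(\fM)/\cC_{\zZ}
\simeq \bigl(\Dbc(\fM)/\cC_{p_0^{-1}(\wW)}\bigr)\big/\overline{\cC}_{\zZ}
\simeq \Dbc(\fM_{\circ}) / j^{\ast}\cC_{\zZ},
\end{align*}
where $\overline{\cC}_{\zZ}$ denotes the essential image of $\cC_{\zZ}$ in the first quotient.

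It then remains to identify $j^{\ast}\cC_{\zZ} = \cC_{\zZ_{\circ}}$ inside $\Dbc(\fM_{\circ})$. The inclusion $j^{\ast}\cC_{\zZ} \subset \cC_{\zZ_{\circ}}$ is immediate from the compatibility of singular support with open restriction, since an object with singular support in $\zZ$ restricts to one with singular support in $\zZ \cap \nN_{\circ} = \zZ_{\circ}$. For the reverse inclusion, given $\gG \in \cC_{\zZ_{\circ}}$, use essential surjectivity of $j^{\ast}$ modulo $\cC_{p_0^{-1}(\wW)}$ (from the first step) to pick a lift $\fF \in \Dbc(\fM)$ with $j^{\ast}\fF \simeq \gG$; the singular support of $\fF$ then lies in $\zZ_{\circ} \cup p_0^{-1}(\wW) \subset \zZ$ by hypothesis, so $\fF \in \cC_{\zZ}$. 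Combining the three steps gives the asserted equivalence.

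The main technical obstacle is the first step, specifically the identification of $\ker(j^{\ast})$ with $\cC_{p_0^{-1}(\wW)}$ rather than simply with the subcategory of classically supported complexes. This rests on the fact that classical support is the $p_0$-image of singular support, hence the two support conditions coincide, but its rigorous verification requires descending through a smooth chart and invoking Koszul duality together with the compatibility of singular support with open restriction; once this is in hand the remainder of the argument is formal manipulation of Verdier quotients.
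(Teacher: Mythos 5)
Your overall architecture is the right one and matches the intended argument: factor $j^{\ast}$ through the kernel, use $\cC_{p_0^{-1}(\wW)}\subset\cC_{\zZ}$ and the third isomorphism theorem for Verdier quotients, and then identify the image of $\cC_{\zZ}$ in $\Dbc(\fM_{\circ})$ with $\cC_{\zZ_{\circ}}$ by lifting objects and using $\mathrm{Supp}^{\rm{sg}}(\fF)\subset \zZ_{\circ}\cup p_0^{-1}(\wW)\subset\zZ$. Steps 2 and 3 are fine, and your identification of $\Ker(j^{\ast})$ with $\cC_{p_0^{-1}(\wW)}$ correctly rests on the two containments $\Supp(\fF)\subset p_0(\mathrm{Supp}^{\rm{sg}}(\fF))$ and $\mathrm{Supp}^{\rm{sg}}(\fF)\subset p_0^{-1}(\Supp(\fF))$.

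The gap is in your justification of Step 1, which you yourself flag as the crux. The statement that $j^{\ast}\colon \Dbc(\fM)\to\Dbc(\fM_{\circ})$ is a Verdier quotient with kernel the objects supported on $\wW$ is a \emph{global} assertion and cannot be obtained by checking it on smooth charts $\fU\to\fM$ and ``gluing across the cover via the limit definition.'' Two things go wrong: first, essential surjectivity of $j^{\ast}$ amounts to extending bounded coherent complexes from the open substack $\fM_{\circ}$ to $\fM$; chart-wise extensions do not descend, and the genuine proof uses that $\mM$ is a noetherian (QCA) stack, extending coherent sheaves as in Laumon--Moret-Bailly and then inducting on cohomological amplitude. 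Second, even granting chart-wise localization sequences for factorization categories via Theorem~\ref{thm:knoer} and (\ref{equiv:supporZ}), a limit of Drinfeld/Verdier quotients is not in general the quotient of the limit; this commutation is exactly the delicate point that the paper only controls after passing to ind-completions under compact generation (Proposition~\ref{prop:DTcat}, i.e.\ \cite[Proposition~3.2.7, Theorem~7.2.2]{TocatDT}), so your gluing step is not available at the level of $\Dbc$ without that extra input. The fully-faithfulness of the induced functor $\Dbc(\fM)/\Ker(j^{\ast})\to\Dbc(\fM_{\circ})$ likewise needs a global approximation argument (a morphism over $\fM_{\circ}$ is only defined after modifying the source by an object supported on $\wW$). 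Once Step 1 is either proved along these lines or quoted as the known localization theorem for $\Dbc$ of noetherian/QCA stacks, the rest of your argument goes through verbatim.
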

The ind-completions of DT categories are described as follows: 
\begin{prop}\emph{(\cite[Proposition~3.2.7, Theorem~7.2.2]{TocatDT})}\label{prop:DTcat}
	Suppose that $\Ind \cC_{\zZ, \dg}$ is compactly generated. 
	Then we have an equivalence
	\begin{align}\label{equiv:indDT}
		\Ind \Dbc(\fM)_{\rm{dg}}
	/\Ind \cC_{\zZ, \rm{dg}} \stackrel{\sim}{\to}
	\Ind\dDT^{\C}(\nN^{\rm{ss}})_{\rm{dg}}. 
		\end{align}
	\end{prop}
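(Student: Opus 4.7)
The plan is to reduce the statement to a general result about ind-completions of Drinfeld quotients of compactly generated dg-categories, combined with Neeman--Drinfeld--Tabuada style localization.

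First, I would invoke the QCA hypothesis on $\fM$: by the Drinfeld--Gaitsgory theorem cited in Subsection~\ref{subsec:qsmooth}, the dg-category $\Ind \Dbc(\fM)_{\dg}$ is compactly generated, and its subcategory of compact objects is exactly $\Dbc(\fM)_{\dg}$ (up to idempotent completion). This is the starting point that puts us in the setting where Neeman's localization theory applies.

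Second, I would identify the compact objects of the localizing subcategory $\Ind \cC_{\zZ, \dg}$. By assumption it is compactly generated. Since it is a full localizing subcategory of a compactly generated dg-category, the general principle (Neeman) forces its compact generators to be compact in the ambient category, hence to lie in $\Dbc(\fM)_{\dg}$. Combined with the construction $\cC_{\zZ, \dg} = \Dbc(\fM)_{\dg} \cap \Ind \cC_{\zZ, \dg}$ coming from the singular support definition in (\ref{ssupp:stack}), this identifies $\cC_{\zZ, \dg}$ with the compact objects of $\Ind \cC_{\zZ, \dg}$ up to idempotent completion. Here a small verification is needed to see that taking the colimit closure of $\cC_{\zZ, \dg}$ inside $\Ind \Dbc(\fM)_{\dg}$ indeed recovers all of $\Ind \cC_{\zZ, \dg}$; this is automatic from compact generation once the generators are known to lie in $\cC_{\zZ, \dg}$.

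Third, I would apply the Neeman--Drinfeld localization theorem for Drinfeld quotients by compactly generated localizing subcategories: if $\dD$ is compactly generated and $\dD' \subset \dD$ is a localizing subcategory compactly generated by a set of compact objects of $\dD$, then $\dD / \dD'$ is compactly generated and its subcategory of compact objects is the idempotent completion of $\dD^c / (\dD')^c$. Applied to $\dD = \Ind \Dbc(\fM)_{\dg}$ and $\dD' = \Ind \cC_{\zZ, \dg}$, this yields
\[
\bigl( \Ind \Dbc(\fM)_{\dg} / \Ind \cC_{\zZ, \dg} \bigr)^c \simeq \bigl(\Dbc(\fM)_{\dg} / \cC_{\zZ, \dg}\bigr)^{\rm idem} = \dDT^{\C}(\nN^{\rm{ss}})_{\dg}^{\rm idem}.
\]
Since both $\Ind \Dbc(\fM)_{\dg} / \Ind \cC_{\zZ, \dg}$ and $\Ind \dDT^{\C}(\nN^{\rm{ss}})_{\dg}$ are compactly generated and ind-completion is insensitive to idempotent completion, the universal property of ind-completion gives the asserted equivalence.

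The main obstacle is the second step: establishing that the compact generators of $\Ind \cC_{\zZ, \dg}$ really sit in $\cC_{\zZ, \dg}$. The assumption only says that $\Ind \cC_{\zZ, \dg}$ is compactly generated as an abstract dg-category; it does not \textit{a priori} say the generators are compact in $\Ind \Dbc(\fM)_{\dg}$. One must show that the inclusion $\Ind \cC_{\zZ, \dg} \hookrightarrow \Ind \Dbc(\fM)_{\dg}$ preserves compact objects, which follows because it is the ind-extension of a fully faithful functor between small categories and because its right adjoint (pro-truncation) preserves filtered colimits under the compact generation assumption. Once this point is settled, the rest is a direct citation of Neeman's localization theorem.
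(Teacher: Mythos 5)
Your overall strategy (QCA $\Rightarrow$ $\Ind\Dbc(\fM)_{\dg}$ is compactly generated with compacts $\Dbc(\fM)_{\dg}$, then Neeman--Thomason localization to identify the compacts of the quotient with the idempotent completion of $\Dbc(\fM)_{\dg}/\cC_{\zZ,\dg}$, then the insensitivity of ind-completion to idempotent completion) is the standard route and is surely the skeleton of the cited proof. But your second step, which you yourself flag as the crux, contains a genuine gap. The assertion that ``the general principle (Neeman) forces its compact generators to be compact in the ambient category'' is false: a localizing subcategory of a compactly generated category can be compactly generated without any of its compact objects being compact in the ambient category. For instance, the full subcategory of complexes with rational cohomology in $D(\mathbb{Z})$ is localizing and compactly generated (by $\mathbb{Q}$), yet $\mathbb{Q}$ is not compact in $D(\mathbb{Z})$, and the intersection of this subcategory with the perfect complexes is zero. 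So the hypothesis ``$\Ind\cC_{\zZ,\dg}$ is compactly generated'' does not formally yield generation by objects of $\cC_{\zZ,\dg}$, which is exactly what Neeman's theorem needs.

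Your attempted repair is circular. First, $\Ind\cC_{\zZ,\dg}$ is \emph{not} defined as the ind-completion of the small category $\cC_{\zZ,\dg}$: by (\ref{ssupp:stack}) it is a limit over smooth charts of the subcategories $\Ind\cC_{\alpha^{\ast}\zZ,\dg}$, so saying the inclusion ``is the ind-extension of a fully faithful functor between small categories'' presupposes the identification $\Ind\cC_{\zZ,\dg}\simeq\Ind(\cC_{\zZ,\dg})$, which is essentially the statement you are trying to establish. Second, continuity of the right adjoint to the inclusion $\Ind\cC_{\zZ,\dg}\hookrightarrow\Ind\Dbc(\fM)_{\dg}$ is \emph{equivalent} to the inclusion preserving compact objects, so invoking it ``under the compact generation assumption'' assumes the conclusion. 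What actually closes this gap is geometric input, not abstract category theory: either one uses that the compact generation in \cite[Theorem~7.2.2]{TocatDT} is proved by exhibiting explicit generators that lie in $\cC_{\zZ,\dg}$, or one uses the Arinkin--Gaitsgory colocalization on charts (the inclusion $\Ind\cC_{\alpha^{\ast}\zZ,\dg}\hookrightarrow\Ind\Dbc(\fU)_{\dg}$ has a continuous right adjoint and compact objects $\cC_{\alpha^{\ast}\zZ,\dg}$) together with its compatibility with smooth pullback to glue a continuous right adjoint over $\fM$, after which compacts of $\Ind\cC_{\zZ,\dg}$ land in $\Dbc(\fM)_{\dg}\cap\Ind\cC_{\zZ,\dg}=\cC_{\zZ,\dg}$ and your third step goes through verbatim. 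As written, your proof does not supply either ingredient.
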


	If $\mM$ admits a good moduli space, we have the 
	following compact generation of the subcategory of 
	fixed singular supports: 
\begin{prop}\emph{(\cite[Theorem~7.2.2]{TocatDT})}\notag
For a quasi-smooth and QCA derived stack $\fM$, 
suppose that $\mM=t_0(\fM)$ admits a good moduli space.
Then $\Ind \cC_{\zZ, \rm{dg}}$ is compactly generated. 
In particular, the equivalence (\ref{equiv:indDT}) holds. 
\end{prop}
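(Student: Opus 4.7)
The plan is to reduce compact generation of $\Ind \cC_{\zZ, \dg}$ to its affine local models via Koszul duality, using the good moduli space assumption to obtain a finite étale covering by such models. The starting point is that the QCA hypothesis already gives compact generation of $\Ind \Dbc(\fM)_{\dg}$ with compacts $\Dbc(\fM)_{\dg}$, so the task is to exhibit a set of compact generators of $\Ind \cC_{\zZ, \dg}$ sitting inside $\cC_{\zZ, \dg}$.

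First, I would use the local structure theorem for stacks admitting a good moduli space (Alper--Hall--Rydh): for each closed point $y \in \mM$, there is a smooth (in fact étale at $y$) morphism $\alpha_y \colon [\fU_y/G_y] \to \fM$ with $\fU_y$ an affine quasi-smooth derived scheme of the form (\ref{frak:U}) and $G_y = \Aut(y)$ reductive. By QCA together with quasi-compactness of $M$ (hence of $\mM$), finitely many such charts $\alpha_i \colon [\fU_i/G_i] \to \fM$ suffice to cover $\mM$. On each chart, Proposition~\ref{prop:ssupp} gives the equivalence
\begin{align*}
\Ind \cC_{\alpha_i^{\ast}\zZ, \dg} \simeq \MF^{\C}_{\qcoh}([V_i^{\vee}/G_i], w_i)_{\alpha_i^{\ast}\zZ},
\end{align*}
and the right-hand side is compactly generated with compacts $\MF^{\C}_{\coh}([V_i^{\vee}/G_i], w_i)_{\alpha_i^{\ast}\zZ}$ by the standard Grothendieck-style generator argument for support subcategories of factorization categories on quotient stacks of reductive groups.

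Second, I would glue the local generators into a global compact-generating set. For each $i$, let $\{E_{i,\lambda}\}_\lambda$ be a set of compact generators of $\Ind \cC_{\alpha_i^{\ast}\zZ, \dg}$. I would push them forward to $\Ind \cC_{\zZ, \dg}$ via the $\ast$-pushforward (ind-completion) $\alpha_{i, \ast}^{\rm{ind}}$; these pushforwards have singular support inside $\zZ$ by the commutativity diagram (\ref{dia:com:ind}). To show the family $\{\alpha_{i, \ast}^{\rm{ind}} E_{i,\lambda}\}$ consists of compact objects of $\Ind \Dbc(\fM)_{\dg}$, one checks that the right adjoint $\alpha_i^!$ is continuous, which holds because $\alpha_i$ is of finite presentation, schematic with affine stabilizers, and $G_i$ is reductive (this is exactly where QCA plus good moduli are used). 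Conservativity of the family $\{\alpha_i^!\}$ on $\Ind \cC_{\zZ, \dg}$ follows from the covering property together with the fact that singular support is detected locally.

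The hard part is the continuity/compactness-preservation claim in the gluing step, i.e.\ that pushforward along $\alpha_i$ sends compact objects in $\Ind \cC_{\alpha_i^{\ast}\zZ, \dg}$ to compact objects in $\Ind \cC_{\zZ, \dg}$. For general derived stacks this can fail, and one cannot directly invoke the Drinfeld--Gaitsgory theorem for the ambient ind-coherent category because the subcategory $\Ind \cC_{\zZ, \dg}$ may not a priori be stable under all the relevant functors. The resolution is to verify compatibility of singular support with $\alpha_{i, \ast}^{\rm{ind}}$ and $\alpha_i^!$ via diagrams (\ref{dia:com:ind}) and (\ref{dia:com:ind2}) and to exploit the reductivity of $G_i$ (supplied by the good moduli space hypothesis) so that averaging reduces continuity of $\alpha_i^!$ to the scheme-theoretic case; this is the technical core one has to grind through.
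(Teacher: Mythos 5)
Your local analysis is in the right spirit (the statement is not proved in this paper at all --- it is quoted from \cite[Theorem~7.2.2]{TocatDT} --- and the cited proof does reduce, via the Alper--Hall--Rydh local structure of stacks with good moduli spaces, to charts of the form $[\Spec A/G]$ with $G$ reductive, where Koszul duality and generation results for factorization categories give compact generation). The genuine gap is in your gluing step. You propose to push compact generators forward along the \'etale charts $\alpha_i$ and argue they stay compact because ``$\alpha_i^!$ is continuous''. This conflates $\alpha_i^!$ with the right adjoint of $\alpha_{i\ast}^{\rm{ind}}$: the adjunction $(\alpha_{i\ast},\alpha_i^!)$ on ind-coherent sheaves holds for \emph{proper} morphisms, whereas for an \'etale (or affine, or open) representable morphism one has $\alpha_i^!\simeq\alpha_i^{\ast}$ and it is a \emph{left} adjoint of $\alpha_{i\ast}$; its continuity is automatic and says nothing about compactness of pushforwards. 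And in fact pushforward along such charts does \emph{not} preserve compactness: compact objects of $\Ind\Dbc$ are the coherent complexes, and already for the open immersion $j\colon\mathbb{A}^2\setminus\{0\}\hookrightarrow\mathbb{A}^2$ (or for any nontrivial Zariski cover $U\sqcup V\to M$) the object $j_{\ast}\oO$ fails to be coherent. Reductivity of $G_i$ plays no role here --- the relevant dichotomy is proper versus non-proper, and the AHR charts are affine \'etale, not proper. So the family $\{\alpha_{i\ast}^{\rm{ind}}E_{i,\lambda}\}$ need not consist of compact objects, and the argument collapses at exactly the point you flagged as ``the technical core''.

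What is actually needed to bridge local and global is a local-to-global principle for compact generation \emph{over the good moduli space} $M$, not a pushforward of generators: one arranges the charts to be pulled back from an \'etale cover of the algebraic space $M$ (possible precisely because of the good moduli space hypothesis, cf.\ Lemma~\ref{lem:fit:M}(ii)), notes that $\Ind\cC_{\zZ,\rm{dg}}$ is a $D_{\qcoh}(M)$-linear category satisfying descent with respect to such covers (this is the content behind \cite[Lemma~7.2.3]{TocatDT}, used again in the proof of Theorem~\ref{thm:sod} here), and then invokes the Thomason--Neeman/To\"en-style statement that compact generation of such linear categories is Zariski/\'etale local on a quasi-compact quasi-separated base: compact objects are \emph{extended} from opens of $M$ using the localization triangles for complementary supports, rather than pushed forward along the charts. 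With that replacement, your first step (compact generation of $\MF^{\C}_{\qcoh}$ with support on each reductive quotient chart, via the Koszul duality equivalences of Proposition~\ref{prop:ssupp}) does the rest, and the ``in particular'' clause is immediate from Proposition~\ref{prop:DTcat}.
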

	
Let $\fM_{\epsilon} \cneq \fM \times \Spec \mathbb{C}[\epsilon]$
for $\deg(\epsilon)=-1$ 
and set
\begin{align*}
	\zZ_{\epsilon} \cneq \mathbb{C}^{\ast}(\zZ \times \{1\})
	\sqcup (\nN \times \{1\}) \subset 
	\nN \times \mathbb{A}^1=t_0(\Omega_{\fM_{\epsilon}}[-1]). 
	\end{align*}
Here $\C$ acts on fibers of $\nN \times \mathbb{A}^1 \to \mM$
by weight two. 
Then $\zZ_{\epsilon}$ is a conical closed substack. 
The $\mathbb{Z}/2$-periodic DT categories in
Definition~\ref{defi:DTcat} as follows: 
\begin{defi}\emph{(\cite[Definition~4.2]{TocatDT2})}\label{defi:DTcat2}
	We define $\mathbb{Z}/2$-periodic DT categories 
	for $\nN^{\rm{ss}}$ as 
	\begin{align*}
		\dDT^{\mathbb{Z}/2}(\nN^{\rm{ss}})_{\rm{dg}} \cneq 
		\Dbc(\fM_{\epsilon})_{\rm{dg}}/\cC_{\zZ_{\epsilon}, \rm{dg}}, \ 
		\dDT^{\mathbb{Z}/2}(\nN^{\rm{ss}}) \cneq \Dbc(\fM_{\epsilon})/\cC_{\zZ_{\epsilon}}. 
	\end{align*}
\end{defi}
The above definition is also based on the Koszul duality equivalence. 
If $\fM=\fU$ as in (\ref{frak:U}), 
then there is an equivalence (see~\cite[Proposition~3.13]{TocatDT2})
\begin{align*}
	\Dbc(\fU_{\epsilon})/\cC_{\zZ_{\epsilon}} \stackrel{\sim}{\to}
	\MF_{\coh}^{\mathbb{Z}/2}(V^{\vee}\setminus \zZ, w). 
	\end{align*}
Moreover it is proved in~\cite{TocatDT2} that, 
	up to idempotent completion, 
	the $\mathbb{Z}/2$-periodic DT category is
	recovered from the $\C$-equivariant DT category 
	$\dDT^{\C}(\nN^{\rm{ss}})$ defined in~\cite{TocatDT}. 
	\begin{thm}\emph{(\cite[Theorem~4.9]{TocatDT2})}\label{thm:intro:compare}
		Suppose that $\zZ \subset \nN$ is
		a conical closed substack 
		such that $\Ind \cC_{\zZ} \subset \Ind \Dbc(\fM)$
		is compactly generated. 
		Then there is an equivalence 
		\begin{align}\notag
			\overline{\mathcal{DT}}^{\mathbb{Z}/2}(\nN^{\rm{ss}})_{\rm{dg}}
			\simeq \dR \underline{\hH om}(\mathbb{C}[u^{\pm 1}], 
			\Ind \mathcal{DT}^{\C}(\nN^{\rm{ss}})_{\rm{dg}})^{\rm{cp}}. 	
		\end{align}
		Here $\deg(u)=2$, $\mathbb{C}[u^{\pm 1}]$ is regarded as a 
		dg-category with one object, 
		and $\dR \underline{\hH om}(-, -)$
		is an inner Hom of dg-categories (see~\cite[Corollary~6.4]{Todg}). 
	\end{thm}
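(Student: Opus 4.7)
The plan is to reduce the asserted equivalence to the affine local model via Koszul duality, identify both sides there as explicit factorization categories, and then recognize $\mathbb{Z}/2$-periodicity on the factorization side as the Tate construction, i.e.\ $\mathbb{C}[u^{\pm 1}]$-linearization of $\C$-equivariance. The compact generation hypothesis will let us work with presentable ind-completions and pass to compact objects only at the very end.

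First, I would work smooth-locally. For a chart $\alpha\colon[\fU/G]\to\fM$ with $\fU$ as in (\ref{frak:U}) and associated function $w$ on $V^{\vee}$ from (\ref{func:w}), Theorem~\ref{thm:knoer} and Proposition~\ref{prop:ssupp} identify $\Dbc([\fU/G])/\cC_{\alpha^{\ast}\zZ}$ with $\MF^{\C}_{\coh}([V^{\vee}/G]\setminus\alpha^{\ast}\zZ, w)$, and a parallel $\mathbb{Z}/2$-periodic Koszul duality from~\cite{TocatDT2} identifies $\Dbc([\fU_{\epsilon}/G])/\cC_{\alpha^{\ast}\zZ_{\epsilon}}$ with $\MF^{\mathbb{Z}/2}_{\coh}([V^{\vee}/G]\setminus\alpha^{\ast}\zZ, w)$, where $\fU_{\epsilon}=\fU\times\Spec\mathbb{C}[\epsilon]$. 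So it is enough to produce, functorially in $\alpha$, a local equivalence
\begin{align*}
\overline{\MF}^{\mathbb{Z}/2}_{\coh}([V^{\vee}/G]\setminus Z, w)_{\rm{dg}} \simeq \dR\underline{\hH om}\bigl(\mathbb{C}[u^{\pm 1}], \Ind\MF^{\C}_{\coh}([V^{\vee}/G]\setminus Z, w)_{\rm{dg}}\bigr)^{\rm{cp}}.
\end{align*}

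Second, I would identify $\mathbb{Z}/2$-periodicity on the factorization side with $\mathbb{C}[u^{\pm 1}]$-linearization. The $\C$-equivariant factorization category is naturally $\mathbb{C}[u]$-linear with $\deg u = 2$, reflecting $H^{\ast}(B\C)=\mathbb{C}[u]$; inverting $u$ is a Tate-style construction which produces the $\mathbb{Z}/2$-periodic category. Dually, on the Koszul side, $\mathbb{C}[\epsilon]$ with $\deg\epsilon=-1$ is Koszul dual to $\mathbb{C}[u]$ with $\deg u=2$, so the extra factor of $\Spec\mathbb{C}[\epsilon]$ in $\fM_{\epsilon}$ corresponds precisely to tensoring over $\mathbb{C}[u]$ with $\mathbb{C}[u^{\pm 1}]$. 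The compact generation hypothesis, via Proposition~\ref{prop:DTcat}, converts the Drinfeld quotient into an honest localization of presentable dg-categories; after this, the right-adjoint inner Hom $\dR\underline{\hH om}(\mathbb{C}[u^{\pm 1}], -)$ commutes with the quotient, and passing to compact objects yields the idempotent completion of the resulting $\mathbb{Z}/2$-folding, matching the left-hand side.

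Third, globalization proceeds by descent. Both sides of the claim are limits over the $\infty$-category $\iI$ of smooth charts $\alpha\colon\fU\to\fM$: the $\C$-equivariant side by the definition of $\Ind\Dbc(\fM)_{\rm{dg}}$ in Subsection~\ref{subsec:qsmooth}, and the $\mathbb{Z}/2$-periodic side by the same definition applied to $\fM_{\epsilon}$. The naturality of the local equivalence in $\alpha$, which follows from the functoriality of Koszul duality reviewed in Subsection~\ref{subsec:funct} (in particular the compatibility diagrams (\ref{dia:com:ind}), (\ref{dia:com:ind0.5}), (\ref{dia:com:ind2})), then assembles the local identifications into the claimed global equivalence. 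The hardest step will be the second one: producing a genuine dg equivalence, not merely a triangulated one, between $\overline{\MF}^{\mathbb{Z}/2}_{\coh}$ and $\dR\underline{\hH om}(\mathbb{C}[u^{\pm 1}], \Ind\MF^{\C}_{\coh})^{\rm{cp}}$. This requires careful bookkeeping of absolutely acyclic objects, which a priori differ between the $\C$-equivariant and $\mathbb{Z}/2$-periodic setups, and it is precisely here that the idempotent completion enters: the natural forgetful-type functor from $\MF^{\C}_{\coh}$ to $\MF^{\mathbb{Z}/2}_{\coh}$ is not essentially surjective in general, but becomes so after idempotent-completing the target, which is exactly what the symbol $(-)^{\rm{cp}}$ encodes on the right-hand side of the statement.
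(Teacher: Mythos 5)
This theorem is not proved in the present paper: it is quoted from~\cite[Theorem~4.9]{TocatDT2}, so there is no in-text proof to measure you against, and I can only assess your sketch against the machinery the paper sets up around the statement. Your overall route --- local Koszul duality turning the $\epsilon$-factor into $\mathbb{C}[u]$-linearity ($\deg u=2$), reading $\mathbb{Z}/2$-periodicity as $u$-inversion, and recovering the small category by passing to compact objects --- is the natural one and is consistent with how the cited result is designed (note also that the inversion of $u$ is geometrically encoded by the extra component $\nN\times\{0\}\subset\zZ_{\epsilon}$, which your appeal to the local $\mathbb{Z}/2$-Koszul duality silently absorbs).

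There is, however, a genuine gap in your globalization step. You assert that ``both sides of the claim are limits over the $\infty$-category $\iI$ of smooth charts''. That is true by definition for $\Dbc(\fM)_{\rm{dg}}$, $\Ind \Dbc(\fM)_{\rm{dg}}$ and $\cC_{\zZ, \rm{dg}}$, but not for the DT categories themselves: Drinfeld/Verdier quotients do not commute with limits, and nowhere does the paper identify $\dDT^{\C}(\nN^{\rm{ss}})_{\rm{dg}}$ with the limit over charts of the local quotients --- even in the far better situation of \'etale covers of a good moduli space only a fully-faithful functor into such a limit is available (see the use of \cite[Lemma~7.2.3]{TocatDT} in the proof of Theorem~\ref{thm:sod}). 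So gluing your local equivalences chart-by-chart does not yield the global statement. This is precisely where the compact generation hypothesis must act: it gives the global identification $\Ind \Dbc(\fM)_{\rm{dg}}/\Ind \cC_{\zZ,\rm{dg}} \simeq \Ind \dDT^{\C}(\nN^{\rm{ss}})_{\rm{dg}}$ (Proposition~\ref{prop:DTcat}), and the argument has to be carried out at the level of these global presentable categories --- where one must also justify, rather than assert, that $\dR \underline{\hH om}(\mathbb{C}[u^{\pm 1}], -)$ is compatible with this localization (e.g.\ via smoothness of $\mathbb{C}[u^{\pm 1}]$) --- with compact objects taken at the end, the idempotent completion arising because compact objects of a quotient of compactly generated categories recover the quotient of compacts only up to direct summands. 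Your closing explanation of $(-)^{\rm{cp}}$ via essential surjectivity of a forgetful functor $\MF^{\C}_{\coh} \to \MF^{\mathbb{Z}/2}_{\coh}$ is not the operative mechanism, so using compact generation ``only at the very end'' misplaces the one hypothesis the theorem actually needs.
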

By the above theorem, 
a fully-faithful functor (equivalence) of 
$\C$-equivariant DT categories 
induce a fully-faithful functor (equivalence)
of idempotent completions of $\mathbb{Z}/2$-periodic 
DT categories (see the argument of~\cite[Theorem~5.4]{TocatDT2}). 
Therefore in what follows, we focus on the 
$\C$-equivariant DT categories.

	\section{The stacks of filtered and graded objects: review}
		Halpern-Leistner~\cite{Halpinstab} developed the
	moduli theory of maps from $\Theta=[\mathbb{A}^{1}/\C]$ to a
	fixed Artin stack, 
	whose moduli stack is called the stack of
	filtered objects. 
	It is used to define the notion of 
	$\Theta$-stratifications,
	which generalizes Harder-Narasimhan stratifications 
	for moduli stacks of coherent sheaves and Kempf-Ness 
	stratifications for GIT quotient stacks. 
	The above moduli theory is extended to 
	the case of maps to derived Artin stacks by Halpern-Leistner-Preygel~\cite{Halpstack}.
	In this section, we review the theory of 
	stacks of filtered objects, $\Theta$-stratifications, 
	and some of their properties. 
\subsection{Quasi-coherent sheaves on theta stacks}\label{sec:filtst}
	Let $\Theta$ be the stack defined by 
	\begin{align*}
		\Theta \cneq [\mathbb{A}^1/\C]. 
		\end{align*}
	Here $\C$ acts on $\mathbb{A}^1$ by weight one. 
	Here we review some basic properties of quasi-coherent sheaves 
	on the stacks $\Theta \times T$ and $B\C \times T$ for a derived stack $T$.
	By the Rees construction, we have the following lemma: 
	\begin{lem}\emph{(\cite[Section~1.1]{HalpK32})}\label{lem:Rees}
		(i)
		The $\infty$-category $D_{\qcoh}(\Theta\times T)$
	is equivalent to the $\infty$-category of diagrams 
	\begin{align}\label{Rees}
		\eE_{\bullet}=
		\cdots \to \eE_{1} \to \eE_0 \to \eE_{-1} \to \cdots 
	\end{align}
	where $\eE_i \in D_{\qcoh}(T)$. 
	
	(ii) The $\infty$-category $D_{\qcoh}(B\C \times T)$ decomposes into 
	\begin{align*}
		D_{\qcoh}(B\C \times T)=\bigoplus_{i\in \mathbb{Z}} D_{\qcoh}(T)_{\wt=i}
	\end{align*}
	where $D_{\qcoh}(T)_{\wt=i}$ is a copy of $D_{\qcoh}(T)$ corresponding to the
	$\C$-weight $i$ part. 
	\end{lem}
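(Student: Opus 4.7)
The plan is to deduce both parts from descent along natural smooth atlases together with the Rees-type identification of $\C$-equivariant sheaves.

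For part (ii), I would use the trivial $\C$-torsor $T \to B\C \times T$ as a smooth atlas. By flat descent for quasi-coherent sheaves on derived stacks, $D_{\qcoh}(B\C \times T)$ is equivalent to the $\infty$-category of $\C$-equivariant objects in $D_{\qcoh}(T)$ with respect to the trivial $\C$-action on $T$. Since $\C$ is diagonalizable and acts trivially, any such equivariant object decomposes canonically into weight eigenspaces indexed by the character lattice $\mathbb{Z}$, giving the claimed direct sum decomposition $D_{\qcoh}(B\C \times T) = \bigoplus_{i \in \mathbb{Z}} D_{\qcoh}(T)_{\wt = i}$.

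For part (i), I would apply the same descent strategy to the smooth atlas $\mathbb{A}^1 \times T \to \Theta \times T$, which is again a $\C$-torsor. This identifies $D_{\qcoh}(\Theta \times T)$ with the $\infty$-category of $\C$-equivariant quasi-coherent sheaves on $\mathbb{A}^1_T = \SPEC_T \oO_T[t]$, where $\C$ acts on the coordinate $t$ with weight one. Such an equivariant object is nothing but a $\mathbb{Z}$-graded $\oO_T[t]$-module $M = \bigoplus_{n \in \mathbb{Z}} M_n$, for which multiplication by $t$ is a degree-one graded morphism $M_n \to M_{n+1}$. Setting $\eE_i \cneq M_{-i}$ then converts this data into precisely the diagram
\begin{equation*}
\cdots \to \eE_1 \to \eE_0 \to \eE_{-1} \to \cdots
\end{equation*}
with arrows given by the $t$-multiplications, and this assignment is manifestly functorial and invertible.

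The main obstacle is not conceptual but a matter of setting up the $\infty$-categorical descent correctly: one must verify that the equivalences arising from descent are compatible with the weight decomposition produced by the $\C$-action, and that the Rees-type identification in (i) preserves the full derived (rather than merely abelian) information. Both points are standard once the descent is phrased via affine morphisms: after pulling back to the cover, $D_{\qcoh}$ of an affine derived scheme over $T$ becomes modules over the corresponding cdga, and the $\C$-equivariance becomes a $\mathbb{Z}$-grading on these modules, so the argument reduces to elementary $\infty$-categorical bookkeeping with graded $\oO_T[t]$-modules.
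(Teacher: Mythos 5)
Your proposal is correct and follows essentially the same route as the paper, which invokes the Rees construction of~\cite[Section~1.1]{HalpK32}: descent along the standard atlases identifies $D_{\qcoh}(\Theta\times T)$ with graded quasi-coherent $\oO_T[t]$-modules and $D_{\qcoh}(B\C\times T)$ with $\C$-equivariant objects for the trivial action, and your reindexing $\eE_i=M_{-i}$ reproduces exactly the diagram description (with $t$ acting by $\eE_{i+1}\to\eE_i$) and the weight decomposition stated in the lemma. The remaining step you defer — that graded $\oO_T[t]$-modules agree with $\mathbb{Z}$-indexed diagrams at the derived level — is precisely the content of the cited reference, so nothing essential is missing.
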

	An object $\eE \in D_{\qcoh}(\Theta \times T)$
	determines a diagram (\ref{Rees}) 
	by 
	\begin{align*}
	\cdots \to	\pi_{T\ast}(\oO(i+1) \boxtimes \eE) \to \pi_{T\ast}(\oO(i) \boxtimes \eE) \to
		\pi_{T\ast}(\oO(i-1) \boxtimes \eE) \to \cdots,
		\end{align*}
	where $\pi_T \colon \Theta \times T \to T$ is the projection, 
	$\oO(i)$ is a line bundle on $\Theta$
	determined by the $\C$-character of weight $i$. 
	Conversely a diagram (\ref{Rees})
	determines an object of $D_{\qcoh}(\Theta \times T)$
	by 
	\begin{align*}
		\eE_{\bullet} \mapsto \bigoplus_{i \in \mathbb{Z}}\eE_i,
		\end{align*}
	where the action of $t \in \oO_{\mathbb{A}^1}=\mathbb{C}[t]$
	is given by $\eE_{i+1} \to \eE_i$. 
	
	We denote by 
	\begin{align}\label{mor:i}
		i_1 \colon \Spec \mathbb{C} \to \Theta, \ 
		i_0 \colon B\C \to \Theta
				\end{align}
			the morphisms 
	corresponding to $1 \in \mathbb{A}^1$, $0 \in \mathbb{A}^1$
	respectively. 
	Under the above Rees construction, the pull-back functors 
	\begin{align*}
		i_1^{\ast} \colon D_{\qcoh}(\Theta \times T) \to 
		D_{\qcoh}(T), \ 
		i_0^{\ast} \colon D_{\qcoh}(\Theta \times T) \to D_{\qcoh}(B\C \times T)
		\end{align*}
	are described as 
	\begin{align}\label{i_0ast}
		i_1^{\ast}(\eE_{\bullet})=\colim_{i\to -\infty}\eE_i, \ 
		i_0^{\ast}(\eE_{\bullet})=\bigoplus_{i\in \mathbb{Z}} \Cone(\eE_{i+1} \to \eE_i). 
		\end{align}
	Here $\Cone(\eE_{i+1} \to \eE_i)$ lies in $D_{\qcoh}(T)_{\wt=i}$. 
	
	Let $\pi_T \colon \Theta \times T \to T$
	and 
	$\pi_T' \colon B\C \times T \to T$ be the projections. 
	We have the pull-back functors 
	\begin{align*}
		\pi_T^{\ast} \colon D_{\qcoh}(T) \to D_{\qcoh}(\Theta \times T), \ 
		{\pi_T'}^{\ast} \colon D_{\qcoh}(T) \to D_{\qcoh}(B\C \times T). 
		\end{align*}
	Their left adjoint functors
	\begin{align*}
		\pi_{T+} \colon D_{\qcoh}(\Theta \times T) \to 
		D_{\qcoh}(T), \ 
		\pi'_{T+} \colon D_{\qcoh}(B\C \times T) \to 
		D_{\qcoh}(T)
		\end{align*}
	are given by (see~\cite[Lemma~1.3.1]{HalpK32})
	\begin{align}\label{pi+}
		\pi_{T+}(\eE_{\bullet})=
		\Cone\left( \eE_1 \to \colim_{i \to -\infty}\eE_i \right), \ 
		\pi'_{T+}\left(\bigoplus_{i\in \mathbb{Z}} \eE_i \right)=\eE_0. 
		\end{align}
		\subsection{The stacks of filtered objects}
	Let $\fM$ be a derived Artin stack with affine stabilizers. 
	The $\infty$-functor
	\begin{align*}
		dAff^{op} \to Sset, \ 
		T \mapsto \mathrm{Map}(\Theta \times T, \fM)
		\end{align*}
	is a derived Artin stack~\cite{Halpstack}, denoted by $\mathrm{Filt}(\fM)$
	and called the \textit{stack of filtered objects}.  
	Similarly the $\infty$-functor 
	\begin{align*}
		dAff^{op} \to Sset, \ 
		T \mapsto \mathrm{Map}(B\C \times T, \fM)
	\end{align*}
	is a derived Artin stack, denoted by $\mathrm{Grad}(\fM)$
	and called the \textit{stack of graded objects}. 
	There exist natural maps 
	\begin{align}\label{dia:Filt}
		\xymatrix{
		\mathrm{Filt}(\fM) \ar[r]^-{\ev_1} \ar[d]^-{\ev_0} & \fM \\
		\mathrm{Grad}(\fM) \ar[ur]_-{\tau}. \ar@/^18pt/[u]^-{\sigma} & 
	}
		\end{align}
	Here $\ev_0, \ev_1, \sigma, \tau$ are induced by morphisms
	$i_0$, $i_1$ in (\ref{mor:i}), the projection 
	$\Theta \to B\C$ and the natural morphism 
	$\Spec \mathbb{C} \to B\C$ respectively. 
	Note that we have $\tau=\ev_1 \circ \sigma$. 
		Moreover we have 
	\begin{align}\label{Filt:t0}
		t_0(\Filt(\fM))=\Filt(t_0(\fM)), \ 
		t_0(\Grad(\fM))=\Grad(t_0(\fM)). 
		\end{align}
	
		We note that there is a natural action of $B\C$ on $\Grad(\fM)$
	acting on the source of the maps
	$B\C \times T \to \fM$. 
	Consequently there is a natural morphism into the inertia stack
	\begin{align*}
		(\C)_{\Grad(\fM)} \to I_{\Grad(\fM)}. 
	\end{align*}
	We have the decomposition of the derived category  
	$\Dbc(\Grad(\fM))$ into weight space
	categories
	with respect to the above $B\C$-action 
	\begin{align}\label{grad:wt}
		\Dbc(\Grad(\fM))=\bigoplus_{j \in \mathbb{Z}} \Dbc(\Grad(\fM))_{\wt=j}. 
	\end{align}
	
	\begin{exam}\label{exam:frakU}
		Let $\fU$ be an affine derived scheme of the form (\ref{frak:U}).
		Suppose that a reductive algebraic group $G$
		acts on $Y$ such that $(V, s)$ is $G$-equivariant. 
		For a one parameter subgroup 
		$\lambda \colon \C \to G$, 
		we have the following commutative diagram 
		(see the notation in Subsection~\ref{subsec:exam:KN})
\begin{align*}
	\xymatrix{
		V^{\lambda=0}  \ar[d] \ar@/^10pt/[r] & \ar[l]
		V^{\lambda \ge 0} \ar@<-0.3ex>@{^{(}->}[r] \ar[d] & V \ar[d] \\
Y^{\lambda=0} \ar@/^10pt/[u]^-{s^{\lambda=0}}  \ar@/_10pt/[r]&	Y^{\lambda \ge 0} \ar@<-0.3ex>@{^{(}->}[r] \ar[l]
\ar@/^10pt/[u]^-{s^{\lambda \ge 0}}
	& Y, \ar@/_10pt/[u]_-{s}&
}
	\end{align*}	
Here the 
the horizontal arrows from left to right are natural inclusions, 
and the 
horizontal arrows from left to right
 are given by taking the limits 
for the $\lambda$-actions. 	
By setting $\fU^{\lambda \ge 0}$, $\fU^{\lambda=0}$ to be the 
derived zero loci of $s^{\lambda \ge 0}$, $s^{\lambda=0}$ respectively, 
we have the following diagram 
	\begin{align}\notag
	\xymatrix{
	[\fU^{\lambda \ge 0}/G^{\lambda \ge 0}]
	 \ar[r]^-{\ev_1} \ar[d]^-{\ev_0} & [\fU/G] \\
		[\fU^{\lambda=0}/G^{\lambda=0}]\ar[ur]_-{\tau}. 
		\ar@/^18pt/[u]^-{\sigma} & 
	}
\end{align}
It is proved in~\cite[Lemma~1.6.1]{HalpK32}
that $[\fU^{\lambda \ge 0}/G^{\lambda \ge 0}]$
is an open and closed substack of $\Filt([\fU/G])$, 
$[\fU^{\lambda=0}/G^{\lambda=0}]$
is its center, and 
any connected component of $\Filt([\fU/G])$ is a 
component of $[\fU^{\lambda \ge 0}/G^{\lambda \ge 0}]$
for a unique conjugacy class of $\lambda$. 
The decomposition (\ref{grad:wt}) at the components 
$[\fU^{\lambda=0}/G^{\lambda=0}]$ is 
\begin{align*}
	\Dbc([\fU^{\lambda=0}/G^{\lambda=0}])=\bigoplus_{j \in \mathbb{Z}}
	\Dbc([\fU^{\lambda=0}/G^{\lambda=0}])_{\wt=j}
	\end{align*}
where each component is $\lambda$-weight $j$-part 
with respect to $\lambda \colon \C \to G^{\lambda=0}$. 
	\end{exam}

The cotangent complexes of $\mathbb{L}_{\Filt(\fM)}$, $\mathbb{L}_{\Grad(\fM)}$ are 
described in the following way. 
Let us consider the following diagrams 
\begin{align}\label{dia:filgr}
	\xymatrix{
\Theta \times \Filt(\fM) \ar[r]^-{\ev_F} \ar[d]_-{\pi_F} & \fM \\
\Filt(\fM)
}, \
	\xymatrix{
	B\C \times \Grad(\fM) \ar[r]^-{\ev_G} \ar[d]_-{\pi_G'} & \fM \\
	\Grad(\fM)
}
	\end{align}
where horizontal arrows are universal maps and the vertical arrows are projections. 	
Then we have (see~\cite[Prop.~5.1.10]{Halpstack})
\begin{align}\label{L:filt}
	\mathbb{L}_{\Filt(\fM)}=
	\pi_{F+}(\ev_F^{\ast}\mathbb{L}_{\fM}), \ 
	\mathbb{L}_{\Grad(\fM)}=
	\pi_{G+}'(\ev_G^{\ast}\mathbb{L}_{\fM}).
	\end{align}
Here $\pi_{F+}$, $\pi_{G+}'$ are given by (\ref{pi+}). 
In the case that $\fM$ is quasi-smooth, we have the following: 
\begin{lem}\emph{(\cite[Lemma~2.2.4]{HalpK32})}\label{lem:qsmooth}
	If $\fM$ is quasi-smooth, then $\Filt(\fM)$ and $\Grad(\fM)$ are also 
	quasi-smooth and the morphism $\ev_0$ in the diagram (\ref{dia:Filt})
	 is a quasi-smooth morphism. 
	\end{lem}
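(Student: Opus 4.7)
My plan is to reduce all three claims to the local setting provided by Example~\ref{exam:frakU}, using the fact that quasi-smoothness of a stack and of a morphism are both smooth-local on the source and target. Pick a smooth atlas of $\fM$ of the form $\alpha \colon [\fU/G] \to \fM$ with $\fU$ the derived zero locus of a section $s\colon Y \to V$ as in (\ref{frak:U}); it suffices to verify each assertion after base change to such an atlas. By the functoriality of $\Filt$ and $\Grad$, this reduces us to the case $\fM = [\fU/G]$.

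First I would handle $\Grad(\fM)$. By formula (\ref{L:filt}) and (\ref{pi+}), the cotangent complex $\mathbb{L}_{\Grad(\fM)} = \pi'_{G+}(\ev_G^{\ast}\mathbb{L}_\fM)$ is the $B\C$-weight-zero direct summand of $\ev_G^{\ast}\mathbb{L}_\fM$. Since $\fM$ is quasi-smooth, $\mathbb{L}_\fM$ is perfect of Tor-amplitude $[-1,1]$; pullbacks and direct summands preserve this amplitude, so $\mathbb{L}_{\Grad(\fM)}$ is perfect of amplitude $[-1,1]$. This proves $\Grad(\fM)$ is quasi-smooth (and simultaneously treats all connected components $[\fU^{\lambda=0}/G^{\lambda=0}]$ uniformly).

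Next, for $\Filt(\fM)$: by Example~\ref{exam:frakU}, every connected component of $\Filt([\fU/G])$ takes the form $[\fU^{\lambda \ge 0}/G^{\lambda \ge 0}]$ for some one-parameter subgroup $\lambda$, where $\fU^{\lambda \ge 0}$ is by construction the derived zero locus of $s^{\lambda \ge 0}$ on $Y^{\lambda \ge 0}$. By the Bialynicki-Birula theorem applied to the $\C$-action via $\lambda$ on the smooth scheme $Y$, the attracting locus $Y^{\lambda \ge 0}$ is smooth (it is a $\C$-equivariant affine bundle over $Y^{\lambda=0}$), and $V^{\lambda \ge 0}$ is a vector bundle on it. Thus $\fU^{\lambda \ge 0}$ itself is of the form (\ref{frak:U}) and so is quasi-smooth; taking the quotient by the smooth group $G^{\lambda \ge 0}$ preserves quasi-smoothness.

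Finally, for the quasi-smoothness of $\ev_0$, in the local model one has $\ev_0 \colon [\fU^{\lambda \ge 0}/G^{\lambda \ge 0}] \to [\fU^{\lambda=0}/G^{\lambda=0}]$ induced by $\lim_{t\to 0}\lambda(t)$. Decompose $V^{\lambda \ge 0} = V^{\lambda=0} \oplus V^{\lambda > 0}$ and $s^{\lambda \ge 0} = s^{\lambda=0} + s^{\lambda > 0}$ into $\lambda$-weight-$0$ and strictly positive parts. Then the relative cotangent complex $\mathbb{L}_{\ev_0}$, extracted from the fiber sequence
\begin{equation*}
\ev_0^{\ast}\mathbb{L}_{\Grad(\fM)} \longrightarrow \mathbb{L}_{\Filt(\fM)} \longrightarrow \mathbb{L}_{\ev_0},
\end{equation*}
is the cone of $(V^{\lambda > 0})^{\vee}|_{\fU^{\lambda \ge 0}}[1] \to \mathbb{L}_{Y^{\lambda \ge 0}/Y^{\lambda=0}}|_{\fU^{\lambda \ge 0}}$, i.e.\ the local two-term Koszul complex of the positive-weight section $s^{\lambda > 0}$ over the smooth affine bundle $Y^{\lambda \ge 0} \to Y^{\lambda=0}$. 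Both constituents are vector bundles sitting in degrees $1$ and $0$, so $\mathbb{L}_{\ev_0}$ has Tor-amplitude $[-1,1]$ and $\ev_0$ is quasi-smooth.

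The main obstacle will be verifying the precise identification of $\mathbb{L}_{\ev_0}$ in terms of the weight decomposition; in principle this can be extracted from (\ref{L:filt}) together with the Rees description of $\ev_F^{\ast}\mathbb{L}_\fM$ as a diagram filtered by $\lambda$-weights, but one must carefully separate positive-weight contributions (which generate the cotangent of $\ev_0$) from the zero-weight ones (which are absorbed into $\ev_0^{\ast}\mathbb{L}_{\Grad}$), and check compatibility of the weight filtration with pullback to the local chart. Once this identification is in place, the amplitude bound $[-1,1]$ is immediate from the Bialynicki-Birula smoothness of $Y^{\lambda \ge 0}$.
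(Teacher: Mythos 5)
The paper itself offers no proof of this lemma (it is quoted from \cite[Lemma~2.2.4]{HalpK32}), and the intended argument is the global computation with (\ref{L:filt}); your treatment of $\Grad(\fM)$ is exactly that computation and is fine. The genuine gap is your opening reduction: ``by the functoriality of $\Filt$ and $\Grad$, this reduces us to the case $\fM=[\fU/G]$'' is not a formal step. Quasi-smoothness of $\Filt(\fM)$ is checked on a smooth cover of $\Filt(\fM)$, not of $\fM$, so you need (a) smooth charts of the form $[\fU/G]$ with $G$ reductive acting on a smooth affine scheme --- these are not part of the paper's definition of a quasi-smooth stack, whose charts are affine derived schemes, and their existence is a derived \'etale slice theorem requiring extra hypotheses --- and, more seriously, (b) that the induced maps $\Filt([\fU/G])\to\Filt(\fM)$ and $\Grad([\fU/G])\to\Grad(\fM)$ are smooth and \emph{jointly surjective}. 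Surjectivity is the real content and fails for naive atlases: for an affine chart $U\to\fM$ one has $\Grad(U)=U$, so such charts only reach the components of trivial gradings (e.g.\ $\mathrm{pt}\to BG$ misses every nontrivial component of $\Grad(BG)$). Covering the nontrivial components forces one to choose slices through each graded/filtered point with the relevant one-parameter subgroup inside the chart's group, which is a substantive theorem (Alper--Hall--Rydh slices combined with Halpern-Leistner's compatibility of $\Filt/\Grad$ with \'etale maps), not functoriality; you neither prove nor cite it.

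The repair is to drop the reduction altogether: the weight-summand argument you already used for $\Grad$ proves all three statements. By (\ref{L:filt}) and (\ref{pi+}), $\mathbb{L}_{\Filt(\fM)}$ is the cone of $(\ev_F^{\ast}\mathbb{L}_{\fM})_1\to\colim_i(\ev_F^{\ast}\mathbb{L}_{\fM})_i$, and $\mathbb{L}_{\ev_0}$ is identified with the cone of $(\ev_F^{\ast}\mathbb{L}_{\fM})_0\to\ev_1^{\ast}\mathbb{L}_{\fM}$ --- this is precisely the identification established in the proof of Proposition~\ref{prop:compare}. Both cones are (locally) finite iterated extensions of the weight summands of $(i_0,\id)^{\ast}\ev_F^{\ast}\mathbb{L}_{\fM}$ in weights $\le 0$, resp.\ $<0$; each such summand is a direct summand of a pullback of $\mathbb{L}_{\fM}$, hence perfect of amplitude $[-1,1]$, and Tor-amplitude $[-1,1]$ is stable under extensions, giving quasi-smoothness of $\Filt(\fM)$ and of $\ev_0$ with no local model needed. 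Finally, a local slip worth flagging even where your chart computation applies: your formula for $\mathbb{L}_{\ev_0}$ as the two-term complex $(V^{\lambda>0})^{\vee}\to\Omega_{Y^{\lambda\ge 0}/Y^{\lambda=0}}$ forgets that $\ev_0$ also changes the group from $G^{\lambda\ge 0}$ to $G^{\lambda=0}$, so $(\mathfrak{g}^{\lambda>0})^{\vee}$ contributes in cohomological degree $+1$; this does not endanger the conclusion (degree $1$ is allowed for quasi-smooth morphisms), but the asserted identification, and the ``two bundles in degrees $1$ and $0$'' bookkeeping, are incorrect as written.
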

As for the morphism $\ev_1$, the following $\Theta$-reductive 
condition is introduced in~\cite{Halpinstab}:
\begin{defi}\emph{(\cite[Definition~4.16]{Halpinstab})}
	A derived stack $\fM$ is called $\Theta$-reductive if the morphism 
	$\ev_1$ satisfies the valuative crieterion of properness. 
	\end{defi}
For example if $\mM=t_0(\fM)$ admits a good moduli 
space, then $\fM$ is $\Theta$-reductive (see~\cite[Theorem~A]{AHLH}). 
If $\fM$ is quasi-smooth and $\Theta$-reductive, 
then the diagram (\ref{dia:Filt})
is a diagram of quasi-smooth derived stacks such that $\ev_0$ is quasi-smooth 
and $\ev_1$ is proper. 
	
	\subsection{Theta stratifications}
	Let $\mathfrak{S}\subset \mathrm{Filt}(\fM)$ be an open and 
	closed substack. The \textit{center} of $\mathfrak{S}$ is defined to be
	the open and closed substack 
	$\mathfrak{Z} \cneq \sigma^{-1}(\mathfrak{S}) \subset \mathrm{Grad}(\fM)$.
	The diagram (\ref{dia:Filt}) restricts to the diagrams 
		\begin{align}\label{dia:Filt2}
		\xymatrix{
		\mathfrak{S} \ar[r]^-{\ev_1} \ar[d]^-{\ev_0} & \fM \\
		\mathfrak{Z}\ar[ur]_-{\tau}, \ar@/^18pt/[u]^-{\sigma} & 
		} \
		\xymatrix{
		\sS \ar[r]^-{\ev_1} \ar[d]^-{\ev_0} & \mM \\
		\zZ\ar[ur]_-{\tau}. \ar@/^18pt/[u]^-{\sigma} & 
	}
	\end{align}
Here the right diagram is obtained from the left by taking 
classical truncations. 
From (\ref{Filt:t0}), note that $\sS \subset \Filt(\mM)$ is open and closed, 
and $\zZ$ is the center of $\sS$. 
Following~\cite{Halpinstab, HalpK32}, 
the substack $\ffS \subset \Filt(\fM)$ is called 
a \textit{$\Theta$-stratum}
if the map
\begin{align*}
	\ev_1 \colon \ffS \hookrightarrow \Filt(\fM) \to \fM
	\end{align*}
	in the diagram (\ref{dia:Filt2}) 
	is a closed immersion. 
The last condition is also equivalent to that 
$\ev_1 \colon \sS \to \mM$ is a closed immersion
of classical stacks. 

	\begin{defi}\emph{(\cite[Definition~2.2]{Halpinstab})}
		\label{defi:Thetastrata}
	A \textit{$\Theta$-stratification} of $\fM$ 
	indexed by a totally ordered set $I$ with minimal element 
	$0 \in I$ consists of 
	\begin{enumerate}
		\item a collection of open substacks $\fM_{\le c} \subset \fM$
		for $c \in I$ such that $\fM_{\le c} \subset \fM_{\le c'}$
		when $c<c'$, 
		\item a $\Theta$-stratum $\mathfrak{S}_{c} \subset \mathrm{Filt}(\fM_{\le c})$
		for all $c\in I$ 
		with $\fM_{\le c} \setminus \ev_1(\mathfrak{S}_{c})=\fM_{<c}$, 
		\item for every $x \in \fM$, there is a minimal $c \in I$
		such that $x \in \fM_{\le c}$. 
		\end{enumerate}
	The semistable locus is defined to be 
	$\fM^{\rm{ss}} \cneq \fM_{\le 0} \subset \fM$. 
	\end{defi}
	We will often regard $\mathfrak{S}_{c}$ as a locally 
	closed substack of $\fM$
	by $\mathfrak{S}_c \stackrel{\ev_1}{\hookrightarrow} \fM_{\le c}
	\subset \fM$, and write the $\Theta$-stratification as
	\begin{align*}
		\fM=\fM^{\rm{ss}} \bigcup_{c>0} \mathfrak{S}_{c}. 
		\end{align*}
	
	Note that 
	\begin{align*}
		H^{\ast}(\Theta, \mathbb{R}) \cong \mathbb{R}[q], \ 
		q=c_1(\oO(1))
		\end{align*}
	where $\oO(1)$ is the line bundle on $\Theta$
	induced by the 
	weight one $\C$-character. 
	Then for any $\mathbb{R}$-line bundle $\lL$ on $\fM$
	with $l=c_1(\lL)$, 
	we have $q^{-1}f^{\ast}l \in \mathbb{R}$. 
	\begin{defi}
	A point $p \in \fM(\mathbb{C})$ is called 
	$l$\textit{-semistable} 
	if for any $f \colon \Theta \to \fM$ 
	with $f(1) \cong p$, 
	we have 
	$q^{-1}f^{\ast}l \ge 0$. 	
	\end{defi}
A map $f \colon B\C \to \fM$ is called non-degenerate
if the induced morphism $\C \to \Aut(f(0))$ is non-trivial, 
and $f \colon \Theta \to \fM$ is called non-degenerate if 
$f(0) \colon B\C \to \fM$ is non-degenerate. 
For an element $b \in H^4(\fM, \mathbb{R})$, 
we have the following positivity condition:
\begin{defi}\emph{(\cite[Definition~3.85]{Halpinstab})}\label{defi:bpos}
	An element $b \in H^4(\fM, \mathbb{R})$ is called 
	\textit{positive definite} if 
	for any non-degenerate
	$f \colon B\C \to \fM$,
	we have 
	$q^{-2} f^{\ast} b>0$. 
	\end{defi}
For $(l, b)$ such that $b$ is positive definite and 
a non-degenerate map 
$f \colon \Theta \to \fM$, 
 we set
	\begin{align}\label{mu:lb}
		\mu_{l, b}(f) \cneq 
		-\frac{q^{-1}f^{\ast}l}{\sqrt{q^{-2} f^{\ast} b}} \in \mathbb{R}. 
		\end{align}
	\begin{defi}\emph{(\cite[Definition~3.1.2]{HalpK32})}\label{defi:num}
	The numerical invariant 
	$\mu_{l, b}$ is said to define the $\Theta$-stratification 
	if there is a $\Theta$-stratification of $\fM$
	whose strata $\mathfrak{S} \subset \mathrm{Filt}(\fM)$
	are ordered by the values of $\mu$, and 
	each $f \in \mathfrak{S}$ with $f(1) \cong  p$
	is the unique maximizer of $\mu$
	(up to composition with a ramified covering 
	$(-)^n \colon \Theta \to \Theta$)
	satisfying the condition $f(1) \cong p$. 
	The $l$-semistable locus is denoted by $\fM^{l\sss} \subset \fM$. 
	\end{defi}

In the case that $\mM$ admits a good moduli space, 
the following result is proved in~\cite{HalpK32}. 
\begin{thm}\emph{(\cite[Theorem~3.1.3]{HalpK32})}\label{thm:theta}
	If $\mM$ admits a good moduli space, 
	then 
	$\mu_{l, b}$ given by (\ref{mu:lb}) defines the $\Theta$-stratification 
	\begin{align*}
		\mM= \sS_1 \sqcup \cdots \sqcup \sS_N \sqcup \mM^{l\sss},
	\end{align*}
	such that $\mM^{l\sss}$ also admits a good moduli space. 
\end{thm}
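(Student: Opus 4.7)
The plan is to deduce this theorem directly from the general framework of Halpern-Leistner~\cite{HalpK32}, which establishes the existence of $\Theta$-stratifications defined by numerical invariants on stacks admitting good moduli spaces. Since the good moduli morphism $\pi_{\mM} \colon \mM \to M$ is universally closed with reductive stabilizers at closed points, and $\mM$ is in particular $\Theta$-reductive and $\sS$-complete by the criterion of Alper-Halpern-Leistner-Heinloth~\cite{AHLH}, the abstract existence theorem for $\Theta$-stratifications applies. Thus the main task reduces to verifying that the numerical invariant $\mu_{l,b}$ defined in (\ref{mu:lb}) satisfies the required hypotheses, namely the HN-boundedness property and uniqueness of the optimal destabilizer.

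First I would check that $\mu_{l,b}$ is a well-defined, bounded numerical invariant. Since $b \in H^4(\fM, \mathbb{R})$ is positive definite in the sense of Definition~\ref{defi:bpos}, the denominator $\sqrt{q^{-2}f^{\ast}b}$ is strictly positive on non-degenerate maps $f \colon \Theta \to \fM$, and the ratio is invariant under the reparametrization $(-)^n \colon \Theta \to \Theta$. Next I would show that for each $x \in \mM(\mathbb{C})$, the supremum of $\mu_{l,b}$ over non-degenerate filtrations $f \colon \Theta \to \fM$ with $f(1) \cong x$ is attained by a unique filtration up to ramified covering. Étale locally near a closed point $y \in \mM$ corresponding to a closed orbit, the formal neighborhood theorem reduces the analysis to a quotient stack $[Y/G_y]$ with $G_y$ reductive, and in this local model the existence and uniqueness of the optimal one-parameter subgroup follows from the classical Hilbert-Mumford-Kempf theory applied to the quadratic form $b$ and the linearization $l$; the convexity properties needed for uniqueness come precisely from $b$ being positive definite.

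Second I would globalize: the loci $\fM_{\le c} \subset \fM$ for $c \in \mathbb{R}_{\ge 0}$ are defined by requiring that the supremum of $\mu_{l,b}$ on filtrations through $x$ is bounded by $c$, and openness follows from upper semi-continuity of the numerical invariant, itself a consequence of the properness of $\ev_1 \colon \Filt(\fM) \to \fM$ (equivalent to $\Theta$-reductivity) together with the existence of good moduli spaces on each $\fM_{\le c}$. The strata $\ffS_c$ are then identified with the open and closed substacks of $\Filt(\fM_{\le c})$ cut out by the equation $\mu_{l,b} = c$, and the fact that $\ev_1$ restricts to a closed immersion on $\ffS_c$ is precisely the content of~\cite[Theorem~3.1.3]{HalpK32} in this setting. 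Finiteness of the stratification follows since $\mM$ is quasi-compact and the slope function takes values in a discrete subset of $\mathbb{R}_{\ge 0}$.

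The hard part, and the place where~\cite{HalpK32} does the real work, is establishing the existence of a good moduli space for $\mM^{l\sss}$. This cannot be done purely from the Hilbert-Mumford picture; rather one must verify the existence criterion from~\cite{AHLH}, i.e.\ that $\mM^{l\sss}$ remains $\Theta$-reductive and $\sS$-complete. Both are delicate: $\Theta$-reductivity on the semistable locus requires that HN filtrations of a semistable limit are themselves semistable, while $\sS$-completeness amounts to a uniqueness statement for polystable degenerations. These properties are extracted from the interaction between the numerical invariant and the local description of $\fM$ near closed points of $\mM$, and their verification is where the bulk of the argument in~\cite[Section~3]{HalpK32} lies. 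I would therefore simply invoke that result to conclude.
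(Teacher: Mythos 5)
Your proposal is consistent with the paper's treatment: the paper states this result purely as a citation of \cite[Theorem~3.1.3]{HalpK32} (together with the good-moduli-space existence criterion of \cite{AHLH}) and gives no proof of its own, and your argument likewise reduces everything to invoking that theorem after a sketch of its ingredients. Your summary of those ingredients (positive definiteness of $b$ giving a well-defined bounded invariant, local reduction to reductive quotient stacks, and $\Theta$-reductivity/$\sS$-completeness of the semistable locus for the existence of its good moduli space) is a fair description of the cited argument, so there is no substantive discrepancy to flag.
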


\begin{exam}\label{exam:KN}
	In the situation of Subsection~\ref{subsec:exam:KN}, 
	let us take 
	\begin{align*}
		l \in H^2(BG, \mathbb{R})=M_{\mathbb{R}}, \ b \in H^4(BG, \mathbb{R})
	\end{align*}
	such that $b$ is positive definite. 
	We regard them as elements of 
	$H^2([Y/G])$, $H^4([Y/G])$ by the pull-back 
	of the projection 
	$[Y/G] \to BG$. 
	A choice of $b$ corresponds to a Weyl-invariant 
	norm $\lvert \ast \rvert$ on $N_{\mathbb{R}}$. 
	By taking the quotient stacks of the 
	stratification (\ref{KN:strata}), we have the 
	$\Theta$-stratification of the quotient stack $\yY=[Y/G]$
	with respect to $\mu_{l, b}$
	\begin{align}\notag
		\yY=\sS_{1} \sqcup \sS_2 \sqcup \cdots \sqcup \sS_N \sqcup 
		\yY^{l\sss}. 
	\end{align} 
\end{exam}
Later we will use the following lemma: 
\begin{lem}\label{lem:fit:M}
	Let $\pi_{\mM} \colon \mM \to M$ be a good moduli space. 
	
	(i) 
We have the commutative diagram 
	\begin{align}\label{filt:gmoduli}
		\xymatrix{
		\Filt(\mM) \ar[r]^-{\ev_1} \ar[d]_-{\ev_0} & \mM \ar[d]^-{\pi_{\mM}}\\
		\Grad(\mM) \ar[r]_-{\pi_{\mM} \circ \tau}  & M. 
	}
\end{align}

(ii) For a morphism $M' \to M$ between algebraic spaces, we set 
$\mM'=\mM \times_M M'$
so that $\pi_{\mM'} \colon \mM' \to M'$ is a good moduli space morphism. 
Then the pull-back of the diagram (\ref{filt:gmoduli}) by 
$M' \to M$ is isomorphic to 
the diagram 
\begin{align}\notag
	\xymatrix{
		\Filt(\mM') \ar[r]^-{\ev_1} \ar[d]_-{\ev_0} & \mM' \ar[d]^-{\pi_{\mM'}}\\
		\Grad(\mM') \ar[r]_-{\pi_{\mM'} \circ \tau}  & M'. 
	}
\end{align}
	\end{lem}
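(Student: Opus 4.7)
The plan is to exploit the universal property of good moduli space morphisms, together with the fact that $\Theta=[\mathbb{A}^1/\C]$ has $\Spec \mathbb{C}$ as its good moduli space (since $\oO_{\mathbb{A}^1}^{\C}=\mathbb{C}$). First I would observe that, by Alper's flat base change for good moduli spaces, for any classical affine test scheme $T$ the good moduli space of $\Theta \times T$ is $T$ itself, so every morphism $\Theta \times T \to Z$ to an algebraic space $Z$ factors uniquely through the projection $\mathrm{pr}_T \colon \Theta \times T \to T$. The same statement holds with $\Theta$ replaced by $B\C$, or by $\Spec \mathbb{C}$ trivially. This is the main input; once it is in hand, both parts follow by unwinding the functor-of-points descriptions of $\Filt$ and $\Grad$.

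For (i), I would pick a $T$-point $f \colon \Theta \times T \to \mM$ of $\Filt(\mM)$ and factor the composition $\pi_{\mM} \circ f \colon \Theta \times T \to M$ uniquely as $\phi_f \circ \mathrm{pr}_T$ for some $\phi_f \colon T \to M$. Writing $i_1 \colon \Spec \mathbb{C} \to \Theta$ and $i_0' \colon \Spec \mathbb{C} \to B\C \to \Theta$ for the inclusions of the points $1$ and $0$, we have $\mathrm{pr}_T \circ (i_1 \times \id_T) = \id_T = \mathrm{pr}_T \circ (i_0' \times \id_T)$. Since $\tau \circ \ev_0(f)$ is by construction $f \circ (i_0' \times \id_T)$ and $\ev_1(f)=f\circ (i_1 \times \id_T)$, one obtains
\begin{align*}
\pi_{\mM} \circ \ev_1(f) &= \pi_{\mM} \circ f \circ (i_1 \times \id_T) = \phi_f, \\
\pi_{\mM} \circ \tau \circ \ev_0(f) &= \pi_{\mM} \circ f \circ (i_0' \times \id_T) = \phi_f,
\end{align*}
so $\pi_{\mM} \circ \ev_1$ and $\pi_{\mM} \circ \tau \circ \ev_0$ agree as morphisms $\Filt(\mM) \to M$, proving commutativity of (\ref{filt:gmoduli}).

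For (ii), I would use the functor-of-points description: for any $T$,
\begin{align*}
\Filt(\mM')(T) &= \Map(\Theta \times T, \mM \times_M M') \\
&= \Map(\Theta \times T, \mM) \times_{\Map(\Theta \times T, M)} \Map(\Theta \times T, M').
\end{align*}
Applying the factorization above to the algebraic spaces $M$ and $M'$ gives $\Map(\Theta \times T, M) = M(T)$ and $\Map(\Theta \times T, M') = M'(T)$, whence
\[
\Filt(\mM')(T) = \Filt(\mM)(T) \times_{M(T)} M'(T) = (\Filt(\mM) \times_M M')(T),
\]
yielding $\Filt(\mM') \cong \Filt(\mM) \times_M M'$. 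The identical argument with $B\C$ in place of $\Theta$ gives $\Grad(\mM') \cong \Grad(\mM) \times_M M'$. Functoriality of $\ev_0, \ev_1, \tau$ in the target stack then identifies the diagram in (ii) as the pullback of (\ref{filt:gmoduli}) along $M' \to M$.

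The only delicate point is justifying that morphisms $\Theta \times T \to M$ (and $B\C \times T \to M$) factor through $\mathrm{pr}_T$, which reduces by flat base change to the assertion that $\Theta$ and $B\C$ have $\Spec \mathbb{C}$ as their good moduli spaces; the rest of the proof is formal manipulation of mapping stacks.
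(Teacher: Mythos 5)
Your proposal is correct and follows essentially the same route as the paper: part (i) is exactly the paper's argument, namely that $\Theta\times T\to T$ (resp. $B\C\times T\to T$) is the good moduli space morphism, so $\pi_{\mM}\circ f$ factors uniquely through $T$, and both $\ev_1(f)$ and $\tau\circ\ev_0(f)$ are obtained by composing $f$ with sections of this projection. The only difference is in (ii): the paper simply cites Halpern-Leistner (\cite[Corollary~1.30.1]{Halpinstab}) for the Cartesian squares $\Filt(\mM')\cong\Filt(\mM)\times_M M'$ and $\Grad(\mM')\cong\Grad(\mM)\times_M M'$, whereas you rederive them by the functor-of-points computation $\Map(\Theta\times T,\mM\times_M M')\cong\Map(\Theta\times T,\mM)\times_{M(T)}M'(T)$, using the same key fact that maps from $\Theta\times T$ (or $B\C\times T$) to an algebraic space factor uniquely through $T$; this is a legitimate self-contained substitute for the citation.
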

\begin{proof}
	(i) 
	Let $g$ be the map
	$g \colon \Spec \mathbb{C} \to B\C \to \Theta$
	where the first map is the natural one and 
	the second one is $i_0$. 
	Then for any classical scheme $T$
	and a map $f \colon \Theta \times T  \to \mM$, 
	we have the commutative diagram 
	\begin{align*}
		\xymatrix{
T	\ar@<0.5ex>[r]^-{(i_1, \id)} \ar@<-0.5ex>[r]_-{(g, \id)} \ar[d] & \Theta \times T 
	\ar[r]^-{f} \ar[d] & \mM \ar[d]^-{\pi_{\mM}} \\
	T \ar@{=}[r] & T \ar[r] & M. 
	}
		\end{align*}
	Here the middle 
	vertical arrow is the projection which is nothing 
	but the good moduli space morphism for $\Theta \times T$, 
	and the bottom arrows are induced morphisms on good moduli spaces.
	The commutative diagram (\ref{filt:gmoduli}) follows from the above 
	commutative diagram. 
	
	(ii) The claim follows from 
	the following Cartesian diagrams (see~\cite[Corollary~1.30.1]{Halpinstab})
	\begin{align}\notag
		\xymatrix{
			\Filt(\mM') \ar[r] \ar[d]_-{\pi_{\mM'} \circ \ev_1} 
			& \Filt(\mM) \ar[d]^-{\pi_{\mM} \circ \ev_1}\\
			M' \ar[r]  & M, 
		} \
		\xymatrix{
			\Grad(\mM') \ar[r] \ar[d]_-{\pi_{\mM'} \circ \tau} 
			& \Grad(\mM) \ar[d]^-{\pi_{\mM} \circ \tau}\\
			M' \ar[r]  & M.
		}
	\end{align}	
	Here top horizontal arrows are induced by $\mM' \to \mM$.
	\end{proof}

	\section{The stacks of filtered and graded objects for $(-1)$-shifted cotangents}
	In this section, we describe the stack of filtered objects
	for the $(-1)$-shifted cotangent $\Omega_{\fM}[-1]$ in terms of 
	$(-2)$-shifted conormal stacks
	over $\Filt(\fM)$. 
	This is a generalization of the result proved in~\cite[Proposition~3.1]{THtype} that the
	moduli stacks
	of exact sequences of local surfaces are isomorphic to 
	$(-2)$-shifted conormal stacks over the moduli stacks of exact
	sequences of surfaces. 
	\subsection{Description via $(-2)$-shifted conormals}
	For a derived stack $\fM$, 
	let $\Omega_{\fM}[-1]$ be the $(-1)$-shifted cotangent stack of $\fM$. 
	We denote the diagram (\ref{dia:Filt}) for $\Omega_{\fM}[-1]$ as 
		\begin{align}\notag
		\xymatrix{
			\mathrm{Filt}(\Omega_{\fM}[-1]) \ar[r]^-{\ev_1^{\Omega}} \ar[d]^-{\ev_0^{\Omega}} & \Omega_{\fM}[-1] \\
			\mathrm{Grad}(\Omega_{\fM}[-1])\ar[ur]_-{\tau^{\Omega}}. \ar@/^18pt/[u]^-{\sigma^{\Omega}} & 
		}
	\end{align}
		On the other hand, 
	we have the following morphism from the diagram (\ref{dia:Filt})
	\begin{align}\label{mor:ev}
		(\ev_0, \ev_1) \colon \Filt(\fM) \to \Grad(\fM) \times \fM. 
		\end{align}
	We denote by 
	$\Omega_{(\ev_0, \ev_1)}[-2]$ the $(-2)$-shifted conormal stack 
	for the above morphism 
	\begin{align}\label{Omegaev}
		\Omega_{(\ev_0, \ev_1)}[-2]
		\cneq \Spec S_{\oO_{\fM}}(\mathbb{T}_{(\ev_0, \ev_1)}[2]) \to \Filt(\fM). 
		\end{align}
	Note that in the diagram (\ref{dia:Filt}), 
	we have the distinguished triangles on $\Filt(\fM)$
	\begin{align}\label{dist:L}
		&\ev_1^{\ast}\mathbb{L}_{\fM} \to \mathbb{L}_{\ev_0} \to \mathbb{L}_{(\ev_0, \ev_1)}
		\to \ev_1^{\ast}\mathbb{L}_{\fM}[1], \\
	\notag	&\ev_0^{\ast}\mathbb{L}_{\Grad(\fM)} \to \mathbb{L}_{\ev_1} \to \mathbb{L}_{(\ev_0, \ev_1)}
		\to \ev_0^{\ast}\mathbb{L}_{\Grad(\fM)}[1].
		\end{align}
	The last arrows induce the morphisms 
		\begin{align}\label{mor:Oev01}
		\xymatrix{
\Omega_{(\ev_0, \ev_1)}[-2] \ar[d]_-{h_0} \ar[r]^-{h_1} & \Omega_{\fM}[-1] \\
\Omega_{\Grad(\fM)}[-1]. & 	
}
		\end{align}
We have the following relationship between 
	the stack of filtered objects for $(-1)$-shifted cotangent stack and 
	$(-2)$-shifted conormal stack of the morphism (\ref{mor:ev}). 
	\begin{prop}\label{prop:compare}
		There exists an equivalence of derived stacks
		\begin{align}\label{equiv:Omega}
			\Filt(\Omega_{\fM}[-1]) \stackrel{\sim}{\to} \Omega_{(\ev_0, \ev_1)}[-2]
			\end{align}
		such that the following diagram is commutative 
		\begin{align}\label{dia:filt}
			\xymatrix{
				\Filt(\fM) \ar@{=}[d] & \Filt(\Omega_{\fM}[-1]) \ar[l] \ar[d]_-{\sim} \ar[r]^-{\ev_1^{\Omega}} & \Omega_{\fM}[-1] \ar@{=}[d] \\
				\Filt(\fM) & \Omega_{(\ev_0, \ev_1)}[-2] \ar[l] \ar[r]^-{h_1} & \Omega_{\fM}[-1]. 
				}
			\end{align}
		Here 
		the top left arrow is induced by the projection $\Omega_{\fM}[-1] \to \fM$, 
		the bottom left arrow is the natural morphism (\ref{Omegaev}). 
		\end{prop}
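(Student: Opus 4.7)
My plan is to verify the equivalence (\ref{equiv:Omega}) by matching functors of $T$-points for affine derived test schemes $T$, and then to check compatibility with the evaluation maps directly. By definition, a $T$-point of $\Filt(\Omega_{\fM}[-1])$ is a map $\Theta \times T \to \Omega_{\fM}[-1]$; since $\Omega_{\fM}[-1] \to \fM$ represents sections of $\mathbb{L}_{\fM}[-1]$, such a datum is a pair $(f, s)$ with $f \colon \Theta \times T \to \fM$ and $s \in \Map_{\oO_{\Theta \times T}}(\oO_{\Theta \times T}, f^{\ast}\mathbb{L}_{\fM}[-1])$. Similarly, a $T$-point of $\Omega_{(\ev_0, \ev_1)}[-2]$ is a pair $(F, \sigma)$ with $F \colon T \to \Filt(\fM)$ and $\sigma \in \Map_{\oO_T}(\oO_T, F^{\ast}\mathbb{L}_{(\ev_0, \ev_1)}[-2])$. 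The mapping-stack adjunction identifies $f$ with $F$, so the problem reduces to producing a natural identification between the spaces of sections $s$ and $\sigma$.

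The central computation is to show $F^{\ast}\mathbb{L}_{(\ev_0, \ev_1)} \simeq \pi_{T\ast}(f^{\ast}\mathbb{L}_{\fM})[1]$, where $\pi_T \colon \Theta \times T \to T$ is the projection. I write $f^{\ast}\mathbb{L}_{\fM}$ in the Rees picture of Lemma~\ref{lem:Rees} as a diagram $\cdots \to \eE_1 \to \eE_0 \to \eE_{-1} \to \cdots$ in $D_{\qcoh}(T)$ with $\eE_0 = \pi_{T\ast}(f^{\ast}\mathbb{L}_{\fM})$. Formula~(\ref{L:filt}) together with~(\ref{pi+}) gives $F^{\ast}\mathbb{L}_{\Filt(\fM)} \simeq \Cone(\eE_1 \to \colim_{i \to -\infty}\eE_i)$, while combining~(\ref{L:filt}) with~(\ref{i_0ast}) and~(\ref{pi+}) on the $\Grad$-side yields $F^{\ast}\ev_0^{\ast}\mathbb{L}_{\Grad(\fM)} \simeq \Cone(\eE_1 \to \eE_0)$. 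An octahedral argument applied to the triangle $\ev_0^{\ast}\mathbb{L}_{\Grad(\fM)} \to \mathbb{L}_{\Filt(\fM)} \to \mathbb{L}_{\ev_0}$ then produces $F^{\ast}\mathbb{L}_{\ev_0} \simeq \Cone(\eE_0 \to \colim_{i \to -\infty}\eE_i)$. Feeding this into the first triangle of~(\ref{dist:L}) along with the identification $F^{\ast}\ev_1^{\ast}\mathbb{L}_{\fM} \simeq i_1^{\ast}(f^{\ast}\mathbb{L}_{\fM}) \simeq \colim_{i \to -\infty}\eE_i$ from~(\ref{i_0ast}), a second octahedral step collapses $\Cone(\colim \eE_i \to \Cone(\eE_0 \to \colim \eE_i))$ to $\eE_0[1]$, giving the claim.

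Given this identification, the bijection of sections follows from the pullback--pushforward adjunction $\pi_T^{\ast} \dashv \pi_{T\ast}$: elements of $\Map_{\oO_T}(\oO_T, \pi_{T\ast}(f^{\ast}\mathbb{L}_{\fM})[-1])$ are exactly $\Map_{\oO_{\Theta \times T}}(\oO_{\Theta \times T}, f^{\ast}\mathbb{L}_{\fM}[-1])$. This identification is manifestly natural in $T$, so it assembles into the equivalence (\ref{equiv:Omega}). The left square of (\ref{dia:filt}) commutes by construction. For the right square, I trace $T$-points: $\ev_1^{\Omega}$ sends $(f, s)$ to its restriction along $(i_1 \times \id_T) \colon T \hookrightarrow \Theta \times T$, while the composite $\Filt(\Omega_{\fM}[-1]) \to \Omega_{(\ev_0, \ev_1)}[-2] \xrightarrow{h_1} \Omega_{\fM}[-1]$ applies the natural map $F^{\ast}\mathbb{L}_{(\ev_0, \ev_1)}[-2] \to F^{\ast}\ev_1^{\ast}\mathbb{L}_{\fM}[-1]$ to $\sigma$; under our identification this is the Rees-level structure map $\eE_0 \to \colim_{i \to -\infty}\eE_i$, which via the adjunction corresponds precisely to the restriction $i_1^{\ast}s$ of $s$ to $\{1\} \times T$.

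The main technical obstacle will be carrying out the two octahedral computations coherently, especially keeping careful track of the shifts and the directions of the connecting morphisms in (\ref{dist:L}). A secondary concern is upgrading the pointwise matching of $T$-points to a natural equivalence of derived mapping stacks; however, since Lemma~\ref{lem:Rees}, the formulas (\ref{L:filt}), (\ref{pi+}), (\ref{i_0ast}), and the pull-push adjunction are all manifestly functorial in $T$, this is essentially bookkeeping.
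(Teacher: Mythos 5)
Your proposal is correct and follows essentially the same route as the paper: both use the Rees description together with the triangles (\ref{dist:L}) to identify $\mathbb{L}_{(\ev_0,\ev_1)}$ with the shifted weight-zero Rees piece $(\ev_F^{\ast}\mathbb{L}_{\fM})_0[1]$, and then match $T$-points of $\Filt(\Omega_{\fM}[-1])$ and $\Omega_{(\ev_0,\ev_1)}[-2]$ as sections of $\mathbb{L}_{\fM}[-1]$ resp.\ $\mathbb{L}_{(\ev_0,\ev_1)}[-2]$, with the $\ev_1^{\Omega}$-compatibility coming from the connecting map to $\ev_1^{\ast}\mathbb{L}_{\fM}[1]$. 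The only cosmetic difference is that you identify the section spaces via the adjunction $\pi_T^{\ast}\dashv\pi_{T\ast}$ after pulling everything back to $T$, whereas the paper computes the cotangent complex universally on $\Filt(\fM)$ and matches maps of Rees diagrams directly; these are interchangeable.
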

	\begin{proof}
		We first describe the 
		relative cotangent complex $\mathbb{L}_{(\ev_0, \ev_1)}$ for the morphism (\ref{mor:ev}). 
		Let $\ev_F \colon \Theta \times \Filt(\fM) \to \fM$
		 be the universal morphism as in the diagram (\ref{dia:filgr}).
		By the Rees construction in Lemma~\ref{lem:Rees}, the object 
		$\ev^{\ast}_F\mathbb{L}_{\fM} \in D_{\qcoh}(\Theta \times \Filt(\fM))$ 
		is described by a diagram 
		\begin{align}\label{ev:Rees}
			\ev_F^{\ast}\mathbb{L}_{\fM}
			= \cdots \to (\ev_F^{\ast}\mathbb{L}_{\fM})_{1} \to 
			(\ev_F^{\ast}\mathbb{L}_{\fM})_{0} \to
			(\ev_F^{\ast}\mathbb{L}_{\fM})_{-1} \to \cdots
			\end{align}
		for $(\ev_F^{\ast}\mathbb{L}_{\fM})_{i} \in D_{\qcoh}(\Filt(\fM))$. 
		By (\ref{pi+}) and (\ref{L:filt}), we have 
		\begin{align}\label{Lfilt2}
			\mathbb{L}_{\Filt(\fM)}=\Cone\left(	(\ev_F^{\ast}\mathbb{L}_{\fM})_{1}
			\to \colim_{i \to -\infty}	(\ev_F^{\ast}\mathbb{L}_{\fM})_{i} \right). 
			\end{align}
		From the commutative diagram
		\begin{align*}
			\xymatrix{
			\Filt(\fM) \ar[d]_-{i_1} \ar[rd]^-{\ev_1} & \\
			\Theta \times \Filt(\fM) \ar[r]_-{\ev_F} & \fM
		}
			\end{align*}
		and (\ref{i_0ast}), we have 
		\begin{align}\label{ev1ast}
			\ev_1^{\ast}\mathbb{L}_{\fM}=
			i_1^{\ast}\ev_F^{\ast}\mathbb{L}_{\fM}=
			\colim_{i\to -\infty}(\ev_F^{\ast}\mathbb{L}_{\fM})_i. 
			\end{align}
		Also from the commutative diagram 
		\begin{align*}
			\xymatrix{
	B\C \times \Filt(\fM) \ar[r]^-{(\id, \ev_0)} \ar[d]_{(i_0, \id)} 
	& B\C \times \Grad(\fM) \ar[d]^-{\ev_G} \\
	\Theta \times \Filt(\fM) \ar[r]_-{\ev_F} & \fM		
	}
			\end{align*}
		and (\ref{i_0ast}), (\ref{pi+}),
		we have  
		\begin{align}\label{ev0ast}
			\ev_0^{\ast}\mathbb{L}_{\Grad(\fM)}
			& \cong ((\id, \ev_0)^{\ast}\ev_G^{\ast}\mathbb{L}_{\fM})_{\wt=0} \\
		\notag	& \cong  ((i_0, \id)^{\ast}\ev_F^{\ast}\mathbb{L}_{\fM})_{\wt=0}  \\
		\notag & \cong \Cone\left( 	(\ev_F^{\ast}\mathbb{L}_{\fM})_{1}
		\to 	(\ev_F^{\ast}\mathbb{L}_{\fM})_{0} \right). 
			\end{align}
		We have the commutative diagram from (\ref{Lfilt2}), (\ref{ev1ast}), (\ref{ev0ast}), where 
		horizontal arrows are distinguished triangles
		\begin{align*}
			\xymatrix{
		\ev_0^{\ast}\mathbb{L}_{\Grad(\fM)} \ar[r] \ar[d]_-{\sim} & \mathbb{L}_{\Filt(\fM)} \ar[r] 
		\ar[d]_-{\sim} &
		\mathbb{L}_{\ev_0} \ar@{.>}[d] \\ 
		\Cone\left( 	(\ev_F^{\ast}\mathbb{L}_{\fM})_{1}
		\to 	(\ev_F^{\ast}\mathbb{L}_{\fM})_{0} \right) \ar[r] & 
		\Cone\left(	(\ev_F^{\ast}\mathbb{L}_{\fM})_{1}
		\to \ev_1^{\ast}\mathbb{L}_{\fM} \right) \ar[r] & 
		\Cone\left(	(\ev_F^{\ast}\mathbb{L}_{\fM})_{0}
		\to  \ev_1^{\ast}\mathbb{L}_{\fM}  \right).
		}
			\end{align*}
		Therefore there is an isomorphism 
		\begin{align*}
			\mathbb{L}_{\ev_0} \cong \Cone\left(	(\ev_F^{\ast}\mathbb{L}_{\fM})_{0}
			\to  \ev_1^{\ast}\mathbb{L}_{\fM}  \right)
			\end{align*}
		which makes the above diagram commutative. 
		We also have the commutative diagram, where 
		horizontal arrows are distinguished triangles 
		\begin{align}\label{dia:evmor}
			\xymatrix{
			\ev_1^{\ast}\mathbb{L}_{\fM} \ar[r] \ar@{=}[d] & \mathbb{L}_{\ev_0} \ar[r] 
		\ar[d]_-{\sim} &
		\mathbb{L}_{(\ev_0, \ev_1)} \ar@{.>}[d] \ar[r] & \ev_1^{\ast}\mathbb{L}_{\fM}[1] \ar@{=}[d] \\ 
	\ev_1^{\ast}\mathbb{L}_{\fM} \ar[r] & 
		\Cone\left(	(\ev_F^{\ast}\mathbb{L}_{\fM})_{0}
		\to \ev_1^{\ast}\mathbb{L}_{\fM} \right) \ar[r] & 
			(\ev_F^{\ast}\mathbb{L}_{\fM})_{0}[1]\ar[r] & \ev_1^{\ast}\mathbb{L}_{\fM}[1]
		}
			\end{align}
	Therefore there is an isomorphism
	\begin{align}\label{isom:Lev}
		\mathbb{L}_{(\ev_0, \ev_1)} \cong (\ev_F^{\ast}\mathbb{L}_{\fM})_0[1]
		\end{align}		
	which makes the above diagram commutative. 	
			
			Let $T$ be a derived stack, and consider a diagram 
			\begin{align}\label{dia:lift0}
				\xymatrix{
			& \Filt(\Omega_{\fM}[-1]) \ar[d]  \\
			T \ar@{.>}[ur] \ar[r]_-{f} & \Filt(\fM). 	
			}
				\end{align}
			By the definition of the stack of filtered objects, 
			the above diagram corresponds to a diagram 
			\begin{align}\label{dia:lift}
			\xymatrix{
				& \Omega_{\fM}[-1] \ar[d]  \\
			\Theta \times	T \ar@{.>}[ur] \ar[r]_-{g} & \fM. 	
			}
		\end{align}
	Here $f$, $g$ fit into the commutative diagram 
	\begin{align*}
		\xymatrix{
	\Theta \times T \ar[rd]^-{g} \ar[d]_{(\id, f)} & \\
	\Theta \times \Filt(\fM) \ar[r]_-{\ev_F} & \fM. 	
	}
		\end{align*}	
	By the definition of $\Omega_{\fM}[-1]$ and the above commutative diagram, 
	giving a dotted arrow in (\ref{dia:lift}) is equivalent to giving a morphism 
	\begin{align}\label{mor:lift}
		\oO_{\Theta \times T} \to g^{\ast}\mathbb{L}_{\fM}[-1]=(\id, f)^{\ast}\ev_F^{\ast}
		\mathbb{L}_{\fM}[-1]. 
		\end{align}
	Under the Rees construction, the above morphism 
	corresponds to a commutative diagram 
	\begin{align*}
		\xymatrix{
\cdots \ar[r] &	0 \ar[r] \ar[d] & \oO_T \ar[r]^-{\id} \ar[d] & 	\oO_T \ar[r]^-{\id} \ar[d] & \cdots \\
\cdots \ar[r] &	f^{\ast}(\ev_F^{\ast}\mathbb{L}_{\fM})_1[-1] \ar[r] & 
	f^{\ast}(\ev_F^{\ast}\mathbb{L}_{\fM})_0[-1] \ar[r] & 
	f^{\ast}(\ev_F^{\ast}\mathbb{L}_{\fM})_{-1}[-1] \ar[r] & \cdots
	}
		\end{align*}
			Together with the isomorphism (\ref{isom:Lev}), 
			we see that giving a morphism (\ref{mor:lift}) is equivalent to giving 
			a morphism 
			\begin{align}\label{OTf}
				\oO_T \to 	f^{\ast}(\ev_F^{\ast}\mathbb{L}_{\fM})_0[-1]
				\cong f^{\ast}
				\mathbb{L}_{(\ev_0, \ev_1)}[-2]. 
				\end{align}
			It follows that giving a dotted arrow in (\ref{dia:lift0}) is equivalent 
			to giving a dotted arrow in the diagram
				\begin{align}\notag
				\xymatrix{
					& \Omega_{(\ev_0, \ev_1)}[-2] \ar[d]  \\
					T \ar@{.>}[ur] \ar[r]_-{f} & \Filt(\fM).	
				}
			\end{align}
		Therefore we obtain an equivalence (\ref{equiv:Omega}) together with the left 
		commutative diagram in (\ref{dia:filt}). 
		The right commutative diagram in (\ref{dia:filt}) follows from the
		right square in the diagram (\ref{dia:evmor}). 
		\end{proof}
	
	In the proof of the above proposition, we described 
	the cotangent complex of $\Filt(\fM)$ as in (\ref{Lfilt2}). 
	Now suppose that $\fM$ is quasi-smooth and QCA, so in particular 
	$\Filt(\fM)$ is also quasi-smooth by Lemma~\ref{lem:qsmooth}. 
	Then $\mathbb{L}_{\Filt(\fM)}$ is 
	perfect, so we have its determinant line bundle 
	$\det \mathbb{L}_{\Filt(\fM)}$. 
	Later we will use its description. 
We use the following commutative diagram 
\begin{align}\label{dia:FGev}
	\xymatrix{
\Filt(\fM) \diasquare
 \ar[d]_-{\ev_0} & \Theta \times \Filt(\fM) \ar[l]_-{\pi_F} \ar[r]^-{(\mathrm{pr}, \id)} 
\ar[d]_-{(\id, \ev_0)} & B\C \times \Filt(\fM) \ar[d]_-{(i_0, \id)} &  \\
\Grad(\fM) & \Theta \times \Grad(\fM) \ar[l]^-{\pi_G}	\ar[r]_-{(\id, \sigma)} & 
\Theta \times \Filt(\fM) \ar[r]_-{\ev_F} & \fM. 
}
	\end{align}
\begin{lem}\label{lem:detFilt}
	There is an isomorphism of line bundles 
	\begin{align*}
		\det \mathbb{L}_{\Filt(\fM)} \cong 
		\ev_0^{\ast} \det(\pi_{G+}(\mathrm{id}, \sigma)^{\ast}\ev_F^{\ast}\mathbb{L}_{\fM}). 
		\end{align*}
	\end{lem}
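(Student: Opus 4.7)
The plan is a two-step argument: first reducing the claim via base change to the statement that $\det \mathbb{L}_{\Filt(\fM)}$ is pulled back from $\Grad(\fM)$ along $\ev_0$, then exhibiting this pullback explicitly by analyzing the Rees filtration.

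For the reduction, observe that the left square of (\ref{dia:FGev}) is Cartesian, so base change combined with the description (\ref{L:filt}) of $\mathbb{L}_{\Filt(\fM)}$ yields
\[
\sigma^{\ast} \mathbb{L}_{\Filt(\fM)} \cong \pi_{G+}(\mathrm{id},\sigma)^{\ast} \ev_F^{\ast} \mathbb{L}_{\fM}.
\]
Hence the right-hand side of the lemma equals $\ev_0^{\ast} \sigma^{\ast} \det \mathbb{L}_{\Filt(\fM)}$, and it suffices to prove the intrinsic statement $\det \mathbb{L}_{\Filt(\fM)} \cong \ev_0^{\ast} \sigma^{\ast} \det \mathbb{L}_{\Filt(\fM)}$.

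For the identification, I would analyze the filtration $(\ev_F^{\ast} \mathbb{L}_{\fM})_{\bullet}$ of (\ref{ev:Rees}). The commutativity $\ev_F \circ (i_0,\mathrm{id}) = \ev_G \circ (\mathrm{id},\ev_0)$ encoded in (\ref{dia:FGev}), combined with the $\C$-weight decomposition (\ref{grad:wt}) and the formula (\ref{i_0ast}) for $i_0^{\ast}$, identifies the $j$-th associated graded piece as
\[
\Cone\bigl((\ev_F^{\ast} \mathbb{L}_{\fM})_{j+1} \to (\ev_F^{\ast} \mathbb{L}_{\fM})_{j}\bigr) \cong \ev_0^{\ast} (\ev_G^{\ast} \mathbb{L}_{\fM})_{\wt=j}.
\]
Since the $\C$-weights of a perfect complex are locally bounded, telescoping in $j$ gives $\det (\ev_F^{\ast} \mathbb{L}_{\fM})_1 \cong \ev_0^{\ast} \det (\ev_G^{\ast} \mathbb{L}_{\fM})_{\wt \ge 1}$, and passing to the limit $i \to -\infty$ using (\ref{ev1ast}) gives $\det \ev_1^{\ast} \mathbb{L}_{\fM} \cong \ev_0^{\ast} \det \ev_G^{\ast} \mathbb{L}_{\fM}$. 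Substituting into the cone description (\ref{Lfilt2}) of $\mathbb{L}_{\Filt(\fM)}$ produces $\det \mathbb{L}_{\Filt(\fM)} \cong \ev_0^{\ast} \det (\ev_G^{\ast} \mathbb{L}_{\fM})_{\wt \le 0}$, while a parallel direct evaluation via (\ref{pi+}) of the Rees pieces of $(\mathrm{id},\sigma)^{\ast} \ev_F^{\ast} \mathbb{L}_{\fM}$ on $\Theta \times \Grad(\fM)$ identifies $\pi_{G+}(\mathrm{id},\sigma)^{\ast} \ev_F^{\ast} \mathbb{L}_{\fM}$ with the same object $(\ev_G^{\ast} \mathbb{L}_{\fM})_{\wt \le 0}$, yielding the desired isomorphism of determinants.

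The main technical obstacle is careful bookkeeping of the Rees conventions: one must fix whether the Rees index $i$ corresponds to the weight-$\ge i$ or weight-$\le i$ subcomplex in the graded setting, and then pair this consistently with the cone formulas (\ref{Lfilt2}) and (\ref{pi+}) so that the two telescoping computations (via $\sigma^{\ast}$ and via $\pi_{G+}(\mathrm{id},\sigma)^{\ast}$) match exactly. Once these conventions are locked in, the local boundedness of $\C$-weight decompositions on perfect complexes makes the telescoping well-defined and the argument routine.
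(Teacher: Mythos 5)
Your proposal is correct and follows essentially the same route as the paper's proof: identify the Rees pieces and associated graded pieces of $\ev_F^{\ast}\mathbb{L}_{\fM}$ via the diagram (\ref{dia:FGev}) (so that $\mathrm{gr}_j \cong \ev_0^{\ast}(\ev_G^{\ast}\mathbb{L}_{\fM})_{\wt=j}$), use perfectness to make the determinant telescoping finite, and match both sides of the claimed isomorphism through the cone formulas (\ref{Lfilt2}) and (\ref{pi+}); the only difference is cosmetic, in that you evaluate both sides separately as $\ev_0^{\ast}\det(\ev_G^{\ast}\mathbb{L}_{\fM})_{\wt\le 0}$ while the paper runs a single chain of isomorphisms. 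One small remark: your step-one base change $\sigma^{\ast}\mathbb{L}_{\Filt(\fM)}\cong\pi_{G+}(\id,\sigma)^{\ast}\ev_F^{\ast}\mathbb{L}_{\fM}$ is true but uses the Cartesian square with vertical arrows $\sigma$ and $(\id,\sigma)$, which is not the left square of (\ref{dia:FGev}) (that one involves $\ev_0$); since your final argument never actually needs this reduction, the slip is harmless.
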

\begin{proof}
	Let $(\ev_F^{\ast}\mathbb{L}_{\fM})_i$ be as in (\ref{ev:Rees}), and set 
	\begin{align}\label{gr:LM}
		\mathrm{gr}_i(\ev_F^{\ast}\mathbb{L}_{\fM}) \cneq 
		\Cone\left((\ev_F^{\ast}\mathbb{L}_{\fM})_{i+1} \to (\ev_F^{\ast}\mathbb{L}_{\fM})_i \right). 
		\end{align}
	We also set $\mathrm{gr}_{\ge j}(\ev_F^{\ast}\mathbb{L}_{\fM})$
	to be
	the direct sum of (\ref{gr:LM}) for all $i\ge j$, 
	and $\mathrm{gr}_{\bullet}(\ev_F^{\ast}\mathbb{L}_{\fM})$ to be
	the direct sum of (\ref{gr:LM}) for all $i$. 
	Then we have 
	\begin{align*}
		(i_0, \id)^{\ast}\ev_F^{\ast}\mathbb{L}_{\fM} \cong \mathrm{gr}_{\bullet}(\ev_F^{\ast}\mathbb{L}_{\fM})
		\end{align*}
	and they are perfect, so we have 
	$\mathrm{gr}_i(\ev_F^{\ast}\mathbb{L}_{\fM})=0$ for $\lvert i \rvert \gg 0$
	and each $\mathrm{gr}_i(\ev_F^{\ast}\mathbb{L}_{\fM})$ is also perfect. 
	It follows that each 
	$(\ev_F^{\ast}\mathbb{L}_{\fM})_i$ is also perfect. 	
	From the diagram (\ref{dia:FGev}), we have the isomorphisms
	\begin{align*}
		\ev_0^{\ast} \det(\pi_{G+}(\mathrm{id}, \sigma)^{\ast}\ev_F^{\ast}\mathbb{L}_{\fM})
		& \cong \det(\pi_{F+}(\id, \ev_0)^{\ast}(\id, \sigma)^{\ast}\ev_F^{\ast}\mathbb{L}_{\fM}) \\
		& \cong \det(\pi_{F+}(\mathrm{pr}, \id)^{\ast}(i_0, \id)^{\ast}\ev_F^{\ast}\mathbb{L}_{\fM}) \\
		& \cong \det \Cone\left( \mathrm{gr}_{\ge 1}(\ev_F^{\ast}\mathbb{L}_{\fM}) \to 
		\mathrm{gr}_{\bullet}(\ev_F^{\ast}\mathbb{L}_{\fM}) \right) \\
			&\cong \det \Cone\left(	(\ev_F^{\ast}\mathbb{L}_{\fM})_{1}
		\to \colim_{i \to -\infty}	(\ev_F^{\ast}\mathbb{L}_{\fM})_{i} \right) \\
		& \cong \det \mathbb{L}_{\Filt(\fM)}. 
		\end{align*}
	\end{proof}
	
	\subsection{The stacks of graded objects on $(-1)$-shifted cotangents}
		In a way similar to Proposition~\ref{prop:compare}, we have the following lemma: 
	\begin{lem}\label{lem:Grad}
		There exists an equivalence of derived stacks
	\begin{align}\label{equiv:OmegaG}
		\Grad(\Omega_{\fM}[-1]) \stackrel{\sim}{\to} \Omega_{\Grad(\fM)}[-1]
	\end{align}
	such that the following diagram is commutative 
	\begin{align}\label{dia:filtG}
		\xymatrix{
			\Grad(\fM) \ar@{=}[d] & \Grad(\Omega_{\fM}[-1]) \ar[l] \ar[d]_-{\sim} & 
			\Filt(\Omega_{\fM}[-1]) \ar[l]_-{\ev_0^{\Omega}} \ar[d]_-{\sim} \\
			\Grad(\fM) & \Omega_{\Grad(\fM)}[-1] \ar[l]  & \ar[l]_-{h_0} \Omega_{(\ev_0, \ev_1)}[-2]. 
		}
	\end{align}
	Here 
	the top left arrow is induced by the projection $\Omega_{\fM}[-1] \to \fM$, 
	the bottom left arrow is the natural projection, and the right vertical arrow 
	is given in Proposition~\ref{prop:compare}. 
			\end{lem}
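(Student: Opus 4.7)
The plan is to mimic the proof of Proposition~\ref{prop:compare}, with $B\C$ playing the role of $\Theta$. The situation is in fact slightly easier: by Lemma~\ref{lem:Rees}(ii), the $\infty$-category $D_{\qcoh}(B\C \times T)$ has a genuine direct-sum decomposition by $\C$-weights, so the Rees-style filtered arguments used in Proposition~\ref{prop:compare} collapse to picking out a single weight component.

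The first step is to describe $\mathbb{L}_{\Grad(\fM)}$. Combining the second identity in (\ref{L:filt}) with the formula (\ref{pi+}) for $\pi'_{T+}$, and writing $\ev_G \colon B\C \times \Grad(\fM) \to \fM$ for the universal morphism from (\ref{dia:filgr}), one has $\mathbb{L}_{\Grad(\fM)} \simeq (\ev_G^{\ast}\mathbb{L}_{\fM})_{\wt=0}$, the weight-$0$ summand in the decomposition of Lemma~\ref{lem:Rees}(ii). The second step constructs the equivalence (\ref{equiv:OmegaG}) via the functor of points. For a test derived stack $T$ and a map $f \colon T \to \Grad(\fM)$, inducing $g \colon B\C \times T \to \fM$, a lift of $f$ to $\Grad(\Omega_{\fM}[-1])$ amounts to a lift of $g$ to $\Omega_{\fM}[-1]$ over $\fM$, which is the same data as a morphism $\oO_{B\C \times T} \to g^{\ast}\mathbb{L}_{\fM}[-1]$. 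Since $\oO_{B\C \times T}$ lives in $\C$-weight $0$, the weight decomposition identifies such a morphism with $\oO_T \to f^{\ast}\mathbb{L}_{\Grad(\fM)}[-1]$, i.e.\ a lift of $f$ to $\Omega_{\Grad(\fM)}[-1]$. This gives the equivalence and, by construction, the compatibility with the projections down to $\Grad(\fM)$, hence the left square of (\ref{dia:filtG}).

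The third step is to verify the right square. Under Proposition~\ref{prop:compare} and the equivalence just built, a point of $\Filt(\Omega_{\fM}[-1])$ over $f \colon T \to \Filt(\fM)$ is a morphism $\oO_T \to f^{\ast}\mathbb{L}_{(\ev_0, \ev_1)}[-2]$, where the target is identified via (\ref{isom:Lev}) with $f^{\ast}(\ev_F^{\ast}\mathbb{L}_{\fM})_0[-1]$. The morphism $\ev_0^{\Omega}$ corresponds to restriction of the Rees-style family from $\Theta$ to $B\C$ via $i_0$, which on cotangent data picks out the weight-$0$ component. Using the description $\ev_0^{\ast}\mathbb{L}_{\Grad(\fM)} \simeq \Cone((\ev_F^{\ast}\mathbb{L}_{\fM})_1 \to (\ev_F^{\ast}\mathbb{L}_{\fM})_0)$ established in the display preceding (\ref{dia:evmor}), together with the natural projection $(\ev_F^{\ast}\mathbb{L}_{\fM})_0 \to \ev_0^{\ast}\mathbb{L}_{\Grad(\fM)}$, this weight-$0$ component agrees with the morphism induced by $h_0$ via the second distinguished triangle in (\ref{dist:L}).

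The main obstacle I anticipate is the coherence check in the third step: one must verify that the identification $\mathbb{L}_{(\ev_0,\ev_1)} \simeq (\ev_F^{\ast}\mathbb{L}_{\fM})_0[1]$ produced during the proof of Proposition~\ref{prop:compare} is compatible, in the correct way, with the definition of $h_0$ in (\ref{mor:Oev01}). This is a diagram chase through the octahedral arrangement formed by the two distinguished triangles in (\ref{dist:L}) together with the computations of the displays around (\ref{ev0ast})--(\ref{isom:Lev}); it requires no new geometric input, but does require some care with signs and shifts.
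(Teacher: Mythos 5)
Your proposal is correct and follows essentially the same route as the paper: the functor-of-points computation identifying a lift over $B\C \times T$ with a weight-zero morphism $\oO_T \to f^{\ast}\mathbb{L}_{\Grad(\fM)}[-1]$, and then the check of the right square by pulling back along $i_0 \times \id_T$ and comparing with the natural map $(\ev_F^{\ast}\mathbb{L}_{\fM})_0 \to \Cone((\ev_F^{\ast}\mathbb{L}_{\fM})_1 \to (\ev_F^{\ast}\mathbb{L}_{\fM})_0)$. The coherence check you flag is exactly what the paper resolves with the final commutative square relating (\ref{isom:Lev}), (\ref{ev0ast}) and the connecting map defining $h_0$, and it goes through as you anticipate.
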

	\begin{proof}
			Let $T$ be a derived stack, and consider a diagram 
		\begin{align}\notag
			\xymatrix{
				& \Grad(\Omega_{\fM}[-1]) \ar[d]  \\
				T \ar@{.>}[ur] \ar[r]_-{f} & \Grad(\fM). 	
			}
		\end{align}
		By the definition of the stack of graded objects, 
		the above diagram corresponds to a diagram 
		\begin{align}\label{dia:liftG}
			\xymatrix{
				& \Omega_{\fM}[-1] \ar[d]  \\
				B\C \times	T \ar@{.>}[ur] \ar[r]_-{g} & \fM. 	
			}
		\end{align}
		Here $f$, $g$ fit into the commutative diagram 
		\begin{align*}
			\xymatrix{
				B\C \times T \ar[rd]^-{g} \ar[d]_{(\id, f)} & \\
				B\C \times \Grad(\fM) \ar[r]_-{\ev_G} & \fM. 	
			}
		\end{align*}	
		By the definition of $\Omega_{\fM}[-1]$ and the above commutative diagram, 
		giving a dotted arrow in (\ref{dia:liftG}) is equivalent to giving a morphism 
		\begin{align}\notag
			\oO_{B\C \times T} \to g^{\ast}\mathbb{L}_{\fM}[-1]=(\id, f)^{\ast}\ev_G^{\ast}
			\mathbb{L}_{\fM}[-1]. 
		\end{align}
	By (\ref{pi+}) and (\ref{L:filt}), giving the above morphism is equivalent to 
	giving a morphism 
	\begin{align*}
		\oO_T \to f^{\ast}(\ev_G^{\ast}\mathbb{L}_{\fM})_{\wt=0}[-1]
		=f^{\ast}\mathbb{L}_{\Grad(\fM)}[-1]. 
		\end{align*}
	Therefore giving a dotted arrow in (\ref{dia:liftG}) is equivalent to 
	giving a dotted arrow in the diagram
		\begin{align}\notag
		\xymatrix{
			&  \Omega_{\Grad(\fM)}[-1] \ar[d]  \\
			T \ar@{.>}[ur] \ar[r]_-{f} & \Grad(\fM),	
		}
	\end{align}
	which implies the equivalence (\ref{equiv:OmegaG}) together with the left commutative diagram in (\ref{dia:filtG}). 
	
	Finally we check that the right diagram in (\ref{dia:filtG})
	is commutative. 
	Let us consider a diagram (\ref{dia:lift0})
	and its composition with 
	$\ev_0^{\Omega}$
		\begin{align}\notag
		\xymatrix{
			& \Filt(\Omega_{\fM}[-1]) \ar[d] \ar[r]^-{\ev_0^{\Omega}} &
			\Grad(\Omega_{\fM}[-1]) \\
			T \ar@{.>}[ur] \ar[r]_-{f} & \Filt(\fM).  & 	
		}
	\end{align}
The composition of the above dotted arrow 
with $\ev_0^{\Omega}$ corresponds to the composition of the
dotted arrow in the diagram 
(\ref{dia:lift}) with 
$i_1 \times \id_T \colon B\C \times T \to \Theta \times T$
	\begin{align}\notag
		\xymatrix{
		&	& \Omega_{\fM}[-1] \ar[d]  \\
		B\C \times T \ar[r]_-{i_0 \times \id_T} &	\Theta \times	T \ar@{.>}[ur] \ar[r]_-{g} & \fM. 	
		}
	\end{align}
		It corresponds to the morphism given by the pull-back 
	of (\ref{mor:lift}) by $(i_0 \times \id_T)$
	\begin{align*}
		\oO_T \to ((i_0, \id_T)^{\ast}g^{\ast}\mathbb{L}_{\fM})_{\wt=0}[-1]
		=\Cone( f^{\ast}(\ev_F^{\ast}\mathbb{L}_{\fM})_1
		\to  f^{\ast}(\ev_F^{\ast}\mathbb{L}_{\fM})_0)[-1],
		\end{align*}
	which is the composition of (\ref{OTf}) 
	with the natural 
	morphism 
	\begin{align*}
		f^{\ast}(\ev_F^{\ast}\mathbb{L}_{\fM})_0[-1]
	\to \Cone( f^{\ast}(\ev_F^{\ast}\mathbb{L}_{\fM})_1
	\to  f^{\ast}(\ev_F^{\ast}\mathbb{L}_{\fM})_0)[-1].
	\end{align*}
	Then the right commutative diagram in (\ref{dia:filtG}) follows from the commutative diagram 
	\begin{align*}
		\xymatrix{
 (\ev_F^{\ast}\mathbb{L}_{\fM})_0[-1] \ar[r] \ar[d]_-{\sim} & \Cone( (\ev_F^{\ast}\mathbb{L}_{\fM})_1
 \to  (\ev_F^{\ast}\mathbb{L}_{\fM})_0)[-1] \ar[d]_-{\sim} \\
 \mathbb{L}_{(\ev_0, \ev_1)}[-2] \ar[r]  & \ev_0^{\ast}\mathbb{L}_{\Grad(\fM)}[-1]. 	
}
		\end{align*}
				Here the left vertical arrow is (\ref{isom:Lev})
		and the 
		the right vertical arrow is (\ref{ev0ast}), and the horizontal arrows are 
		natural morphisms. 
		\end{proof}
	\subsection{Compatibility with singular supports}
	We now show that, using 
	Proposition~\ref{prop:compare} and Lemma~\ref{lem:Grad}, 
	some natural functors induced by the stacks of filtered objects 
	preserve singular supports in some sense. 
	We first consider the 
	following commutative diagram 
		\begin{align}\label{dia:filtev}
		\xymatrix{
			\Filt(\Omega_{\fM}[-1]) \ar[r] \ar[d]_-{\ev_0^{\Omega}} & 
			\Filt(\fM)	\ar[d]^-{\ev_0}  \\
			\Grad(\Omega_{\fM}[-1]) \ar[r]  & 	\Grad(\fM). 		
		}
	\end{align}
Here the horizontal arrows are induced by 
the projection $\Omega_{\fM}[-1] \to \fM$. 
	\begin{lem}\label{lem:isompi}
		The diagram (\ref{dia:filtev}) induces the 
		isomorphisms of connected components, 
		\begin{align}\label{dia:pi}
			\xymatrix{
	\pi_0(\Filt(\Omega_{\fM}[-1])) \ar[r]^-{\cong} \ar[d]_-{\cong} & 
		\pi_0(\Filt(\fM))	\ar[d]^-{\cong}  \\
			\pi_0(\Grad(\Omega_{\fM}[-1])) \ar[r]^-{\cong}  & 	\pi_0(\Grad(\fM)). 		
	}
			\end{align}
		\end{lem}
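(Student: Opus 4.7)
The plan is to verify each arrow in (\ref{dia:pi}) is a bijection; the fourth then follows from any three by commutativity of (\ref{dia:filtev}). For the horizontal arrows, I would use Proposition~\ref{prop:compare} and Lemma~\ref{lem:Grad} to identify them with the structure maps
\begin{align*}
p^F \colon \Omega_{(\ev_0,\ev_1)}[-2] \to \Filt(\fM), \quad
p^G \colon \Omega_{\Grad(\fM)}[-1] \to \Grad(\fM).
\end{align*}
Each is of the form $\Spec_X S_{\oO_X}(E) \to X$ for a perfect complex $E$ of non-positive cohomological amplitude, so on classical truncations the map becomes $\Spec_{t_0(X)}\mathrm{Sym}(h^0(E)) \to t_0(X)$ with connected fibers. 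For $p^F$, the isomorphism (\ref{isom:Lev}) together with the amplitude $[-1,1]$ of $\mathbb{L}_{\fM}$ (from quasi-smoothness of $\fM$) forces $\mathbb{T}_{(\ev_0,\ev_1)}[2]$ into strictly negative degrees, so $p^F$ is in fact an isomorphism on classical truncations. For $p^G$, quasi-smoothness of $\Grad(\fM)$ (Lemma~\ref{lem:qsmooth}) puts $\mathbb{T}_{\Grad(\fM)}[1]$ in degrees $[-2,0]$, hence $t_0(\Omega_{\Grad(\fM)}[-1])$ is a linear stack over $t_0(\Grad(\fM))$ with affine fibers, giving the $\pi_0$-bijection.

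For the right vertical arrow $\pi_0(\ev_0)$, surjectivity is immediate from $\ev_0 \circ \sigma = \id$. Injectivity uses the monoid structure on $\Theta$: the multiplication $\mathbb{A}^1 \times \mathbb{A}^1 \to \mathbb{A}^1$ descends to a morphism $\mathbb{A}^1 \times \Theta \to \Theta$ whose restriction at $1 \in \mathbb{A}^1$ is the identity and whose restriction at $0 \in \mathbb{A}^1$ factors through $B\C \hookrightarrow \Theta$. Precomposition then yields an $\mathbb{A}^1$-action
\begin{align*}
\alpha \colon \mathbb{A}^1 \times \Filt(\fM) \to \Filt(\fM)
\end{align*}
with $\alpha(1, -) = \id$ and $\alpha(0, -) = \sigma \circ \ev_0$. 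For any classical point $f$ of $\Filt(\fM)$, the morphism $\alpha(-, f) \colon \mathbb{A}^1 \to \Filt(\fM)$ connects $f$ to $\sigma(\ev_0(f))$, placing them in the same $\pi_0$-class; hence $\pi_0(\ev_0)$ is injective. The commutativity of (\ref{dia:filtev}) then gives the bijectivity of $\pi_0(\ev_0^{\Omega})$.

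The main technical obstacle is the amplitude computation for $p^F$, which requires combining (\ref{isom:Lev}) with the amplitude of the Rees component $(\ev_F^{\ast}\mathbb{L}_{\fM})_0$ implied by quasi-smoothness of $\fM$; the $\mathbb{A}^1$-contraction argument for $\pi_0(\ev_0)$ is standard and works identically for $\ev_0^{\Omega}$ applied to $\Omega_{\fM}[-1]$, should one prefer to treat all four arrows directly rather than deduce the fourth from commutativity.
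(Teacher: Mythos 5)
Your overall strategy is essentially the paper's: the horizontal arrows are identified, via Proposition~\ref{prop:compare} and Lemma~\ref{lem:Grad}, with the projections from the shifted conormal/cotangent stacks over $\Filt(\fM)$ and $\Grad(\fM)$, and the bijection on $\pi_0$ comes from the cone structure of these projections (the paper phrases this as the fiberwise $\C$-scaling actions which specialize to the zero sections). For the vertical arrow the paper simply cites \cite[Lemma~1.24]{Halpinstab}; your $\mathbb{A}^1$-monoid contraction with $\alpha(1,-)=\id$ and $\alpha(0,-)=\sigma\circ\ev_0$ is a correct direct proof of that cited statement, so this part is a legitimate (and standard) replacement for the citation.

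There is, however, a genuine error in your amplitude claim for $p^F$. From (\ref{isom:Lev}) one has $\mathbb{L}_{(\ev_0,\ev_1)}\cong(\ev_F^{\ast}\mathbb{L}_{\fM})_0[1]$, and since $\mathbb{L}_{\fM}$ has amplitude $[-1,1]$, the Rees component $(\ev_F^{\ast}\mathbb{L}_{\fM})_0$ also has cohomology in degrees $[-1,1]$; in particular its $h^{-1}$, coming from the obstruction part $\hH^{-1}(\mathbb{L}_{\fM})$, does not vanish in general. Hence $\mathbb{T}_{(\ev_0,\ev_1)}[2]$ has amplitude $[-2,0]$, \emph{not} strictly negative amplitude, and $p^F$ is not an isomorphism on classical truncations: $t_0(\Filt(\Omega_{\fM}[-1]))=\Filt(\nN)$ is in general strictly larger than $\Filt(\mM)$. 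This is exactly Example~\ref{exam:pull}, where the fiber of the truncated map is $(V_1^{\vee})^{\lambda\ge 0}$, and it is also the reason the semistable locus $\nN^{l\sss}$ need not be the pull-back of $\mM^{l\sss}$ (Remark~\ref{rmk:sod}); if your strictly-negative claim held, much of the point of the paper would trivialize. Fortunately the error is confined to that sentence: the general principle you state just before it --- for $E$ perfect of non-positive amplitude, $t_0(\Spec_X S_{\oO_X}(E))\to t_0(X)$ is the abelian cone $\Spec_{t_0(X)}\mathrm{Sym}(h^0(E))$, which has connected fibers (it retracts onto the zero section by the scaling action) and hence induces a bijection on $\pi_0$ --- applies verbatim to $E=\mathbb{T}_{(\ev_0,\ev_1)}[2]$ of amplitude $[-2,0]$, and likewise to $\mathbb{T}_{\Grad(\fM)}[1]$ without needing quasi-smoothness. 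With that correction your proof goes through and agrees with the paper's argument.
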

	\begin{proof}
		By Proposition~\ref{prop:compare} and 
Lemma~\ref{lem:Grad},
there exist $\C$-actions on 
$\Filt(\Omega_{\fM}[-1])$, $\Grad(\Omega_{\fM}[-1])$ which 
specialize to $\Filt(\fM)$, $\Grad(\fM)$ respectively. 
Therefore the horizontal arrows in (\ref{dia:pi}) are isomorphisms. 
 		The vertical isomorphisms are proved in~\cite[Lemma~1.24]{Halpinstab}. 
		\end{proof}
	
	Given a quasi-compact open and closed substack 
	$\ffS \subset \Filt(\fM)$ with the associated
	diagram (\ref{dia:Filt2}), 
	by Lemma~\ref{lem:isompi}
	we have the associated diagrams 
	for $\Omega_{\fM}[-1]$
	and its classical truncation
		\begin{align}\notag
		\xymatrix{
			\mathfrak{S}^{\Omega} \ar[r]^-{\ev_1^{\Omega}} \ar[d]^-{\ev_0^{\Omega}} & 
			\Omega_{\fM}[-1], \\
			\mathfrak{Z}^{\Omega} \ar[ur]_-{\tau^{\Omega}} \ar@/^18pt/[u]^-{\sigma^{\Omega}} & 
		}  \
	\xymatrix{
		\sS^{\Omega} \ar[r]^-{\ev_1^{\Omega}} \ar[d]^-{\ev_0^{\Omega}} & 
		t_0(\Omega_{\fM}[-1]). \\
		\zZ^{\Omega} \ar[ur]_-{\tau^{\Omega}} \ar@/^18pt/[u]^-{\sigma^{\Omega}} & 
	}
	\end{align}
	Here $\ffS^{\Omega}$ is the open and closed substack of 
	$\Filt(\Omega_{\fM}[-1])$ corresponding to $\ffS$
	by the top isomorphism in (\ref{dia:pi}), 
	$\ffZ^{\Omega}$ is the center of 
	$\ffS^{\Omega}$, and the right diagram is induced by 
	the left one by taking the classical truncation. 
	By Proposition~\ref{prop:compare} and Lemma~\ref{lem:Grad}, we have the 
	commutative isomorphisms 
	\begin{align}\label{dia:Somega}
		\xymatrix{
			\zZ^{\Omega} \ar[d]^-{\cong} & \sS^{\Omega} \ar[l]_-{\ev_0^{\Omega}} \ar[r]^-{\ev_1^{\Omega}} \ar[d]^-{\cong} 
			& t_0(\Omega_{\fM}[-1]) \ar@{=}[d] \\
			t_0(\Omega_{\ffZ}[-1]) & t_0(\Omega_{(\ev_0, \ev_1)|_{\ffS}}[-2]) \ar[l]_-{h_0} \ar[r]^-{h_1} & t_0(\Omega_{\fM}[-1]). 
		}	
	\end{align}

Suppose that $\fM$ is quasi-smooth and $\Theta$-reductive, 
so that in the diagram (\ref{dia:Filt2})
the morphism $\ev_0$ is quasi-smooth and $\ev_1$ is proper. 
Then we have the functor 
\begin{align}\label{funct:Ups0}
	\Upsilon \colon \Dbc(\ffZ) \stackrel{\ev_0^{\ast}}{\to}
	\Dbc(\ffS) \stackrel{\ev_{1\ast}}{\to} \Dbc(\fM). 
	\end{align}
Let $T_0 \subset \zZ^{\Omega}$
and $T_1 \subset t_0(\Omega_{\fM}[-1])$ be conical 
closed substacks such that the following holds:
\begin{align}\label{hpull}
	(\ev_0^{\Omega})^{-1}(T_0) \subset 
	(\ev_1^{\Omega})^{-1}(T_1) \subset \sS^{\Omega}. 
	\end{align}
Below we identify 
$T_0$ as a closed substack
in $t_0(\Omega_{\fZ}[-1])$ by the left isomorphism in (\ref{dia:Somega}). 
We have the following lemma: 
\begin{lem}\emph{(\cite[Proposition~2.4]{THtype})}\label{lem:Tpush}
	Under the condition (\ref{hpull}),
	the functor $\Upsilon$ in (\ref{funct:Ups0}) preserves singular supports, 
	i.e. it restricts to the functor 
	\begin{align*}
		\Upsilon \colon \cC_{T_0} \to \cC_{T_1}.
		\end{align*}
	\end{lem}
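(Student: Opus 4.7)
The plan is to mimic and generalize the proof of \cite[Proposition~2.4]{THtype}, which establishes this compatibility for moduli stacks of filtered coherent sheaves on local surfaces. The strategy is to combine local Koszul duality with the identifications of $\ffS^{\Omega}$ and $\ffZ^{\Omega}$ coming from Proposition~\ref{prop:compare} and Lemma~\ref{lem:Grad}.

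First, I would reduce to étale-local charts $\alpha \colon [\fU/G] \to \fM$ with $\fU$ of the form (\ref{frak:U}). By Example~\ref{exam:frakU}, the pull-back of $\ffS$ decomposes into a disjoint union of open-and-closed substacks of the form $[\fU^{\lambda \ge 0}/G^{\lambda \ge 0}]$ with centers $[\fU^{\lambda=0}/G^{\lambda=0}]$; over these charts, $\ev_0$ is the limit morphism (which is quasi-smooth) and $\ev_1$ is induced by the natural closed immersion $\fU^{\lambda \ge 0} \hookrightarrow \fU$, and is proper as a morphism of stacks since $\fM$ is $\Theta$-reductive.

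Next, I would transport $\ev_0^{\ast}$ and $\ev_{1\ast}$ through Koszul duality using Theorem~\ref{thm:knoer} together with the compatibility diagrams (\ref{dia:com:ind1.5}) and (\ref{dia:com:ind0.5}). In the local models, $\ev_0^{\ast}$ corresponds to a composition $g_{!} \circ f^{\ast}$ and $\ev_{1\ast}$ to $f_{\ast} \circ g^{\ast}$, for the dual diagrams (\ref{diagram:dual}) attached to the local models. By Proposition~\ref{prop:ssupp}, the singular support is converted into set-theoretic support in $V^{\vee}$, and the identity (\ref{identity:crit}) identifies the correspondence space for each dual diagram with the classical truncation of the appropriate $(-1)$-shifted cotangent fiber product. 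Concatenating the two correspondences and invoking the identifications (\ref{dia:Somega}) coming from Proposition~\ref{prop:compare} and Lemma~\ref{lem:Grad}, the composite local correspondence is identified with $\ffS^{\Omega}$ equipped with the maps $\ev_0^{\Omega}$ and $\ev_1^{\Omega}$.

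Assembling the local computations, for $\fF \in \cC_{T_0}$ the singular support of $\Upsilon(\fF)$ is contained in $\ev_1^{\Omega}\bigl((\ev_0^{\Omega})^{-1}(T_0)\bigr)$, which by assumption (\ref{hpull}) is contained in $\ev_1^{\Omega}\bigl((\ev_1^{\Omega})^{-1}(T_1)\bigr) \subset T_1$. The main technical obstacle will be the étale descent step, namely verifying that Proposition~\ref{prop:compare} and Lemma~\ref{lem:Grad} interact correctly with the smooth base change along $\alpha$, and that the functorial Koszul-duality diagrams of Subsection~\ref{subsec:funct} are stable under this base change, so that the local singular-support bounds glue into the required global bound in $t_0(\Omega_{\fM}[-1])$.
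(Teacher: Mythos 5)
There is a genuine gap at your reduction to charts. Example~\ref{exam:frakU} describes $\Filt([\fU/G])$ itself; it does not describe $\Filt(\fM)\times_{\fM}[\fU/G]$. The stack of filtered objects does not commute with smooth base change along a chart $\alpha\colon[\fU/G]\to\fM$: already for $\mathrm{pt}\to BG$ one has $\Filt(\mathrm{pt})=\mathrm{pt}$ while $\Filt(BG)\times_{BG}\mathrm{pt}$ is a disjoint union of flag varieties $G/G^{\lambda\ge 0}$. Since membership of $\Upsilon(\fF)$ in $\cC_{T_1}$ is tested by computing $\alpha^{\ast}\ev_{1\ast}\ev_0^{\ast}\fF$, and this is computed by base change through the correspondence $\ffS\times_{\fM}[\fU/G]$, the local correspondence you actually need is a flag-bundle-type proper stack over the chart, not one of the open and closed substacks $[\fU^{\lambda\ge 0}/G^{\lambda\ge 0}]\subset\Filt([\fU/G])$ to which the Koszul-duality compatibilities (\ref{dia:com:ind0.5}), (\ref{dia:com:ind1.5}) apply. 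So the step where you transport $\ev_0^{\ast}$ and $\ev_{1\ast}$ through Theorem~\ref{thm:knoer} does not get off the ground as stated; note also that the lemma is proved in the general quasi-smooth, $\Theta$-reductive setting, with no good moduli space or formal neighborhood hypothesis, so the formal-local identifications of the later sections (via Lemma~\ref{lem:fit:M}) are not available here. The ``descent'' issue you flag at the end is exactly this point, and it is not a routine verification.

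For comparison, the paper's proof needs neither charts nor Koszul duality: it forms the diagram (\ref{diagram:fN}), observes that quasi-smoothness of $\ev_0$ makes $\ev_0^{\diamondsuit}$ a closed immersion and properness of $\ev_1$ makes $\ev_1^{\spadesuit}$ (hence $h_1$) proper, and then applies the general singular-support functoriality of Arinkin--Gaitsgory for quasi-smooth pullback and proper pushforward (\cite[Lemmas~8.4.2 and 8.4.5]{MR3300415}); the Cartesian middle square of (\ref{diagram:fN}) together with (\ref{dia:Somega}) identifies the resulting bound with $h_1h_0^{-1}(T_0)$, which (\ref{hpull}) places inside $T_1$. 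Your final containment $\mathrm{Supp}^{\rm{sg}}(\Upsilon(\fF))\subset \ev_1^{\Omega}\bigl((\ev_0^{\Omega})^{-1}(T_0)\bigr)\subset T_1$ is the same endgame, but on your route one would in effect have to reprove those two functoriality lemmas in local models and then glue, which is precisely the part your outline leaves open.
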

\begin{proof}
	Using the notation of the diagram (\ref{diagram:induced}), 
	the diagram (\ref{dia:Filt2}) induces the following diagram 
	\begin{align}\label{diagram:fN}
		\xymatrix{
			& & t_0(\Omega_{(\ev_0, \ev_1)|_{\ffS}}[-2])
			\ar@{}[dd]|\square
			\ar[ld]_-{g_0} \ar[rd]^-{g_1} \ar@/_30pt/[lldd]_-{h_0} 
			\ar@/^30pt/[rrdd]^-{h_1} & & \\
			& \ev_0^{\ast}t_0(\Omega_{\ffZ}[-1]) \ar[ld]_-{\ev_0^{\spadesuit}} 
			\ar[rd]^-{\ev_0^{\diamondsuit}} & & 
			\ev_1^{\ast}t_0(\Omega_{\fM}[-1]) \ar[ld]_-{\ev_1^{\diamondsuit}}
			\ar[rd]^-{\ev_1^{\spadesuit}} & \\
			t_0(\Omega_{\ffZ}[-1]) & & t_0(\Omega_{\ffS}[-1]) & & t_0(\Omega_{\fM}[-1]).
		}
	\end{align} 
	Here the middle square is Cartesian by~\cite[(2.18)]{THtype}. 
	As $\ev_0$ is quasi-smooth and $\ev_1$ is proper, 
	the morphism $\ev_0^{\diamondsuit}$ is a closed immersion 
	and $\ev_1^{\spadesuit}$ is proper, so $h_1$ is also proper. 
	The morphisms $h_0, h_1$ are restrictions of the classical truncations of the
	morphisms in (\ref{mor:Oev01}). 
	By~\cite[Lemma~8.4.2]{MR3300415}, the functor $\ev_0^{\ast}$ in (\ref{funct:Ups0})
	restricts to the functor 
	\begin{align*}
		\ev_0^{\ast} \colon \cC_{T_0} \to \cC_{\ev_0^{\diamondsuit}(\ev_0^{\spadesuit})^{-1}(T_0)}. 
		\end{align*}
	By~\cite[Lemma~8.4.5]{MR3300415}, 
	the functor $\ev_{1\ast}$ in (\ref{funct:Ups0})
	restricts to the functor 
\begin{align*}
	\ev_{1\ast} \colon 
	\cC_{\ev_0^{\diamondsuit}(\ev_0^{\spadesuit})^{-1}(T_0)}
\to \cC_{\ev_1^{\spadesuit}(\ev_1^{\diamondsuit})^{-1}\ev_0^{\diamondsuit}(\ev_0^{\spadesuit})^{-1}(T_0)}.
	\end{align*}	
The lemma follows from 
\begin{align*}
	\ev_1^{\spadesuit}(\ev_1^{\diamondsuit})^{-1}\ev_0^{\diamondsuit}(\ev_0^{\spadesuit})^{-1}(T_0)
	=h_1(h_0)^{-1}(T_0) \subset T_1. 
	\end{align*}
	Here the last inclusion follows from (\ref{hpull}) and the diagram 
	(\ref{dia:Somega}). 
	\end{proof}

\subsection{Comparison of $\Theta$-strata}
Finally in this section, we observe a relationship between 
$\Theta$-strata for $\mM$ and $t_0(\Omega_{\fM}[-1])$. 
Let $\ffS \subset \Filt(\fM)$ be as in the previous subsection, and 
set $\sS=t_0(\ffS)$ as in the diagram (\ref{dia:Filt2}). 
We have the commutative diagram 
\begin{align}\label{dia:SM}
	\xymatrix{
\sS^{\Omega} \ar[r]^-{\ev_1^{\Omega}} \ar[d] & t_0(\Omega_{\fM}[-1]) \ar[d] \\
\sS \ar[r]^-{\ev_1} & \mM. 
}
	\end{align}
	Here the left vertical arrow is induced 
	by the projection 
	$\Omega_{\fM}[-1] \to \fM$. 
	\begin{lem}\label{lem:compareS}
		Suppose that $\fM$ is quasi-smooth and 
		the morphism $\ev_1 \colon \sS \to \mM$ is a closed 
		immersion. 
		The $\ev_1^{\Omega} \colon \sS^{\Omega} \to t_0(\Omega_{\fM}[-1])$
		is a closed immersion. 
		\end{lem}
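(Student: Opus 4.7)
The plan is to use Proposition~\ref{prop:compare} to identify $\ev_1^{\Omega}$ with the morphism $h_1$ of diagram~(\ref{diagram:fN}), then factor $h_1$ as $\ev_1^{\spadesuit}\circ g_1$ using the Cartesian square there, and finally show that each factor is a closed immersion by base change.

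First I would invoke the identifications recorded in~(\ref{dia:Somega}) (which come from Proposition~\ref{prop:compare} and Lemma~\ref{lem:Grad}): the equivalence $\sS^{\Omega}\simeq t_0(\Omega_{(\ev_0,\ev_1)|_{\ffS}}[-2])$ intertwines $\ev_1^{\Omega}$ with $h_1$, and the top row of~(\ref{diagram:fN}) expresses $h_1$ as the composition $\ev_1^{\spadesuit}\circ g_1$. Hence it suffices to prove that both $g_1$ and $\ev_1^{\spadesuit}$ are closed immersions of classical stacks.

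For $g_1$: quasi-smoothness of $\fM$ together with Lemma~\ref{lem:qsmooth} implies that $\ev_0\colon\ffS\to\ffZ$ is quasi-smooth, so by~\cite[Lemma~3.1.2]{TocatDT} the map $\ev_0^{\diamondsuit}$ in~(\ref{diagram:fN}) is a closed immersion; since the middle square of~(\ref{diagram:fN}) is Cartesian, $g_1$ is a base change of $\ev_0^{\diamondsuit}$ and thus also a closed immersion. For $\ev_1^{\spadesuit}$: apply diagram~(\ref{diagram:induced}) to the morphism $\ev_1\colon\ffS\to\fM$; the right square is Cartesian with bottom arrow the classical morphism $\ev_1\colon\sS\to\mM$, which is a closed immersion by hypothesis, so its base change $\ev_1^{\spadesuit}$ is a closed immersion as well. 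Composing the two gives the desired conclusion.

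I do not anticipate a genuine obstacle beyond bookkeeping the identifications; the only mildly delicate step is verifying that the equivalence in~(\ref{dia:Somega}) really does intertwine $\ev_1^{\Omega}$ with $\ev_1^{\spadesuit}\circ g_1$, which is exactly the content of the right commutative square of~(\ref{dia:filt}) combined with the middle Cartesian square of~(\ref{diagram:fN}).
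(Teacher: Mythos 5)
Your proposal is correct, and in substance it proves exactly the same two facts as the paper, but it assembles them differently. The paper's own proof works directly with the distinguished triangle (\ref{dist:L}): since $\ev_0$ is quasi-smooth (Lemma~\ref{lem:qsmooth}), $\hH^{-2}(\mathbb{L}_{\ev_0})=0$, so $\hH^{-2}(\mathbb{L}_{(\ev_0,\ev_1)})\hookrightarrow \hH^{-1}(\ev_1^{\ast}\mathbb{L}_{\fM})$, which together with the middle isomorphism in (\ref{dia:Somega}) yields the closed immersion $\sS^{\Omega}\hookrightarrow \sS\times_{\mM}t_0(\Omega_{\fM}[-1])$ of (\ref{closed:S}); the final composition with the projection (a base change of the closed immersion $\ev_1\colon\sS\to\mM$) is left implicit. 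You instead obtain the same closed immersion (\ref{closed:S}) --- which is precisely your $g_1$ --- formally, by base-changing $\ev_0^{\diamondsuit}$ along the Cartesian middle square of (\ref{diagram:fN}), citing \cite[Lemma~3.1.2]{TocatDT} for ``$\ev_0$ quasi-smooth $\Rightarrow$ $\ev_0^{\diamondsuit}$ closed immersion'', and then handle $\ev_1^{\spadesuit}$ by base change exactly as the paper implicitly does; this mirrors the properness argument in the proof of Lemma~\ref{lem:Tpush} with ``closed immersion'' in place of ``proper''. The trade-off is minor: your route leans on the Cartesianness of the middle square of (\ref{diagram:fN}) (cited from \cite[(2.18)]{THtype}) and on the quoted criterion for $f^{\diamondsuit}$, while the paper re-derives that content in one line from the cotangent-complex triangle; both hinge on the same inputs (quasi-smoothness of $\ev_0$, the hypothesis on the classical $\ev_1$, and the identification of $\ev_1^{\Omega}$ with $h_1$ from Proposition~\ref{prop:compare}), and your only mildly delicate point --- that (\ref{dia:Somega}) intertwines $\ev_1^{\Omega}$ with $\ev_1^{\spadesuit}\circ g_1$ --- is indeed settled by the right square of (\ref{dia:filt}) as you say.
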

	\begin{proof}
		From the distinguished triangle (\ref{dist:L}),
		we have the exact sequence 
		of coherent sheaves on $\sS$
		\begin{align*}
			\hH^{-2}(\mathbb{L}_{\ev_0}) \to 
			\hH^{-2}(\mathbb{L}_{(\ev_0, \ev_1)}) \to 
			\hH^{-1}(\ev_1^{\ast}\mathbb{L}_{\fM}). 
			\end{align*} 
		By Lemma~\ref{lem:qsmooth}, 
		we have $\hH^{-2}(\mathbb{L}_{\ev_0})=0$. 
		Together with the middle vertical isomorphism in (\ref{dia:Somega}),
		we have the closed immersion 
		\begin{align}\label{closed:S}
			\sS^{\Omega} \hookrightarrow 
			\sS \times_{\mM} t_0(\Omega_{\fM}[-1]). 
			\end{align} 
		Therefore the lemma holds. 
		\end{proof}
	By the above lemma, 
	a $\Theta$-strata $\sS \hookrightarrow \mM$
	gives rise to a $\Theta$-strata 
	$\sS^{\Omega} \hookrightarrow t_0(\Omega_{\fM}[-1])$. 
	On the other hand, the diagram (\ref{dia:SM}) is not necessary 
	Cartesian (see Example~\ref{exam:pull} below), 
	so a $\Theta$-stratification for $\mM$ 
	does not necessary pull-back to a $\Theta$-stratification 
	for $t_0(\Omega_{\fM}[-1])$. 
	\begin{exam}\label{exam:pull}
		Let $V_0, V_1$ be finite dimensional $\C$-representations, 
		and set 
		\begin{align*}
			\fU=\Spec \mathbb{C}[V_1^{\vee}[1] \oplus V_0^{\vee}], \ 
			\fM=[\fU/\C]. 
			\end{align*}
		Here the differential on 
		$\mathbb{C}[V_1^{\vee}[1] \oplus V_0^{\vee}]$ is zero. 
		Then 
		we have 
		\begin{align*}
			t_0(\Omega_{\fM}[-1])=[(V_0 \oplus V_1^{\vee})/\C]
			\end{align*}
		and 
		a diagram (\ref{dia:SM}) is of the form 
		\begin{align*}
			\xymatrix{
		[(V_0 \oplus V_1^{\vee})^{\lambda \ge 0}/\C] \ar@<-0.3ex>@{^{(}->}[r] \ar[d] & 
			[(V_0 \oplus V_1^{\vee})/\C] \ar[d] \\
			[V_0^{\lambda \ge 0}/\C] \ar@<-0.3ex>@{^{(}->}[r] & 
			[V_0/\C]	
		}
			\end{align*}
		for some one parameter subgroup $\lambda \colon \C \to \C$. 
		Each horizontal arrow is a closed immersion, 
		but the above diagram is Cartesian if and only if 
		$V_1^{\lambda >0}=0$, or equivalently 
		$\hH^{-1}(\mathbb{L}_{\fM}|_{0})=V_1^{\vee}$ has only non-negative 
		weights. 
		\end{exam}
	
	We have the following sufficient condition for the diagram (\ref{dia:SM}) to be a 
	Cartesian: 
	\begin{lem}\label{lem:weight}
		Suppose that 
		$\hH^{-1}(\mathbb{L}_{\fM}|_{\zZ})$ has only non-negative 
		weights, where 
		$(-)|_{\zZ}$ is the pull-back via 
		$\zZ \hookrightarrow \fZ \stackrel{\tau}{\to} \fM$. 
		Then the diagram (\ref{dia:SM}) is a Cartesian. 
			\end{lem}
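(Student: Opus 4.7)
The plan is to verify the Cartesian condition étale-locally on $\mM$ using the model of Example~\ref{exam:frakU}. By Lemma~\ref{lem:compareS} there is a canonical closed immersion
\[
\iota\colon \sS^{\Omega}\hookrightarrow \sS\times_{\mM}t_0(\Omega_{\fM}[-1]),
\]
so it suffices to show that $\iota$ is surjective, which is étale-local on $\mM$. I will therefore assume $\fM\cong[\fU/G]$ with $\fU=\Spec\rR(V\to Y,s)$, and the $\Theta$-stratum data given by a one-parameter subgroup $\lambda\colon \C\to G$. Then Example~\ref{exam:frakU} identifies $\ffS\cong[\fU^{\lambda\ge 0}/G^{\lambda\ge 0}]$ and $\ffZ\cong[\fU^{\lambda=0}/G^{\lambda=0}]$, so on truncations $\sS\cong[\{s|_{Y^{\lambda\ge 0}}=0\}/G^{\lambda\ge 0}]$.

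In this local picture (\ref{Omega:U}) and (\ref{func:w}) give $t_0(\Omega_{\fM}[-1])\cong[\Crit(w)/G]$, hence
\[
\sS\times_{\mM}t_0(\Omega_{\fM}[-1]) \cong \bigl[\{(x,v)\in \Crit(w) : x\in Y^{\lambda\ge 0}\}/G^{\lambda\ge 0}\bigr].
\]
Applying the same $\Theta$-filtered analysis to the derived stack $\Omega_{\fM}[-1]$ (whose classical truncation is a conical closed substack of $[V^{\vee}|_{\fU}/G]$, on which $\lambda$ acts) identifies
\[
\sS^{\Omega}\cong \bigl[\{(x,v)\in \Crit(w) : x\in Y^{\lambda\ge 0},\ v\in (V^{\vee}|_x)^{\lambda\ge 0}\}/G^{\lambda\ge 0}\bigr],
\]
and $\iota$ becomes the inclusion enforcing the extra condition $v\in (V^{\vee}|_x)^{\lambda\ge 0}$.

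The hypothesis now enters via the identification $\hH^{-1}(\mathbb{L}_{\fM}|_{\zZ})\cong V^{\vee}|_{\zZ}$ as $G^{\lambda=0}$-representations, which translates ``only non-negative $\lambda$-weights'' to $(V^{\vee}|_y)^{\lambda<0}=0$ for $y\in\zZ$, i.e.\ $(V^{\vee}|_y)^{\lambda\ge 0}=V^{\vee}|_y$ throughout; the two loci above then coincide and $\iota$ is an isomorphism. The main delicate point is the identification of $\sS^{\Omega}$ in the local model, which one can either extract by applying Example~\ref{exam:frakU} directly to $\Omega_{\fM}[-1]$, or more canonically via Proposition~\ref{prop:compare}: the injection underlying $\iota$ is $\hH^{-2}(\mathbb{L}_{(\ev_0,\ev_1)})|_{\sS}\hookrightarrow \hH^{-1}(\ev_1^{\ast}\mathbb{L}_{\fM})|_{\sS}$ from the LES of (\ref{dist:L}), and using (\ref{ev1ast})--(\ref{ev0ast}) together with (\ref{isom:Lev}) one sees that the next connecting map restricts on $\ffZ$ to the projection $\hH^{-1}(\tau^{\ast}\mathbb{L}_{\fM})\twoheadrightarrow\hH^{-1}(\tau^{\ast}\mathbb{L}_{\fM})_{\wt<0}$, which vanishes under the hypothesis.
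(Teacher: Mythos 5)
Your reduction has a genuine gap at the step where the hypothesis is translated into the local model. For a chart $\fU=\Spec \rR(V\to Y,s)$ one has $\hH^{-1}(\mathbb{L}_{\fM}|_{z})=\ker\bigl(V^{\vee}|_{z}\to \Omega_{Y}|_{z}\bigr)$ (the transpose of $ds$), not $V^{\vee}|_{z}$; the identification $\hH^{-1}(\mathbb{L}_{\fM}|_{\zZ})\cong V^{\vee}|_{\zZ}$ holds only for a chart that is minimal along $\zZ$, which you have not arranged. Consequently the hypothesis does not give $(V^{\vee}|_{y})^{\lambda<0}=0$, and the asserted coincidence of the two loci ``throughout'' does not follow. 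What the hypothesis does give is that over the fixed locus $\zZ$ the fiber of $\nN\to\mM$ --- which by the critical-locus equations is exactly $\hH^{-1}(\mathbb{L}_{\fM}|_{x_0})\subset V^{\vee}|_{x_0}$ --- has no negative weights, i.e.\ $\iota$ is an isomorphism over $\zZ$. Passing from $\zZ$ to all of $\sS$ still requires an argument: at a non-fixed point $x\in\sS$ the weights of $v$ have no direct meaning, and one must either take rescaled limits along the attracting flow (a semicontinuity argument producing a nonzero negative-weight vector in $\hH^{-1}(\mathbb{L}_{\fM}|_{x_0})$ if $v$ had a negative component), or, as the paper does, observe that the locus where the closed immersion (\ref{closed:S}) is an isomorphism is open and every point of $\sS$ specializes into $\zZ$. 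This propagation step is entirely absent from your write-up; your closing sketch via Proposition~\ref{prop:compare} likewise only yields vanishing of the relevant map over $\zZ$, and the connecting map is not literally ``the projection onto weight $<0$'' --- it factors through $\hH^{-1}(\mathbb{L}_{\ev_0}|_{\zZ})$, which has only negative weights by \cite[Lemma~1.5.5]{HalpK32}.

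A secondary but real issue is the \'etale-local reduction itself: Example~\ref{exam:frakU} requires a presentation $[\fU/G]$ with $G$ reductive and the stratum induced by a single one-parameter subgroup, and such presentations are not available \'etale-locally for an arbitrary quasi-smooth $\fM$ in the generality of the lemma (no good moduli space or formal neighborhood theorem is assumed at this point, and the smooth charts used for quasi-smooth stacks are affine derived schemes, on which the $\Theta$-stratum data does not localize). The paper's proof avoids local models altogether: it uses the exact sequence (\ref{exact:Z}) coming from (\ref{dist:L}), the fact from \cite[Lemma~1.5.5]{HalpK32} that $\mathbb{L}_{\ev_0}|_{\zZ}$ has only negative weights, the hypothesis to conclude $\hH^{-2}(\mathbb{L}_{(\ev_0,\ev_1)}|_{\zZ})\stackrel{\cong}{\to}\hH^{-1}(\mathbb{L}_{\fM}|_{\zZ})$, and then the openness-plus-specialization argument described above. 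If you want to salvage your approach, you would need either to justify equivariant minimal charts along $\zZ$ or to replace the full-bundle claim by the kernel statement together with the limit/specialization argument.
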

		\begin{proof}
			From the proof of Lemma~\ref{lem:compareS}, we have the exact sequence 
			\begin{align}\label{exact:Z}
				0 \to \hH^{-2}(\mathbb{L}_{(\ev_0, \ev_1)}|_{\zZ}) \to 
				\hH^{-1}(\mathbb{L}_{\fM}|_{\zZ}) \to 
				\hH^{-1}(\mathbb{L}_{\ev_0}|_{\zZ}). 
				\end{align}
			Here we have pulled back objects on $\fS$ via 
			$\zZ \hookrightarrow \fZ \stackrel{\sigma}{\to} \fS$. 
			We also have the distinguished triangle 
			\begin{align*}
				\mathbb{L}_{\fZ}|_{\zZ} \to \mathbb{L}_{\fS}|_{\zZ} \to \mathbb{L}_{\ev_0}|_{\zZ}				\end{align*}
			By~\cite[Lemma~1.5.5]{HalpK32}, the object $\mathbb{L}_{\fS}|_{\zZ}$ has only 
			non-positive weights such that 
			$\mathbb{L}_{\fZ}|_{\zZ}$ is the direct summand of the weight zero part. 
			Therefore $\mathbb{L}_{\ev_0}|_{\zZ}$ has only 
			negative weights. 
			By the assumption that $\hH^{-1}(\mathbb{L}_{\fM}|_{\zZ})$ has only non-negative 
			weights, it follows that the last map in (\ref{exact:Z}) is a zero map. 
			Therefore the second map in (\ref{exact:Z}) is an isomorphism, 
			so the closed immersion (\ref{closed:S}) is an isomorphism on $\zZ \hookrightarrow \sS$. 
			Since any point in $\sS$ is speciazlized to a point in $\zZ$, 
			and the locus in $\sS$ where (\ref{closed:S}) is 
			an isomorphism is open, it follows that 
			the closed immersion (\ref{closed:S}) is an isomorphism. 
			\end{proof}
	
	\section{Semiorthogonal decompositions of DT categories}
	In this section, we give a proof of Theorem~\ref{intro:main}. 
	In~\cite[Section~6]{TocatDT}, we proved a similar 
	result for moduli spaces of PT stable pairs using 
	categorified Hall products~\cite{PoSa}. The argument in this 
	section is almost a repetition of~\cite[Section~6]{TocatDT}
	with some modifications. 
	\subsection{Theta stratifications of $(-1)$-shifted cotangents}\label{subsec:theta-1}
	Let $\fM$ be a quasi-smooth and QCA derived stack, 
	such that its classical truncation $\mM=t_0(\fM)$
	 admits a good moduli space
	\begin{align*}
		\pi_{\mM} \colon \mM \to M. 
		\end{align*}
Let $\nN$ be the classical truncation of the $(-1)$-shifted cotangent
\begin{align}\label{stack:N}
	p_0 \colon \nN=t_0(\Omega_{\fM}[-1]) \to \mM. 
	\end{align} 
Here we discuss $\Theta$-stratification on $\nN$. 
First we have the following lemma. 
\begin{lem}\label{lem:gmoduliN}
	The stack $\nN$ also admits a good moduli space $\nN \to N$. 	
	\end{lem}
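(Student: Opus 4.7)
The plan is to exploit the fact that the projection $p_0 \colon \nN \to \mM$ in (\ref{stack:N}) is affine, together with the hypothesis that $\pi_{\mM} \colon \mM \to M$ is a good moduli space, to build $N$ as a relative spectrum over $M$ and verify Alper's axioms directly.

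First I would check that $p_0$ is an affine morphism of classical stacks. Since $\Omega_{\fM}[-1] = \Spec_{\fM} S(\mathbb{T}_{\fM}[1]) \to \fM$ is affine and classical truncation commutes with relative Spec, the morphism $\nN \to \mM$ is affine as well; concretely, on a derived chart $\fU$ of the form (\ref{frak:U}) this is the statement that $\Crit(w) \hookrightarrow V^\vee \to Y \supset t_0(\fU)$ is affine, which is clear from (\ref{func:w})--(\ref{Omega:U}). Write $\aA \cneq p_{0\ast}\oO_{\nN}$, a quasi-coherent sheaf of $\oO_{\mM}$-algebras, and set
\[
	N \cneq \mathbf{Spec}_M\bigl(\pi_{\mM \ast}\aA\bigr).
\]
Since $\pi_{\mM}$ is quasi-compact and quasi-separated, $\pi_{\mM\ast}\aA$ is a quasi-coherent sheaf of $\oO_M$-algebras, so $N$ is an algebraic space, affine over $M$. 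The universal property of relative Spec gives a canonical factorization $\pi_{\nN} \colon \nN \to N$ of $\pi_{\mM} \circ p_0$.

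Next I would verify Alper's two axioms for the morphism $\pi_{\nN}$. For exactness: since $N \to M$ is affine, a sequence on $N$ is exact iff its pushforward to $M$ is exact, and $(\pi_{\mM} \circ p_0)_{\ast}= \pi_{\mM \ast} \circ p_{0\ast}$ is a composition of exact functors ($p_{0\ast}$ is exact because $p_0$ is affine; $\pi_{\mM \ast}$ is exact by the good moduli hypothesis). For the structure sheaf identity: pushing forward to $M$ via the affine map $N \to M$, both $\pi_{M \ast}\oO_N$ and $\pi_{M\ast}\pi_{\nN \ast}\oO_{\nN} = (\pi_{\mM}\circ p_0)_{\ast}\oO_{\nN} = \pi_{\mM \ast}\aA$ agree with $\pi_{\mM \ast}\aA$ by construction, so $\oO_N \to \pi_{\nN \ast}\oO_{\nN}$ is an isomorphism.

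Finally I would check quasi-compactness of $\pi_{\nN}$: the composition $\nN \to \mM \to M$ is quasi-compact (as $p_0$ is affine and $\pi_{\mM}$ is quasi-compact), and $N \to M$ is separated (being affine), so $\pi_{\nN} \colon \nN \to N$ is quasi-compact. No step here is technically hard; the main point I want to be careful about is simply that the definition of $N$ via iterated relative Spec genuinely produces an algebraic space satisfying Alper's axioms, which is a formal consequence of affineness of $p_0$ and of the axioms for $\pi_{\mM}$.
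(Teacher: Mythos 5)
Your proposal is correct and follows essentially the same route as the paper: the paper also sets $N \cneq \Spec_M f_{\ast}\oO_{\nN}$ for $f = \pi_{\mM}\circ p_0$ and notes that the good moduli space property follows from $p_0$ being affine, which is exactly the verification you spell out. Your write-up merely makes explicit the checks of Alper's axioms that the paper leaves to the reader.
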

\begin{proof}
	Let $f$ be the composition 
	\begin{align*}
		f \colon 
		\nN \stackrel{p_0}{\to} \mM \stackrel{\pi_{\mM}}{\to} M.
		\end{align*}
	We set $N \cneq \Spec_M f_{\ast}\oO_{\nN}$. 
		Since $p_0$ is an affine morphism, the 
	natural morphism $\nN \to N$ is checked to be a good moduli space morphism. 
	\end{proof}
Let us take 
$\lL \in \Pic(\mM)_{\mathbb{R}}$
with $l=c_1(\lL) \in H^2(\mM, \mathbb{R})$ and $b \in H^4(\mM, \mathbb{R})$
such that $b$ is positive definite. 
We also regard them as elements in $\Pic(\nN)_{\mathbb{R}}$, 
$H^2(\nN, \mathbb{R})$ and 
$H^4(\nN, \mathbb{R})$ by the 
pull-back via (\ref{stack:N}). 
By Theorem~\ref{thm:theta} and Lemma~\ref{lem:gmoduliN}, 
there is a $\Theta$-stratification with respect to 
$l$ and $b$
\begin{align}\label{N:theta}
	\nN=\sS_1^{\Omega} \sqcup \cdots \sqcup \sS_N^{\Omega} \sqcup 
	\nN^{l\sss}. 
	\end{align}
For each $i$, 
we denote by $\sS_{\le i}^{\Omega}$ the 
union of $\sS_j^{\Omega}$ for $j\le i$
which is a closed substack of $\nN$. 
We have the diagram 
\begin{align}\label{dia:Nst}
		\xymatrix{
		\sS_i^{\Omega} \ar@<-0.3ex>@{^{(}->}[r]^-{\ev_1^{\Omega}} \ar[d]^-{\ev_0^{\Omega}} & 
		\nN \setminus \sS_{\le i-1}^{\Omega} \ar@<-0.3ex>@{^{(}->}[r]  & \nN \\
		\zZ_i^{\Omega} \ar@/^18pt/[u]^-{\sigma^{\Omega}} \ar[ur]_-{\tau^{\Omega}} & &
	}
	\end{align}
where $\zZ_i^{\Omega}$ is the center of $\sS_i^{\Omega}$. 
Note that we have open immersions 
\begin{align*}
	\sS_i^{\Omega} \subset \Filt(\nN \setminus \sS_{\le i-1}^{\Omega})
	\subset \Filt(\nN), \ 
		\zZ_i^{\Omega} \subset \Grad(\nN \setminus \sS_{\le i-1}^{\Omega})
	\subset \Grad(\nN)
	\end{align*}
where the first inclusions are open and closed, 
and the second inclusions are induced by the open immersion 
$\nN\setminus \sS_{\le i-1}^{\Omega} \subset \nN$. 

We define 
\begin{align*}
	\overline{\sS}_i^{\Omega} \subset \Filt(\nN), \ 
	\overline{\zZ}_i^{\Omega} \subset \Grad(\nN)
	\end{align*}
to be the smallest open and closed substacks which contain
$\sS_i^{\Omega}$, $\zZ_i^{\Omega}$ respectively. 
Note that 
$\overline{\zZ}_i^{\Omega}$ is the center of $\overline{\sS}_i^{\Omega}$, 
and the diagram (\ref{dia:Nst}) extends to the diagram 
\begin{align}\label{dia:Nst2}
	\xymatrix{
		\overline{\sS}_i^{\Omega} \ar[r]^-{\ev_1^{\Omega}} \ar[d]^-{\ev_0^{\Omega}} & 
		\nN   \\
		\overline{\zZ}_i^{\Omega}.  \ar@/^18pt/[u]^-{\sigma^{\Omega}} \ar[ur]_-{\tau^{\Omega}} & 
	}
\end{align} 
We have the following lemma: 
\begin{lem}\label{lem:SZcart}
	The following diagrams are Cartesian 
	\begin{align}\label{lem:Cart}
				\xymatrix{
				\sS_i^{\Omega} \ar@<-0.3ex>@{^{(}->}[r] \ar@<-0.3ex>@{^{(}->}[d]_-{\ev_1^{\Omega}} \diasquare & \overline{\sS}_i^{\Omega} \ar[d]^-{\ev_1^{\Omega}} \\
				\nN \setminus \sS_{\le i-1}^{\Omega}  \ar@<-0.3ex>@{^{(}->}[r] & \nN,  	
			} \quad 
				\xymatrix{
			\sS_i^{\Omega} \ar@<-0.3ex>@{^{(}->}[r] \ar[d]_-{\ev_0^{\Omega}}  \diasquare & \overline{\sS}_i^{\Omega} \ar[d]^-{\ev_0^{\Omega}} \\
			\zZ_i^{\Omega}  \ar@<-0.3ex>@{^{(}->}[r] & \overline{\zZ}_i^{\Omega}. 	
		}
	\end{align}	
\end{lem}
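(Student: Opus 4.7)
The plan is to reduce both Cartesian squares to a single claim about $\Theta$-filtrations: \emph{if $f \in \overline{\sS}_i^\Omega$ satisfies $\ev_1(f) \in \nN \setminus \sS_{\le i-1}^\Omega$, then $f \in \sS_i^\Omega$}. The key preliminary fact I would use is that for any open substack $U \subset \nN$, a morphism $f \colon \Theta \to \nN$ factors through $U$ if and only if $\tau(\ev_0(f)) \in U$: this is because $f^{-1}(U)$ is an open substack of $\Theta$, and the only $\C$-invariant open subset of $\mathbb{A}^1$ containing $0$ is $\mathbb{A}^1$ itself.

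Granted the claim, both squares will follow. For the first square, the fiber product is by construction the locus $\{f \in \overline{\sS}_i^\Omega : \ev_1(f) \in \nN \setminus \sS_{\le i-1}^\Omega\}$, which the claim identifies with $\sS_i^\Omega$. For the second square, the fiber product $\overline{\sS}_i^\Omega \times_{\overline{\zZ}_i^\Omega} \zZ_i^\Omega$ is the locus of $f \in \overline{\sS}_i^\Omega$ with $\ev_0(f) \in \zZ_i^\Omega$, equivalently $\tau(\ev_0(f)) \in \nN \setminus \sS_{\le i-1}^\Omega$; by the preliminary fact such $f$ factors through $\nN \setminus \sS_{\le i-1}^\Omega$, so in particular $\ev_1(f) \in \nN \setminus \sS_{\le i-1}^\Omega$ and the claim again applies.

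To prove the claim, I would use the numerical invariant $\mu_{l,b}$ from (\ref{mu:lb}). Since $f^{\ast}l \in H^2(\Theta,\mathbb{R})$ and $f^{\ast}b \in H^4(\Theta,\mathbb{R})$ depend only on the connected component of $\Filt(\nN)$ containing $f$, $\mu_{l,b}$ is locally constant on $\Filt(\nN)$; since $\overline{\sS}_i^\Omega$ is by definition a union of connected components meeting $\sS_i^\Omega$, every $f \in \overline{\sS}_i^\Omega$ satisfies $\mu_{l,b}(f) = \mu_i$. For $p \cneq \ev_1(f) \in \nN \setminus \sS_{\le i-1}^\Omega$, no $\Theta$-filtration $g$ with $\ev_1(g) \cong p$ has $\mu_{l,b}(g) > \mu_i$ (by the ordering of strata), so $f$ attains the maximum. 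The uniqueness assertion in Definition~\ref{defi:num} then forces $f$ to be a composition $f_0 \circ \phi_n$ of the canonical first destabilizer $f_0 \in \sS_i^\Omega$ of $p$ with a ramified cover $\phi_n \colon \Theta \to \Theta$; an auxiliary argument using that $\ev_1 \colon \sS_i^\Omega \hookrightarrow \nN \setminus \sS_{\le i-1}^\Omega$ is a closed immersion (so $\sS_i^\Omega$ contains at most one representative per closed point of its image) together with the fact that different $\phi_n$ produce filtrations in different connected components of $\Filt(\nN)$ (since they rescale $f^{\ast}l$ and $f^{\ast}b$) forces $n=1$, hence $f=f_0 \in \sS_i^\Omega$. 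The main obstacle will be invoking the uniqueness clause of Definition~\ref{defi:num} carefully and analyzing the interaction of ramified reparameterizations with the connected-component decomposition of $\Filt(\nN)$ underlying $\overline{\sS}_i^\Omega$.
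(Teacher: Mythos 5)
Your proposal follows essentially the same route as the paper: the paper proves the left square first by appealing directly to the unique-maximizer condition of Definition~\ref{defi:num}, and then deduces the right square exactly as you do, by observing that $f(0)\in\zZ_i^{\Omega}$ factors through $\nN\setminus\sS_{\le i-1}^{\Omega}$ and that openness then forces $f(1)\in\nN\setminus\sS_{\le i-1}^{\Omega}$. Your extra discussion of local constancy of $\mu_{l,b}$ and of ramified coverings only fleshes out the one-line appeal the paper makes (note, though, that your rescaling argument as stated shows $f$ and $f_0$ lie in different components, not yet that $f$'s component avoids $\sS_i^{\Omega}$ -- a point the paper likewise leaves implicit), so the argument is correct and matches the paper's proof.
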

\begin{proof}
	The left diagram is Cartesian by the unique maximizer condition in Definition~\ref{defi:num}. 
	As for the right diagram, let 
	us take a map $f \colon \Theta \to \nN$ in $\overline{\sS}_i^{\Omega}$
	and suppose that 
	$f(0) \colon B\C \to \nN$ lies $\zZ_i^{\Omega}$. 
	Then $f(0)$ factors as 
	\begin{align*}
		f(0) \colon B\C \to \nN \setminus \sS_{\le i-1}^{\Omega}
		\subset \nN. 
		\end{align*}
	Therefore $f(1) \in \nN \setminus \sS_{\le i-1}^{\Omega}$ as 
	$\nN \setminus \sS_{\le i-1}^{\Omega} \subset \nN$ is open. 
	By the left Cartesian diagram in (\ref{lem:Cart}), we conclude that 
	$f$ lies in $\sS_i^{\Omega}$. 
\end{proof}
We define open and closed substacks 
\begin{align*}
	\ffS_i \subset \Filt(\fM), \ 
	\ffZ_i \subset \Grad(\fM)
	\end{align*}
to be 
corresponding to 
$\overline{\sS}_i^{\Omega}$, $\overline{\zZ}_i^{\Omega}$
under the isomorphisms (\ref{dia:pi}). 
Then $\ffZ_i$ is the center of $\ffS_i$
and we have the following diagram as in (\ref{dia:Filt2})
	\begin{align}\label{dia:Filt4}
	\xymatrix{
		\mathfrak{S}_i \ar[r]^-{\ev_1} \ar[d]^-{\ev_0} & \fM \\
		\mathfrak{Z}_i \ar[ru]_-{\tau}. \ar@/^18pt/[u]^-{\sigma} & 
	}
\end{align}
Note that, although $\sS_i^{\Omega}$ forms a $\Theta$-stratification for $\nN$, 
the morphism $\ev_1$ in (\ref{dia:Filt4}) is not necessary a closed immersion so that 
$\fS_i$ may not form a $\Theta$-stratification of $\fM$. 
We set $T_i^{\Omega}$ to be  
\begin{align*}
	T_i^{\Omega} \cneq \overline{\zZ}_i^{\Omega} \setminus \zZ_i^{\Omega}
	\subset \overline{\zZ}_i^{\Omega} \stackrel{\cong}{\to}t_0(\Omega_{\ffZ_i}[-1]). 
	\end{align*}
Here the last isomorphism is given in Lemma~\ref{lem:Grad}. 
We regard $T_i^{\Omega}$ as a closed substack of $t_0(\Omega_{\ffZ_i}[-1])$
by the above isomorphism, which is conical. 
Following Definition~\ref{defi:DTcat}, the 
DT categories for $\zZ_i^{\Omega}$ and $\nN \setminus \sS_{\le i-1}^{\Omega}$ are defined by 
\begin{align*}
	\dDT^{\C}(\zZ_i^{\Omega}) \cneq \Dbc(\ffZ_i)/\cC_{T_i^{\Omega}}, \ 
	\dDT^{\C}(\nN \setminus \sS_{\le i-1}^{\Omega}) \cneq 
	\Dbc(\fM)/\cC_{\sS_{\le i-1}^{\Omega}}. 
	\end{align*}

Since $\fM$ is quasi-smooth and $\Theta$-reductive, 
in the diagram (\ref{dia:Filt4})
the morphism $\ev_0$ is quasi-smooth and $\ev_1$ is proper. 
Therefore for each $i$, we have the functor
\begin{align}\label{funct:ev}
	\Upsilon_i \colon \Dbc(\ffZ_i) \stackrel{\ev_0^{\ast}}{\to} \Dbc(\ffS_i) \stackrel{\ev_{1\ast}}{\to}
	\Dbc(\fM). 
	\end{align}
\begin{lem}\label{lem:descend}
	The functor (\ref{funct:ev}) descends to the functor 
	\begin{align}\label{funct:Upsi}
		\Upsilon_i \colon	\dDT^{\C}(\zZ_i^{\Omega}) 
		\to \dDT^{\C}(\nN \setminus \sS_{\le i-1}^{\Omega}). 
		\end{align}
	\end{lem}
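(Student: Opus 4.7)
The plan is to apply Lemma~\ref{lem:Tpush} with $(\ffS, \ffZ)$ replaced by $(\ffS_i, \ffZ_i)$, with $T_0 = T_i^{\Omega}$ and $T_1 = \sS_{\le i-1}^{\Omega}$. Recall that $\Upsilon_i$ is defined as $\ev_{1\ast} \circ \ev_0^{\ast}$ for the diagram (\ref{dia:Filt4}), and the diagram for $\Omega_{\fM}[-1]$ is the enhanced diagram (\ref{dia:Nst2}). Since $\fM$ is quasi-smooth and $\Theta$-reductive (because $\mM$ admits a good moduli space), the morphism $\ev_0$ is quasi-smooth and $\ev_1$ is proper, so the hypotheses of Lemma~\ref{lem:Tpush} are in force. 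To conclude that $\Upsilon_i(\cC_{T_i^{\Omega}}) \subset \cC_{\sS_{\le i-1}^{\Omega}}$, I therefore need to verify
\begin{align*}
	(\ev_0^{\Omega})^{-1}(T_i^{\Omega}) \subset (\ev_1^{\Omega})^{-1}(\sS_{\le i-1}^{\Omega}) \subset \overline{\sS}_i^{\Omega}.
\end{align*}

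The second inclusion is automatic since $\ev_1^{\Omega}$ is defined on $\overline{\sS}_i^{\Omega}$. For the first inclusion, I would use Lemma~\ref{lem:SZcart}. By the right Cartesian square in (\ref{lem:Cart}), we have
$(\ev_0^{\Omega})^{-1}(\zZ_i^{\Omega}) = \sS_i^{\Omega}$
inside $\overline{\sS}_i^{\Omega}$, so
\begin{align*}
	(\ev_0^{\Omega})^{-1}(T_i^{\Omega}) = (\ev_0^{\Omega})^{-1}(\overline{\zZ}_i^{\Omega} \setminus \zZ_i^{\Omega}) = \overline{\sS}_i^{\Omega} \setminus \sS_i^{\Omega}.
\end{align*}
Dually, by the left Cartesian square in (\ref{lem:Cart}), we have $(\ev_1^{\Omega})^{-1}(\nN \setminus \sS_{\le i-1}^{\Omega}) = \sS_i^{\Omega}$, hence
\begin{align*}
	(\ev_1^{\Omega})^{-1}(\sS_{\le i-1}^{\Omega}) = \overline{\sS}_i^{\Omega} \setminus \sS_i^{\Omega}.
\end{align*}
So in fact $(\ev_0^{\Omega})^{-1}(T_i^{\Omega}) = (\ev_1^{\Omega})^{-1}(\sS_{\le i-1}^{\Omega})$, and the required inclusion holds with equality.

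Applying Lemma~\ref{lem:Tpush}, the functor $\Upsilon_i$ restricts to $\cC_{T_i^{\Omega}} \to \cC_{\sS_{\le i-1}^{\Omega}}$. By the universal property of the Verdier quotient, $\Upsilon_i$ then descends to the desired functor $\dDT^{\C}(\zZ_i^{\Omega}) = \Dbc(\ffZ_i)/\cC_{T_i^{\Omega}} \to \Dbc(\fM)/\cC_{\sS_{\le i-1}^{\Omega}} = \dDT^{\C}(\nN \setminus \sS_{\le i-1}^{\Omega})$. There is no real obstacle here: the geometric input is entirely packaged in Lemma~\ref{lem:SZcart} (the two Cartesian squares coming from the $\Theta$-stratum structure and the unique-maximizer property), and the singular support preservation is already handled by Lemma~\ref{lem:Tpush}. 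The only subtle point is the identification of $T_i^{\Omega}$ as a subset of $t_0(\Omega_{\ffZ_i}[-1])$ via the isomorphism of Lemma~\ref{lem:Grad}, which is exactly how the diagram (\ref{dia:Somega}) was set up to ensure compatibility with the hypothesis of Lemma~\ref{lem:Tpush}.
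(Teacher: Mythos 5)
Your proposal is correct and follows essentially the same route as the paper: it establishes the identity $(\ev_0^{\Omega})^{-1}(T_i^{\Omega})=(\ev_1^{\Omega})^{-1}(\sS_{\le i-1}^{\Omega})=\overline{\sS}_i^{\Omega}\setminus \sS_i^{\Omega}$ from the two Cartesian squares of Lemma~\ref{lem:SZcart}, applies Lemma~\ref{lem:Tpush} to get $\Upsilon_i(\cC_{T_i^{\Omega}})\subset \cC_{\sS_{\le i-1}^{\Omega}}$, and descends to the Verdier quotients. This matches the paper's proof, which cites exactly these ingredients.
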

\begin{proof}
	By the Cartesian squares (\ref{lem:Cart}) and Lemma~\ref{lem:SZcart}, 
	we have the identity in the diagram (\ref{dia:Nst2})
	\begin{align}\label{ev:TSomega}
		(\ev_0^{\Omega})^{-1}(T_i^{\Omega})=(\ev_1^{\Omega})^{-1}(\sS_{\le i-1}^{\Omega})
		=\overline{\sS}_i^{\Omega} \setminus \sS_i^{\Omega}
		\subset \overline{\sS}_i^{\Omega}.
		\end{align}
	Therefore by Lemma~\ref{lem:Tpush}, the functor 
 (\ref{funct:ev}) sends 
	$\cC_{T_i^{\Omega}}$ to $\cC_{\sS_{\le i-1}^{\Omega}}$. 
	By taking the quotients, we obtain the functor (\ref{funct:Upsi})
	as a descendant of (\ref{funct:ev}). 
	\end{proof}

The weight decomposition (\ref{grad:wt})
respects the singular supports, 
so we have the decomposition of the 
DT category for $\zZ_i^{\Omega}$ into 
 weight space categories (see~\cite[Subsection~3.2.3]{TocatDT})
\begin{align*}
	\dDT^{\C}(\zZ_i^{\Omega})=\bigoplus_{j\in \mathbb{Z}}
	\dDT^{\C}(\zZ_i^{\Omega})_{\wt=j}. 
	\end{align*}
We denote by 
\begin{align*}
	\xymatrix{
			\dDT^{\C}(\zZ_i^{\Omega})
\ar@<0.5ex>[r]^-{\mathrm{pr}_j}  & \ar@<0.5ex>[l]^-{i_j}
\dDT^{\C}(\zZ_i^{\Omega})_{\wt=j}
}
\end{align*}
the projection onto the weight space category, 
inclusion from the weight space category, respectively. 
We define $\Upsilon_{i, j}$ to be the composition 
\begin{align}\label{Ups:ij}
	\Upsilon_{i, j} \colon 
	\dDT^{\C}(\zZ_i^{\Omega})_{\wt=j}
	\stackrel{i_j}{\hookrightarrow} 
	\dDT^{\C}(\zZ_i^{\Omega})
	\stackrel{\Upsilon_i}{\to}
	\dDT^{\C}(\nN \setminus \sS_{\le i-1}^{\Omega}). 
	\end{align}

\subsection{Descriptions along formal fibers}
Here we give a description of the diagram (\ref{dia:Filt4}), 
formally locally on the good moduli space 
$M$. Below 
we assume that the good moduli space morphism 
$\pi_{\mM} \colon \mM \to M$ satisfies the formal neighborhood 
theorem in Definition~\ref{defi:formalneigh}. 
We take a closed point $y \in M$ and 
use the same symbol $y \in \mM$ to denote the unique 
closed point in the fiber of $\pi_{\mM} \colon \mM \to M$ at $y$, 
and $G_y \cneq \Aut(y)$. 
By the formal neighborhood theorem, 
there exists a $G_y$-equivariant Kuranishi map 
\begin{align}\label{kuranishi:2}
	\kappa_y \colon \widehat{\hH}^0(\mathbb{T}_{\fM}|_{y}) \to 
	\hH^1(\mathbb{T}_{\fM}|_{y})
\end{align}
as in Definition~\ref{defi:formalneigh}, 
and an equivalence 
\begin{align*}
	\widehat{\fM}_y \sim [\widehat{\fU}_y/G_y]
	\end{align*}
(see the notation in Subsection~\ref{subsec:gmoduli}). 

From Lemma~\ref{lem:fit:M}, we have the commutative diagram 
\begin{align}\notag
	\xymatrix{
\sS_i \ar[r]^-{\ev_1} \ar[d]_-{\ev_0} & \mM \ar[d]^-{\pi_{\mM}} \\
\zZ_i \ar[r]_-{\pi_{\mM}\circ \tau} & M. 	
}
		\end{align}
	Here $\sS_i$, $\zZ_i$ are classical truncations of 
	$\fS_i$, $\fZ_i$ in the diagram (\ref{dia:Filt4}). 
By taking the fiber products with 
$\Spec \widehat{\oO}_{M, y} \to M$, 
we obtain the diagram
\begin{align}\label{formal:y}
	\xymatrix{
		\widehat{\ffS}_{i, y}  \ar[rr]^-{\ev_1} \ar[dd]_-{\ev_0} &  \diasquare  & \widehat{\fM}_y \\ 
\diasquare	& \widehat{\sS}_{i, y} \ar@<-0.3ex>@{_{(}->}[lu] \ar[r]^-{\ev_1} \ar[d]_-{\ev_0} & \widehat{\mM}_y \ar@<-0.3ex>@{_{(}->}[u] \ar[d]^-{\pi_{\mM}} \\
\widehat{\ffZ}_{i, y}	& \widehat{\zZ}_{i, y} \ar[r] \ar@<-0.3ex>@{_{(}->}[l]
 & \Spec \widehat{\oO}_{M, y}.	
}
	\end{align}
Here $\widehat{\sS}_{i, y} \cneq \sS_i \times_M \Spec \widehat{\oO}_{M, y}$
is an open and closed substack 
of $\Filt(\widehat{\mM}_y)$ by Lemma~\ref{lem:fit:M} (ii), 
$\widehat{\zZ}_{i, y}$ is its center, 
and 
\begin{align*}
	\widehat{\ffS}_{i, y}
	\subset \Filt(\widehat{\fM}_y), \ 
		\widehat{\ffZ}_{i, y} \subset \Grad(\widehat{\fM}_y)
		\end{align*}
	 are the 
corresponding 
components under the isomorphisms (\ref{Filt:t0}). 
We have the following lemma: 
\begin{lem}\label{lem:Cart:MS}
	The following diagrams are Cartesians: 
\begin{align}\label{formal:y2}
		\xymatrix{
		\widehat{\ffZ}_{i, y} \ar[d]_-{\widehat{i}_y} \diasquare
		& \ar[l]_-{\ev_0} \widehat{\ffS}_{i, y}
		\ar[r]^-{\ev_1} \ar[d]_-{\widehat{j}_y} \diasquare & 
		\widehat{\fM}_y \ar[d]^-{\widehat{\iota}_y} \\
		\ffZ_i  & \ar[l]_-{\ev_0} \ffS_i \ar[r]^-{\ev_1} & \fM. 
	}
	\end{align}
\end{lem}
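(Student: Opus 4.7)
The plan is to establish at the derived level the identifications
\begin{align*}
\Filt(\widehat{\fM}_y) \cong \Filt(\fM) \times_M \Spec \widehat{\oO}_{M,y}, \quad
\Grad(\widehat{\fM}_y) \cong \Grad(\fM) \times_M \Spec \widehat{\oO}_{M,y},
\end{align*}
with right-hand structure maps to $M$ given by $\pi_{\mM} \circ \ev_1$ and $\pi_{\mM} \circ \tau$ respectively; these are the derived analogues of Lemma~\ref{lem:fit:M}(ii). The key input is the cohomological triviality $\Gamma(\Theta, \oO) = \Gamma(B\C, \oO) = \mathbb{C}$, with higher cohomology vanishing because $\C$ is reductive, so that for any derived affine scheme $T$ and classical ring $R$ one has $\Map(\Theta \times T, \Spec R) = \Map(T, \Spec R)$ and similarly with $B\C$ in place of $\Theta$. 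Unpacking the functor of points of $\Filt(\widehat{\fM}_y) = \Map(\Theta \times T, \widehat{\fM}_y)$ through the defining Cartesian squares (\ref{dia:fnbd}) of $\widehat{\fM}_y$, and using this triviality to collapse the factorization data through $\Spec \widehat{\oO}_{M,y}$ down to the $T$-level, yields the first equivalence; the argument for $\Grad$ is identical.

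Next I restrict to the open and closed components $\ffS_i \subset \Filt(\fM)$ and $\ffZ_i \subset \Grad(\fM)$. The derived base change $\ffS_i \times_M \Spec \widehat{\oO}_{M,y}$ is then an open and closed derived substack of $\Filt(\widehat{\fM}_y)$ whose classical truncation, by the classical Lemma~\ref{lem:fit:M}(ii), coincides with $\widehat{\sS}_{i,y}$. Since an open and closed derived substack is uniquely determined by its classical truncation (both sit on the same underlying connected components), this base change must agree with $\widehat{\ffS}_{i,y}$ as defined in the text. The same reasoning identifies $\ffZ_i \times_M \Spec \widehat{\oO}_{M,y}$ with $\widehat{\ffZ}_{i,y}$. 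The right Cartesian square of (\ref{formal:y2}) is then immediate: rewriting $\Filt(\fM) \times_M \Spec \widehat{\oO}_{M,y} \cong \Filt(\fM) \times_{\fM} \widehat{\fM}_y$ via $\ev_1$ and restricting to $\ffS_i$ yields $\widehat{\ffS}_{i,y} \cong \ffS_i \times_{\fM} \widehat{\fM}_y$.

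For the left square, I invoke the commutativity $\pi_{\mM} \circ \ev_1 = \pi_{\mM} \circ \tau \circ \ev_0$ on $\Filt(\fM)$ from Lemma~\ref{lem:fit:M}(i), which allows the chain of identifications
\begin{align*}
\widehat{\ffS}_{i,y} \cong \ffS_i \times_M \Spec \widehat{\oO}_{M,y} \cong \ffS_i \times_{\ffZ_i}(\ffZ_i \times_M \Spec \widehat{\oO}_{M,y}) \cong \ffS_i \times_{\ffZ_i} \widehat{\ffZ}_{i,y}
\end{align*}
by associativity of derived fiber products and the fact that $\ev_0 \colon \ffS_i \to \ffZ_i$ intertwines the two structure maps to $M$. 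The main technical step of the whole argument is the derived extension of Lemma~\ref{lem:fit:M}(ii) in the first paragraph, which rests on the cohomological triviality of $\Theta$ and $B\C$ with respect to classical affine targets; once this is granted, the restriction to the components $\ffS_i, \ffZ_i$ and the passage between the two Cartesian squares are formal consequences of fiber product manipulations and the compatibility of the structure maps to $M$.
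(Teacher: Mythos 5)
There is a genuine gap, and it sits at the very first step. Your whole argument rests on forming fiber products such as $\Filt(\fM) \times_M \Spec \widehat{\oO}_{M,y}$ with structure map ``$\pi_{\mM}\circ \ev_1$'', and more generally on treating $\widehat{\fM}_y$ as the derived base change $\fM \times_M \Spec\widehat{\oO}_{M,y}$. But no morphism $\fM \to M$ (nor $\Filt(\fM)\to M$, $\Grad(\fM)\to M$) exists: the good moduli space morphism $\pi_{\mM}$ is defined only on the classical truncation $\mM$, and it does not extend canonically over the derived thickening $\mM \hookrightarrow \fM$. This is exactly why the paper defines $\widehat{\fM}_y$ by a deformation-theoretic lifting: the Cartesian squares in (\ref{dia:fnbd}) only exhibit the \emph{classical} truncation $\widehat{\mM}_y$ as a fiber product over $M$, while $\widehat{\fM}_y$ itself is characterized by the additional property $\widehat{\iota}_y^{\ast}\mathbb{L}_{\fM}\simeq \mathbb{L}_{\widehat{\fM}_y}$, not by a universal property among derived stacks. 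Hence your ``unpacking the functor of points of $\Map(\Theta\times T, \widehat{\fM}_y)$ through the defining Cartesian squares'' cannot be carried out: for a general derived test scheme $T$, a $T$-point of $\widehat{\fM}_y$ is not a $T$-point of $\fM$ together with compatible data over $M$, so the claimed derived analogue of Lemma~\ref{lem:fit:M}(ii) is not established. Since the rest of your argument (restriction to the components $\ffS_i$, $\ffZ_i$, and the rewriting of the left square via $\ffS_i\times_{\ffZ_i}\widehat{\ffZ}_{i,y}$, which again invokes Lemma~\ref{lem:fit:M}(i) for maps out of \emph{derived} stacks) is pure fiber-product bookkeeping on top of that step, it has nothing to rest on. Your auxiliary observation that $\Map(\Theta\times T,\Spec R)\simeq \Map(T,\Spec R)$ (cohomological triviality of $\Theta$ and $B\C$) is correct but does not address this issue.

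The actual content of the lemma is precisely the upgrade from the classical Cartesian squares (which hold by construction and by Lemma~\ref{lem:fit:M}) to derived Cartesian squares, and this requires engaging with the derived structure. The paper does so by showing that the natural maps $\widehat{j}_y^{\ast}\mathbb{L}_{\ffS_i}\to \mathbb{L}_{\widehat{\ffS}_{i,y}}$ and $\widehat{i}_y^{\ast}\mathbb{L}_{\ffZ_i}\to \mathbb{L}_{\widehat{\ffZ}_{i,y}}$ are equivalences, which follows from the formulas (\ref{L:filt}) expressing $\mathbb{L}_{\Filt(\fM)}$ and $\mathbb{L}_{\Grad(\fM)}$ in terms of $\mathbb{L}_{\fM}$ together with $\widehat{\iota}_y^{\ast}\mathbb{L}_{\fM}\simeq \mathbb{L}_{\widehat{\fM}_y}$. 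Your proposal never touches cotangent complexes, so to repair it you would either have to prove the derived analogue of Lemma~\ref{lem:fit:M}(ii) by some argument of this kind (at which point you have essentially reproduced the paper's proof), or justify a map from the derived mapping stacks to $M$, which is not available.
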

\begin{proof}
The diagrams (\ref{formal:y2}) are Cartesians on 
classical truncations by the constructions. 
It is enough to show that 
\begin{align*}
	\widehat{j}_y^{\ast}\mathbb{L}_{\fS_i} 
\stackrel{\cong}{\to} 
\mathbb{L}_{\widehat{\fS}_{i, y}}, \ 
	\widehat{i}_y^{\ast}\mathbb{L}_{\fZ_i} 
\stackrel{\cong}{\to} 
\mathbb{L}_{\widehat{\fZ}_{i, y}}. 
\end{align*}
Since we have the isomorphism 
$\widehat{\iota}_{y}^{\ast}\mathbb{L}_{\fM}
\stackrel{\cong}{\to}
\mathbb{L}_{\widehat{\fM}_y}$, 
the above isomorphisms easily follow 
from the descriptions of cotangent complexes 
of stacks filtered objects and graded objects in terms of 
$\mathbb{L}_{\fM}$ as in (\ref{L:filt}). 
	\end{proof}

For a one parameter subgroup $\lambda \colon \C \to G_y$, 
from Example~\ref{exam:frakU} and Lemma~\ref{lem:fit:M} (ii)
we have the following diagram 
\begin{align}\notag
	\xymatrix{
		\left[\hH^0(\mathbb{T}_{\fM}|_{y})^{\lambda \ge 0}/G_y^{\lambda \ge 0}
		\right] 
		\ar[r] \ar[d] & 
		\left[\hH^0(\mathbb{T}_{\fM}|_{y})/G_y
		\right] \ar[d] \\
		\left[\hH^0(\mathbb{T}_{\fM}|_{y})^{\lambda = 0}/G_y^{\lambda = 0}
	\right] \ar[r] \ar[ru] \ar@/^18pt/[u]	& 
\hH^0(\mathbb{T}_{\fM}|_{y})\ssslash G_y.	
	}	
	\end{align}
By taking the formal fibers at $0 \in \hH^0(\mathbb{T}_{\fM}|_{y})\ssslash G_y$, 
we obtain the commutative diagram 
\begin{align}\notag
	\xymatrix{
		\left[\widehat{\hH}^0(\mathbb{T}_{\fM}|_{y})^{\lambda \ge 0}/G_y^{\lambda \ge 0}
		\right] 
		\ar[r] \ar[d] & 
		\left[\widehat{\hH}^0(\mathbb{T}_{\fM}|_{y})/G_y
		\right] \ar[d] \\
		\left[\widehat{\hH}^0(\mathbb{T}_{\fM}|_{y})^{\lambda = 0}/G_y^{\lambda = 0}
		\right] \ar[r]	\ar@/^18pt/[u] \ar[ur] & 
		\widehat{\hH}^0(\mathbb{T}_{\fM}|_{y})\ssslash G_y.	
	}	
\end{align}
Since $\kappa_y$ in (\ref{kuranishi:2}) is $G_y$-equivariant 
it restricts to the following maps 
\begin{align*}
	&\kappa_y^{\lambda \ge 0} \colon \widehat{\hH}^0(\mathbb{T}_{\fM}|_{y})^{\lambda \ge 0} 
	\to \hH^1(\mathbb{T}_{\fM}|_{y})^{\lambda \ge 0}, \\ 
	&\kappa_y^{\lambda=0} \colon \widehat{\hH}^0(\mathbb{T}_{\fM}|_{y})^{\lambda = 0} 
	\to \hH^1(\mathbb{T}_{\fM}|_{y})^{\lambda = 0},
	\end{align*}
which are $G_y^{\lambda \ge 0}$-equivariant, 
$G_y^{\lambda=0}$-equivariant, respectively. 
We define 
\begin{align*}
	\widehat{\fU}_y^{\lambda \ge 0} \hookrightarrow 
	\widehat{\hH}^0(\mathbb{T}_{\fM}|_{y})^{\lambda \ge 0}, \ 
	\widehat{\fU}_y^{\lambda = 0}
	 \hookrightarrow \widehat{\hH}^0(\mathbb{T}_{\fM}|_{y})^{\lambda = 0} 
	\end{align*}
to be the derived zero loci 
of $\kappa_y^{\lambda \ge 0}$, 
$\kappa_y^{\lambda=0}$ respectively. 

By Example~\ref{exam:frakU}, 
for each $1\le i\le N$ there is a 
one parameter subgroup 
\begin{align}\label{lambda:i}
	\lambda_i \colon \C \to G_y
	\end{align}
such that 
each component of the outer diagram in (\ref{formal:y}) 
\begin{align}\notag
	\xymatrix{
		\widehat{\fS}_{i, y} \ar[r]^-{\ev_1} \ar[d]^-{\ev_0} &
	\widehat{\fM}_{i, y} \\
	\widehat{\fZ}_{i, y} \ar@/^18pt/[u]^-{\sigma} \ar[ru]_-{\tau} & 
	}	
\end{align}
is equivalent 
to the diagram of the 
following form for $\lambda=\lambda_i$
\begin{align}\label{dia:lambda}
	\xymatrix{
	[\widehat{\fU}_y^{\lambda \ge 0}/G_y^{\lambda \ge 0}] \ar[r]^-{\ev_1} \ar[d]^-{\ev_0} &
	[\widehat{\fU}_y/G_y] \\
	[\widehat{\fU}_y^{\lambda=0}/G_y^{\lambda=0}]. \ar@/^18pt/[u]^-{\sigma} \ar[ru]_-{\tau} & 
}	
	\end{align}

\subsection{Descriptions of $(-1)$-shifted cotangents along formal fibers}
Here we give a formal local description of the
stack $\nN$ and its $\Theta$-stratification 
 (\ref{N:theta}). 
Let us consider the composition 
\begin{align*}
	\nN \stackrel{p_0}{\to} \mM \stackrel{\pi_{\mM}}{\to} M. 
	\end{align*}
We denote by $\widehat{\nN}_y$ the formal fiber of the above 
morphism at $y \in M$, i.e. 
\begin{align*}
	\widehat{\nN}_y \cneq \nN \times_M \Spec \widehat{\oO}_{M, y}. 
	\end{align*}
Let $\kappa_y$ be the Kuranishi map (\ref{kuranishi:2}). 
We define the following function, determined by $\kappa_y$ as in (\ref{func:w})
\begin{align*}
	\widehat{w}_y \colon 
	\left[ \left(\widehat{\hH}^0(\mathbb{T}_{\fM}|_{y}) \oplus 
	\hH^1(\mathbb{T}_{\fM}|_{y})^{\vee} \right) /G_y \right]
	\to \mathbb{C}, \ 
	\widehat{w}_y(-, -)=\langle \kappa_y(-), - \rangle. 
	\end{align*}
Then from the identity (\ref{Omega:U}), the stack 
$\widehat{\nN}_y$ is isomorphic to 
the critical locus of $\widehat{w}_y$
\begin{align}\notag
	\widehat{\nN}_y\cong
	\Crit(\widehat{w}_y) \subset \left[\left(\widehat{\hH}^0(\mathbb{T}_{\fM}|_{y}) \oplus 
	\hH^1(\mathbb{T}_{\fM}|_{y})^{\vee}\right) /G_y \right]. 
	\end{align}

We next give a formal local description 
of the $\Theta$-stratification (\ref{N:theta}) for $\nN$. 
We have the following commutative diagram 
\begin{align*}
	\xymatrix{
\sS_i^{\Omega}	\ar@<-0.3ex>@{^{(}->}[r] \ar[d]_-{\ev_0^{\Omega}} 
\diasquare & \overline{\sS}_i^{\Omega}
\ar[r]^-{\ev_1^{\Omega}}  \ar[d]_-{\ev_0^{\Omega}} & \nN \ar[r]^-{p_0} \ar[d]_-{\pi_{\nN}}
& \mM \ar[d]_-{\pi_{\mM}} \\
\zZ_i^{\Omega}	\ar@<-0.3ex>@{^{(}->}[r]  & \overline{\zZ}_i^{\Omega}
\ar[r]^-{\pi_{\nN} \circ \tau^{\Omega}} & N \ar[r] & M. 	
}
	\end{align*} 
Here the left Cartesian diagram follows from Lemma~\ref{lem:SZcart}, 
the middle square follows from Lemma~\ref{lem:fit:M}, and 
the right bottom arrow is the induced morphism on good moduli 
spaces. By taking the fiber products with 
$\Spec \widehat{\oO}_{M, y} \to M$, we obtain 
the commutative diagram 
\begin{align}\label{Somega}
	\xymatrix{
		\widehat{\sS}_{i, y}^{\Omega}	\ar@<-0.3ex>@{^{(}->}[r] \ar[d]_-{\ev_0^{\Omega}} 
		\diasquare & \overline{\sS}_{i, y}^{\Omega}
		\ar[r]^-{\ev_1^{\Omega}}  \ar[d]_-{\ev_0^{\Omega}} & 
		\widehat{\nN}_y \ar[r]^-{p_0} \ar[d]_-{\pi_{\nN}}
		& \widehat{\mM}_y \ar[d]_-{\pi_{\mM}} \\
		\widehat{\zZ}_{i, y}^{\Omega}	\ar@<-0.3ex>@{^{(}->}[r]  & \overline{\zZ}_{i, y}^{\Omega}
		\ar[r]^-{\pi_{\nN} \circ \tau^{\Omega}} & \widehat{N}_y \ar[r] & \Spec \widehat{\oO}_{M, y}. 	
	}
\end{align} 
By~\cite[Corollary~1.30.1]{Halpinstab}, 
the stacks $\widehat{\sS}_{i, y}^{\Omega}$, $\overline{\sS}^{\Omega}_{i, y}$
are open and closed substacks
\begin{align*}
	\widehat{\sS}_{i, y}^{\Omega} \subset \Filt(\widehat{\nN}_y \setminus \widehat{\sS}_{\le i-1, y}^{\Omega}), \ \overline{\sS}^{\Omega}_{i, y} \subset \Filt(\widehat{\nN}_y), 
	\end{align*}
and 
the $\Theta$-stratification (\ref{N:theta}) induces the $\Theta$-stratification 
on $\widehat{\nN}_y$
\begin{align}\notag
		\widehat{\nN}_y=
	\widehat{\sS}_{1, y}^{\Omega} \sqcup \cdots \sqcup \widehat{\sS}_{N, y}^{\Omega}\sqcup \widehat{\nN}_y^{l\sss}. 
	\end{align}
Here $\widehat{\nN}_y^{l\sss}$ is the semistable locus with 
respect to the $G_y$-character 
$\lL|_{y}$.

For a one parameter subgroup $\lambda \colon \C \to G_y$, 
let $\widehat{w}_y^{\lambda \ge 0}$, $\widehat{w}_y^{\lambda=0}$
be the following functions 
determined by $\kappa_y^{\lambda \ge 0}$, $\kappa_y^{\lambda=0}$
as in (\ref{func:w})
\begin{align*}
&\widehat{w}_y^{\lambda \ge 0} \colon 
\left[\left( \widehat{\hH}^0(\mathbb{T}_{\fM}|_{y})^{\lambda \ge 0} \oplus 
\left(\hH^1(\mathbb{T}_{\fM}|_{y})^{\lambda \ge 0}\right)^{\vee}\right) /G_y^{\lambda \ge 0} \right]
\to \mathbb{C}, \ 
\widehat{w}_y^{\lambda \ge 0}(-, -)=\langle 
\kappa_y^{\lambda \ge 0}(-), - \rangle, \\
&\widehat{w}_y^{\lambda = 0} \colon 
\left[\left(\widehat{\hH}^0(\mathbb{T}_{\fM}|_{y})^{\lambda=0} \oplus 
\left(\hH^1(\mathbb{T}_{\fM}|_{y})^{\lambda=0}\right)^{\vee}
\right) /G_y^{\lambda =0} \right]
\to \mathbb{C}, \ 
\widehat{w}_y^{\lambda =0}(-, -)=\langle \kappa_y^{\lambda=0}(-), - \rangle.	
	\end{align*}
We have the following commutative diagram 
\begin{align*}
	\xymatrix{
	 \left[\left( \widehat{\hH}^0(\mathbb{T}_{\fM}|_{y})^{\lambda \ge 0} \oplus 
	(\hH^1(\mathbb{T}_{\fM}|_{y})^{\vee})^{\lambda \ge 0}\right) /G_y^{\lambda \ge 0} \right]	\ar[r]^-{q_2} \ar@{=}[d] 
	&   \left[\left( \widehat{\hH}^0(\mathbb{T}_{\fM}|_{y}) \oplus 
		\hH^1(\mathbb{T}_{\fM}|_{y})^{\vee}\right) /G_y \right]
		\ar@/^130pt/[ddd]_-{\widehat{w}_y} \\		
		\left[\left( \widehat{\hH}^0(\mathbb{T}_{\fM}|_{y})^{\lambda \ge 0} \oplus 
		\left(\hH^1(\mathbb{T}_{\fM}|_{y})^{\lambda \le 0}\right)^{\vee}\right) /G_y^{\lambda \ge 0} \right]	\ar@/_110pt/[dd]_-{q_1} 
		\ar[r]_-{r_1} \ar[d]_-{r_2} \diasquare & 
		\left[\left( \widehat{\hH}^0(\mathbb{T}_{\fM}|_{y})^{\lambda \ge 0} \oplus 
		\hH^1(\mathbb{T}_{\fM}|_{y})^{\vee}\right) /G_y^{\lambda \ge 0} \right] \ar[u]_-{f_2} \ar[d]^-{g_2}  \\
		\left[\left( \widehat{\hH}^0(\mathbb{T}_{\fM}|_{y})^{\lambda \ge 0} \oplus 
		\left(\hH^1(\mathbb{T}_{\fM}|_{y})^{\lambda=0}\right)^{\vee}\right) /G_y^{\lambda \ge 0} \right]	
		\ar[r]_-{g_1} \ar[d]_-{f_1} & \left[\left( \widehat{\hH}^0(\mathbb{T}_{\fM}|_{y})^{\lambda \ge 0} \oplus 
		\left(\hH^1(\mathbb{T}_{\fM}|_{y})^{\lambda \ge 0}\right)^{\vee}\right) /G_y^{\lambda \ge 0} \right] \ar[d]^-{\widehat{w}_y^{\lambda \ge 0}} \\
		\left[\left( \widehat{\hH}^0(\mathbb{T}_{\fM}|_{y})^{\lambda=0} \oplus 
		\left(\hH^1(\mathbb{T}_{\fM}|_{y})^{\lambda=0}\right)^{\vee}\right) /
		G_y^{\lambda=0} \right]
		\ar[r]_-{\widehat{w}_y^{\lambda=0}} 
		& \mathbb{C}. 
	} 
	\end{align*}
Here $f_1$, $r_2$, $g_2$ are given by projections onto the 
corresponding weight spaces, and 
$g_1$, $r_1$, $f_2$, $q_2$ are given by inclusions 
from the corresponding weight spaces. 
In order to simplify the notation, we write the above diagram 
as 
\begin{align}\label{dia:comX}
	\xymatrix{	
	\xX_6	\ar@/_20pt/[dd]_-{q_1} 
	\ar[r]_-{r_1} \ar[d]_-{r_2} \ar@/^20pt/[rr]^-{q_2}\diasquare & 
	\xX_2 \ar[r]_-{f_2} \ar[d]^-{g_2} & \xX_1 \ar[dd]^-{w_1}  \\
	\xX_4	
	\ar[r]_-{g_1} \ar[d]_-{f_1} & \xX_3 
	\ar[dr]^-{w_3} & \\
\xX_5
	\ar[rr]_-{w_5}  & 
	& \mathbb{C},
} 
\end{align}
where each $(\xX_i, w_i)$ is 
\begin{align*}
	w_1 \colon 
\xX_1 &=\left[\left( \widehat{\hH}^0(\mathbb{T}_{\fM}|_{y}) \oplus 
\hH^1(\mathbb{T}_{\fM}|_{y})^{\vee}\right) /G_y \right]
\stackrel{\widehat{w}_y}{\to} \mathbb{C}, \\
w_2 \colon \xX_2 &=\left[\left( 
\widehat{\hH}^0(\mathbb{T}_{\fM}|_{y})^{\lambda \ge 0} \oplus 
\hH^1(\mathbb{T}_{\fM}|_{y})^{\vee}\right) /G_y^{\lambda \ge 0} \right]\stackrel{\widehat{w}_y f_2}{\to} \mathbb{C}, \\
w_3 \colon \xX_3 &=\left[\left( \widehat{\hH}^0(\mathbb{T}_{\fM}|_{y})^{\lambda \ge 0} \oplus 
\left(\hH^1(\mathbb{T}_{\fM}|_{y})^{\lambda \ge 0}\right)^{\vee}\right) /
G_y^{\lambda \ge 0} \right]\stackrel{\widehat{w}_y^{\lambda \ge 0}}{\to} \mathbb{C}, \\
w_4 \colon \xX_4 &=\left[\left( \widehat{\hH}^0(\mathbb{T}_{\fM}|_{y})^{\lambda \ge 0} \oplus 
\left(\hH^1(\mathbb{T}_{\fM}|_{y})^{\lambda=0}\right)^{\vee}\right) /G_y^{\lambda \ge 0} \right]\stackrel{\widehat{w}_y^{\lambda \ge 0}g_1}{\to} \mathbb{C}, \\
w_5 \colon \xX_5 &=\left[\left( \widehat{\hH}^0(\mathbb{T}_{\fM}|_{y})^{\lambda=0} \oplus 
\left(\hH^1(\mathbb{T}_{\fM}|_{y})^{\lambda=0}\right)^{\vee}\right) /
G_y^{\lambda=0} \right]\stackrel{\widehat{w}_y^{\lambda =0}}{\to} \mathbb{C}, \\
w_6 \colon \xX_6 &= \left[\left( \widehat{\hH}^0(\mathbb{T}_{\fM}|_{y})^{\lambda \ge 0} \oplus 
(\hH^1(\mathbb{T}_{\fM}|_{y})^{\vee})^{\lambda \ge 0}\right) /G_y^{\lambda \ge 0} \right]\stackrel{\widehat{w}_y q_2}{\to} \mathbb{C}.
\end{align*}
We note that, 
by the Koszul duality equivalence in Theorem~\ref{thm:knoer}, 
we have equivalences 
\begin{align}\label{Koszul:formal}
	&\widehat{\Phi}_y \colon 
	\Dbc([\widehat{\fU}_y/G_y]) \stackrel{\sim}{\to}
	\MF_{\coh}^{\C}(\xX_1, w_1), \\
	\notag	&\widehat{\Phi}_y^{\lambda \ge 0} \colon 
	\Dbc([\widehat{\fU}_y^{\lambda \ge 0}/
	G_y^{\lambda \ge 0}]) \stackrel{\sim}{\to}
	\MF_{\coh}^{\C}(\xX_3, w_3), \\
	\notag	&\widehat{\Phi}_y^{\lambda = 0} \colon 
	\Dbc([\widehat{\fU}_y^{\lambda = 0}/
	G_y^{\lambda = 0}]) \stackrel{\sim}{\to}
	\MF_{\coh}^{\C}(\xX_5, w_5). 
\end{align}
The diagram (\ref{dia:comX})
induces the diagram 
 	\begin{align}\label{diagram:Crity}
 		\xymatrix{
 		q_1^{-1}(\Crit(w_5))
 	\cap (g_2 r_1)^{-1}(\Crit(w_3))
 	\cap q_2^{-1}(\Crit(w_1)) \ar[r]^-{q_2} \ar[d]^-{q_1}	 & 
 \Crit(w_1) \\
 	\Crit(w_5)\ar@/^18pt/[u]^-{q_3} \ar[ur]_-{q_4}. & 
 	}
 	 \end{align} 
  Here $q_3$ is induced by the zero section of 
  $q_1 \colon \xX_6 \to \xX_5$, and $q_4 \cneq q_2 \circ q_3$. 
  
  Let $\lambda=\lambda_i$ be the one parameter 
  subgroup of $G_y$ corresponding to the $i$-th 
  $\Theta$-strata as in (\ref{lambda:i}).
  Then from the diagrams (\ref{dia:Somega}), (\ref{diagram:fN}) together with (\ref{identity:crit}), 
  one can show that (see the proof of~\cite[Proposition~3.2.4]{TocatDT})
  the diagram (\ref{diagram:Crity}) is isomorphic to the following diagram
  in (\ref{Somega})
\begin{align}\label{dia:SZNhat}
	\xymatrix{
\overline{\sS}_{i, y}^{\Omega} \ar[r]^-{\ev_1^{\Omega}}
\ar[d]^-{\ev_0^{\Omega}} & \widehat{\nN}_y \\
\overline{\zZ}_{i, y}^{\Omega}.  \ar@/^18pt/[u]^-{\sigma^{\Omega}} \ar[ru]_-{\tau^{\Omega}}
}
	\end{align}

\subsection{Adjoint functors of $\Upsilon_i$ and $\Upsilon_{i, j}$}
Let us take the ind-completion of the functor 
(\ref{funct:ev})
\begin{align}\label{Up:ind}
	\Upsilon_i^{\ind} \colon \Ind \dDT^{\C}(\zZ_i^{\Omega}) 
	\to \Ind \dDT^{\C}(\nN \setminus \sS_{\le i-1}^{\Omega}). 	
	\end{align}
We see that the above functor admits a right adjoint, explicitly described 
in terms of the diagram (\ref{dia:Filt4}) as follows: 
\begin{lem}\label{lem:Upadj}
	The functor (\ref{Up:ind}) admits a right adjoint functor of the form
	\begin{align}\label{Up:indr}
		\Upsilon_i^{R}=\ev_{0\ast}^{\ind}\ev_1^! \colon 
		\Ind \dDT^{\C}(\nN \setminus \sS_{\le i-1}^{\Omega})
		\to \Ind \dDT^{\C}(\zZ_i^{\Omega}). 	
	\end{align}
Here $\ev_0$, $\ev_1$ are given in the diagram (\ref{dia:Filt4}). 
	\end{lem}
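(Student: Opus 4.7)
The plan is to exhibit $\ev_{0\ast}^{\ind}\ev_1^{!}$ as the right adjoint first on the level of $\Ind \Dbc$, and then descend to the DT quotient categories by an adjunction argument.

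First I would observe that in the diagram (\ref{dia:Filt4}) the morphism $\ev_0$ is quasi-smooth by Lemma~\ref{lem:qsmooth}, while $\ev_1$ is proper, since $\fM$ is $\Theta$-reductive because $\mM$ admits a good moduli space (as noted just after Lemma~\ref{lem:qsmooth}). Consequently, on the ind-completions we obtain continuous adjoint pairs $(\ev_0^{\ast},\ev_{0\ast}^{\ind})$ and $(\ev_{1\ast}^{\ind},\ev_1^{!})$, where $\ev_{0\ast}^{\ind}$ is the ind-$\ast$-pushforward (a right adjoint to $\ev_0^{\ast}$ on ind-coherent categories) and $\ev_1^{!}$ is the continuous right adjoint of the proper pushforward as in Subsection~\ref{subsec:funct}. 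Composing these two adjunctions, $\ev_{0\ast}^{\ind}\ev_1^{!}$ is the right adjoint of $\ev_{1\ast}^{\ind}\ev_0^{\ast}$, viewed as a functor $\Ind \Dbc(\ffZ_i) \to \Ind \Dbc(\fM)$.

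Next I would descend this adjunction to the DT categories. By Proposition~\ref{prop:DTcat} together with the good moduli space hypothesis on $\mM$ (which supplies compact generation of the relevant singular-support subcategories, applied both to $\fM$ and to the open and closed substack $\ffZ_i \subset \Grad(\fM)$), the source and target of $\Upsilon_i^{\ind}$ are canonically identified with the Verdier quotients $\Ind \Dbc(\ffZ_i)/\Ind \cC_{T_i^{\Omega}}$ and $\Ind \Dbc(\fM)/\Ind \cC_{\sS_{\le i-1}^{\Omega}}$, or equivalently with the right orthogonals $(\Ind \cC_{T_i^{\Omega}})^{\perp}$ and $(\Ind \cC_{\sS_{\le i-1}^{\Omega}})^{\perp}$. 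Lemma~\ref{lem:descend} (together with its ind-completion by continuity) shows that $\ev_{1\ast}^{\ind}\ev_0^{\ast}$ carries $\Ind \cC_{T_i^{\Omega}}$ into $\Ind \cC_{\sS_{\le i-1}^{\Omega}}$; by the standard adjunction principle its right adjoint $\ev_{0\ast}^{\ind}\ev_1^{!}$ then carries $(\Ind \cC_{\sS_{\le i-1}^{\Omega}})^{\perp}$ into $(\Ind \cC_{T_i^{\Omega}})^{\perp}$. Pulling back along the right-orthogonal identifications yields a functor between the DT ind-categories which is right adjoint to $\Upsilon_i^{\ind}$ and is computed by the stated formula.

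The main technical obstacle is the compact-generation bookkeeping needed to identify each DT ind-category with a right orthogonal so that the descended right adjoint admits the clean closed-form expression $\ev_{0\ast}^{\ind}\ev_1^{!}$ rather than a less tractable quotient expression. This reduces, via the good moduli space on $\mM$, to an application of the compact generation statement (the unlabeled proposition preceding Definition~\ref{defi:DTcat2}) to $\fM$ and to $\ffZ_i$, whose classical truncation inherits a good moduli space from that of $\mM$ by Lemma~\ref{lem:fit:M}(i) applied to $\Grad(\mM) \to M$.
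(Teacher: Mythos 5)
Your ind-level setup is fine: with $\ev_0$ quasi-smooth and $\ev_1$ proper one gets a continuous adjunction $\ev_{1\ast}^{\ind}\ev_0^{\ast} \dashv \ev_{0\ast}^{\ind}\ev_1^{!}$ on ind-coherent categories, and your orthogonality argument (if the left adjoint sends $\Ind\cC_{T_i^{\Omega}}$ into $\Ind\cC_{\sS_{\le i-1}^{\Omega}}$, then its right adjoint sends $(\Ind\cC_{\sS_{\le i-1}^{\Omega}})^{\perp}$ into $(\Ind\cC_{T_i^{\Omega}})^{\perp}$) does produce \emph{a} right adjoint of $\Upsilon_i^{\ind}$ once the DT ind-categories are identified with right orthogonals via compact generation. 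But this is weaker than what the lemma asserts and what is used afterwards: the point is that $\ev_{0\ast}^{\ind}\ev_1^{!}$ itself \emph{descends} to the quotients, i.e. it preserves singular supports, $\ev_{0\ast}^{\ind}\ev_1^{!}\bigl(\Ind\cC_{\sS_{\le i-1}^{\Omega}}\bigr) \subset \Ind\cC_{T_i^{\Omega}}$, so that $\Upsilon_i^{R}$ applied to the class of an arbitrary $\eE$ is represented by $\ev_{0\ast}^{\ind}\ev_1^{!}(\eE)$. In your model the adjoint is computed on the localized lift $r q(\eE) \in (\Ind\cC_{\sS_{\le i-1}^{\Omega}})^{\perp}$; the discrepancy between $G(\eE)$ and $G(rq(\eE))$ is $G$ applied to an object of $\Ind\cC_{\sS_{\le i-1}^{\Omega}}$, and without the support-preservation statement you cannot conclude that this dies in the quotient. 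So the "clean closed-form expression" you claim at the end is exactly the step you have not proved.

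That support preservation is the real content of the paper's proof and it is not formal: using the point-wise nature of singular supports it is checked formally locally on the good moduli space $M$, by passing through the Koszul duality equivalences (\ref{Koszul:formal}), the identity (\ref{ev0:Omega}), and the fact that the square in (\ref{dia:comX}) is a derived Cartesian with $f_2$ proper, which allows one to invoke \cite[Proposition~3.2.14]{TocatDT}; the adjunction then descends by Lemma~\ref{lem:adquot}. The stronger, descended form is what Proposition~\ref{prop:radj} and the proof of Theorem~\ref{thm:sod} rely on, where $\Upsilon_{i,j}^{R}(\eE)$ for $\eE \in \Dbc(\fM)$ is computed by the explicit formula and compared with formal-local factorization categories via base change. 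A minor further point: compact generation of $\Ind\cC_{T_i^{\Omega}}$ requires a good moduli space for $t_0(\ffZ_i)$; Lemma~\ref{lem:fit:M}(i) only provides the commutative square over $M$, so this needs a separate justification (it is true, but not by that citation).
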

\begin{proof}
	Let us take the ind-completion of the functor (\ref{funct:ev})
	\begin{align}\label{Up:ind2}
		\Upsilon_i^{\ind} \colon \Ind \Dbc(\ffZ_i) \stackrel{\ev_0^{\ast}}{\to} \Ind \Dbc(\ffS_i) 
		\stackrel{\ev_{1\ast}^{\ind}}{\to} \Ind \Dbc(\fM). 
		\end{align}
	By Proposition~\ref{prop:DTcat},
	the functor (\ref{Up:ind})
	is a descendant of the functor (\ref{Up:ind2}).
	It admits a right adjoint 
	\begin{align}\label{radj1}
		\ev_{0\ast}^{\ind}\ev_1^! \colon 
		\Ind \Dbc(\fM) \stackrel{\ev_1^!}{\to}
		\Ind \Dbc(\ffS_i) \stackrel{\ev_{0\ast}^{\ind}}{\to} \Ind \Dbc(\ffZ_i). 
		\end{align}  
	Here see~\cite[Section~3.6, 3.7.7]{MR3037900}, \cite[Proposition~3.16]{MR3701352}
	for the above adjoint functors for ind-coherent sheaves. 
	We show that the above functor (\ref{radj1}) restricts to the functor 
	\begin{align}\label{radj2}
		\ev_{0\ast}^{\ind} \ev_1^! \colon \Ind 
		\cC_{\sS_{\le i-1}^{\Omega}} \to 
		\Ind \cC_{T_i^{\Omega}}. 
		\end{align}
	Since singular supports have point-wise characterizations (see~\cite[Proposition~6.2.2]{MR3300415}), 
	it is enough to prove the above claim formally locally on 
	the good moduli space $M$, i.e. for any closed point $y \in M$
	and 
	the diagram (\ref{dia:lambda}), it is enough to show that
	the functor 
	\begin{align}\label{upind:loc}
		\ev_{0\ast}^{\ind} \ev_1^! \colon 
		\Ind \Dbc([\widehat{\fU}_y/G_y]) \stackrel{\ev_1^!}{\to}
		\Ind \Dbc([\widehat{\fU}_y^{\lambda \ge 0}/G_y^{\lambda \ge 0}]) 
		\stackrel{\ev_{0\ast}^{\ind}}{\to} \Ind \Dbc([\widehat{\fU}_y^{\lambda=0}/G_y^{\lambda=0}]) 
		\end{align}
	restricts to the functor 
	\begin{align}\label{upind:loc2}
		\ev_{0\ast}^{\ind} \ev_1^! \colon 
		\Ind \cC_{\widehat{\sS}_{\le i-1, y}^{\Omega}} \to 
		\Ind \cC_{\widehat{T}_{i, y}^{\Omega}}. 
	\end{align}
Here $\widehat{T}_{i, y}^{\Omega} \cneq \overline{\zZ}_{i, y}^{\Omega} \setminus 
\widehat{\zZ}_{i, y}^{\Omega}$ in the diagram (\ref{Somega}). 
By the identity (\ref{ev:TSomega}), 
we also have the following identity in the diagram (\ref{dia:SZNhat})
\begin{align}\label{ev0:Omega}
	(\ev_0^{\Omega})^{-1}(\widehat{T}_{i, y})
	=(\ev_1^{\Omega})^{-1}(\widehat{\sS}_{\le i-1, y}^{\Omega})  \subset \overline{\sS}_{i, y}^{\Omega}.
	\end{align}
By the above identity together with the fact that the Cartesian square in (\ref{dia:comX})
is a derived Cartesian and $f_2$ is proper, 
we can apply~\cite[Proposition~3.2.14]{TocatDT}
to conclude that the functor (\ref{upind:loc})
restricts to the functor (\ref{upind:loc2}). 

By Proposition~\ref{prop:DTcat} and Lemma~\ref{lem:adquot} below, the desired right adjoint 
(\ref{Up:indr})
is obtained by taking the quotients of both sides in (\ref{radj1}) 
by the subcategories in (\ref{radj2}). 	
	\end{proof}
We have used the following lemma, whose proof is straightforward. 
\begin{lem}\emph{(\cite[Lemma~1.1]{Orsin})}\label{lem:adquot}
	Let $\dD, \dD'$ be triangulated categories and 
	$\cC \subset \dD$, $\cC' \subset \dD'$ 
	be full triangulated subcategories. 
	Let $F \colon \dD \to \dD'$, $G \colon \dD' \to \dD$ be an adjoint
	pair of exact functors such that $F(\cC) \subset \cC'$, 
	$G(\cC') \subset \cC$. 
	Then they induce functors
	\begin{align*}
		F \colon \dD/\cC \to \dD'/\cC', \ 
		G \colon \dD'/\cC' \to \dD/\cC
		\end{align*}
	which are adjoints as well. 
	\end{lem}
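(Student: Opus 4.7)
The plan is to apply the universal property of Verdier quotients three times: once to descend each functor, once to descend the unit and counit of the adjunction, and once implicitly when verifying the triangle identities.

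First I would construct the descended functors. Let $Q \colon \dD \to \dD/\cC$ and $Q' \colon \dD' \to \dD'/\cC'$ denote the quotient functors. Since $F(\cC) \subset \cC'$, the composition $Q' \circ F$ annihilates $\cC$ (sends its objects to zero), so by the universal property of the Verdier quotient $Q$ there exists a unique exact functor $\overline{F} \colon \dD/\cC \to \dD'/\cC'$ with $\overline{F} \circ Q = Q' \circ F$. Symmetrically, since $G(\cC') \subset \cC$, one obtains an exact functor $\overline{G} \colon \dD'/\cC' \to \dD/\cC$ with $\overline{G} \circ Q' = Q \circ G$.

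Second, I would descend the unit $\eta \colon \id_{\dD} \to GF$ and counit $\epsilon \colon FG \to \id_{\dD'}$. For each $X \in \dD$ set $\overline{\eta}_{QX} := Q(\eta_X) \colon QX \to Q(GFX) = \overline{G}\overline{F}(QX)$. Since $Q$ is essentially surjective on objects, this determines $\overline{\eta}$ pointwise. For naturality, every morphism in $\dD/\cC$ is represented by a roof $QX \xleftarrow{\sim} QX' \to QY$ coming from a diagram $X \xleftarrow{s} X' \xrightarrow{f} Y$ in $\dD$ with $\Cone(s) \in \cC$; the naturality of $\eta$ in $\dD$ yields the commutative squares $GFs \circ \eta_{X'} = \eta_X \circ s$ and $GFf \circ \eta_{X'} = \eta_Y \circ f$, whose images under $Q$ express the required naturality of $\overline{\eta}$ with respect to this roof. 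An identical construction yields $\overline{\epsilon} \colon \overline{F}\overline{G} \to \id_{\dD'/\cC'}$.

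Third, I would verify the triangle identities $(\overline{G}\overline{\epsilon}) \circ (\overline{\eta}\overline{G}) = \id_{\overline{G}}$ and $(\overline{\epsilon}\overline{F}) \circ (\overline{F}\overline{\eta}) = \id_{\overline{F}}$. Evaluated on an object $Q'Y$ or $QX$, these reduce to the images under $Q$ or $Q'$ of the corresponding triangle identities for $(F,G)$ in the original categories, which hold by hypothesis; essential surjectivity of $Q$ and $Q'$ completes the check. The main obstacle is the naturality verification in the second step: one must confirm that $\overline{\eta}$ is natural with respect to morphisms in $\dD/\cC$ that are not images of morphisms from $\dD$ but rather equivalence classes of roofs, which is handled by the calculus-of-fractions description of Verdier quotients together with the naturality of $\eta$ on each leg of a representing roof.
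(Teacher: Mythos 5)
Your proof is correct and is essentially the standard argument: the paper itself omits the proof (calling it straightforward and citing Orlov's Lemma~1.1), and the argument there is precisely this one — descend $F$, $G$ via the universal property of the Verdier quotient, define $\overline{\eta}$, $\overline{\epsilon}$ object-wise as $Q(\eta_X)$, $Q'(\epsilon_Y)$, check naturality on a representing roof using naturality of $\eta$, $\epsilon$ in $\dD$, $\dD'$, and observe that the triangle identities are the images under $Q$, $Q'$ of those in the original categories. No gaps.
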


Let $\Upsilon_{i, j}^R$ be the functor 
obtained by composing 
$\Upsilon_i^R$ in Lemma~\ref{lem:Upadj} with the projection onto the 
weight $j$ part
\begin{align}\label{Up:ijR}
	\Upsilon_{i, j}^R \colon 
\Ind \dDT^{\C}(\nN \setminus \sS_{\le i-1}^{\Omega}) \stackrel{\Upsilon_{i}^R}{\to}
\Ind \dDT^{\C}(\zZ_i^{\Omega}) \stackrel{\mathrm{pr}_j}{\to} \Ind \dDT^{\C}(\zZ_i^{\Omega})_{\wt=j}. 
	\end{align}
\begin{prop}\label{prop:radj}
	The functor (\ref{Up:ijR})
	restricts to the functor 
	\begin{align}\notag
		\Upsilon_{i, j}^R \colon 
		\dDT^{\C}(\nN \setminus \sS_{\le i-1}^{\Omega}) \to \dDT^{\C}(\zZ_i^{\Omega})_{\wt=j},
		\end{align}
	giving a right adjoint of the functor (\ref{Ups:ij}). 
	\end{prop}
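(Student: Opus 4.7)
The plan is to reduce the compactness claim to a formal local statement at each closed point of the good moduli space $M$, where the window theorem (Theorem~\ref{thm:window}) applies directly through Koszul duality.

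First, I would observe that the ind-level adjunction $(\Upsilon_i^{\ind}, \Upsilon_i^R)$ from Lemma~\ref{lem:Upadj}, combined with the obvious adjunction $(i_j, \mathrm{pr}_j)$ between the weight-$j$ embedding and projection, yields an adjunction $(\Upsilon_{i,j}^{\ind}, \Upsilon_{i,j}^R)$ at the ind-level. Hence both parts of the proposition follow once we show that $\Upsilon_{i,j}^R$ carries compact objects in $\Ind\dDT^{\C}(\nN \setminus \sS_{\le i-1}^{\Omega})$, i.e.~objects of $\dDT^{\C}(\nN \setminus \sS_{\le i-1}^{\Omega})$, to compact objects in $\Ind\dDT^{\C}(\zZ_i^{\Omega})_{\wt=j}$.

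Next, since singular support conditions can be verified point-wise on the good moduli space (as used in the proof of Lemma~\ref{lem:Upadj}), compactness of $\Upsilon_{i,j}^R(\pP)$ can be checked formally locally at each closed point $y \in M$. At such $y$, by the formal neighborhood theorem (Definition~\ref{defi:formalneigh}) and Lemma~\ref{lem:Cart:MS}, the diagram (\ref{dia:Filt4}) becomes the diagram (\ref{dia:lambda}) for $\lambda = \lambda_i$, and the Koszul duality equivalences (\ref{Koszul:formal}) translate the derived categories on $[\widehat{\fU}_y/G_y]$, $[\widehat{\fU}_y^{\lambda\ge 0}/G_y^{\lambda\ge 0}]$, $[\widehat{\fU}_y^{\lambda=0}/G_y^{\lambda=0}]$ into factorization categories on $\xX_1, \xX_3, \xX_5$ with potentials $w_1, w_3, w_5$. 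By combining the commutative diagrams (\ref{dia:com:ind0.5}), (\ref{dia:com:ind1.5}) and (\ref{dia:com:ind2}) with the Cartesian square inside (\ref{dia:comX}), the functor $\Upsilon_i$ corresponds formally locally to a composition of the form $\overline{q}_{i\ast}\overline{p}_i^{\ast}$ introduced in Subsection~\ref{subsec:crit:window}, and $\Upsilon_i^R = \ev_{0\ast}^{\ind}\ev_1^!$ corresponds to its right adjoint $\overline{p}_{i\ast}\overline{q}_i^{!}$ on the factorization categories.

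Finally, I would invoke the $j$-th weight piece of the equivalence (\ref{equiv:bar}) from Theorem~\ref{thm:window}: the weight-$j$ summand of $\Upsilon_i$ is a fully faithful embedding whose essential image is a semiorthogonal summand of the ambient factorization category of coherent objects on $[(\widehat{\hH}^0(\mathbb{T}_{\fM}|_y) \oplus \hH^1(\mathbb{T}_{\fM}|_y)^{\vee})/G_y]$, and hence its right adjoint (the projection onto this summand in the semiorthogonal decomposition of Theorem~\ref{thm:window}~(ii)) preserves bounded coherent factorizations. Composed over formal fibers, this yields compactness of the weight-$j$ part of $\Upsilon_i^R(\pP)$ for every coherent $\pP$, which is what we wanted.

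The main obstacle I anticipate is step three: carefully matching the globally-defined right adjoint $\ev_{0\ast}^{\ind}\ev_1^{!}$ on ind-coherent sheaves with the local factorization-category right adjoint $\overline{p}_{i\ast}\overline{q}_i^{!}$ through the formal-fiber Koszul reduction, and verifying that the $B\C$-weight decomposition on $\ffZ_i$ is compatible with the $\lambda_i$-weight decomposition on $[\widehat{\fU}_y^{\lambda_i=0}/G_y^{\lambda_i=0}]$ used in Theorem~\ref{thm:window}; this demands chaining the commutative diagrams (\ref{dia:com:ind0.5})–(\ref{dia:com:ind1.5}) with the base change against $\Spec\widehat{\oO}_{M,y}\to M$ from Lemma~\ref{lem:Cart:MS} while tracking both adjoint relationships and weight gradings simultaneously.
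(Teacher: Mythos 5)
Your overall skeleton (ind-level adjunction from Lemma~\ref{lem:Upadj} plus the $(i_j,\mathrm{pr}_j)$ adjunction, reduction to formal fibers, Koszul duality into the factorization categories of the diagram (\ref{dia:comX})) matches the paper, but the two steps that carry the actual content have gaps. First, the reduction ``compactness of $\Upsilon_{i,j}^R(\pP)$ can be checked formally locally'' is not justified by the point-wise nature of singular supports --- that fact is what Lemma~\ref{lem:Upadj} uses for preservation of $\Ind\cC_{(-)}$, and it says nothing about detecting compact objects of an ind-completed quotient category on formal fibers. The paper avoids this by working \emph{before} the quotient: it first shows $\mathrm{pr}_j\,\ev_{0\ast}^{\ind}\ev_1^!(\eE)$ is bounded below (using that $\ev_{0\ast}^{\ind}$ and $\ev_1^!$ preserve $(-)^+$), invokes $\Ind\Dbc(\ffZ_i)^+_{\wt=j}\simeq D_{\qcoh}(\ffZ_i)^+_{\wt=j}$, and only then uses that ``bounded with coherent cohomologies'' is a local condition on a quasi-coherent complex, checked after base change along $\Spec\widehat{\oO}_{M,y}\to M$. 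Without this step your formal-local reduction is not even a well-posed statement about an ind-object, and detecting compactness of objects of $\Ind\dDT^{\C}(\zZ_i^{\Omega})_{\wt=j}$ on formal fibers is precisely what is never established in the paper (Lemma~\ref{lem:fmloc} controls Hom-spaces, not compactness).

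Second, your local step replaces the paper's explicit computation by an appeal to the semiorthogonal decomposition of Theorem~\ref{thm:window}, claiming the right adjoint is the SOD projection and hence preserves coherence. But that SOD lives on $\MF_{\coh}^{\C}$ of the \emph{complement of the previous strata} (the local model of the quotient category), whereas the statement your formal-local reduction needs --- if run, as in the paper, at the pre-quotient level where locality of coherence is available --- is that $\mathrm{pr}_j f_{1\ast}g_1^{\ast}g_{2\ast}f_2^!$ preserves coherence on all of $\MF^{\C}(\xX_1,w_1)$; the SOD gives no handle on that. The paper's proof of exactly this point is the factorization $q_1=q_1''\circ q_1'$: pushforward along the attracting directions has $\lambda$-weights bounded above with coherent weight pieces (positivity of weights, \cite[Lemma~2.2.3]{TocatDT}), and $q_1''$ is a tower of $\mathbb{A}^{m}$-gerbes for the unipotent radical $G_y^{\lambda>0}$, so $\mathrm{pr}_j q_{1\ast}$ is coherent in each fixed weight. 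This computation (or an equivalent substitute) is missing from your proposal; in addition, identifying the ind-level adjoint with the abstract coherent-level SOD adjoint requires an agreement-of-adjoints argument you do not supply, and you do not track the weight twist by $\langle\lambda_i,\hH^1(\mathbb{T}_{\fM}|_{y})^{\lambda_i>0}\rangle$ coming from $g_{1!}$ versus $g_{1\ast}$, which shifts which weight summand you are actually projecting to.
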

\begin{proof}
	It is enough to show that the composition 
	\begin{align*}
		\Ind \Dbc(\fM) \stackrel{\ev_1^!}{\to} \Ind \Dbc(\ffS_i) \stackrel{\ev_{0\ast}^{\ind}}{\to}
		\Ind \Dbc(\ffZ_i) \stackrel{\mathrm{pr}_j}{\to} \Ind \Dbc(\ffZ_i)_{\wt=j}
		\end{align*}
		in the diagram (\ref{dia:Filt4}) sends $\Dbc(\fM)$ to $\Dbc(\ffZ_i)_{\wt=j}$. 
		For $\eE \in \Dbc(\fM)$, we have 
		\begin{align}\label{prj:ev}
			\mathrm{pr}_j \ev_{0\ast}^{\ind}\ev_1^!(\eE) \in 
			\Ind \Dbc(\ffZ_i)_{\wt=j}^+ 
			\subset \Ind \Dbc(\ffZ_i). 
			\end{align}
		Here
		the subscript $+$ indicates 
		bounded below subcategory with respect to the natural 
		t-structure on 
		ind-coherent sheaves, 
		and the fact that $\ev_{0\ast}^{\ind}$, $\ev_1^!$ preserve 
		these subcategories is proved in~\cite[Lemma~3.4.4]{MR3136100}. 
		By~\cite[Proposition~1.2.4]{MR3136100}, we have the equivalence 
		\begin{align*}
			\Ind \Dbc(\ffZ_i)_{\wt=j}^{+}			
			\stackrel{\sim}{\to}
			D_{\qcoh}(\ffZ_i)_{\wt=j}^{+}.
			\end{align*}
		Therefore it is enough to show that the object (\ref{prj:ev})
		is cohomological bounded above and has coherent cohomologies. 
		Since this is a local property, it is enough to check this
		formally locally on $M$, i.e. it is enough to show that for each 
		closed $y\in M$ we have 
		\begin{align*}
			\widehat{i}_y^{\ast}	\mathrm{pr}_j \ev_{0\ast}^{\ind}\ev_1^!(\eE)
			\in \Dbc(\widehat{\ffZ}_{i, y})_{\wt=j}. 
			\end{align*}
		Here $\widehat{i}_y$ is given in the diagram (\ref{formal:y2}). 
	By the base change 
		properties for functors of ind-coherent sheaves (see~\cite[Lemma~3.6.9, Proposition~7.1.6]{MR3136100}), 
		it is enough to show that the composition functor 
		from the diagram (\ref{dia:lambda})
		\begin{align}\notag
		\Ind \Dbc([\widehat{\fU}_y/G_y]) &\stackrel{\ev_1^!}{\to}
		\Ind \Dbc([\widehat{\fU}_y^{\lambda \ge 0}/G_y^{\lambda \ge 0}]) \\
	\notag	&\stackrel{\ev_{0\ast}^{\ind}}{\to} \Ind \Dbc([\widehat{\fU}_y^{\lambda=0}/G_y^{\lambda=0}])
		\stackrel{\mathrm{pr}_j}{\to} \Ind \Dbc([\widehat{\fU}_y^{\lambda=0}/G_y^{\lambda=0}])_{\wt=j}
			\end{align}
		restricts to the functor 
		\begin{align}\notag
			\mathrm{pr}_j \ev_{0\ast}^{\ind}\ev_1^! \colon 
			\Dbc([\widehat{\fU}_y/G_y]) \to \Dbc([\widehat{\fU}_y^{\lambda=0}/G_y^{\lambda=0}])_{\wt=j}.
			\end{align}
				Below we use the notation of the diagram (\ref{dia:comX}). 
		By the equivalences (\ref{Koszul:formal})
		and using (\ref{dia:com:ind}) and (\ref{dia:com:ind2}),
		we are reduced to showing that the composition
		of functors 
		\begin{align*}
			\MF_{\qcoh}^{\C}(\xX_1, w_1) &\stackrel{f_2^!}{\to} \MF_{\qcoh}^{\C}(\xX_2, w_2) \stackrel{g_{2\ast}}{\to}	
			\MF_{\qcoh}^{\C}(\xX_3, w_3) 
			\stackrel{g_1^{\ast}}{\to} \MF_{\qcoh}^{\C}(\xX_4, w_4) \\
			&\stackrel{f_{1\ast}}{\to} 
			\MF_{\qcoh}^{\C}(\xX_5, w_5) \stackrel{\mathrm{pr}_j}{\to}
			\MF_{\qcoh}^{\C}(\xX_5, w_5)_{\lambda \mathchar`- \wt= j}	
		\end{align*}
		restricts to the functor
		\begin{align*}
			\MF_{\coh}^{\C}(\xX_1, w_1) \to \MF_{\coh}^{\C}(\xX_5, w_5)_{\lambda \mathchar`- \wt= j}. 
		\end{align*}
		By the derived base change, the above composition functor is equivalent to the 
		following composition 
		\begin{align*}
			\MF_{\qcoh}^{\C}(\xX_1, w_1) &\stackrel{f_2^!}{\to} \MF_{\qcoh}^{\C}(\xX_2, w_2) \stackrel{r_1^{\ast}}{\to}	
			\MF_{\qcoh}^{\C}(\xX_6, w_6) \\
			&\stackrel{q_{1\ast}}{\to} 
			\MF_{\qcoh}^{\C}(\xX_5, w_5)
		\stackrel{\mathrm{pr}_j}{\to}
			\MF_{\qcoh}^{\C}(\xX_5, w_5)_{\lambda \mathchar`- \wt= j}. 	
		\end{align*}
		Since $f_2 \colon \xX_2 \to \xX_1$ 
		is a representable morphism of smooth stacks, 
		it is quasi-smooth and $f_2^!$ is given by 
		$f_2^!(-)=f_2^{\ast}(-) \otimes \omega_{f_2}$. 
		Therefore $r_1^{\ast}f_2^!$
		gives the functor 
		\begin{align*}
			r_1^{\ast} f_2^! \colon 
			\MF_{\coh}^{\C}(\xX_1, w) \to \MF_{\coh}^{\C}(\xX_6, w_6). 
		\end{align*}
		It is enough to show that the functor $\mathrm{pr}_j q_{1\ast}$
		gives the functor 
		\begin{align}\label{funct:pr}
			\mathrm{pr}_j q_{1\ast} \colon 
			\MF_{\coh}^{\C}(\xX_6, w_6) \to 
			\MF_{\coh}^{\C}(\xX_5, w_5)_{\lambda \mathchar`- \wt= j}. 
		\end{align}
		The morphism $q_1$ factors as 
		\begin{align*}
			q_1 \colon \xX_6 \stackrel{q_1'}{\to} \xX_7 \cneq 
			[A /
			G_y^{\lambda\ge 0}]
			\stackrel{q_1''}{\to} \xX_5
		\end{align*}
	where $A \cneq \widehat{\hH}^0(\mathbb{T}_{\fM}|_{y})^{\lambda=0} \oplus 
		\left(\hH^1(\mathbb{T}_{\fM}|_{y})^{\lambda=0}\right)^{\vee}$
		and $G_y^{\lambda \ge 0}$ acts on it 
		through the projection $G^{\lambda \ge 0} \twoheadrightarrow G^{\lambda=0}$.
		Since $\hH^0(\mathbb{T}_{\fM}|_{y})^{\lambda > 0} \oplus 
		(\hH^1(\mathbb{T}_{\fM}|_{y})^{\vee})^{\lambda > 0}$
		has positive $\lambda$-weights, 
		by~\cite[Lemma~2.2.3]{TocatDT}
				the push-forward $q_{1\ast}'$ restricts to the functor 
		\begin{align*}
			q_{1\ast}' \colon 
			\MF_{\coh}^{\C}(\xX_6, w_6) \to 
			\MF_{\coh}^{\C}(\xX_7, w_7)_{\lambda \mathchar`- \rm{above}},
		\end{align*}
		where $w_7=q_1''^{\ast}w_5$. 
		Let $G_y^{\lambda >0}$ be the kernel of the projection
		$G_y^{\lambda \ge 0} \twoheadrightarrow G_y^{\lambda=0}$. 
		Then $G_y^{\lambda>0}$ is unipotent, 
		so it admits a filtration of normal subgroups
		\begin{align*}
			0=G_0 \subset G_1 \subset \cdots \subset G_k =G_y^{\lambda>0}
			\end{align*}
		such that each subquotient 
		$G_i/G_{i-1}$ is isomorphic to the additive group 
		$\mathbb{A}^{m_i}$ for some $m_i$. 
		By setting $Q_i=G_y^{\lambda \ge 0}/G_i$, we have the factorizations 
		of $q_1''$
		\begin{align*}
		q_1'' \colon \xX_7=[A/Q_0] \to [A/Q_1] \to \cdots \to [A/Q_k]=\xX_5. 	
			\end{align*}
		Here each $Q_i$ acts on $A$ through the projection $Q_i \twoheadrightarrow G_y^{\lambda=0}$. 
		Since 
		$[A/Q_{i-1}] \to [A/Q_i]$ is a $\mathbb{A}^{m_i}$-gerbe, 
		the push-forward of a coherent sheaf along with
		the above morphism is quasi-isomorphic 
		to a bounded complex of coherent sheaves. 
		Therefore 
		the functor $q_{1\ast}''$ gives 
		\begin{align*}
			q_{1\ast}'' \colon 
			\MF_{\coh}^{\C}(\xX_7, w_7) \to 
			\MF_{\coh}^{\C}(\xX_5, w_5). 
		\end{align*}
		Therefore $q_{1\ast}=q_{1\ast}'' \circ q_{1\ast}'$ 
		restricts to the functor 
		\begin{align}\notag
			q_{1\ast} \colon 
			\MF_{\coh}^{\C}(\xX_6, w_6) \to \MF_{\coh}^{\C}(\xX_5, w_5)_{\lambda \mathchar`- \rm{above}},
		\end{align}
		which concludes that $\mathrm{pr}_j q_{1\ast}$ gives
		the functor 
		(\ref{funct:pr}).		 
	\end{proof}

We also have the left adjoints as follows: 
\begin{lem}\label{lem:ladjoint}
	The functor (\ref{Ups:ij}) admits a left adjoint 
	\begin{align}\label{Ups:L}
		\Upsilon_{i, j}^L \colon 
		\dDT^{\C}\left( \nN \setminus \sS_{\le i-1}^{\Omega}\right) \to 
		\dDT^{\C}\left(\zZ_{i}^{\Omega}\right)_{\wt=j}. 
	\end{align}	
\end{lem}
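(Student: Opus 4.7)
The plan is to obtain $\Upsilon_{i,j}^L$ by Grothendieck--Serre duality from the right adjoint constructed in Proposition~\ref{prop:radj}, shifting indices appropriately.

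First I would check that the Serre duality functor $\mathbb{D}_{\fM}(-) \cneq \RHom_{\oO_{\fM}}(-, \omega_{\fM})$ on the quasi-smooth QCA stack $\fM$ preserves singular supports --- this follows from the canonical identification $\Hom^{2\ast}(\mathbb{D}\eE, \mathbb{D}\eE) \cong \Hom^{2\ast}(\eE, \eE)$ as $\oO_{\Crit(w)}$-modules in the local Koszul charts --- so that $\mathbb{D}_{\fM}$ descends to an antiequivalence of $\dDT^{\C}(\nN \setminus \sS_{\le i-1}^{\Omega})$. The analogous statement for $\mathbb{D}_{\ffZ_i}$ on $\dDT^{\C}(\zZ_i^{\Omega})$ holds as well, and under the $B\C$-weight decomposition it exchanges weight $j$ with weight $-j$.

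Next I would combine duality with the diagram (\ref{dia:Filt4}). Grothendieck duality for the proper morphism $\ev_1$ gives $\mathbb{D}_{\fM} \circ \ev_{1\ast} \cong \ev_{1\ast} \circ \mathbb{D}_{\ffS_i}$. Since $\ev_0$ is quasi-smooth (Lemma~\ref{lem:qsmooth}), $\omega_{\ev_0}$ is a shifted line bundle and $\ev_0^! \cong \ev_0^*(-) \otimes \omega_{\ev_0}$; moreover by Lemma~\ref{lem:detFilt} the line bundle $\det(\mathbb{L}_{\ev_0})$ is pulled back from $\ffZ_i$, so $\omega_{\ev_0} \cong \ev_0^*\lL_i[d_i]$ for a line bundle $\lL_i$ on $\ffZ_i$ and integer $d_i$. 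Letting $\eta_i$ denote the $\lambda_i$-weight of $\lL_i|_{\zZ_i^{\Omega}}$, these compatibilities combine into an intertwining
\begin{align*}
	\mathbb{D}_{\fM} \circ \Upsilon_{i, j} \cong \Upsilon_{i, -j-\eta_i} \circ \mathbb{D}_{\ffZ_i}[d_i]
\end{align*}
on weight components. I would then define
\begin{align*}
	\Upsilon_{i, j}^L \cneq \mathbb{D}_{\ffZ_i} \circ \Upsilon_{i, -j-\eta_i}^R \circ \mathbb{D}_{\fM}[d_i]
\end{align*}
and verify the adjunction $(\Upsilon_{i, j}^L, \Upsilon_{i, j})$ by a direct manipulation:
\begin{align*}
	\Hom(\Upsilon_{i, j}^L(\fF), \gG)
	&\cong \Hom(\mathbb{D}_{\ffZ_i}\gG, \Upsilon_{i, -j-\eta_i}^R \mathbb{D}_{\fM}(\fF)[d_i]) \\
	&\cong \Hom(\Upsilon_{i, -j-\eta_i}\mathbb{D}_{\ffZ_i}\gG, \mathbb{D}_{\fM}(\fF)[d_i]) \\
	&\cong \Hom(\fF, \Upsilon_{i, j}(\gG)),
\end{align*}
where the first isomorphism uses the antiequivalence $\mathbb{D}_{\ffZ_i}$, the second uses the adjunction from Proposition~\ref{prop:radj}, and the third applies $\mathbb{D}_{\fM}$ together with the intertwining relation.

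The main obstacle will be the careful bookkeeping of shifts and weights, in particular extracting $d_i$ and $\eta_i$ precisely from Lemma~\ref{lem:detFilt} and confirming that the intertwining lands in the correct weight component on the nose, as well as verifying the Grothendieck duality identity $\mathbb{D}_{\fM}\circ \ev_{1\ast} \cong \ev_{1\ast} \circ \mathbb{D}_{\ffS_i}$ on the quasi-smooth derived stacks at hand (this can be reduced to the Koszul charts via the formal neighborhood description as in Proposition~\ref{prop:radj}). Confirming that $\Upsilon_{i,j}^L$ lands in $\dDT^{\C}(\zZ_i^{\Omega})_{\wt = j}$ rather than its ind-completion is, by contrast, immediate from the above formula once the corresponding fact for $\Upsilon_{i, -j-\eta_i}^R$ from Proposition~\ref{prop:radj} and the obvious boundedness of $\mathbb{D}_{\ffZ_i}$ are combined.
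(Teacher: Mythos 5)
Your route is essentially the paper's own proof: the left adjoint is obtained by conjugating $\ev_{0\ast}\ev_1^{!}$ (i.e.\ $\Upsilon_i^R$) by Serre duality on $\fM$ and on $\ffZ_i$, using Grothendieck duality along the proper map $\ev_1$, quasi-smoothness of $\ev_0$, and Lemma~\ref{lem:detFilt} to write $\omega_{\ev_0}\cong \ev_0^{\ast}\lL[k]$ with $\lL$ a line bundle on $\ffZ_i$, and then descending to the DT categories because duality and line-bundle twists preserve singular supports and coherence (combined with Proposition~\ref{prop:radj} for coherence of the dual composite). The one bookkeeping point you flagged does need the fix the paper builds in: since $\omega_{\ffZ_i}$ has $B\C$-weight zero, the composite $\mathbb{D}_{\ffZ_i}\circ \Upsilon^R_{i,-j\mp\eta_i}\circ\mathbb{D}_{\fM}$ as written lands in the weight $j\pm\eta_i$ component, not weight $j$; one must insert the twist by $\lL$ itself (the paper's formula is $\otimes\lL\circ\mathrm{pr}_{j-\wt(\lL)}\circ\mathbb{D}_{\ffZ_i}\circ\ev_{0\ast}\ev_1^{!}\circ\mathbb{D}_{\fM}$, up to shift), which simultaneously corrects the weight and makes the intertwining $\mathbb{D}_{\fM}\circ\Upsilon_{i,j}\cong\Upsilon_{i,-j+\wt(\lL)}\circ(\otimes\lL)\circ\mathbb{D}_{\ffZ_i}[k]$ typecheck on weight components. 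With that twist included, your adjunction manipulation is exactly the paper's argument.
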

\begin{proof}
	In order to simplify the notation, we write
	$\fM_1=\fM$, $\fM_2=\ffS_i$ and $\fM_3=\ffZ_i$ in the diagram (\ref{dia:Filt4}). 
	We denote by $\mathbb{D}_i$ the Serre duality 
	equivalence for $\Dbc(\fM_i)$, 
	given by 
	\begin{align*}
		\mathbb{D}_i \cneq \hH om(-, \omega_{\fM_i}), \ \omega_{\fM_i} \cneq 
		\det \mathbb{L}_{\fM_i}[\rank \mathbb{L}_{\fM_i}]. 
	\end{align*}
	We have the following 
	adjoint pairs for the functors between 
	$\Ind \Dbc(\fM_1)$ and $\Ind \Dbc(\fM_3)_{\wt=j}$,
	\begin{align}\notag
		\mathrm{pr}_j 
		\mathbb{D}_{3}
		\ev_{0\ast}\ev_1^!
		\mathbb{D}_{1} \dashv 
		\mathbb{D}_1 \ev_{1\ast}\ev_0^{\ast} \mathbb{D}_3 i_j. 
	\end{align}
	Here the above functors are given by 
	\begin{align*}
		\xymatrix{
			\Ind \Dbc(\fM_1)	
			\ar@<0.5ex>[r]^-{\mathbb{D}_1} &
			\ar@<0.5ex>[l]^-{\mathbb{D}_1}
			\Ind \Dbc(\fM_1)^{\rm{op}} \ar@<0.5ex>[r]^-{\ev_1^!} & 
			\ar@<0.5ex>[l]^-{\ev_{1\ast}}
			\Ind\Dbc(\fM_2)^{\rm{op}}  \\ 
			\ar@<0.5ex>[r]^-{\ev_{0\ast}}&\ar@<0.5ex>[l]^-{\ev_0^{\ast}}
			\Ind\Dbc(\fM_3)^{\rm{op}} \ar@<0.5ex>[r]^-{\mathbb{D}_3} & 
			\ar@<0.5ex>[l]^-{\mathbb{D}_3}
		\Ind\Dbc(\fM_3) \ar@<0.5ex>[r]^-{\mathrm{pr}_j} & 
			\ar@<0.5ex>[l]^-{i_j}
			\Ind\Dbc(\fM_3)_{\wt=j}. 
		}
	\end{align*}
	Since $\ev_1$ is proper, 
	using~\cite[Corollary~9.5.9]{MR3136100}
	we have 
	\begin{align*}
		\mathbb{D}_1 \ev_{1\ast}\ev_0^{\ast} \mathbb{D}_3 i_j
		\cong 	
		\ev_{1\ast}\mathbb{D}_2\ev_0^{\ast} \mathbb{D}_3 i_j 
		\cong \ev_{1\ast}\ev_0^{!}i_j.
	\end{align*}
	By Lemma~\ref{lem:qsmooth}, 
	 $\ev_0$ is quasi-smooth. 
	 Also by Lemma~\ref{lem:detFilt}, 
	 the relative 
	dualizing complex 
	$\omega_{\ev_0}$ is of the form 
	$\ev_0^{\ast}\lL[k]$ for a line bundle 
	$\lL$ on $\fM_3$ and $k\in \mathbb{Z}$. 
	Therefore 
	we conclude that 
	\begin{align*}
		\mathbb{D}_1 \ev_{1\ast}\ev_0^{\ast} \mathbb{D}_3 i_j(-)
		\cong 	\ev_{1\ast}\ev_0^{\ast}(i_j(-)\otimes \lL)[k]. 
	\end{align*}
	The composition $\otimes \lL \circ i_j$ is isomorphic to 
	$i_{j+\wt(\lL)} \circ \otimes \lL$, where $\otimes \lL$ 
	is 
	the equivalence 
	\begin{align*}
		\otimes \lL \colon 
		\Ind\Dbc(\fM_3)_{\wt=j}
		\stackrel{\sim}{\to}
		\Ind\Dbc(\fM_3)_{\wt=j+\wt(\lL)}.
	\end{align*}
	Therefore we have the adjoint pair 
	for the functors between $\Ind\Dbc(\fM_1)$ and $\Ind\Dbc(\fM_3)_{\wt=j}$
	\begin{align}\label{ladjoint:ast}
		\otimes \lL \circ \mathrm{pr}_{j-\wt(\lL)}
		\mathbb{D}_{3}
		\ev_{0\ast}\ev_1^!
		\mathbb{D}_{1} \dashv 
		\ev_{1\ast}\ev_0^{\ast} i_j. 
	\end{align}
	Since the dualizing functors and tensor products with 
	line bundle preserve 
	singular supports and compact objects, 
	as in the proof of Lemma~\ref{lem:Upadj}
	the LHS of (\ref{ladjoint:ast}) induces the functor (\ref{Ups:L})
	giving a left adjoint of the functor (\ref{Ups:ij}). 
\end{proof}

\subsection{Semiorthogonal decompositions of DT categories}
Here we give a proof of Theorem~\ref{intro:main}. 
We first show that the functors $\Upsilon_{i, j}$ are fully-faithful. 
\begin{prop}\label{prop:ups:ff}
	The functor $\Upsilon_{i, j}$ in 
	(\ref{Ups:ij}) is fully-faithful. 	
\end{prop}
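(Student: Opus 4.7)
The plan is to prove $\Upsilon_{i,j}$ is fully-faithful by showing that the unit
$$\eta_\eE\colon \eE \to \Upsilon_{i,j}^R \Upsilon_{i,j}(\eE)$$
of the adjunction from Proposition~\ref{prop:radj} is an isomorphism for every compact object $\eE \in \dDT^\C(\zZ_i^\Omega)_{\wt=j}$. Since the functors $\ev_{0\ast}^{\ind}$, $\ev_1^!$, $\ev_{1\ast}$ and $\ev_0^\ast$ all satisfy base change along the Cartesian squares of Lemma~\ref{lem:Cart:MS} (exactly as exploited in the proof of Proposition~\ref{prop:radj}), and because a morphism in $\Dbc$ of a quasi-smooth QCA derived stack admitting a good moduli space is an isomorphism precisely when it becomes so after restriction to every formal fiber, it suffices to verify the claim formally locally at each closed point $y \in M$.

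Fix $y \in M$ with reductive stabilizer $G_y=\Aut(y)$ and Kuranishi chart $[\widehat{\fU}_y/G_y]$. By Lemma~\ref{lem:Cart:MS} the pullback of the diagram (\ref{dia:Filt4}) to the formal fiber at $y$ is equivalent to the diagram (\ref{dia:lambda}) for the one parameter subgroup $\lambda=\lambda_i$ of (\ref{lambda:i}); via (\ref{dia:SZNhat}) this is in turn controlled by the Cartesian diagram (\ref{dia:comX}). Applying the Koszul duality equivalences (\ref{Koszul:formal}) together with the intertwining relations (\ref{dia:com:ind0.5}), (\ref{dia:com:ind1.5}) and (\ref{dia:com:ind2}) (exactly as in Proposition~\ref{prop:radj}), the functor $\Upsilon_{i,j}$ and its right adjoint $\Upsilon_{i,j}^R$ transform, after formal localization, into a composition of pushforward and pullback functors between factorization categories on the six stacks $(\xX_k, w_k)$ of (\ref{dia:comX}).

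A direct diagram chase then identifies this local version of $\Upsilon_{i,j}$ with the critical-locus window functor $\overline{q}_{i\ast}\overline{p}_i^\ast$ restricted to the $\lambda$-weight $j$ summand, as reviewed in Subsection~\ref{subsec:crit:window}. The equivalence (\ref{equiv:bar}), which is a consequence of Theorem~\ref{thm:window}(i) combined with the support compatibility (\ref{equiv:supp3}), asserts in particular that this functor is fully-faithful, i.e.\ the corresponding unit is an isomorphism. Pulling back under the formal localization $\widehat{\iota}_y$ sends $\eta_\eE$ to the unit of this local adjoint pair, which is thereby an isomorphism on formal fibers, and we conclude.

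The main obstacle is the second step: one must verify carefully that, under the Koszul duality identifications (\ref{Koszul:formal}), the adjoint pair $(\ev_1^\ast, \ev_{1\ast})$ and $(\ev_0^\ast, \ev_{0\ast})$ on the derived category side transforms into the correct adjoint pair involving $f_2, g_2, g_1, f_1$ on the factorization side, and furthermore that the singular-support subcategories $\cC_{T_i^\Omega}$ and $\cC_{\sS_{\le i-1}^\Omega}$ descend compatibly with the supports in (\ref{MF:supp}) on both sides of (\ref{dia:comX}). This is where the geometric identity (\ref{ev0:Omega}), together with the Cartesian properties in Lemma~\ref{lem:SZcart} and Lemma~\ref{lem:Cart:MS}, is essential.
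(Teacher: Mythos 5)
Your proposal is correct and follows essentially the same route as the paper: reduce the unit isomorphism to a formal-local statement at each closed point of $M$, translate via the Koszul duality equivalences (\ref{Koszul:formal}) and the compatibilities (\ref{dia:com:ind0.5}), (\ref{dia:com:ind1.5}) into factorization categories over the diagram (\ref{dia:comX}), and identify the resulting functor with the window functor of Subsection~\ref{subsec:crit:window}, whose full-faithfulness is the equivalence (\ref{equiv:bar}) coming from Theorem~\ref{thm:window} together with the support identity (\ref{ev0:Omega}). The only detail the paper spells out that you compress into a "diagram chase" is the computation $g_{1!}(-)\cong g_{1\ast}(-\otimes\det(\hH^1(\mathbb{T}_{\fM}|_{y})^{\lambda>0})^{\vee}[-\dim])$, which produces the weight shift in (\ref{otimes:O}) but, being an equivalence, does not affect full-faithfulness.
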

\begin{proof}
	By Proposition~\ref{prop:radj}, 
	it is enough to show that the natural transform
	\begin{align}\label{id:adjoint}
		\id \to \Upsilon_{i, j}^R \circ \Upsilon_{i, j}
	\end{align}
	is an isomorphism. 
	This is a local property, so it is enough to 
	check the isomorphism (\ref{id:adjoint})
	after pulling back via $\widehat{\iota}_y \colon 
	\widehat{\fM}_y \to \fM$
	at each closed point $y \in M$. 
	Let $\lambda_i \colon \C \to G_y$
	be the one parameter subgroup 
	corresponding to the $i$-th $\Theta$-strata 
	as in (\ref{lambda:i}), and set $\lambda=\lambda_i$. 
	By the base change, 
	we are reduced to showing that the functor 
	\begin{align*}
		\ev_{1\ast}\ev_0^{\ast} \colon 
		\Dbc([\widehat{\fU}_y^{\lambda=0}/G_y^{\lambda=0}])_{\wt=j}/\cC_{\widehat{T}_{i, y}^{\Omega}}
		\to \Dbc([\widehat{\fU}_y/G_y])/\cC_{\widehat{\sS}_{\le i-1, y}^{\Omega}}
	\end{align*}
	from the diagram (\ref{dia:lambda}) is fully-faithful. 
	We show that the above functor is fully-faithful 
	through the Koszul duality equivalences in (\ref{Koszul:formal}).
	
	Below we use the notation in the diagram (\ref{dia:comX}).  
	Let us consider the following composition functor
	\begin{align*}
		\MF_{\coh}^{\C}(\xX_5, w_5)_{\lambda \mathchar`- \wt= j} &\stackrel{f_1^{\ast}}{\to}
		\MF_{\coh}^{\C}(\xX_4, w_4) \stackrel{g_{1!}}{\to} \MF_{\coh}^{\C}(\xX_3, w_3) \\
		&\stackrel{g_2^{\ast}}{\to}
		\MF_{\coh}^{\C}(\xX_2, w_2) \stackrel{f_{2\ast}}{\to}\MF_{\coh}^{\C}(\xX_1, w_1).
	\end{align*}
	By (\ref{dia:com:ind0.5})
	and (\ref{dia:com:ind1.5}), 
	it is enough to show that the descendant of the 
	above composition functor 
	\begin{align}\label{ff:mf:loc}
		f_{2\ast}g_2^{\ast}	g_{1 !} f_1^{\ast} \colon 
		\MF_{\coh}^{\C}(\xX_5 \setminus 
		\widehat{T}_{i, y}^{\Omega}, w_5)_{\lambda \mathchar`- \wt= j}
		\to \MF_{\coh}^{\C}(\xX_1 \setminus 
		\widehat{\sS}_{\le i-1, y}^{\Omega}, w_1)
	\end{align}
	is fully-faithful. 
	Note that we have 
	\begin{align*}
		g_{1!}(-)=g_{1\ast}(- \otimes \det(\hH^1(\mathbb{T}_{\fM}|_{y})^{\lambda>0})^{\vee}[-\dim \hH^1(\mathbb{T}_{\fM}|_{y})^{\lambda>0}]). 
	\end{align*}
	Here 
	$\det(\hH^1(\mathbb{T}_{\fM}|_{y})^{\lambda>0})^{\vee}$
	is a $G_y^{\lambda \ge 0}$-character, 
	which descends to a 
	$G_y^{\lambda=0}$-character by the 
	projection $G_y^{\lambda \ge 0} \twoheadrightarrow G_y^{\lambda=0}$
	as $\C$ is commutative. 
	Therefore 
	it 
	is written as $f_1^{\ast}\det(\hH^1(\mathbb{T}_{\fM}|_{y})^{\lambda>0})^{\vee}$ where 
	we regard 
	$\det(\hH^1(\mathbb{T}_{\fM}|_{y})^{\lambda>0})^{\vee}$ as a
	$G_y^{\lambda=0}$-character. 
	It follows that we have 
	\begin{align*}
		f_{2\ast}g_2^{\ast}	g_{1 !} f_1^{\ast}(-)
		\cong q_{2\ast}q_1^{\ast}(- \otimes 
		\det(\hH^1(\mathbb{T}_{\fM}|_{y})^{\lambda>0})^{\vee}
		[- \dim \hH^1(\mathbb{T}_{\fM}|_{y})^{\lambda>0}]). 	
	\end{align*}
	Here the first functor is an equivalence 
	\begin{align}\label{otimes:O}
		\otimes \det(\hH^1(\mathbb{T}_{\fM}|_{y})^{\lambda>0})^{\vee} \colon 
		\MF_{\coh}^{\C}(\xX_5 & \setminus \widehat{T}_{i, y}^{\Omega}, w_5)_{\lambda \mathchar`- \wt= j} \\
		\notag
		&\stackrel{\sim}{\to}
		\MF_{\coh}^{\C}(\xX_5 \setminus \widehat{T}_{i, y}^{\Omega}, w_5)_{\lambda \mathchar`- \wt= j-\langle \lambda, \hH^1(\mathbb{T}_{\fM}|_{y})^{\lambda>0} \rangle}. 
	\end{align}
	By the identity (\ref{ev0:Omega}), we have 
	$		\widehat{T}_{i, y}^{\Omega}=(\tau^{\Omega})^{-1}
	(\widehat{\sS}_{\le i-1, y}^{\Omega})
	$
	in the diagram (\ref{dia:SZNhat}). 
	Therefore from the equivalence (\ref{equiv:bar}), 
	we conclude that 
	the functor 
	\begin{align}\label{funct:formal:loc}
		q_{2\ast}q^{\ast}_1 \colon 
		\MF_{\coh}^{\C}(\xX_5 \setminus \widehat{T}_{i, y}^{\Omega}, w_5)_{\lambda \mathchar`- \wt= j-\langle \lambda, \hH^1(\mathbb{T}_{\fM}|_{y})^{\lambda>0} \rangle}
		\to \MF_{\coh}^{\C}(\xX_1 \setminus \widehat{\sS}_{\le i-1, y}^{\Omega}, w_1)
	\end{align}
	is fully-faithful. Therefore the functor (\ref{ff:mf:loc}) is fully-faithful. 
\end{proof}

We denote by 
\begin{align}\notag
	\dD_{i, j} \subset \dDT^{\C}(\nN \setminus \sS_{\le i-1}^{\Omega})
\end{align}
the essential images of the functor 
$\Upsilon_{i, j}$ in (\ref{Ups:ij}), which is 
equivalent to $\dDT^{\C}(\zZ_i^{\Omega})_{\wt=j}$
by Proposition~\ref{prop:ups:ff}. 
We have the following semiorthogonality of these
subcategories: 
\begin{lem}\label{lem:orthognoal}
	For $j>j'$, we have 
	\begin{align*}
		\Hom(\dD_{i, j}, \dD_{i, j'})=0. 
	\end{align*}
\end{lem}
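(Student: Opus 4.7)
The plan is to reduce the claim to the analogous semiorthogonality for factorization categories via Koszul duality on formal neighborhoods, where it will follow from the window theorem (Theorem~\ref{thm:window}) applied in the form discussed in Subsection~\ref{subsec:crit:window}.

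First, I would invoke the adjunction of Proposition~\ref{prop:radj}: for $A \in \dDT^{\C}(\zZ_i^{\Omega})_{\wt=j}$ and $A' \in \dDT^{\C}(\zZ_i^{\Omega})_{\wt=j'}$,
\begin{align*}
\Hom_{\dDT^{\C}(\nN \setminus \sS_{\le i-1}^{\Omega})}(\Upsilon_{i,j}(A),\Upsilon_{i,j'}(A'))
\cong \Hom_{\dDT^{\C}(\zZ_i^{\Omega})_{\wt=j}}(A,\Upsilon_{i,j}^{R}\Upsilon_{i,j'}(A')),
\end{align*}
so it suffices to show the vanishing $\Upsilon_{i,j}^{R}\circ\Upsilon_{i,j'}=0$ for $j>j'$. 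Since singular supports admit pointwise characterizations (\cite[Proposition~6.2.2]{MR3300415}) and the functors $\ev_0^{*},\ev_{1*},\ev_1^{!}$ are compatible with base change along $\widehat{\iota}_y \colon \widehat{\fM}_y \to \fM$ (as in the proofs of Lemma~\ref{lem:Upadj} and Proposition~\ref{prop:ups:ff}), this vanishing may be checked formally locally at each closed point $y\in M$.

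Next, after fixing $y$ and the one-parameter subgroup $\lambda=\lambda_i$ of (\ref{lambda:i}), I would apply the Koszul duality equivalences (\ref{Koszul:formal}) together with the functorial compatibilities (\ref{dia:com:ind0.5}) and (\ref{dia:com:ind1.5}). As computed at the end of the proof of Proposition~\ref{prop:ups:ff}, the formal-local counterpart of $\Upsilon_{i,j}$ becomes, up to a twist by the $G_y^{\lambda=0}$-character $\det(\hH^1(\mathbb{T}_{\fM}|_{y})^{\lambda>0})^{\vee}$ and a cohomological shift, the pushforward--pullback functor
\begin{align*}
q_{2\ast}q_1^{\ast} \colon
\MF_{\coh}^{\C}(\xX_5 \setminus \widehat{T}_{i,y}^{\Omega}, w_5)_{\lambda\mathchar`-\wt=j-c}
\to \MF_{\coh}^{\C}(\xX_1 \setminus \widehat{\sS}_{\le i-1,y}^{\Omega}, w_1),
\end{align*}
where $c=\langle\lambda,\hH^1(\mathbb{T}_{\fM}|_{y})^{\lambda>0}\rangle$ and the notation is that of (\ref{dia:comX}); this is precisely the fully-faithful embedding whose essential image was denoted $\dD_{i,j-c}$ in the discussion of Subsection~\ref{subsec:crit:window}, via the equivalence (\ref{equiv:bar}).

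Finally, since the character twist (\ref{otimes:O}) is a weight-preserving shift of the indexing by the same constant $c$ for every $j$, the ordering $j>j'$ is preserved. Given any $j>j'$, pick $m\in\mathbb{R}$ with $\lceil m\rceil-c\in(j',j]$ (e.g.\ $m=j+c$). The window semiorthogonal decomposition of Subsection~\ref{subsec:crit:window} applied to $\xX_1 \setminus \widehat{\sS}_{\le i-1,y}^{\Omega}$ then places $\dD_{i,j'-c}$ strictly to the left of $\dD_{i,j-c}$, yielding $\Hom(\dD_{i,j-c},\dD_{i,j'-c})=0$ in the factorization category, hence $\Upsilon_{i,j}^{R}\circ\Upsilon_{i,j'}=0$ after taking formal-local descents. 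The main potential obstacle is the bookkeeping in the passage between the categorical DT side (with functors $\ev_0^{*}, \ev_{1*}$ on quasi-smooth derived stacks) and the factorization side (with $q_1^{*}, q_{2*}$ and a dualizing twist), but these identifications have already been carried out in the proofs of Lemma~\ref{lem:Upadj} and Proposition~\ref{prop:ups:ff}, so only the ordering argument above remains.
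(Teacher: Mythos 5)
Your proposal is correct and follows essentially the same route as the paper: reduce via adjunction to the vanishing of $\Upsilon_{i,j}^R\circ\Upsilon_{i,j'}$, check it formally locally on $M$ as in the proof of Proposition~\ref{prop:ups:ff}, and observe through Koszul duality that the local functors land in the $\lambda$-weight $j-\langle\lambda,\hH^1(\mathbb{T}_{\fM}|_y)^{\lambda>0}\rangle$ pieces, whose images are semiorthogonal in the correct order by Theorem~\ref{thm:window}. The only difference is cosmetic: your explicit choice of the window parameter $m$ is not needed, since the semiorthogonal decomposition of Theorem~\ref{thm:window} already orders all weight components for any fixed $m$.
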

\begin{proof} 
	It is enough to show the vanishing 
	\begin{align*}
		\Upsilon_{i, j}^R \circ \Upsilon_{i, j}\cong 0, \ j>j'.
		\end{align*} 
	As in the proof of Proposition~\ref{prop:ups:ff}, it is enough to prove
	this formally locally on the good moduli space $M$. 
	Note that the LHS of (\ref{funct:formal:loc})
	have $\lambda$-weight $j-\langle \lambda, \hH^1(\mathbb{T}_{\fM}|_{y})^{\lambda>0} \rangle$. 
	Therefore by Theorem~\ref{thm:window}, 
	the essential images of the functors (\ref{funct:formal:loc}) are 
	semiorthogonal for $j>j'$, so the lemma follows. 
\end{proof}

For each $1\le i \le N$ and $m \in \mathbb{Z}$, we define 
\begin{align*}
	\wW_{i, m} \cneq  \bigcap_{j\ge m} \Ker(\Upsilon_{i, j}^R) \cap 
	\bigcap_{j< m} \Ker(\Upsilon_{i, j}^L)
	\subset \dDT^{\C}(\nN \setminus \sS_{\le i-1}^{\Omega}). 
	\end{align*}
The following is the main result in this section, which 
gives a window theorem for DT categories associated with 
$\Theta$-stratification on $(-1)$-shifted cotangents. 
\begin{thm}\label{thm:sod}
	There exists a semiorthogonal decomposition 
	\begin{align}\label{sod:DT}
		\dDT^{\C}(\nN \setminus \sS_{\le i-1}^{\Omega})=\langle \ldots, \dD_{i, m-2}, \dD_{i, m-1}, 
		\wW_{i, m}, \dD_{i, m}, \dD_{i, m+1}, \ldots   \rangle,  
		\end{align}
	such that the composition functor 
	\begin{align}\label{W:equiv}
			\wW_{i, m}
			\hookrightarrow \dDT^{\C}(\nN \setminus \sS_{\le i-1}^{\Omega})
			\twoheadrightarrow \dDT^{\C}(\nN \setminus \sS_{\le i}^{\Omega})
		\end{align}
	is an equivalence. 
	\end{thm}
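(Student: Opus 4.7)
The plan is to establish the semiorthogonal decomposition by combining the adjoint and fully-faithfulness results already proved (Proposition~\ref{prop:ups:ff}, Proposition~\ref{prop:radj}, Lemma~\ref{lem:ladjoint}, Lemma~\ref{lem:orthognoal}) with a reduction to the factorization-category window theorem (Theorem~\ref{thm:window}) via the formal neighborhood theorem. I would first verify the semiorthogonality of all the pieces in (\ref{sod:DT}), then identify the subcategory generated by the $\dD_{i,j}$'s with the kernel of the quotient (\ref{W:equiv}), and finally deduce both the SOD and the equivalence from these.

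First, I would check semiorthogonality of $\wW_{i,m}$ with each $\dD_{i,j}$. By definition, any $W \in \wW_{i,m}$ satisfies $\Upsilon_{i,j}^R W = 0$ for $j \ge m$ and $\Upsilon_{i,j}^L W = 0$ for $j < m$, so for $D = \Upsilon_{i,j}(F) \in \dD_{i,j}$, adjunction gives $\Hom(D,W) = \Hom(F, \Upsilon_{i,j}^R W) = 0$ for $j \ge m$ and $\Hom(W,D) = \Hom(\Upsilon_{i,j}^L W, F) = 0$ for $j < m$; combined with Lemma~\ref{lem:orthognoal} this gives the full semiorthogonality claimed in (\ref{sod:DT}).

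The main work is in the generation/mutation step. I would show that the extension closure $\langle \dD_{i,j} : j \in \mathbb{Z}\rangle_{\rm{ex}}$ coincides with the kernel $\cC_{\sS_i^{\Omega}}/\cC_{\sS_{\le i-1}^{\Omega}}$ of the quotient functor $\dDT^{\C}(\nN\setminus \sS_{\le i-1}^{\Omega}) \twoheadrightarrow \dDT^{\C}(\nN\setminus \sS_{\le i}^{\Omega})$. Since singular supports are characterized pointwise on the good moduli space and $\Ind \cC_{\zZ}$ is compactly generated under the good moduli space assumption, this is a formal local claim on $M$. For a closed point $y \in M$, via the formal neighborhood theorem and the identifications made in the diagrams (\ref{formal:y2}), (\ref{dia:lambda}), (\ref{dia:SZNhat}), the Koszul duality equivalences (\ref{Koszul:formal}) translate the statement into the corresponding SOD for KN-stratifications of the critical locus (the setting of Subsection~\ref{subsec:crit:window}), with data coming from the one parameter subgroup $\lambda_i$ of (\ref{lambda:i}) applied to the diagram (\ref{dia:comX}). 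In that setting Theorem~\ref{thm:window} (together with the support equivalence (\ref{equiv:supp3}) to pass from $\Crit(w)$ to a KN stratification of $Y$) yields precisely such an SOD, with local $\dD_{i,j}$'s matching the essential images (\ref{equiv:bar}) up to the twist (\ref{otimes:O}) by $\det \hH^{1}(\mathbb{T}_{\fM}|_{y})^{\lambda>0}$ that already appeared in the proof of Proposition~\ref{prop:ups:ff}.

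With the kernel identified, the admissibility of each $\dD_{i,j}$ (i.e.~existence of both adjoints) allows iterated mutation to produce the semiorthogonal decomposition (\ref{sod:DT}), where the middle factor is by construction the intersection $\bigcap_{j\ge m}\Ker(\Upsilon_{i,j}^R)\cap \bigcap_{j<m}\Ker(\Upsilon_{i,j}^L) = \wW_{i,m}$. The composition (\ref{W:equiv}) is then fully-faithful, since for $W,W' \in \wW_{i,m}$ any morphism $W \to W'$ in the quotient lifts uniquely because $\Hom$'s from $W$ into the kernel vanish by the semiorthogonality above; and it is essentially surjective because the quotient functor is, and any $E \in \dDT^{\C}(\nN\setminus \sS_{\le i-1}^{\Omega})$ differs from its projection onto $\wW_{i,m}$ by an object in the kernel. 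The hard part is the second paragraph above, namely faithful gluing of the formal-local SODs: this requires combining the base change of Lemma~\ref{lem:Cart:MS}, the identity (\ref{ev:TSomega}) matching $\widehat{T}_{i,y}^{\Omega}$ with $(\ev_1^{\Omega})^{-1}(\widehat{\sS}_{\le i-1,y}^{\Omega})$, and the pointwise characterization of singular support, in order to transfer the local result from Theorem~\ref{thm:window} into a global statement about the Verdier quotient $\Dbc(\fM)/\cC_{\sS_{\le i-1}^{\Omega}}$.
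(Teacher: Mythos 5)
Your first paragraph (semiorthogonality from the definition of $\wW_{i,m}$, adjunction, and Lemma~\ref{lem:orthognoal}) matches the paper, and the general idea of reducing hard content to Theorem~\ref{thm:window} formally locally via Koszul duality is the right one. However, the two remaining steps have genuine gaps. For generation, you propose to identify the kernel of the quotient functor with $\langle \dD_{i,j} : j\in\mathbb{Z}\rangle_{\rm{ex}}$ by a formal-local argument and then obtain (\ref{sod:DT}) by iterated mutation. Membership in an extension closure is not a condition that can be checked formally locally on $M$ (unlike vanishing of a fixed object or an isomorphism of Hom-spaces), so this local-to-global step is unjustified; moreover, with infinitely many factors $\dD_{i,j}$ the mutation procedure needs a termination input you never supply. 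The paper proves generation directly: for each $\eE$ one first shows $\Upsilon^R_{i,j}(\eE)=0$ for $j\gg 0$ and $\Upsilon^L_{i,j}(\eE)=0$ for $j\ll 0$ — formal-local vanishing from the proof of Proposition~\ref{prop:radj}, globalized using universal closedness of $\pi_{\mM}$ and quasi-compactness of $M$ — and then runs a finite sequence of truncation triangles $\Upsilon_{i,j_1}\Upsilon^R_{i,j_1}(\eE)\to\eE\to\eE_1$, etc. That boundedness is the real content behind generation and is absent from your plan; the identification of the kernel with $\langle \dD_{i,j}\rangle_{\rm{ex}}$ is a consequence of the theorem, not a usable input.

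The fully-faithfulness of (\ref{W:equiv}) is also not established by your argument: the claim that morphisms ``lift uniquely because Hom's from $W$ into the kernel vanish by the semiorthogonality above'' is false. The kernel contains the $\dD_{i,j}$ with $j\ge m$, and semiorthogonality gives $\Hom(\dD_{i,j},\wW_{i,m})=0$ for $j\ge m$, not $\Hom(\wW_{i,m},\dD_{i,j})=0$; since kernel objects occur on both sides of $\wW_{i,m}$, neither calculus of roofs applies, and the quotient Hom-comparison is not formal (this is precisely why the window equivalence is nontrivial already for GIT quotients). The paper instead proves that the map (\ref{nat:func}) is an isomorphism by embedding $\dDT^{\C}(\nN\setminus\sS^{\Omega}_{\le i-1})$ into the limit over \'etale $U\to M$, passing to formal fibers via Lemma~\ref{lem:fmloc}, and there invoking Theorem~\ref{thm:window}: objects of $\wW_{i,m}$ restrict to the local windows $\widehat{\wW}_{i,m}$ because $\Upsilon^{R}_{i,j}$ and $\Upsilon^{L}_{i,j}$ commute with base change over $M$, and the local window maps isomorphically onto the local quotient category. (Essential surjectivity, by contrast, is fine: it uses only that each $\dD_{i,j}$ dies in the quotient, via the singular-support statement of Lemma~\ref{lem:Tpush}, together with the already-established decomposition.) You would need to replace your lifting argument with a Hom-comparison of this kind.
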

\begin{proof}
	We first show the semiorthogonal decomposition (\ref{sod:DT}). 
	From Lemma~\ref{lem:orthognoal} and the definition of $\wW_{i, m}$, 
	the RHS of (\ref{sod:DT}) is semiorthogonal. It is enough 
	to show that the RHS generates the LHS. 
		For an object $\eE$ in the LHS, 
	the proof of Proposition~\ref{prop:radj} shows that 
	$\Upsilon_{i, j}^R(\eE)=0$ for $j \gg 0$ formally locally 
	at the good moduli space $p \in M$. 	
	Since $\mM \to M$ is universally closed, 
	there exists an open neighborhood of 
	$p \in M$ on which 
	$\Upsilon_{i, j}^R(\eE)=0$ for $j\gg 0$. 
	As $M$ is quasi-compact, 
	we conclude that $\Upsilon_{i, j}^R(\eE)=0$ for $j\gg 0$. 
	A similar argument shows that 
	$\Upsilon_{i, j}^L(\eE)=0$ for $j \ll 0$. 
	
	Suppose that $\eE$ is not an object in 
	$\wW_{i, m}$. 
	Then there is 
	$j_1 \ge m$ such that 
	$\Upsilon_{i, j}^R(\eE)=0$ for $j>j_1$ and 
	$\Upsilon_{i, j_1}^R(\eE) \neq 0$, 
	or 
	there is $j_1' < m$
	such that $\Upsilon_{i, j}^L(\eE)=0$ for $j<j_1'$ and 
	$\Upsilon_{i, j_1'}^L(\eE)\neq 0$. 
	Below we assume the former case. 
	The latter case is similarly discussed. 
	We have the distinguished triangle 
	$\Upsilon_{i, j_1}\Upsilon_{i, j_1}^R(\eE)
	\to \eE \to \eE_1$, 
	where 
	$\Upsilon_{i, j}^R(\eE_1)=0$ for $j\ge j_1$. 
	Repeating the above constructions for $\eE_1$, 
	we have a distinguished triangle
	\begin{align*}
		\eE_2  \to \eE \to \eE_3, \ 
		\eE_2 \in \langle \dD_{i, m}, \ldots,  \dD_{i, j_1} \rangle, \ 
		\eE_3 \in \bigcap_{j\ge m} \Ker(\Upsilon_{i, j}^R). 
	\end{align*}
	If $\Upsilon_{i, j}^L(\eE_3)=0$ for all $j< m$, then 
	$\eE_3 \in \wW_{i, m}$. 
	Otherwise there is $j_1' < m$ such that 
	$\Upsilon_{i, j}^L(\eE_3) \neq 0$ for $j< j_1'$ and 
	$\Upsilon_{i, j_1'}^L(\eE_3) \neq 0$. 
	Similarly to above, 
	we have the distinguished triangle 
$\eE_4 \to \eE_3 \to \Upsilon_{i, j_1'} \Upsilon_{i, j_1'}^L(\eE)$
	such that  
	$\Upsilon_{i, j}^L(\eE_4)=0$ for $j\le j_1'$. 
	We also have $\Upsilon_{i, j}^R(\eE_4)=0$ for $j\ge m$
	by applying $\Upsilon_{i, j}^R$ to the above triangle
	and using Lemma~\ref{lem:orthognoal}. 
	By repeating the above construction for $\eE_4$, 
	we obtain the distinguished triangle 
	\begin{align*}
		\eE_5 \to \eE_3 \to \eE_6, \ 
		\eE_6 \in \langle 
		\dD_{i, j_1'}, \ldots, \dD_{i, m-1}\rangle, \ 
		\eE_5 \in \wW_{i, m}. 
	\end{align*}
	Therefore we obtain the desired semiorthogonal decomposition (\ref{sod:DT}). 
	
Below we show that the composition functor (\ref{W:equiv}) is an equivalence. 
By Lemma~\ref{lem:Tpush}, 
any object in $\dD_{i, j}$ have singular 
supports contained in $\sS_{\le i}^{\Omega}$. 
Therefore 
the composition 
\begin{align*}
	\dD_{i, j} \hookrightarrow \dDT^{\C}(\nN \setminus 
	\sS_{\le i-1}^{\Omega}) \to \dDT^{\C}(\nN \setminus 
	\sS_{\le i}^{\Omega}) 
	\end{align*}
is zero. It follows that by the semiorthogonal decomposition (\ref{sod:DT}) 
the functor (\ref{W:equiv}) is essentially surjective. 
It remains to show that (\ref{W:equiv}) is fully-faithful. 
Let $\iota \colon U \to M$ be an \'{e}tale morphism 
for an affine scheme $U$, and 
take the following Cartesian diagrams
\begin{align*}
	\xymatrix{
		\mathfrak{M}_U \ar[d]_-{\iota_{\fM}} \diasquare & \ar@<0.3ex>@{_{(}->}[l] \mM_U \ar[r]\ar[d] \diasquare & U \ar[d]_-{\iota} \\
		\fM & \ar@<0.3ex>@{_{(}->}[l]  \mM \ar[r] & M
	}
\end{align*}
such that $\iota_{\fM}$ is \'{e}tale. 
Here the above diagram exists since the $\infty$-category of \'{e}tale morphisms
with target $\mM$ is equivalent to that with target $\fM$. 
We
have the fully-faithful functor (see~\cite[Lemma~7.2.3]{TocatDT})
\begin{align*}
	\dDT^{\C}\left(\nN \setminus \sS_{\le i-1}^{\Omega}\right)
	\hookrightarrow 
	\lim_{U \stackrel{\iota}{\to} M}
	\left(\Dbc(\fM_U)/\cC_{\iota_{\fM}^{\ast}\sS_{\le i-1}^{\Omega}}\right). 
\end{align*}
Let $\eE_1, \eE_2$ be an object in $\wW_{i, m}$. 
By the above fully-faithful functor, it is enough to show 
that the natural morphism 
\begin{align}\label{nat:func}
	\Hom_{\Dbc(\fM_U)/\cC_{\iota_{\fM}^{\ast}\sS_{\le i-1}^{\Omega}}}(\iota_{\fM}^{\ast}\eE_1, 
	\iota_{\fM}^{\ast}\eE_2)
	\to
		\Hom_{\Dbc(\fM_U)/\cC_{\iota_{\fM}^{\ast}\sS_{\le i}^{\Omega}}}
		(\iota_{\fM}^{\ast}\eE_1, 
	\iota_{\fM}^{\ast}\eE_2)
\end{align}
is an isomorphism. 
The above morphism is regarded as a morphism in $D_{\qcoh}(U)$, 
so it is enough to show the above isomorphism 
formally locally at any point in $U$. 
Similarly to Proposition~\ref{prop:radj}, we prove 
the corresponding claim for derived categories of factorizations via 
Koszul duality. 

For each $y \in M$, 
we use the notation of the diagram (\ref{dia:comX}). 
Let $\widehat{\dD}_{i, j}$ be
the essential image of the functor (\ref{ff:mf:loc}). 
Since the functor (\ref{ff:mf:loc}) is the composition of (\ref{otimes:O}) and (\ref{funct:formal:loc}), 
by Theorem~\ref{thm:window} we have the semiorthogonal decomposition 
\begin{align}\notag
	\MF^{\C}_{\coh}(\xX_1 \setminus \widehat{\sS}_{\le i-1, y}^{\Omega}, w_1)
	=\langle \ldots, \widehat{\dD}_{i, m-2}, \widehat{\dD}_{i, m-1}, 
	\widehat{\wW}_{i, m}, \widehat{\dD}_{i, m}, \widehat{\dD}_{i, m+1}, 
	\ldots \rangle 
\end{align}
such that the composition functor 
\begin{align*}
	\widehat{\wW}_{i, m} \hookrightarrow 
		\MF^{\C}_{\coh}(\xX_1 \setminus \widehat{\sS}_{\le i-1, y}^{\Omega}, w_1)
		\to 
			\MF^{\C}_{\coh}(\xX_1 \setminus \widehat{\sS}_{\le i, y}^{\Omega}, w_1)
\end{align*}
is an equivalence. Since $\Upsilon_{i, j}^R$ and $\Upsilon_{i, j}^L$ 
commute with 
base change over $M$, 
for any object $\eE \in \wW_{i, m}$
we have $\widehat{\Phi}_y(\widehat{\iota}_y^{\ast}\eE) \in 
\widehat{\wW}_{i, m}$
where $\widehat{\iota}_y \colon \widehat{\fM}_y \to \fM$
is the morphism in (\ref{dia:fnbd}) and 
$\widehat{\Phi}_y$ is the equivalence
\begin{align*}
	\widehat{\Phi}_y \colon 
	\Dbc([\widehat{\fU}_y/G_y])/\cC_{\widehat{\sS}_{\le i-1, y}^{\Omega}}
	\stackrel{\sim}{\to}
	\MF^{\C}_{\coh}(\xX_1 \setminus \widehat{\sS}_{\le i-1, y}^{\Omega}, w_1)
\end{align*}
 given by (\ref{Psi:supp2}). 
It follows that the natural morphism 
\begin{align*}
	\Hom_{\Dbc(\widehat{\fM}_y)/\cC_{\widehat{\sS}_{\le i-1, y}^{\Omega}}}
	(\widehat{\iota}_y^{\ast}\eE_1, \widehat{\iota}_y^{\ast}\eE_2)
	\to \Hom_{\Dbc(\widehat{\fM}_y)/\cC_{\widehat{\sS}_{\le i, y}^{\Omega}}}
	(\widehat{\iota}_y^{\ast}\eE_1, \widehat{\iota}_y^{\ast}\eE_2)
	\end{align*}
is an isomorphism. 
Using Lemma~\ref{lem:fmloc} below, we conclude
the formal local isomorphism of (\ref{nat:func}). 
\end{proof}

\begin{lem}\label{lem:fmloc}
	In the setting of the diagram (\ref{dia:fnbd}),
	suppose that $M$ is affine. 
	Let $\zZ \subset t_0(\Omega_{\fM}[-1])$ be a conical 
	closed substack. 
	Then for any $\eE_1, \eE_2 \in \Dbc(\fM)$, 
	we have the isomorphism 
	\begin{align*}
		\Hom_{\Dbc(\fM)/\cC_{\zZ}}(\eE_1, \eE_2)
		\otimes_{\oO_M}\widehat{\oO}_{M, y}
		\stackrel{\cong}{\to}
		\Hom_{\Dbc(\widehat{\fM}_y)/\cC_{\widehat{\iota}_y^{\ast}\zZ}}
		(\widehat{\iota}_y^{\ast}\eE_1, \widehat{\iota}_y^{\ast}\eE_2). 
		\end{align*} 
	\end{lem}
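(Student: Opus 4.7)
The plan is to reduce the Hom in the Verdier quotient to a Hom in $\Ind\Dbc(\fM)$ applied to a colocalized object, and then invoke flat base change along $\widehat{\iota}_y \colon \widehat{\fM}_y \to \fM$. Since $M$ is affine, the morphism $\Spec \widehat{\oO}_{M,y} \to M$ is flat, and hence so is its base change $\widehat{\iota}_y$ along the composition $\fM \to \mM \to M$. This flatness powers the entire argument.

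First, since $\mM$ admits a good moduli space, $\Ind\cC_{\zZ,\dg}$ is compactly generated by the theorem quoted immediately before Proposition~\ref{prop:DTcat}, so by Proposition~\ref{prop:DTcat} the Hom in the Verdier quotient agrees with the Hom computed in $\Ind\Dbc(\fM)/\Ind\cC_{\zZ}$. The inclusion $\Ind\cC_{\zZ} \hookrightarrow \Ind\Dbc(\fM)$ then admits a right adjoint $\rho$, giving a fiber sequence $\rho(\eE_2) \to \eE_2 \to \eE_2^{\perp}$ with $\rho(\eE_2) \in \Ind\cC_{\zZ}$ and $\eE_2^{\perp}$ right-orthogonal to $\Ind\cC_{\zZ}$; the Hom in question equals $\Hom_{\Ind\Dbc(\fM)}(\eE_1,\eE_2^{\perp})$. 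The same discussion applies on the right-hand side, with $\widehat{\mM}_y$ (which admits $\Spec\widehat{\oO}_{M,y}$ as good moduli space) in place of $\mM$.

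Second, the flatness of $\widehat{\iota}_y$ supplies standard flat base change: for $\eE_1$ compact in $\Dbc(\fM)$ and any $\gG \in \Ind\Dbc(\fM)$, the natural map
\begin{align*}
\Hom_{\Ind\Dbc(\fM)}(\eE_1, \gG) \otimes_{\oO_M} \widehat{\oO}_{M,y} \stackrel{\cong}{\to} \Hom_{\Ind\Dbc(\widehat{\fM}_y)}(\widehat{\iota}_y^{\ast}\eE_1, \widehat{\iota}_y^{\ast}\gG)
\end{align*}
is an isomorphism (checked for $\gG \in \Dbc(\fM)$ compact, then extended along the filtered colimit defining a general $\gG$ using flatness of $\widehat{\oO}_{M,y}$ and compactness of $\eE_1$). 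Moreover, pullback along the flat morphism $\widehat{\iota}_y$ preserves singular supports (this follows from the formal-local characterization of singular supports, as in the proof of Lemma~\ref{lem:Tpush}), so $\widehat{\iota}_y^{\ast}$ sends $\Ind\cC_{\zZ}$ into $\Ind\cC_{\widehat{\iota}_y^{\ast}\zZ}$, and in particular $\widehat{\iota}_y^{\ast}\rho(\eE_2)$ lies in $\Ind\cC_{\widehat{\iota}_y^{\ast}\zZ}$.

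The remaining and main step is to verify that $\widehat{\iota}_y^{\ast}\eE_2^{\perp}$ is right-orthogonal to $\Ind\cC_{\widehat{\iota}_y^{\ast}\zZ}$, so that the pullback of the fiber sequence above realises the colocalization sequence for $\widehat{\iota}_y^{\ast}\eE_2$. By the compact generation of $\Ind\cC_{\widehat{\iota}_y^{\ast}\zZ}$, this reduces to testing right-orthogonality against compact generators $\cC \in \cC_{\widehat{\iota}_y^{\ast}\zZ}$; combined with the base change isomorphism of the previous paragraph and the right-orthogonality of $\eE_2^{\perp}$, it suffices to exhibit such generators in the form $\widehat{\iota}_y^{\ast}\cC'$ with $\cC' \in \cC_{\zZ}$. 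This is the hardest point, and is where the Koszul-duality local models (\ref{Koszul:formal}) enter: on formal charts the singular-support categories become factorization categories supported on a closed conical subset, for which the compact generators are manifestly obtained by base change from a corresponding chart of $\fM$. Collecting these three steps then yields the desired isomorphism, by taking $\gG = \eE_2^{\perp}$ in the flat base change formula.
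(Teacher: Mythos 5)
Your overall strategy (colocalize with respect to $\Ind \cC_{\zZ}$ and then apply flat base change along $\widehat{\iota}_y$) can be made to work, but as written it has a genuine gap exactly at the point you flag as the hardest one. The assertion that the compact generators of $\Ind \cC_{\widehat{\iota}_y^{\ast}\zZ}$ are ``manifestly obtained by base change from a corresponding chart of $\fM$'' is not a proof, and the justification offered does not hold up: the formal fiber $\widehat{\fM}_y$ is \emph{not} the base change of a smooth chart of $\fM$ in any naive sense (it is only identified with a quotient $[\widehat{\fU}_y/G_y]$ via the formal neighborhood theorem, which is an extra hypothesis not assumed in the lemma and not used in its proof), and even granting the Koszul-duality models (\ref{Koszul:formal}), the statement that generators of the supported factorization category on the formal chart come by restriction from \emph{global} objects of $\cC_{\zZ}$ on $\fM$ is essentially the content of the compact generation theorem for $\Ind\cC_{\zZ}$ plus a nontrivial localization argument, not something visible chart by chart. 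Without this, you cannot conclude that $\widehat{\iota}_y^{\ast}\eE_2^{\perp}$ stays right-orthogonal to $\Ind\cC_{\widehat{\iota}_y^{\ast}\zZ}$, i.e.\ that pullback commutes with the colocalization, and the whole computation stalls. (A repair along your lines is possible: since $\widehat{\iota}_y$ is affine, $\widehat{\iota}_{y\ast}$ is conservative, sends $\Ind\cC_{\widehat{\iota}_y^{\ast}\zZ}$ into $\Ind\cC_{\zZ}$, and satisfies the projection formula $\widehat{\iota}_{y\ast}\widehat{\iota}_y^{\ast}(-)\cong(-)\otimes_{\oO_M}\widehat{\oO}_{M,y}$; testing against compact generators of $\Ind\cC_{\zZ}$ upstairs then gives both the generation-by-pullbacks claim and the orthogonality you need. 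But this must be said.)

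Note also that the paper avoids the issue entirely by a shorter argument: it never introduces the colocalization. Using Proposition~\ref{prop:DTcat} and Lemma~\ref{lem:adquot}, the adjunction $(\widehat{\iota}_y^{\ast}, \widehat{\iota}_{y\ast}^{\ind})$ descends to the quotient categories, so
$\Hom_{\Dbc(\widehat{\fM}_y)/\cC_{\widehat{\iota}_y^{\ast}\zZ}}(\widehat{\iota}_y^{\ast}\eE_1, \widehat{\iota}_y^{\ast}\eE_2)
\cong \Hom_{\Ind(\Dbc(\fM)/\cC_{\zZ})}(\eE_1, \widehat{\iota}_{y\ast}^{\ind}\widehat{\iota}_y^{\ast}\eE_2)$;
the projection formula identifies $\widehat{\iota}_{y\ast}^{\ind}\widehat{\iota}_y^{\ast}\eE_2$ with $\eE_2\otimes_{\oO_M}\widehat{\oO}_{M,y}$, and compactness of $\eE_2$ (together with flatness of $\widehat{\oO}_{M,y}$ over the affine $M$) pulls the tensor outside the $\Hom$. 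Your flat base change formula is, in effect, this adjunction-plus-projection-formula argument in disguise, so once you have set it up you do not need the colocalization step at all; keeping it forces you to prove the generation statement above, which is precisely the part you left unproved.
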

\begin{proof}
	Noting Proposition~\ref{prop:DTcat} and Lemma~\ref{lem:adquot}, 
	we have the isomorphisms
	\begin{align*}
	\Hom_{\Dbc(\widehat{\fM}_y)/\cC_{\widehat{\iota}_y^{\ast}\zZ}}
	(\widehat{\iota}_y^{\ast}\eE_1, \widehat{\iota}_y^{\ast}\eE_2)
	&\cong 
	\Hom_{\Ind (\Dbc(\fM)/\cC_{\zZ})}
	(\eE_1, \widehat{\iota}_{y\ast}^{\ind}\widehat{\iota}_y^{\ast}\eE_2) \\
	&\cong 		\Hom_{\Ind (\Dbc(\fM)/\cC_{\zZ})}
	(\eE_1, \eE_2 \otimes_{\oO_M}\widehat{\oO}_{M, y}). 
		\end{align*}
	Here the second isomorphism is the projection formula 
	for ind-coherent shaves (see~\cite[Proposition~3.3.7]{MR3701352}), 
	where $\widehat{\oO}_{M, y} \in D_{\qcoh}(M)$
	acts on $\Ind \Dbc(\fM)$ by the tensor product. 
	Since $\eE_2$ is a compact object
	in $\Ind(\Dbc(\fM)/\cC_{\zZ})$
	(see the proof of~\cite[Proposition~3.2.7]{TocatDT}), 
	we have the isomorphism 
	\begin{align*}
		\Hom_{\Dbc(\fM)/\cC_{\zZ}}(\eE_1, \eE_2)
		\otimes_{\oO_M}\widehat{\oO}_{M, y}
		\stackrel{\cong}{\to}
		\Hom_{\Ind (\Dbc(\fM)/\cC_{\zZ})}
		(\eE_1, \eE_2 \otimes_{\oO_M}\widehat{\oO}_{M, y}). 
		\end{align*} 
	Therefore the lemma holds. 
		\end{proof}
	
	For an interval $I \subset \mathbb{R} \cup \{-\infty, \infty\}$, 
	we set
	\begin{align*}
		\dD_{i, I} \cneq \langle 
		\dD_{i, j} \colon j \in I \rangle
		\subset \dDT^{\C}(\nN \setminus \sS_{\le i-1}^{\Omega}). 
		\end{align*}
	Note that
	for each $m \in \mathbb{R}$
	 the semiorthogonal decomposition in Theorem~\ref{thm:sod}
implies that 
\begin{align}\notag
	\dDT^{\C}(\nN \setminus \sS_{\le i-1}^{\Omega})
	=\langle \dD_{i, < m}, \wW_{i, m}, \dD_{i, \ge m} \rangle, 
	\end{align}
where $\wW_{i, m} \cneq \wW_{i, \lceil m \rceil}$. 
	As a corollary of Theorem~\ref{thm:sod}, we have the following: 
	\begin{cor}\label{cor:sod}
		For each choice of $m_i \in \mathbb{R}$ for $1\le i\le N$, 
		there exists a semiorthogonal decomposition 
		\begin{align}\label{sod:ss}
			\dDT^{\C}(\nN)=
			\langle \dD_{1, < m_1},  \ldots, 
			\dD_{N, < m_N}, \wW_{m_{\bullet}}^{l}, 
			\dD_{N, \ge m_N}, \ldots, \dD_{1, \ge m_1}\rangle			
			\end{align}
		such that the composition functor 
		\begin{align*}
			\wW_{m_{\bullet}}^{l}
			\hookrightarrow \dDT^{\C}(\nN) \twoheadrightarrow
			\dDT^{\C}(\nN^{l\sss})
			\end{align*}
		is an equivalence. 
		\end{cor}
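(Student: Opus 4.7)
The plan is to prove Corollary~\ref{cor:sod} by iterating Theorem~\ref{thm:sod} along the $\Theta$-stratification (\ref{N:theta}). Let me write $\nN_0 \cneq \nN$ and $\nN_i \cneq \nN \setminus \sS_{\le i}^{\Omega}$, so that $\nN_N = \nN^{l\sss}$ by (\ref{N:theta}). For each $1\le i\le N$, Theorem~\ref{thm:sod} applied to the $i$-th stratum in $\nN_{i-1}$ produces the three-term decomposition
\begin{align*}
\dDT^{\C}(\nN_{i-1})=\langle \dD_{i, <m_i}, \wW_{i, m_i}, \dD_{i, \ge m_i}\rangle
\end{align*}
(obtained by bundling together the weight pieces $\dD_{i, j}$ for $j<\lceil m_i \rceil$ and $j\ge \lceil m_i \rceil$ respectively, using Lemma~\ref{lem:orthognoal}), together with an equivalence $\wW_{i, m_i} \stackrel{\sim}{\to} \dDT^{\C}(\nN_i)$ given by the canonical quotient functor.

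First I would set up the induction: define inductively $\wW_{m_{\bullet}}^l$ to be the full subcategory of $\dDT^{\C}(\nN)$ corresponding, under the chain of fully-faithful inclusions $\wW_{N,m_N}\hookrightarrow \wW_{N-1,m_{N-1}}\hookrightarrow \cdots \hookrightarrow \wW_{1,m_1}\hookrightarrow \dDT^{\C}(\nN)$, to $\wW_{N,m_N}$. Then I would substitute the $i$-th SOD into the $(i-1)$-st one at the middle term $\wW_{i-1,m_{i-1}}$; since substitution of an SOD into a term of an SOD is again an SOD in the concatenated order, after $N$ iterations one obtains
\begin{align*}
\dDT^{\C}(\nN)=\langle \dD_{1,<m_1},\ldots, \dD_{N,<m_N}, \wW_{m_{\bullet}}^l, \dD_{N,\ge m_N},\ldots, \dD_{1,\ge m_1}\rangle.
\end{align*}
The composition functor $\wW_{m_{\bullet}}^l \hookrightarrow \dDT^{\C}(\nN)\twoheadrightarrow \dDT^{\C}(\nN^{l\sss})$ factors through the $N$-fold composition $\dDT^{\C}(\nN)\twoheadrightarrow \dDT^{\C}(\nN_1)\twoheadrightarrow \cdots \twoheadrightarrow \dDT^{\C}(\nN_N)=\dDT^{\C}(\nN^{l\sss})$, and by Theorem~\ref{thm:sod} each quotient restricts to an equivalence on the corresponding window, so the total composition is an equivalence as well.

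There is essentially no new obstacle: all the analytic content (compact generation of the singular-support subcategories, reduction to formal neighborhoods, Koszul duality, and the local window theorem of Halpern-Leistner--Sam) has already been packaged into Theorem~\ref{thm:sod}. The only bookkeeping item is to check that the pieces $\dD_{i,<m_i}$ and $\dD_{i,\ge m_i}$, originally produced inside $\dDT^{\C}(\nN_{i-1})$, retain the correct semiorthogonality after being transported into $\dDT^{\C}(\nN)$ through the fully-faithful embedding $\wW_{i-1,m_{i-1}}\hookrightarrow \dDT^{\C}(\nN)$; but this is automatic since they land inside $\wW_{i-1,m_{i-1}}$, which is itself a semiorthogonal summand with known Hom-vanishing against the other outer pieces $\dD_{i',<m_{i'}}$ and $\dD_{i',\ge m_{i'}}$ for $i'<i$.
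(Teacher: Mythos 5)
Your proposal is correct and is essentially the argument the paper intends: the corollary is stated without proof as an immediate consequence of Theorem~\ref{thm:sod}, using exactly the bundled three-term form $\langle \dD_{i,<m_i}, \wW_{i,m_i}, \dD_{i,\ge m_i}\rangle$ noted just before the corollary and the iterated substitution of each stratum's decomposition into the previous window, with the quotient functors composing to the quotient onto $\dDT^{\C}(\nN^{l\sss})$. Your explicit remark that the pieces $\dD_{i,<m_i}$, $\dD_{i,\ge m_i}$ are transported into $\dDT^{\C}(\nN)$ through the chain of window equivalences is the same implicit identification the paper uses.
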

	
	\begin{rmk}\label{rmk:sod}
		In~\cite[Theorem~2.3.1]{HalpK32}, 
		Halpern-Leistner proves the existence of 
		semiorthogonal decomposition of $\Dbc(\fM)$, 
		associated with the $\Theta$-stratification for $\fM$
		\begin{align}\label{theta:M}
			\fM=\fS_1 \sqcup \cdots \sqcup \fS_N \sqcup \fM^{l\sss}, 
			\end{align}
		under the additional assumption 
		that each $\hH^{-1}(\mathbb{L}_{\fM}|_{\zZ_i})$ has 
		only non-negative weights. 
		Here $\zZ_i$ is the center of $\sS_i=t_0(\fS_i)$. 
		By Lemma~\ref{lem:weight}, the above weight assumption implies that 
		the $\Theta$-stratification (\ref{theta:M}) for $\fM$
		pulls back to the $\Theta$-stratification (\ref{N:theta}) for $\nN$. 
		In this case we have
		$\dDT^{\C}(\nN^{l\sss})=\Dbc(\fM^{l\sss})$, 
		 $\dDT^{\C}(\zZ_i^{\Omega})=\Dbc(\fZ_i)$
		and the semiorthogonal decomposition in Corollary~\ref{cor:sod}
		coincides with the one proved in~\cite[Theorem~2.3.1]{HalpK32}. 
		On the other hand, the proof of Corollary~\ref{cor:sod}
		is applied without the above weight assumption. Indeed 
		the semistable locus $\nN^{l\sss}$ may be strictly bigger 
		than the pull-back of $\mM^{l\sss}$, 
		and we need categorical DT theory to formulate 
		the semiorthogonal decomposition in Corollary~\ref{cor:sod}
		without the above weight assumption. 
		\end{rmk}

	From the proof of Theorem~\ref{thm:sod}, one can characterize 
	the subcategory $\wW_{m_{\bullet}}^l$ in (\ref{sod:ss}) 
	in terms of formal fibers 
	along with the good moduli space morphism
	$\pi_{\mM} \colon \mM \to M$. 
	Let us take a closed point $y \in M$
	and use the notation in the diagram (\ref{dia:comX}). 
	For each $1\le i\le N$, by taking 
	the one parameter subgroup $\lambda_i$ as in 
(\ref{lambda:i}), we set
	\begin{align}\label{etai}
		\eta_i &\cneq \langle \lambda_i, \det\mathbb{L}_{q_2}^{\vee} \rangle \\
	\notag	&=\langle \lambda_i, -\hH^0(\mathbb{T}_{\fM}|_{y})^{\lambda_i <0}
		+\hH^1(\mathbb{T}_{\fM}|_{y})^{\lambda_i>0}+
		\hH^{-1}(\mathbb{T}_{\fM}|_{y})^{\lambda_i<0} \rangle. 
		\end{align}
Here $q_2$ is a morphism in (\ref{dia:comX}). 
We also fix $\overline{\lL} \in \Pic(\fM)_{\mathbb{R}}$
with $\overline{l}=c_1(\overline{\lL})$
satisfying 
\begin{align}\label{irrat}
	\wt(\tau^{\ast}\overline{\lL}) \notin \mathbb{Q}
	\end{align}
for the diagram (\ref{dia:Filt4}) for all $1\le i\le N$, e.g. 
$\overline{l}=\varepsilon \cdot l$ for $\varepsilon \notin \mathbb{Q}$. 
We define the subcategory 
\begin{align*}
	\widehat{\wW}_y^l \subset \MF_{\coh}^{\C}(\xX_1, w_1)
	\end{align*}
to be consisting of factorizations $(\pP, d_{\pP})$
such that, 
for the inclusion 
\begin{align}\label{inclu:j}
	\tau \colon \xX_5 \setminus \widehat{T}_{i, y}^{\Omega} \hookrightarrow 
	\xX_1 \setminus \widehat{\sS}_{\le i-1, y}^{\Omega} 
	\end{align}
from the $\lambda_i$-fixed part, 
we have 
\begin{align}\label{cond:jP}
	\tau^{\ast}(\pP, d_{\pP}) \in 
	\bigoplus_{j\in \left[ -\frac{1}{2}\eta_i+\langle \lambda_i, \delta \rangle, 
		\frac{1}{2}\eta_i+\langle \lambda_i, \delta \rangle\right]}
	\MF_{\coh}^{\C}(\xX_5 \setminus \widehat{T}_{i, y}^{\Omega}, w_5)_{\lambda_i 
	\mathchar`- \wt =j}
		\end{align}
for all $1\le i\le N$.
Here $\delta$ is 
given by 
\begin{align}\label{Gy:delta}
	\delta \cneq \frac{1}{2}\det(\hH^1(\mathbb{T}_{\fM}|_{y}))^{\vee}+
	\overline{\lL}|_{y}
	\in K(BG_y)_{\mathbb{R}}, 
	\end{align}
which is independent of $i$. 
We also set 
\begin{align}\label{def:mi}
	m_i \cneq \frac{1}{2}\wt(\sigma^{\ast}\det \mathbb{L}_{\ev_1}^{\vee})
+\wt(\tau^{\ast}\overline{\lL})
	\end{align}
in the notation of the diagram (\ref{dia:Filt4}). 
\begin{prop}\label{prop:W:loc}
	For a choice of $m_i$ in (\ref{def:mi}), 
	an object $\eE \in \dDT^{\C}(\nN)$ lies in 
	$\wW_{m_{\bullet}}^l$ if and only if for any closed 
	point $y \in M$, for the morphism 
	$\widehat{\iota}_y \colon \widehat{\fM}_y \to \fM$ in (\ref{dia:fnbd})
	and an equivalence $\widehat{\Phi}_y$ in (\ref{Koszul:formal}), 
	we have 
	$\widehat{\Phi}_y(\widehat{\iota}_y^{\ast}\eE) \in 
		\widehat{\wW}_{y}^l. 
	$
	\end{prop}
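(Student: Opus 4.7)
The plan is to unravel the definition of $\wW_{m_{\bullet}}^l$ down to formal local factorization categories via Koszul duality, and then match window intervals by direct bookkeeping.

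First I would use Corollary~\ref{cor:sod} and Theorem~\ref{thm:sod} iteratively to characterize $\wW_{m_{\bullet}}^l$ as the intersection $\bigcap_{i=1}^N \wW_{i, m_i}$, where each $\wW_{i,m_i}$ is defined as the common kernel of the adjoint functors $\Upsilon_{i,j}^R$ for $j\ge m_i$ and $\Upsilon_{i,j}^L$ for $j<m_i$ appearing in Proposition~\ref{prop:radj} and Lemma~\ref{lem:ladjoint}. Since these adjoint functors are built from $\ev_0^{\ast}$, $\ev_{1\ast}$, $\ev_1^!$ on the diagram (\ref{dia:Filt4}), and since the formation of $\fS_i, \fZ_i$ commutes with base change over $M$ by Lemma~\ref{lem:Cart:MS}, the condition $\eE \in \wW_{m_{\bullet}}^l$ can be checked after pullback along $\widehat{\iota}_y \colon \widehat{\fM}_y \to \fM$ for every closed point $y \in M$, by the same quasi-compactness/universal-closedness argument used at the start of the proof of Theorem~\ref{thm:sod}.

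Next I would apply the Koszul duality equivalences (\ref{Koszul:formal}) at $y$. Under these equivalences, the diagram (\ref{dia:lambda}) is translated into the diagram (\ref{dia:comX}), and by the commutative diagrams (\ref{dia:com:ind0.5}), (\ref{dia:com:ind1.5}), (\ref{dia:com:ind2}) the functor $\widehat{\iota}_y^{\ast}\Upsilon_{i,j}$ corresponds via $\widehat{\Phi}_y^{\lambda_i=0}$ and $\widehat{\Phi}_y$ to the composition (\ref{ff:mf:loc}). From the explicit formula appearing in the proof of Proposition~\ref{prop:ups:ff}, this composition factors as the tensor-shift equivalence (\ref{otimes:O}) followed by the factorization functor (\ref{funct:formal:loc}). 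Therefore $\widehat{\iota}_y^{\ast}\wW_{i,m_i}$ corresponds, under $\widehat{\Phi}_y$, to the window subcategory from Theorem~\ref{thm:window} applied to the KN stratum at $\lambda_i$, shifted in $\lambda_i$-weight by $\langle \lambda_i, \hH^1(\mathbb{T}_{\fM}|_y)^{\lambda_i>0}\rangle$.

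The remaining and main task is a direct computation identifying the resulting weight interval with the symmetric interval in (\ref{cond:jP}). I would first verify that the integer $\eta_i$ from the KN window for $\xX_1$ equals the quantity in (\ref{etai}), by computing $\wt_{\lambda_i}\det(\mathbb{L}_{q_2})^{\vee}$ using the splitting of $\mathbb{L}_{q_2}$ into $\hH^0$-, $\hH^1$-, and $\hH^{-1}$-weight pieces of $\mathbb{L}_{\fM}|_y$ and using that $\mathbb{T}_{\fM}|_y=\mathbb{L}_{\fM}|_y^{\vee}$. Then I would match endpoints: from Lemma~\ref{lem:detFilt} combined with (\ref{L:filt}), the quantity $\wt(\sigma^{\ast}\det\mathbb{L}_{\ev_1}^{\vee})$ appearing in (\ref{def:mi}) is pinned down by the same $\lambda_i$-weight sums as in (\ref{etai}), shifted by $\det(\hH^1(\mathbb{T}_{\fM}|_y))^{\vee}$, which precisely accounts for the $\delta$ in (\ref{Gy:delta}) once the $\overline{\lL}$-contribution is separated; the condition (\ref{irrat}) ensures no boundary ambiguity so that the half-open window $[m_i^{\text{fact}}, m_i^{\text{fact}}+\eta_i)$ is the same as the closed symmetric interval in (\ref{cond:jP}). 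Finally, I would argue, using Lemma~\ref{lem:fmloc}, that membership of $\widehat{\Phi}_y(\widehat{\iota}_y^{\ast}\eE)$ in $\widehat{\wW}_y^l$ implies the formal local vanishings of $\Upsilon_{i,j}^R$ and $\Upsilon_{i,j}^L$, completing both directions.

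The principal obstacle is the endpoint bookkeeping in the previous paragraph: tracking the contributions of $\hH^{-1}(\mathbb{T}_{\fM}|_y) = \mathrm{Lie}(G_y)$, of $\hH^1(\mathbb{T}_{\fM}|_y)^{\vee}$, and of the dualizing shift in the left adjoint (cf.\ (\ref{ladjoint:ast})) simultaneously, so that the asymmetric factorization window $[m_i^{\text{fact}}, m_i^{\text{fact}}+\eta_i)$ from Theorem~\ref{thm:window} matches the \emph{symmetric} interval centered at $\langle \lambda_i,\delta\rangle$ in (\ref{cond:jP}) for the explicit $m_i$ in (\ref{def:mi}). The irrationality hypothesis (\ref{irrat}) is essential for this matching to be unambiguous.
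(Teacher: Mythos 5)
Your proposal is correct and follows essentially the same route as the paper: reduce membership in $\wW_{m_{\bullet}}^l$ to a formal local condition at each closed point of $M$ (the argument of Theorem~\ref{thm:sod}), translate $\Upsilon_{i,j}$ and its adjoints through the Koszul duality equivalences into the composition of (\ref{otimes:O}) and (\ref{funct:formal:loc}), and then match the half-open window $[m_i-\langle\lambda_i,\hH^1(\mathbb{T}_{\fM}|_y)^{\lambda_i>0}\rangle,\,\cdot+\eta_i)$ from Theorem~\ref{thm:window} with the symmetric interval in (\ref{cond:jP}) by the weight computation and the irrationality condition (\ref{irrat}). The only cosmetic difference is that you compute $\wt(\sigma^{\ast}\det\mathbb{L}_{\ev_1}^{\vee})$ via Lemma~\ref{lem:detFilt} and (\ref{L:filt}), whereas the paper reads it off from the distinguished triangle $\ev_1^{\ast}\mathbb{L}_{[\widehat{\fU}_y/G_y]}\to\mathbb{L}_{[\widehat{\fU}_y^{\lambda_i\ge 0}/G_y^{\lambda_i\ge 0}]}\to\mathbb{L}_{\ev_1}$; both give the same bookkeeping.
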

\begin{proof}
	By Theorem~\ref{thm:window} 
	and the argument of Theorem~\ref{thm:sod}, 
	an object $\eE \in \dDT^{\C}(\nN)$ lies in 
	$\wW_{m_{\bullet}}^l$ if and only if for any closed 
	point $y \in M$ and $1\le i \le N$,
	for the morphism (\ref{inclu:j}) 
	and one parameter subgroup 
	$\lambda_i$ as in (\ref{lambda:i}), 
	we have 
	\begin{align}\label{Phiy:sss}
		\tau^{\ast}\widehat{\Phi}_y(\widehat{\iota}_y^{\ast}\eE) \in 
	\bigoplus_{j \in [m_i-\langle \lambda_i, \hH^1(\mathbb{T}_{\fM}|_{y})^{\lambda_i >0}\rangle, m_i-\langle \lambda_i, \hH^1(\mathbb{T}_{\fM}|_{y})^{\lambda_i >0}\rangle+\eta_i)}
	\MF_{\coh}^{\C}(\xX_5 \setminus \widehat{T}_{i, y}^{\Omega}, w_5)_{\lambda_i \mathchar`-\wt=j}. 
	\end{align}
Here the weight shift by $\langle \lambda_i, \hH^1(\mathbb{T}_{\fM}|_{y})^{\lambda_i >0}\rangle$
is due to the equivalence (\ref{otimes:O}). 
From the diagram (\ref{dia:lambda}), we have 
the distinguished triangle 
\begin{align*}
	\ev_1^{\ast}\mathbb{L}_{[\widehat{\fU}_y/G_y]}
	\to \mathbb{L}_{[\widehat{\fU}_y^{\lambda_i \ge 0}/G_y^{\lambda_i \ge 0}]}
	\to \mathbb{L}_{\ev_1}
	\end{align*}
which gives the identity 
\begin{align*}
\wt(\sigma^{\ast}\det\mathbb{L}_{\ev_1}^{\vee})
	=\langle\lambda_i, \hH^{0}(\mathbb{T}_{\fM}|_{y})^{\lambda_i <0}
	-\hH^{1}(\mathbb{T}_{\fM}|_{y})^{\lambda_i <0}
	-\hH^{-1}(\mathbb{T}_{\fM}|_{y})^{\lambda_i <0}
	 \rangle. 
	\end{align*}
It follows that, by (\ref{etai}) we have the identities
\begin{align*}
	&\frac{1}{2}\wt(\sigma^{\ast}\det\mathbb{L}_{\ev_1}^{\vee})
	+\wt(\tau^{\ast}\overline{\lL})
	-\langle \lambda_i, \hH^1(\mathbb{T}_{\fM}|_{y})^{\lambda_i>0}\rangle \\
	&=\frac{1}{2}\langle \lambda_i, \hH^{0}(\mathbb{T}_{\fM}|_{y})^{\lambda_i <0}
	-\hH^{1}(\mathbb{T}_{\fM}|_{y})^{\lambda_i >0}
	-\hH^{-1}(\mathbb{T}_{\fM}|_{y})^{\lambda_i <0}  \rangle
	+\langle \lambda_i, \overline{\lL}|_{y}\rangle
	-\frac{1}{2}\langle \lambda_i, \hH^1(\mathbb{T}_{\fM}|_{y})\rangle \\
	&=-\frac{1}{2}\eta_i+\left\langle \lambda_i, \frac{1}{2}\det(\hH^1(\mathbb{T}_{\fM}|_{y}))^{\vee}+
	\overline{\lL}|_{y} \right\rangle. 
	\end{align*}
Together with the condition (\ref{irrat}), 
we see that the condition (\ref{Phiy:sss}) is equivalent to the 
condition (\ref{cond:jP}) for $\pP=\widehat{\Phi}_y(\widehat{\iota}_y^{\ast}\eE)$.  
Therefore the proposition holds. 	
	\end{proof}

\subsection{Inclusions of window subcategories}
Let us take another $\lL' \in \Pic(\mM)$ 
with $l'=c_1(\lL')$, and set 
\begin{align*}
	l_{\pm} \cneq l\pm \varepsilon l', \ 
	0<\varepsilon \ll 1. 
	\end{align*}
	Then we have 
	$\nN^{l_{\pm}\sss} \subset \nN^{l\sss}$, 
	and the $\Theta$-stratification for $(l_{\pm}, b)$ is a 
	refinement of the $\Theta$-stratification (\ref{N:theta}) for $(l, b)$
\begin{align}\label{Theta:pm}
	\nN=\sS_1^{\Omega} \sqcup \cdots \sqcup \sS_{N}^{\Omega}
	\sqcup \sS_{N+1}^{\Omega \pm} \sqcup \cdots 
	\sqcup \sS_{N+k_{\pm}}^{\Omega \pm} \sqcup \nN^{l_{\pm} \sss},  
	\end{align}
where 
\begin{align}\label{Theta:pm2}
		\nN^{l\sss}=\sS_{N+1}^{\Omega \pm} \sqcup \cdots 
	\sqcup \sS_{N+k_{\pm}}^{\Omega \pm} \sqcup \nN^{l_{\pm} \sss}
	\end{align}
is the $\Theta$-stratification of $\nN^{l\sss}$ for 
$(l_{\pm}, b)$ restricted to $\nN^{l\sss}$. 

Let us consider the composition 
\begin{align*}
	\nN^{l\sss} \hookrightarrow \nN \stackrel{p_0}{\to}
	\mM \stackrel{\pi_{\mM}}{\to} M. 
	\end{align*}
We take a closed point $x \in \nN^{l\sss}$, 
and denote by $y \in M$ its image under 
the above composition. 
As before, we use the same symbol $y \in \mM$ 
to denote the unique closed point in the fiber of 
$\pi_{\mM} \colon \mM \to M$ at $y$. 
Note that $p_0(x) \in \mM$ may not be 
a closed point so that it may not 
be isomorphic to $y \in \mM$. 
In the notation of the diagram (\ref{dia:comX}),
the closed point $x\in \nN^{l\sss}$ corresponds
to a closed point $x \in \xX_1^{l\sss}$, where
$\xX_1^{l\sss}\subset \xX_1$ is the 
semistable locus with respect to the 
$G_y$-character 
$\lL|_{y}$. 
	Let $G_x \subset G_y$ be the stabilizer subgroup of $x$. 
	We define 
	$W_x$ to be the tangent space of the stack 
	$\xX_1^{l\sss}$ at $x$, i.e. 
	\begin{align}\label{rep:Wx}
		W_x \cneq \hH^0(\mathbb{T}_{\xX_1^{l\sss}}|_{x})
		\end{align}
	which is a $G_x$-representation. 
	Here $\mathbb{T}_{\xX_1^{l\sss}}$ is the 
	tangent complex of $\xX_1^{l\sss}$.
	\begin{rmk}
		The tangent complex of $\xX_1^{l\sss}$ is given by  
	\begin{align*}
		\mathbb{T}_{\xX_1^{l\sss}}=\left(
		\hH^{-1}(\mathbb{T}_{\fM}|_{y}) \otimes \oO_{\xX_1^{l\sss}}
		\stackrel{\mu}{\to} (	\hH^{0}(\mathbb{T}_{\fM}|_{y}) \oplus 
			\hH^{1}(\mathbb{T}_{\fM}|_{y})^{\vee})\otimes \oO_{\xX_1^{l\sss}}
			\right).
		\end{align*}
	The restriction of $\mathbb{T}_{\xX_1^{l\sss}}$
	to $x$ is the two term complex of $G_x$-representations 
	\begin{align*}
		\mathbb{T}_{\xX_1^{l\sss}}|_{x}=\left(
		\hH^{-1}(\mathbb{T}_{\fM}|_{y})
		\stackrel{\mu_x}{\to} \hH^{0}(\mathbb{T}_{\fM}|_{y}) \oplus 
		\hH^{1}(\mathbb{T}_{\fM}|_{y})^{\vee} \right).
		\end{align*} 
	The kernel of $\mu_x$ is the Lie algebra of $G_x$. 
	The $G_x$-representation (\ref{rep:Wx}) is given by the cokernel of 
	$\mu_x$. 
	\end{rmk}
	We impose the following assumption:
	\begin{assum}\label{assum:Wx}
		For each closed point $x \in \nN^{l\sss}$, 
		there is a decomposition of $G_x$-representations 
		$W_x=\bS_x \oplus \bU_x$ such that 
		$\bS_x$ is a symmetric $G_x$-representation, 
		i.e. $\bS_x \cong \bS_x^{\vee}$ as $G_x$-representations, and 
		\begin{align}\label{Wsss}
			W_x^{l_- \sss}=\bS_x^{l_-\sss} \oplus \bU_x. 
			\end{align}
		Here the $l_{-}$-semistable 
		loci on $G_x$-representations 
		are defined with respect to 
		the $G_x$-character 
		$(\lL \otimes (\lL')^{-\varepsilon})|_{x}$. 
				\end{assum}
	
	For $1\le i\le N$
	and $N<i \le N+k_{\pm}$, 
	we set $m_i$
	and $m_i^{\pm}$ 
	as in (\ref{def:mi}), 
	with a choice of $\overline{l}$ to be
	\begin{align}\label{l:bar}
		\overline{l}=\varepsilon_+ \cdot l_+ + \varepsilon_- \cdot l_-
		\end{align}
	for generic $(\varepsilon_+, \varepsilon_-) \in \mathbb{R}^2$. 
	We denote the above choice by
	$m_{\bullet}^{\pm}$, i.e. 
	\begin{align*}
		m_{\bullet}^{\pm}=\{m_1, \ldots, m_N, m_{N+1}^{\pm}, \ldots, m_{N+k_{\pm}}^{\pm}\}. 
		\end{align*} 
	By Corollary~\ref{cor:sod}, 
	we have subcategories
	\begin{align*}
		\wW_{m_{\bullet}^{\pm}}^{l_{\pm}} \subset
		\wW_{m_{\bullet}}^{l} \subset \dDT^{\C}(\nN)
		\end{align*}
	such that the compositions 
	\begin{align*}
		\wW_{m_{\bullet}^{\pm}}^{l_{\pm}} \hookrightarrow 
		\dDT^{\C}(\nN) \twoheadrightarrow \dDT^{\C}(\nN^{l_{\pm}\sss})
		\end{align*}
	are equivalences. 
	\begin{thm}\label{thm:inclu}
		Under Assumption~\ref{assum:Wx}, 
		we have the inclusion 
		$\wW_{m_{\bullet}^{-}}^{l_{-}} \subset 
		\wW_{m_{\bullet}^{+}}^{l_{+}}$. 
		In particular, we have the fully-faithful functor 
		\begin{align*}
			\dDT^{\C}(\nN^{l_{-}\sss})
			\hookrightarrow \dDT^{\C}(\nN^{l_{+}\sss}). 
			\end{align*}
		\end{thm}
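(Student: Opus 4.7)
The plan is to check the inclusion formally locally on the good moduli space $M$ via Proposition~\ref{prop:W:loc}, reduce to the refined strata lying inside $\nN^{l\sss}$, and then exploit the symmetric summand supplied by Assumption~\ref{assum:Wx} together with the magic-window theorem of Halpern-Leistner--Sam~\cite{HLKSAM}.

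First, by Proposition~\ref{prop:W:loc}, an object $\eE\in \dDT^{\C}(\nN)$ lies in $\wW_{m_{\bullet}^{\pm}}^{l_{\pm}}$ if and only if for every closed point $y\in M$ the image $\widehat{\Phi}_y(\widehat{\iota}_y^{\ast}\eE)$ satisfies the weight condition (\ref{cond:jP}) at each 1-PS $\lambda_i^{\pm}$ of $G_y$ appearing in the refined stratification (\ref{Theta:pm}). The first $N$ strata $\sS_1^\Omega,\ldots,\sS_N^\Omega$ are common to both refinements, and since $\overline{l}$ in (\ref{l:bar}) depends symmetrically on $l_{\pm}$, the slopes $m_i$ from (\ref{def:mi}) and the centering $\delta$ from (\ref{Gy:delta}) coincide for the $\pm$ windows at those strata, so the conditions there agree verbatim. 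It therefore suffices to handle the extra strata (\ref{Theta:pm2}) lying inside $\nN^{l\sss}$, which near a closed point $x \in \nN^{l\sss}$ are indexed by 1-PS of the stabilizer $G_x$ that are $l$-semistable but $l_{\pm}$-unstable.

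The formal-local picture of $\nN^{l\sss}$ at $x$ is controlled by the $G_x$-representation $W_x$ of (\ref{rep:Wx}). By Assumption~\ref{assum:Wx}, $W_x=\bS_x\oplus \bU_x$ with $\bS_x\cong \bS_x^{\vee}$ symmetric and $W_x^{l_{-}\sss}=\bS_x^{l_{-}\sss}\oplus \bU_x$; the latter splitting forces every 1-PS destabilizing $x$ for $l_{-}$ to act trivially on $\bU_x$. The symmetric structure of $\bS_x$ then allows one to deduce the analogous splitting on the $l_{+}$-side and to see that the 1-PS destabilizing $x$ for $l_{+}$ also act trivially on $\bU_x$; consequently the additional $\pm$-strata weight conditions at $x$ depend only on the symmetric summand $[\bS_x/G_x]$ of the local model.

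On the symmetric representation $\bS_x$, the magic window of~\cite{HLKSAM}, centered by $\delta$ as in (\ref{Gy:delta}), is a subcategory of $\dDT^{\C}([\bS_x/G_x])$ independent of the stability condition, cut out by the requirement that $\lambda$-weights at each $\lambda$-fixed locus lie in the symmetric interval $[-\eta_\lambda/2+\langle\lambda,\delta\rangle,\eta_\lambda/2+\langle\lambda,\delta\rangle]$. Direct inspection shows that this interval is precisely the one in (\ref{cond:jP}) once $m_i$ is expanded via (\ref{def:mi}), using the specific form (\ref{l:bar}) of $\overline{l}$. Hence at each formal fiber the $l_{-}$ and $l_{+}$ conditions coincide on the symmetric summand, and the inclusion $\wW_{m_{\bullet}^{-}}^{l_{-}}\subset \wW_{m_{\bullet}^{+}}^{l_{+}}$ holds locally; gluing over $y\in M$ and composing with the equivalences of Corollary~\ref{cor:sod} produces the desired fully-faithful functor. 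The principal technical obstacle is the bookkeeping that matches the interval (\ref{cond:jP}) with the HLS magic window: one must track the restriction of each $\lambda_i^{\pm}$ from $G_y$ to $G_x$, account for the weight shift by $\langle\lambda_i,\hH^1(\mathbb{T}_{\fM}|_y)^{\lambda_i>0}\rangle$ implicit in (\ref{def:mi}), and verify that the choice (\ref{l:bar}) of $\overline{l}$ produces the symmetric centering the magic window demands.
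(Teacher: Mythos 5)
Your overall strategy---reducing via Proposition~\ref{prop:W:loc} to formal fibers over $M$, observing that the conditions at the first $N$ strata are literally the same for both choices of $m_{\bullet}^{\pm}$, and then analyzing the extra strata inside $\nN^{l\sss}$ through the local model $W_x$ and its symmetric summand---is the same route the paper takes. However, the decisive step is wrong as stated. The splitting $W_x^{l_-\sss}=\bS_x^{l_-\sss}\oplus\bU_x$ of (\ref{Wsss}) does \emph{not} force the one-parameter subgroups destabilizing $x$ for $l_{-}$ (or $l_{+}$) to act trivially on $\bU_x$: in the motivating situation of Lemma~\ref{lem:assum:per} one has $G_x=\GL(W)$ and $\bU_x=W^{\oplus k}$, on which every destabilizing $\lambda$ acts nontrivially. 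Consequently the weight conditions at the extra strata do not ``depend only on the symmetric summand $[\bS_x/G_x]$,'' and the $l_{-}$- and $l_{+}$-conditions do not coincide there; if they did, you would conclude $\wW_{m_{\bullet}^{-}}^{l_{-}}=\wW_{m_{\bullet}^{+}}^{l_{+}}$ and hence an equivalence of DT categories, which is precisely what should fail for a genuine d-critical flip. What actually yields the one-sided inclusion is the asymmetry contributed by $\bU_x$: its $\lambda$-weights (relative to the centering $\delta$) make the $l_{-}$-window interval sit inside the strictly larger $l_{+}$-interval. This is the content of the inclusion of window subcategories proved in~\cite[Proposition~5.1.7]{TocatDT} (see also~\cite[Proposition~2.6]{KoTo}), which the paper invokes at exactly this point instead of re-deriving it from the Halpern-Leistner--Sam magic window; your appeal to the magic window alone (which concerns the purely symmetric case and gives stability-independence, i.e.\ an equality) cannot produce the asymmetric inclusion.

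There is also a technical step you skip that the paper needs: to localize at closed points $x$ of $\xX_1^{l\sss}$ by Luna's \'etale slice theorem one loses the auxiliary $\C$-action giving the grading on factorizations, so the paper first passes from $\C$-equivariant to $\mathbb{Z}/2$-periodic factorization categories (using that the forgetful functor is conservative) and only then tests the window condition on the slices $[\widehat{W}_x/G_x]$, where the refined stratification (\ref{Theta:pm2}) restricts because it is local over the good moduli space. Your proposal does not explain how the graded weight conditions of (\ref{cond:jP}) are to be checked after restriction to the slice, so even with the main gap repaired the localization argument would need this additional ingredient.
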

	\begin{proof}
		For $i>N$, 
		we use the subscript $\pm$ to denote corresponding 
		objects associated with the strata $\sS_i^{\Omega\pm}$ in (\ref{Theta:pm}), 
		e.g. $\zZ_i^{\Omega \pm}$ for the center of $\sS_i^{\Omega \pm}$, 
		$\lambda_{i}^{\pm}$ for one parameter subgroups 
		in (\ref{lambda:i}) for the $i$-th $\Theta$-strata 
		with $N<i\le N+k_{\pm}$, 
		denote $\eta_i^{\pm}$ for (\ref{etai}), etc.
		By Proposition~\ref{prop:W:loc}, it is enough 
		to show the inclusion 
		$\widehat{\wW}_y^{l_-} \subset \widehat{\wW}_y^{l_+}$
		for any closed point $y \in M$. 
		We define 
		\begin{align}\notag
			\widehat{\wW}_y^{\mathbb{Z}/2, l_{\pm}} \subset
				\widehat{\wW}_y^{\mathbb{Z}/2, l} \subset 
			 \MF_{\coh}^{\mathbb{Z}/2}(\xX_1, w_1)
			\end{align} 
		to be the subcategories defined by the same condition (\ref{cond:jP})
		with respect to $l_{\pm}$ and $l$ for 
		the $\mathbb{Z}/2$-periodic triangulated categories 
		of factorizations.  
		Since the forgetting functor 
		$\MF_{\coh}^{\C}(\xX_1, w_1) \to \MF_{\coh}^{\mathbb{Z}/2}(\xX_1, w_1)$ 
		is conservative, 
		it is enough to show the inclusion 
		\begin{align}\label{inclu:Z2}
				\widehat{\wW}_y^{\mathbb{Z}/2, l_{-}}
				\subset 	\widehat{\wW}_y^{\mathbb{Z}/2, l_{+}}. 
			\end{align}
		
		Let $\xX_1^{l\sss} \to X_1^{l\sss}$ be the good moduli 
		space for $\xX_1^{l\sss}$. 
		For a closed point $x \in X_1^{l\sss}$, 
		we also denote by $x \in \xX_1^{l\sss}$ the unique 
		closed point contained in the fiber of 
		$\xX_1^{l\sss} \to X_1^{l\sss}$ at $x$. 
		Let $\widehat{W}_x$ be the formal fiber 
		of $W_x \to W_x \ssslash G_x$ at zero. 
		By Luna's \'etale slice theorem, we have the Cartesian square 
		\begin{align}\label{luna:W}
			\xymatrix{
		[\widehat{W}_x/G_x] \ar[r]^-{\iota_x} \ar[d] \diasquare & \xX_1^{l\sss} \ar[d] \\
		\widehat{W}_x \ssslash G_x \ar[r] & X_1^{l\sss},	
		}
			\end{align}
		which identifies the left vertical arrow 
		with the formal fiber of the right vertical arrow at $x$. 
		Let 
		\begin{align*}
			\iota_x^{\ast}w_1 \colon [\widehat{W}_x/G_x] \to \mathbb{C}
			\end{align*}
		be the pull-back of 
		$w_1 \colon \xX_1 \to \mathbb{C}$ by the top 
		morphism in (\ref{luna:W}). 
		Then its critical locus 
		is isomorphic to a formal fiber along with 
		the good moduli space morphism 
		$\nN^{l\sss} \to N^{l\sss}$. 
						The $\Theta$-stratification (\ref{Theta:pm2})
		is local on the good moduli space, so 
		it induces the $\Theta$-stratification 
		on $\Crit(\iota_x^{\ast}w_1)$
		\begin{align}\label{theta:W}
			\Crit(\iota_x^{\ast}w_1)
			=\widehat{\sS}_{N+1, x}^{\Omega\pm} \sqcup \cdots \sqcup 
			\widehat{\sS}_{N+k_{\pm}, x}^{\Omega\pm} \sqcup 
			\Crit(\iota_x^{\ast}w_1)^{l_{\pm}\sss}. 
			\end{align}
		
		We denote by 
		\begin{align*}
			\widehat{\wW}_x^{\mathbb{Z}/2, l_{\pm}} \subset 
			\MF_{\coh}^{\mathbb{Z}/2}([\widehat{W}_x/G_x], \iota_x^{\ast}w_1)
			\end{align*}
		the window subcategory in Subsection~\ref{subsec:crit:window} 
		associated with the $\Theta$-stratification (\ref{theta:W}), i.e. 
		the subcategory of objects 
	$\pP \in  \MF_{\coh}^{\mathbb{Z}/2}([\widehat{W}_x/G_x], \iota_x^{\ast}w_1)$
	such that for all $N<i\le N+k_{\pm}$ and the morphism 
	\begin{align*}
	\tau_{\pm} \colon 
	[\widehat{W}_x^{\lambda_i^{\pm}=0}/G_x^{\lambda_i^{\pm}=0}]
	\setminus \widehat{\sS}_{\le i-1, x}^{\Omega \pm} \to 
	[\widehat{W}_x/G_x]
	\setminus \widehat{\sS}_{\le i-1, x}^{\Omega \pm}
\end{align*}
of the inclusion from the $\lambda_i^{\pm}$-fixed part, 
we have 
\begin{align}\notag
	\tau^{\ast}_{\pm}\pP \in 
	\bigoplus_{j\in \left[ -\frac{1}{2}\eta_i^{\pm}+\langle \lambda_i^{\pm}, \delta \rangle, 
		\frac{1}{2}\eta_i^{\pm}+\langle \lambda_i^{\pm}, \delta \rangle\right]}
	\MF_{\coh}^{\C}([\widehat{W}_x^{\lambda_i^{\pm}=0}/G_x^{\lambda_i^{\pm}=0}]
	\setminus \widehat{\sS}_{\le i-1, x}^{\Omega \pm} , \iota_x^{\ast}w_1)_{\lambda_i^{\pm} 
		\mathchar`- \wt =j}. 
\end{align}
for all $N<i\le N+k_{\pm}$, where 
$\delta \in \Pic(BG_x)_{\mathbb{R}}$ is
the $G_y$-character (\ref{Gy:delta}) restricted to $G_x \subset G_y$
with the choice of $\overline{l}$ given by (\ref{l:bar}).  
From the equivalence of the composition 
\begin{align*}
		\widehat{\wW}_y^{\mathbb{Z}/2, l}
		\subset \MF_{\coh}^{\mathbb{Z}/2}(\xX_1, w_1)
		\twoheadrightarrow \MF_{\coh}^{\mathbb{Z}/2}(\xX_1^{l\sss}, w_1)
	\end{align*}
we can identify the subcategories 
$\widehat{\wW}_y^{\mathbb{Z}/2, l_{\pm}} \subset \widehat{\wW}_y^{\mathbb{Z}/2, l}$
with the subcategories in 
$\MF_{\coh}^{\mathbb{Z}/2}(\xX_1^{l\sss}, w_1)$
consisting of objects
satisfying the condition (\ref{cond:jP}) for all $N<i\le N+k_{\pm}$
with respect to $l_{\pm}$. 
Since the latter condition is local
on the good moduli space $\xX_1^{l\sss} \to X_1^{l\sss}$, 
an object $\eE \in \MF_{\coh}^{\mathbb{Z}/2}(\xX_1^{l\sss}, w_1)$
is an object in $\widehat{\wW}_y^{\mathbb{Z}/2, l_{\pm}}$ if and only 
if for any closed point $x \in \xX_1^{l\sss}$
we have 
$\iota_x^{\ast}\eE
	\in \widehat{\wW}_x^{\mathbb{Z}/2, l_{\pm}}$. 
On the other hand, it is proved in~\cite[Proposition~5.1.7]{TocatDT} 
(also see~\cite[Proposition~2.6]{KoTo})
that, under Assumption~\ref{assum:Wx}, we have the inclusion 
\begin{align*}
		\widehat{\wW}_x^{\mathbb{Z}/2, l_{-}}
		\subset 	\widehat{\wW}_x^{\mathbb{Z}/2, l_{+}}. 
	\end{align*}
Therefore the inclusion (\ref{inclu:Z2}) holds. 
	\end{proof}

\begin{rmk}
	The results of Theorem~\ref{thm:sod} and Theorem~\ref{thm:inclu}
	rely on the existence of good moduli space of $\mM$. 
	In a situation we are interested in, in many cases
	$\mM$ does not admit a good moduli space. 
	However it is sometimes possible to 
	find an open embedding $\fM \subset \fM'$ 
	for a quasi-smooth derived stack $\fM'$
	such that $\mM'=t_0(\fM')$ admits a good moduli space. 
	Then using Lemma~\ref{lem:replace0}, we can work with the
	singular support quotient of $\Dbc(\fM')$
	and apply the above results. We will apply 
	this idea in Section~\ref{sec:catwall} using 
	the moduli stacks of perverse coherent systems. 
	\end{rmk}

\section{Categorical wall-crossing at $(-1, -1)$-curve}\label{sec:catwall}
In~\cite[Section~6]{TocatDT}, 
we proved Conjecture~\ref{intro:conj:FF:PT} 
for reduced curve classes
using categorified Hall products. 
In this section, we use Theorem~\ref{thm:inclu}
to prove
Conjecture~\ref{intro:conj:FF:PT}
under wall-crossing at $(-1, -1)$-curves for relative 
reduced curve classes. 
\subsection{Categorical PT theory for local surfaces}
Let $S$ be a smooth projective surface over $\mathbb{C}$,
and $X$ the associated local surface
\begin{align*}
	\pi \colon X \cneq \mathrm{Tot}_{S}(\omega_S) \to S. 
	\end{align*}
Let $\Coh_{\le 1}(X) \subset \Coh(X)$ be the 
abelian 
subcategory of compactly supported
coherent sheaves $F$ on $X$ with $\dim \Supp(F) \le 1$. 
For each $(\beta, n) \in \mathrm{NS}(S) \oplus \mathbb{Z}$, 
we denote by 
\begin{align*}
	P_n(X, \beta)
	\end{align*}
the moduli space of Pandharipande-Thomas stable pairs~\cite{PT}
parameterizing pairs $(F, s)$ where $F \in \Coh_{\le 1}(X)$
is a pure one dimensional sheaf and $s \colon \oO_X \to F$ is surjective 
in dimension one, 
satisfying $(\pi_{\ast}[F], \chi(F))=(\beta, n)$. 

For $(\beta,  n) \in \mathrm{NS}(S) \oplus \mathbb{Z}$, 
let
\begin{align*}
	\fM_n^{\dag}(S, \beta)
	\end{align*} be the derived moduli 
stack of pairs $(F, s)$, 
where $F \in \Coh_{\le 1}(S)$
and $s \colon \oO_S \to F$
satisfying $([F], \chi(F))=(\beta, n)$ (see~\cite[Subsection~4.1.1]{TocatDT} for its precise 
formulation). 
It is proved in~\cite[Theorem~4.1.3]{TocatDT}
that $\fM_n^{\dag}(S, \beta)$ is quasi-smooth, 
and the classical truncation of its $(-1)$-shifted 
cotangent
is isomorphic to the component 
of the moduli stack of 
\textit{D0-D2-D6 bound states}, 
i.e. objects in the following subcategory
\begin{align*}
	\aA_X \cneq \langle \oO_{\overline{X}}, \Coh_{\le 1}(X)[-1] 
	\rangle_{\rm{ex}} \subset \Dbc(\overline{X}).
	\end{align*}
Here $X \subset \overline{X}$ is a projective compactification of 
$X$. In particular, we have the 
open immersion 
\begin{align*}
	P_n(X, \beta) \subset t_0(\Omega_{\fM_n^{\dag}(S, \beta)}[-1]), 
	\end{align*}
sending a stable pair $(F, s)$ to the 
two term complex $(\oO_{\overline{X}}\stackrel{s}{\to}F)$. 

Let $\fM_n^{\dag}(S, \beta)_{\mathrm{qc}} \subset \fM_n^{\dag}(S, \beta)$
be a quasi-compact derived open substack 
such that
\begin{align*}
	P_n(X, \beta)
	\subset 
	t_0(\Omega_{\fM_n^{\dag}(S, \beta)_{\rm{qc}}}[-1])
	\subset 	t_0(\Omega_{\fM_n^{\dag}(S, \beta)}[-1]).
	\end{align*}
We have the following conical closed substack
\begin{align}\label{ZP:us}
	\zZ^{P\us} \cneq t_0(\Omega_{\fM_n^{\dag}(S, \beta)_{\rm{qc}}}[-1])
	\setminus  P_n(X, \beta) \subset t_0(\Omega_{\fM_n^{\dag}(S, \beta)_{\rm{qc}}}[-1]). 
	\end{align}
Following Definition~\ref{defi:DTcat}, the DT category 
for PT moduli spaces on the local 
surface is defined in~\cite{TocatDT} as follows: 
\begin{defi}\emph{(\cite[Definition~4.2.1]{TocatDT})}\label{defi:PTt}
	The $\C$-equivariant DT category for $P_n(X, \beta)$ is defined by
	\begin{align*}
		\dDT^{\C}(P_n(X, \beta)) \cneq 
		\Dbc(\fM_n^{\dag}(S, \beta))/\cC_{\zZ^{P\us}}. 
		\end{align*}
	\end{defi}

\subsection{Wall-crossing of D0-D2-D6 bound states}\label{subsec:PTwall}
There are some variants of stable pair moduli spaces, 
depending on choices of stability conditions. 
Let 
\begin{align*}
	\cl \colon K(\aA_X) \to \mathbb{Z} \oplus \mathrm{NS}(S) \oplus \mathbb{Z}
	\end{align*}
be a group homomorphism defined by 
$\cl(\oO_X)=(1, 0, 0)$
and $\cl(F[-1])=(\pi_{\ast}[F], \chi(F))$
for $F \in \Coh_{\le 1}(X)$. 
We also fix an ample divisor $H$ on $S$. 
For $E \in \aA_X$ with $\cl(E)=(r, \beta, n)$
and $t \in \mathbb{R}$, we set 
$\mu_t^{\dag}(E) \in \mathbb{Q} \cup \{\infty\}$
to be 
\begin{align*}
	\mu_t^{\dag}(E) \cneq \begin{cases}
		t, & \mbox{ if } r>0, \\
		n/H\cdot \beta, & \mbox{ if } r=0. 
		\end{cases}
	\end{align*}
\begin{defi}\emph{(\cite[Definition~4.2.5]{TocatDT})}
	An object $E \in \aA_X$ is $\mu_t^{\dag}$-(semi)stable if for any 
	exact sequence $0 \to E_1 \to E \to E_2 \to 0$ in $\aA_X$ with $E_i \neq 0$, 
	we have $\mu_t^{\dag}(E_1)<(\le) \mu_t^{\dag}(E_2)$. 
	\end{defi}
Let $\pP_n^t(X, \beta)$ be the 
classical moduli stack of $\mu_t^{\dag}$-semistable objects in $\aA_X$
with $\cl(-)=(1, \beta, n)$. 
By~\cite[Theorem~7.25]{AHLH}, it admits a good moduli space (also see Remark~\ref{rmk:funcl})
\begin{align}\label{gmoduli:Pn}
	\pP_n^t(X, \beta) \to P_n^t(X, \beta). 
	\end{align}
There is a finite set of walls $W \subset \mathbb{R}$ such that 
the above morphism is an isomorphism if $t \notin W$. 
If $t \in W \cap \mathbb{R}_{>0}$, then 
we have the following wall-crossing diagram 
\begin{align}\label{dia:PT}
	\xymatrix{
P_n^{t_+}(X, \beta) \ar[rd] & & P_n^{t_-}(X, \beta)\ar[ld] \\
& P_n^t(X, \beta) &
	}
	\end{align}
which is a \textit{d-critical flip} (see~\cite[Theorem~9.13]{Toddbir}), i.e. 
a d-critical analogue of usual flip in birational geometry.  
Moreover we have (see~\cite[Theorem~3.21]{Tolim2})
\begin{align*}
	P_n^t(X, \beta)=P_n(X, \beta), \ t \gg 0
	\end{align*} 
By taking $t_1>t_2> \cdots>t_k>0$ which do not lie on walls, 
we have the sequence of d-critical flips
\begin{align}\label{seq:PT}
	P_n(X, \beta) \dashrightarrow P_n^{t_1}(X, \beta) \dashrightarrow 
	\cdots \dashrightarrow P_n^{t_k}(X, \beta). 
	\end{align}
For each $\beta \in \mathrm{NS}(S)$
we set 
\begin{align}\label{nbeta}
	n(\beta) \cneq \mathrm{min}\left\{ \chi(\oO_Z) : 
	\begin{array}{l}
		Z \subset X \mbox{ is a compactly supported } \\
		\mbox{closed subscheme with } \pi_{\ast}[Z] \le \beta
	\end{array}  \right\}. 
\end{align}
Then $n(\beta)>-\infty$ by~\cite[Lemma~3.10]{Tolim}. 
We will use the following lemma: 
\begin{lem}\label{lem:nbeta}\emph{(\cite[Lemma~4.2.7]{TocatDT})}
	If $\pP_{n'}^t(X, \beta') \neq \emptyset$
	for $0<\beta'\le \beta$
	and $t>0$, we have 
	$n'\ge n(\beta)$. 
	\end{lem}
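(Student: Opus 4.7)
The plan is to exhibit, for any $E \in \pP_{n'}^t(X,\beta')$, a compactly supported closed subscheme $Z \subset X$ with $\pi_{\ast}[Z] \le \beta$ and $\chi(\oO_Z) \le n'$; the definition of $n(\beta)$ then gives $n' \ge \chi(\oO_Z) \ge n(\beta)$. The reduction to $\pi_{\ast}[Z] \le \beta$ is automatic because $\beta' \le \beta$, and $n(\beta) \le 0$ by taking $Z = \emptyset$, which will let us finish easily in degenerate cases.

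To construct $Z$ in the generic situation, I would represent $E$ by the defining extension $0 \to F'[-1] \to E \to \oO_{\overline{X}} \to 0$ in $\aA_X$, where $F' \in \Coh_{\le 1}(X)$ has class $(\beta', n')$. A preliminary observation is that for $t > 0$, $\mu_t^{\dag}$-semistability forces $F'$ to be pure one-dimensional: any nonzero zero-dimensional subsheaf $F_0' \subset F'$ would give a subobject $F_0'[-1] \hookrightarrow E$ with $\mu_t^{\dag}(F_0'[-1]) = +\infty$, contradicting $\mu_t^{\dag}(F_0'[-1]) \le t$. If the structural section $s \colon \oO_{\overline{X}} \to F'$ is zero then $E \cong \oO_{\overline{X}} \oplus F'[-1]$, and applying semistability to $F'[-1]$ as both a sub- and a quotient-object of $E$ forces $n' = t H \cdot \beta' > 0 \ge n(\beta)$, finishing the argument. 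If $s \neq 0$, take $Z \subset X$ with $\oO_Z = \im(s) \subset F'$; purity of $F'$ forces $\oO_Z$ (and hence $Z$) to be pure one-dimensional with $\pi_{\ast}[Z] \le \pi_{\ast}[F'] = \beta'$.

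Set $E_Z \cneq [\oO_{\overline{X}} \to \oO_Z] \in \aA_X$ and $Q \cneq F'/\oO_Z$. The snake lemma applied to the morphism of defining extensions for $E_Z$ and $E$, which is well-defined because $s$ factors through $\oO_{\overline{X}} \twoheadrightarrow \oO_Z \hookrightarrow F'$, produces the short exact sequence $0 \to E_Z \to E \to Q[-1] \to 0$ in $\aA_X$. Applying $\mu_t^{\dag}$-semistability of $E$ to this sequence yields $t = \mu_t^{\dag}(E_Z) \le \mu_t^{\dag}(Q[-1])$, which forces $\chi(Q) \ge t H \cdot \pi_{\ast}[Q] > 0$ when $\pi_{\ast}[Q] > 0$, and $\chi(Q) \ge 0$ automatically when $Q$ is zero-dimensional (as then $Q$ has finite length). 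Either way $\chi(\oO_Z) = n' - \chi(Q) \le n'$, completing the argument.

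The main technical point is the identification of the cokernel of $E_Z \to E$ in $\aA_X$ as $Q[-1]$ rather than $Q$: since $Q$ (viewed as a coherent sheaf in standard degree $0$) sits in $\aA_X$-cohomological degree $-1$, one must take care with the $\aA_X$-t-structure and choose consistent conventions for representing $E$ and $E_Z$ as two-term complexes in standard degrees $0$ and $1$. Once the snake lemma is correctly interpreted inside the heart $\aA_X$, the rest of the argument is routine numerical bookkeeping with $\mu_t^{\dag}$.
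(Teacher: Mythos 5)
Your numerical bookkeeping is essentially the intended argument, but the proof starts from an unjustified structural claim: that every object $E$ of $\pP_{n'}^t(X,\beta')$ fits into an exact sequence $0 \to F'[-1] \to E \to \oO_{\overline{X}} \to 0$ in $\aA_X$, i.e.\ is a two-term pair complex $(\oO_{\overline{X}} \stackrel{s}{\to} F')$. The stack parametrizes \emph{all} $\mu_t^{\dag}$-semistable objects of $\aA_X$ of class $(1,\beta',n')$, and a rank-one object of $\aA_X$ is a priori only an iterated extension of $\oO_{\overline{X}}$ and objects of $\Coh_{\le 1}(X)[-1]$. Writing such an $E$ as a pair amounts to extending the canonical surjection $\hH^0(E)\cong I_Z \twoheadrightarrow \oO_{\overline{X}}$ over $E$, and the obstruction is a class in $\Ext^1_{\aA_X}(\hH^1(E)[-1], \oO_{\overline{X}}) \cong \Ext^2_{\overline{X}}(\hH^1(E), \oO_{\overline{X}})$, which has no reason to vanish; e.g.\ a pair with $\hH^0=\oO_{\overline{X}}$ must have $s=0$ and hence be split, whereas at a wall $t=k$ the semistable locus contains \emph{all} iterated extensions of the equal-slope stable factors appearing in Lemma~\ref{lem:abig}, not only those with $\oO_{\overline{X}}$ as a quotient. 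Nothing in your argument shows that the nonempty stack contains even one object of pair form, so ``the defining extension'' is not available, and the subsequent steps (purity of $F'$, the definition of $Z$ via $\im(s)$, the sequence $0 \to E_Z \to E \to Q[-1] \to 0$) do not apply to a general point of $\pP_{n'}^t(X,\beta')$.

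The repair is to run exactly your computation on the cohomology truncation instead, which is how the cited proof (and the use of this lemma inside the proof of Lemma~\ref{lem:agg2}) proceeds, and which needs no pair structure: any rank-one $E \in \aA_X$ sits in the exact sequence $0 \to \hH^0(E) \to E \to \hH^1(E)[-1] \to 0$ in $\aA_X$, where an induction over the extension filtration shows $\hH^0(E)\cong I_Z$ for a compactly supported closed subscheme $Z\subset X$ with $\dim Z\le 1$, and $\hH^1(E)\in \Coh_{\le 1}(X)$. Since $\cl(I_Z)=(1,\pi_{\ast}[Z],\chi(\oO_Z))$ (via $0\to \oO_Z[-1]\to I_Z\to \oO_{\overline{X}}\to 0$), one gets $\beta'=\pi_{\ast}[Z]+\pi_{\ast}[\hH^1(E)]$ and $n'=\chi(\oO_Z)+\chi(\hH^1(E))$. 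Semistability applied to the quotient $\hH^1(E)[-1]$ gives $\mu_t^{\dag}(\hH^1(E)[-1])\ge t>0$, hence $\chi(\hH^1(E))\ge 0$ (automatic when $\hH^1(E)$ is zero-dimensional), while $\pi_{\ast}[Z]\le \beta'\le\beta$ and the definition (\ref{nbeta}) give $\chi(\oO_Z)\ge n(\beta)$; therefore $n'\ge n(\beta)$. This is the same final inequality you derive, so your purity observation, the case analysis on $s$, and the snake-lemma identification all become unnecessary once the truncation sequence replaces the assumed pair presentation.
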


Similarly to the moduli spaces of PT stable pairs, 
there exists a quasi-compact derived open substack 
$\fM_n^{\dag}(S, \beta)_{\rm{qc}} \subset \fM_n^{\dag}(S, \beta)$
such that 
\begin{align*}
	\pP_n^t(X, \beta) 
	\subset t_0(\Omega_{\fM_n^{\dag}(S, \beta)_{\rm{qc}}}[-1])
	\subset t_0(\Omega_{\fM_n^{\dag}(S, \beta)}[-1]). 
	\end{align*}
Similarly to (\ref{ZP:us}), we have the 
conical closed substack 
\begin{align*}
	\zZ^{t\us} \cneq  t_0(\Omega_{\fM_n^{\dag}(S, \beta)_{\rm{qc}}}[-1])
	\setminus 	\pP_n^t(X, \beta)
	\subset t_0(\Omega_{\fM_n^{\dag}(S, \beta)_{\rm{qc}}}[-1]). 	
	\end{align*}
\begin{defi}\emph{(\cite[Definition~4.2.9]{TocatDT})}\label{defi:PTt2}
	The $\C$-equivariant DT category for $\pP_n^t(X, \beta)$ is defined 
	by 
	\begin{align*}
		\dDT^{\C}(\pP_n^t(X, \beta)) \cneq 
		\Dbc(\fM_n^{\dag}(S, \beta))/\cC_{\zZ^{t\us}}. 
		\end{align*}
	\end{defi}
\begin{rmk}\label{rmk:indep}
	By Lemma~\ref{lem:replace0}, the DT categories in Definition~\ref{defi:PTt} and Definition~\ref{defi:PTt2}
	are independent of a choice of $\fM_n^{\dag}(S, \beta)_{\rm{qc}}$ up to equivalence. 
	\end{rmk}
As an analogy of D/K equivalence conjecture in birational 
geometry~\cite{B-O2, MR1949787}, 
the following conjecture is proposed in~\cite{TocatDT}. 
\begin{conj}\emph{(\cite[Conjecture~4.24]{TocatDT})}\label{conj:FF:PT}
	In the diagram (\ref{dia:PT}),
	there exists a fully-faithful functor 
	\begin{align*}
		\dDT^{\C}(P_n^{t_-}(X, \beta)) \hookrightarrow 
		\dDT^{\C}(P_n^{t_+}(X, \beta)).
		\end{align*} 
	In particular in the diagram (\ref{seq:PT}), 
	we have the chain of fully-faithful functors
	\begin{align*}
		\dDT^{\C}(P_n^{t_k}(X, \beta)) \hookrightarrow 
		\cdots \hookrightarrow 
		\dDT^{\C}(P_n^{t_1}(X, \beta))
		\hookrightarrow \dDT^{\C}(P_n(X, \beta)). 
		\end{align*}
	\end{conj}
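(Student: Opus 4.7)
The plan is to deduce this conjecture, in the special setting of wall-crossing at a $(-1,-1)$-curve with $f$-reduced curve class that drives this section, from Theorem~\ref{thm:inclu} applied to (an open embedding of) the derived moduli stack $\fM_n^{\dag}(S, \beta)$. First I would realize the stability $\mu_t^{\dag}$ as a $\Theta$-stability: a wall $t \in W$ corresponds to a class $l = c_1(\lL) \in \Pic(\mM_n^{\dag}(S, \beta))_{\mathbb{R}}$, and the two sides $\mu_{t_{\pm}}^{\dag}$ correspond to perturbations $l_{\pm} = l \pm \varepsilon l'$ where $l'$ encodes the infinitesimal change in $t$. Under this identification the moduli spaces $\pP_n^{t_{\pm}}(X,\beta)$ become the open substacks $\nN^{l_{\pm}\sss}$ of $\nN = t_0(\Omega_{\fM}[-1])$ in Theorem~\ref{thm:inclu}, and the DT categories in Definition~\ref{defi:PTt2} match the categories appearing in Corollary~\ref{cor:sod}.

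The next step is to arrange the good moduli space hypothesis required by Theorem~\ref{thm:inclu}. In general $\mM_n^{\dag}(S, \beta)$ itself need not admit one, so following the remark after Theorem~\ref{thm:inclu} I would choose an open embedding $\fM_n^{\dag}(S, \beta)_{\rm{qc}} \subset \fM'$ into a larger quasi-smooth QCA derived stack $\fM'$ whose truncation \emph{does} admit a good moduli space; in the present geometry a natural candidate is the derived moduli stack of perverse coherent systems of appropriate perversity (referred to as $P_n^{(k)}(X,\beta)$ in Subsection~1.3 of the introduction), which contains both $\pP_n^{t_{\pm}}(X,\beta)$ as open substacks. Lemma~\ref{lem:replace0} then allows me to compute the DT categories via $\Dbc(\fM')/\cC_{\zZ'}$ with the appropriate conical closed substack $\zZ'$. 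The formal neighborhood theorem needed to invoke Corollary~\ref{cor:sod} is supplied by Remark~\ref{rmk:formal}.

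The heart of the proof is to verify Assumption~\ref{assum:Wx} at every closed point $x \in \nN^{l\sss}$. Such a point corresponds to a $\mu_t^{\dag}$-polystable object in $\aA_X$, which by the Jordan-H\"older filtration decomposes as $E = E_{\rm st} \oplus \bigoplus_{i} L_i^{\oplus m_i}[-1]$, where $E_{\rm st}$ is the stable part (carrying the $\oO_{\overline{X}}$-summand) and the $L_i$ are line bundles on the $(-1,-1)$-curve $C \cong \mathbb{P}^1 \subset X$. The stabilizer is a product $G_x = \prod_i GL_{m_i}$ together with scalars, and the tangent representation $W_x$ is assembled from the various $\Ext^1$-groups in $\aA_X$ between the summands. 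The $(-1,-1)$ property of $C$ inside the Calabi-Yau 3-fold $X$ gives a Serre-duality symmetry $\Ext^1(L_i[-1], L_j[-1]) \cong \Ext^1(L_j[-1], L_i[-1])^{\vee}$ among the polystable summands, and this is precisely what produces a symmetric summand $\bS_x \subset W_x$; the cross-Exts with $E_{\rm st}$ (and the $\oO_{\overline{X}}$ term) collect into $\bU_x$. The $f$-reduced condition on $\beta$ is what makes the cross-terms one-sided with respect to the perturbation $l_-$, which is exactly the content of the identity (\ref{Wsss}). The main obstacle of the proof lies here: carrying out the explicit Ext computation on $X$ along $C$, pinning down the $G_x$-action, and checking that the symmetric/unstable decomposition is compatible with $\mu_{l_-}$-stability.

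Once Assumption~\ref{assum:Wx} is in place, Theorem~\ref{thm:inclu} directly provides the fully-faithful functor $\dDT^{\C}(\pP_n^{t_-}(X,\beta)) \hookrightarrow \dDT^{\C}(\pP_n^{t_+}(X,\beta))$, proving Conjecture~\ref{conj:FF:PT} in this situation. Specializing to $t_+ \gg 0$ (so that $\pP_n^{t_+}(X,\beta) = P_n(X,\beta)$) and iterating across the finitely many walls in $\mathbb{R}_{>0}$ then yields the asserted chain of fully-faithful functors from the non-commutative stable pair category to the ordinary PT category.
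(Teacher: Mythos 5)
Your strategy is essentially the paper's own: the special case you target is exactly Theorem~\ref{thm:-1-1}, proved by realizing $\mu_t^{\dag}$-stability as $\Theta$-stability for an $\mathbb{R}$-line bundle $l_t$ on the stack of perverse coherent systems (Proposition~\ref{prop:sss}), replacing $\fM_n^{\dag}(S,\beta)_{\rm{qc}}$ by an open substack of $\fP_n^{\rm{per}}(S,\beta)$, which admits a good moduli space with formal neighborhood theorem (Lemma~\ref{lem:persys}, Lemma~\ref{lem:altenative}), verifying Assumption~\ref{assum:Wx} by an Ext-quiver computation at polystable points (Lemma~\ref{lem:assum:per}), and then invoking Theorem~\ref{thm:inclu}.

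Two points in your sketch are, however, off the mark and would mislead you in execution. First, the $f$-reduced hypothesis has nothing to do with the identity (\ref{Wsss}): it is used in Lemma~\ref{lem:fred} to identify $\pP_n^{\rm{per}}(X,\beta)$ with $t_0(\Omega_{\fP_n^{\rm{per}}(S,\beta)}[-1])$, i.e.\ to place the whole wall-crossing inside the $(-1)$-shifted cotangent framework of Theorem~\ref{thm:inclu} at all (and hence to get the alternative description of the DT category in Lemma~\ref{lem:altenative}); without it the perverse coherent system stack on $X$ is not the shifted cotangent of the stack on $S$. Second, the decomposition $W_x=\bS_x\oplus\bU_x$ is not produced by Serre-duality symmetry among several line bundles $L_i$ on $C$ with the cross-Exts forming $\bU_x$: the choice $H=af^{\ast}h-C$ with $a\gg 0$ (Lemma~\ref{lem:abig}) forces the strictly polystable objects at the wall $t=k$ to be $I_1\oplus\oO_C(k-1)^{\oplus m}[-1]$ with a single line bundle summand, and $\bU_x\cong W^{\oplus k}$ arises from the Euler-pairing excess $\dim\Ext^1_{Q_y}(R_{\infty},R_1)-\dim\Ext^1_{Q_y}(R_1,R_{\infty})=k$ between the framed summand and $\oO_C(k-1)$, the remaining cross-Exts pairing off symmetrically into $\bS_x$. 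You also omit the positive definite class $b\in H^4$ (constructed and checked in Lemma~\ref{lem:posi}), which is needed to have the $\Theta$-stratification of $\pP_n^{\rm{per}}(X,\beta)$ in the first place. Finally, note that this argument only proves the conjecture in the $(-1,-1)$-curve, $f$-reduced, $t\ge 1$ setting; the general statement of Conjecture~\ref{conj:FF:PT} remains open.
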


\subsection{Wall-crossing at $(-1, -1)$-curve}
Suppose that the surface $S$ contains a $(-1)$-curve 
$C \subset S$. Let
\begin{align*}
	f \colon S \to T
\end{align*}
be a birational contraction which contracts $C$
to a smooth point in $T$.
We have the closed embeddings
\begin{align*}
	C \hookrightarrow S \stackrel{i}{\hookrightarrow}X, \ 
	N_{C/X}=\oO_C(-1) \oplus \oO_C(-1), 
\end{align*}
where $i$ is the zero section. 
Let $h$ be an ample divisor on $S$. 
For $X=\mathrm{Tot}_S(\omega_S)$, we have the commutative diagram 
\begin{align*}
	\xymatrix{
		X \ar[r]^-{g} \ar[d]_-{\pi} & Y \cneq \Spec_{T}\left(\bigoplus_{k\ge 0} 
		m_p^k \otimes \omega_{T}^{-k} \right) \ar[d] \\
		S \ar[r]^-{f} & T
	}
\end{align*}
where $p=f(C)\in T$, $m_p \subset \oO_{T}$ is the ideal 
sheaf of $p$. 
The morphism $g$ is a flopping contraction which 
contracts a $(-1, -1)$-curve 
$C$ to a conifold point in $Y$. 
We take an ample class $H \in \NS(S)$ of the form
\begin{align*}
	H=a \cdot f^{\ast}h-C, \ a \gg 0. 
\end{align*}
Let $W \subset \mathbb{R}$ be the set of walls 
with respect to the $\mu_t^{\dag}$-stability 
and the above choice of $H$. 
\begin{lem}\label{lem:abig}
	For a fixed $(\beta, n) \in \mathrm{NS}(S) \oplus \mathbb{Z}$ such that $\beta$
	is effective, we take $a \gg 0$
	so that 
	\begin{align}\label{a:gg}
		a> \mathrm{max}\left\{ \frac{2n-2n(\beta)+C \cdot \beta'}{h \cdot 
		f_{\ast}\beta'} : 0<\beta' \le \beta, f_{\ast}\beta' \neq 0
	   \right\}. 
		\end{align}
	Then we have $W \cap \mathbb{R}_{\ge 1/2} \subset \mathbb{Z}_{\ge 1}$. 
	For $k \in \mathbb{Z}_{\ge 1}$ and $t=k$,
	any strictly $\mu_t^{\dag}$-polystable 
	object in $\aA_X$ with $\cl(-)=(1, \beta, n)$ is of the form 
	\begin{align}\label{polyI}
		I=I_1 \oplus \oO_C(k-1)^{\oplus m}[-1],
		\end{align}
	where $m \in \mathbb{Z}_{\ge 1}$ and $I_1 \in \aA_X$ is 
	$\mu_t^{\dag}$-stable. 
	\end{lem}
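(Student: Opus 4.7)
The plan is a wall-crossing analysis on the polystable decomposition at a wall $t \ge 1/2$. Let $I$ be a strictly $\mu_t^{\dag}$-polystable object of class $(1,\beta,n)$ and decompose $I = I_1 \oplus J$, where $I_1$ is the unique rank-$1$ stable summand and $J = \bigoplus_j F_j[-1]$ collects the $r=0$ stable summands. All summands share slope $\mu_t^{\dag} = t$, and setting $\cl(J) = (0,\beta'',n'')$ with $\beta'' > 0$ yields $t = n''/(H\cdot\beta'')$.

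The first step is to show $f_*\beta'' = 0$. In the generic case $\beta - \beta'' > 0$, the rank-$1$ summand lies in $\pP_{n-n''}^t(X, \beta-\beta'')$, so Lemma~\ref{lem:nbeta} gives $n'' \le n - n(\beta)$. Assuming $f_*\beta'' \ne 0$ and using $H\cdot\beta'' = a(h\cdot f_*\beta'') - C\cdot\beta''$, the wall condition $t \ge 1/2$ forces
\begin{align*}
a \;\le\; \frac{2n'' + C\cdot\beta''}{h\cdot f_*\beta''} \;\le\; \frac{2n - 2n(\beta) + C\cdot\beta''}{h\cdot f_*\beta''},
\end{align*}
contradicting (\ref{a:gg}). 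The edge case $\beta = \beta''$ forces $I_1 \cong \oO_{\overline{X}}$, since $\mu_t^{\dag}$-stability of a rank-$1$ object of class $(1,0,*)$ rules out nonzero $0$-dimensional torsion (any nonzero sub-sheaf produces a sub-object of infinite slope in $\aA_X$), and a parallel application of (\ref{a:gg}) handles this subcase. Since each $f_*\beta_j$ is effective with $\sum_j f_*\beta_j = 0$, we conclude $\beta_j = m_j C$ with $m_j \ge 1$.

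With $\beta_j = m_j C$ and $H \cdot C = (a f^*h - C)\cdot C = 1$, each $F_j$ is a $\mu_t^{\dag}$-stable compactly supported pure one-dimensional sheaf on $X$ with set-theoretic support on the super-rigid $(-1,-1)$-curve $C$, of slope $n_j/m_j = t$. By super-rigidity of $C$, the formal neighborhood of $C$ in $X$ is isomorphic to $\mathrm{Tot}_C(\oO(-1)\oplus\oO(-1))$, and any such sheaf admits a Jordan--H\"older filtration whose factors are the line bundles $\oO_C(d)$ with $\chi = d+1$. Stability then forces $m_j = 1$ and $F_j \cong \oO_C(k-1)$ with $k = n_j$, so the common slope $t = k \in \mathbb{Z}_{\ge 1}$. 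This establishes $W \cap \mathbb{R}_{\ge 1/2} \subset \mathbb{Z}_{\ge 1}$ together with the claimed polystable decomposition $I = I_1 \oplus \oO_C(k-1)^{\oplus m}[-1]$, where $m = \sum_j m_j$.

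The main obstacle will be the rigorous structural claim above, namely that every $\mu_t^{\dag}$-stable pure one-dimensional sheaf on $X$ with $\pi_*[F] = mC$ is a line bundle on $C$ (forcing $m = 1$), and that semistable such sheaves are iterated extensions of the $\oO_C(k-1)$'s. While this is folklore in the theory of flops at $(-1,-1)$-curves, a complete argument must handle possible scheme-theoretic thickenings of $C$ via the local model supplied by super-rigidity. A secondary technical point is confirming the edge-case analysis uniformly; I expect this to follow from the same wall inequality together with the explicit form $I_1 \cong \oO_{\overline{X}}$.
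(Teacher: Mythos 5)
Your argument is correct and follows essentially the same route as the paper: decompose the strictly polystable object into a rank-one stable summand plus torsion summands, combine Lemma~\ref{lem:nbeta} with $t\ge 1/2$ and the bound (\ref{a:gg}) to force $f_{\ast}\beta''=0$, and then identify the $C$-supported stable factors with $\oO_C(k-1)$, which yields $t=k\in\mathbb{Z}_{\ge 1}$ and the shape (\ref{polyI}). The two points you flag for extra care (the edge case $\beta''=\beta$ and the classification of stable one-dimensional sheaves supported on the $(-1,-1)$-curve) are precisely the steps the paper itself leaves implicit, so there is no genuine difference in approach and no gap beyond what the paper also takes for granted.
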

\begin{proof}
	For $t\ge 1/2$, let 
	$I \in \aA_X$ be a strictly $\mu_t^{\dag}$-polystable object
	with $\cl(I)=(1, \beta, n)$. 
	It is of the form 
	\begin{align*}
		I=I_1 \oplus F_2[-1]
		\end{align*}
	where $I_1 \in \aA_X$ is $\mu_t^{\dag}$-stable, 
	$F_2[-1] \in \Coh_{\le 1}(X)[-1]$ is 
	$\mu_t^{\dag}$-semistable, 
	such that 
	\begin{align*}
	\cl(I_1)=(1, \beta_1, n_1), \ 
	\cl(F_2[-1])=(0, \beta_2, n_2), \
	\frac{n_2}{H \cdot \beta_2}=t \ge \frac{1}{2}. 
	\end{align*}
	Since we have
	$(\beta, n)=(\beta_1, n_1)+(\beta_2, n_2)$,  
	we have 
	\begin{align}\label{n1:ineq}
		n_1=n-n_2 \le n-\frac{1}{2}(ah \cdot f_{\ast}\beta_2-C \cdot \beta_2). 
	\end{align}
	Since $\beta_1 \le \beta$, we have 
	$n_1 \ge n(\beta)$ by Lemma~\ref{lem:nbeta}. 
	Therefore 
	 for $a \gg 0$ satisfying (\ref{a:gg}), 
	 we have $f_{\ast}\beta_2=0$, so 
	$\beta_2$ is of the form $\beta_2=m[C]$. 
Then $F_2$ is a semistable sheaf supported on $C$, so 
it is of the form 
$F_2=\oO_C(k-1)^{\oplus m}$ for some 
$m \in \mathbb{Z}_{\ge 1}$ and $k \in \mathbb{Z}$. 
As $\mu_t^{\dag}(F_2)=t=k$, we have 
$t \in \mathbb{Z}_{\ge 1}$, 
and obtain the desired form (\ref{polyI}) for $\mu_t^{\dag}$-polystable objects. 
\end{proof}

\begin{lem}\label{lem:agg2}
	For $a\gg 0$ satisfying (\ref{a:gg}) and $t\ge 1/2$, 
	an object $I \in \aA_X$ with $\cl(I)=(1, \beta, n)$
	is $\mu_t^{\dag}$-semistable if and only if 
	the following conditions are satisfied: 
	\begin{enumerate}
		\item $g_{\ast}\hH^1(I)$ is zero dimensional, 
		\item $\Hom(\oO_C(m)[-1], I)=0$ for $m+1>t$,
		\item $\Hom(I, \oO_C(m)[-1])=0$ for $t>m+1$, 
		\item $\Hom(\oO_x[-1], I)=0$ for $x \in X \setminus C$.  
		\end{enumerate}
	\end{lem}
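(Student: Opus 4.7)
The plan is to prove both implications by testing $\mu_t^\dag$-semistability against the simple objects $\oO_x[-1]$ and $\oO_C(m)[-1]$ of $\aA_X$ together with rank-$0$ subquotients extracted from $\hH^1(I)$. The choice $H = af^*h - C$ with $a$ satisfying (\ref{a:gg}) is what converts the four cohomological conditions into numerical slope inequalities: any $F \in \Coh_{\le 1}(X)$ with $f_*\pi_*[F] \neq 0$ has $H \cdot \pi_*[F] \geq a\, h\cdot f_*\pi_*[F] - C\cdot\pi_*[F]$ arbitrarily large, while Lemma~\ref{lem:nbeta} bounds $\chi(F)$ from below by $n(\beta)$ whenever $F[-1]$ is a sub or quotient of $I$, so $\mu_t^{\dag}(F[-1]) < 1/2 \leq t$.

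For the direction ($\Rightarrow$), I would derive each condition from a putative destabilizer. Condition (4) is immediate: a nonzero map $\oO_x[-1] \to I$ with $x \in X \setminus C$ has image a point sheaf shifted by $[-1]$ with class $(0,0,n')$, $n' \geq 1$, hence slope $+\infty$, giving a subobject of $I$ in $\aA_X$ violating semistability. Conditions (2) and (3) follow because any nonzero sub/quotient of $\oO_C(m)$ in $\Coh_{\le 1}(X)$ has $\mu = \chi/(H\cdot[-]) \geq m+1$ (since $\oO_C(m)$ is $\mu$-stable on $C$ with slope $m+1$, and $0$-dim quotients have $\mu = +\infty$), so a nonzero map from/to $\oO_C(m)[-1]$ lands in a sub or extracts a quotient with the offending slope $m+1$. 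For (1), if $g_*\hH^1(I)$ is not $0$-dimensional, pick a $1$-dimensional quotient $\hH^1(I) \twoheadrightarrow F$ with $f_*\pi_*[F] \neq 0$; this gives a quotient $F[-1]$ of $I$ in $\aA_X$, and the size estimate from $a \gg 0$ produces $\mu_t^\dag(F[-1]) < t$, contradicting semistability.

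For ($\Leftarrow$), I would argue by contradiction. Assume $I$ is not $\mu_t^\dag$-semistable and take a destabilizing sub $J \hookrightarrow I$ in $\aA_X$ (the quotient case being symmetric). If $\cl(J) = (0, \beta_J, n_J)$, then $J = F[-1]$ for some $F \in \Coh_{\le 1}(X)$, and from the long exact cohomology sequence $F$ maps to $\hH^1(I)$; combined with rank considerations this identifies $F$ essentially with a subsheaf of $\hH^1(I)$. If $f_*\pi_*[F] \neq 0$ then, as in the forward direction, condition (1) is contradicted. Otherwise $F$ is supported on $C$ plus $0$-dimensions, so has a filtration whose simple factors are $\oO_C(m)$ and point sheaves $\oO_x$; the destabilizing property is inherited by at least one factor, producing either a sub $\oO_C(m)[-1] \hookrightarrow I$ with $m+1 > t$ (contradicting (2)), a sub $\oO_x[-1]$ with $x \in X \setminus C$ (contradicting (4)), or an $\oO_x[-1]$ with $x \in C$, which factors through $\oO_C(m)$ and again contradicts (2). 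The rank-$1$ case $\cl(J) = (1, \beta_J, n_J)$ is reduced to the rank-$0$ case by passing to the quotient $I/J$, which has rank $0$; a dual version of the numerical argument (bounding $\chi$ from above using (\ref{a:gg}) applied to $\beta - \beta_J$) ensures the quotient is itself destabilizing whenever $J$ is, returning us to the previous analysis.

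The main obstacle will be the careful identification of sub/quotient structure in $\aA_X$ versus $\Coh_{\le 1}(X)$, in particular ensuring that a subobject $F[-1] \hookrightarrow I$ really corresponds to a subsheaf of $\hH^1(I)$ (so condition (1) can be applied) rather than only producing a map $F \to \hH^1(I)$ with torsion-free cokernel sitting in $\hH^0$-terms. This is handled by the usual $\oO_{\overline{X}}$/$\Coh_{\le 1}(X)[-1]$ torsion pair structure on $\aA_X$: the rank-$0$ part of any subobject of $I \in \aA_X$ with $\cl(I) = (1,\beta,n)$ embeds into $\hH^1(I)$ in $\Coh_{\le 1}(X)$. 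Once this translation is established, the rest is a routine slope computation facilitated by the bound (\ref{a:gg}) and Lemma~\ref{lem:nbeta}.
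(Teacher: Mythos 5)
Your overall strategy (testing against $\oO_C(m)[-1]$ and $\oO_x[-1]$, and using $H=af^{\ast}h-C$ together with Lemma~\ref{lem:nbeta} to force destabilizers over the exceptional locus) is the right one, but there is a genuine gap in your converse direction, in the treatment of rank-zero destabilizing \emph{subobjects}. You claim that for a subobject $J=F[-1]\hookrightarrow I$ in $\aA_X$ the sheaf $F$ is ``essentially a subsheaf of $\hH^1(I)$,'' and you then use condition (1) to exclude $f_{\ast}\pi_{\ast}[F]\neq 0$. This claim is false: from $0\to F[-1]\to I\to I_2\to 0$ the cohomology sequence reads $\hH^0(I)\to \hH^0(I_2)\to F\to \hH^1(I)$, so the kernel of $F\to \hH^1(I)$ is a quotient of the rank-one sheaf $\hH^0(I_2)$ and can perfectly well be one-dimensional with $f_{\ast}\pi_{\ast}$-class nonzero. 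For instance, rotating $I_{C'}\to \oO_X\to \oO_{C'}$ shows $\oO_{C'}[-1]$ is a subobject of $I_{C'}$ in $\aA_X$ even though $\hH^1(I_{C'})=0$. So condition (1) controls rank-zero \emph{quotients} (there $\hH^1(I)\to F\to 0$ is exact, and your argument is fine), but it says nothing about rank-zero subobjects, which is exactly the case the lemma has to rule out; your case analysis silently assumes it away. Relatedly, your appeal to Lemma~\ref{lem:nbeta} is stated backwards: it bounds $\chi$ of the rank-one complementary piece from \emph{below} by $n(\beta)$ (hence $\chi(F)$ from \emph{above} by $n-n(\beta)$), and it only applies when that rank-one piece is semistable, so quoting it for an arbitrary destabilizing $J$ (whose quotient $I/J$ need not be semistable) does not give the bound you need.

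What is needed for subobjects is the numerical argument, arranged via the Harder--Narasimhan filtration, which is how the paper proceeds: first use (1) and (3) to kill every rank-zero quotient $I\twoheadrightarrow F[-1]$ with $\mu_t^{\dag}(F)<t$ (here $F$ \emph{is} a quotient of $\hH^1(I)$, so it is supported over $C$ and its minimal HN factor gives a destabilizing map to some $\oO_C(m)[-1]$ with $t>m+1$). This guarantees that if $I$ were unstable its HN filtration has the shape $0\to F_1[-1]\to I\to I_2\to 0$ with $\mu_t^{\dag}(F_1)>t$ and $I_2$ \emph{semistable} of rank one; now Lemma~\ref{lem:nbeta} applies to $I_2$ and gives $n_2\geq n(\beta)$, hence $n_1\leq n-n(\beta)$, and $\mu_t^{\dag}(F_1[-1])>t\geq 1/2$ then contradicts (\ref{a:gg}) unless $f_{\ast}\beta_1=0$. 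Only after this does the socle argument with (2)/(4) (including your correct observation that a point on $C$ factors through $\oO_C(m)$ for $m+1>t$) finish the proof. A smaller point in your forward direction: to verify (1) you should apply semistability to the canonical quotient $\hH^1(I)[-1]$ itself, whose complement is $\hH^0(I)=I_Z$ with $\chi(\oO_Z)\geq n(\beta)$; for a proper quotient $F$ of $\hH^1(I)$ the required upper bound on $\chi(F)$ is not available, since the kernel of $\hH^1(I)\to F$ may have positive Euler characteristic.
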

\begin{proof}
For a $\mu_t^{\dag}$-semistable object $I \in \aA_X$, the 
conditions (ii), (iii) and (iv) are obviously satisfied. 
As for (i), suppose that $g_{\ast}\hH^1(I)$ is 
not zero dimensional. 
By the $\mu_t^{\dag}$-stability we have 
$\mu_t^{\dag}(\hH^1(I)) \ge t\ge 1/2$. 
By setting $\cl(\hH^0(I))=(1, \beta'', n'')$
and $\cl(\hH^1(I)[-1])=(0, \beta', n')$, 
we have $n''=n-n' \ge n(\beta)$ 
since $\hH^0(I)=I_{Z}$ for a
compactly supported closed subscheme $Z \subset X$ with 
$\pi_{\ast}[Z] \le \beta$. 
	Therefore we obtain 
	\begin{align*}
		\frac{n-n(\beta)}{(af^{\ast}h-C) \cdot \beta'} \ge
		\frac{n'}{(af^{\ast}h-C) \cdot \beta'} \ge \frac{1}{2}, 
		\end{align*}
	which contradicts to (\ref{a:gg}). 
	
	Conversely, suppose that $I \in \aA_X$ satisfies (i) to (iv). 
	Let $I\twoheadrightarrow F[-1]$ be a surjection 
	in $\aA_X$ such that $F \in \Coh_{\le 1}(X)$ with 
	 $\mu_t^{\dag}(F)<t$. 
	As $g_{\ast}\hH^1(I)$ is zero dimensional, 
	$g_{\ast}F$ is also zero dimensional. 
	If $F\neq 0$, then there is a surjection 
	$F\twoheadrightarrow \oO_C(m)$
	for $t>m+1$, which contradicts to (iii). 
	Therefore $F=0$. 
Then 
	by taking the 
	Harder-Narasimhan filtration of $I$ 
	for $\mu_t^{\dag}$-stability, we have the 
	exact sequence 
	$0 \to F_1[-1] \to I \to I_2 \to 0$
	in $\aA_X$ 
	such that $F_1 \in \Coh_{\le 1}(X)$
	with $\mu_t^{\dag}(F_1)>t$ and $I_2$
	is $\mu_t^{\dag}$-semistable
	of rank one.  
	We set $\cl(F_1[-1])=(0, \beta_1, n_1)$
	and 
	$\cl(I_2)=(1, \beta_2, n_2)$. 
	Then $n_2=n-n_1 \ge n(\beta)$
	by Lemma~\ref{lem:nbeta}. 
	It follows that 
	\begin{align*}
			\frac{n-n(\beta)}{(af^{\ast}h-C) \cdot \beta_1}
			\ge \frac{n_1}{(af^{\ast}h-C) \cdot \beta_1}
			=\mu_t^{\dag}(F_1[-1]) >t \ge \frac{1}{2}
		\end{align*}
	which contradicts to (\ref{a:gg})
	if $f_{\ast}\beta_1 \neq 0$. 
	Therefore $f_{\ast}\beta_1=0$ so 
	$g_{\ast}F_1$ is zero dimensional. 
	If $F_1 \neq 0$, then there is an injection 
	$\oO_C(m) \hookrightarrow F_1$ for 
	$m+1>t$ or $\oO_x \hookrightarrow F_1$
	for $x \in X \setminus C$, 
	which contradicts to (ii) or (iv). 
	Therefore we conclude that $F_1=0$, i.e. 
	$I$ is $\mu_t^{\dag}$-semistable. 
	\end{proof}

Below we take $a\gg 0$ as in Lemma~\ref{lem:abig}, and set
\begin{align*}
P_n^{(k)}(X, \beta) \cneq P_n^{k+1-0}(X, \beta)	
	\end{align*}
for $k\in \mathbb{Z}_{\ge 0}$, which equals to $P_n^{k+0}(X, \beta)$
if $k\ge 1$ by Lemma~\ref{lem:abig}. 
The wall-crossing diagram (\ref{dia:PT}) 
in this case at $t=k$ is 
\begin{align}\notag
	\xymatrix{
		P_n^{(k)}(X, \beta) \ar[rd] & & P_n^{(k-1)}(X, \beta)\ar[ld] \\
		& P_n^{t=k}(X, \beta). &
	}
\end{align}
The assertion in Conjecture~\ref{conj:FF:PT} is specialized
as follows: 
\begin{conj}\label{conj:-1}
	For each $k\in \mathbb{Z}_{\ge 1}$, there exists a fully-faithful functor 
	\begin{align*}
		\dDT^{\C}(P_n^{(k-1)}(X, \beta)) \hookrightarrow \dDT^{\C}(P_n^{(k)}(X, \beta)). 
		\end{align*}
	\end{conj}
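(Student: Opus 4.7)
The plan is to reduce Conjecture~\ref{conj:-1} to Theorem~\ref{thm:inclu} by realizing the wall-crossing diagram at $t=k$ as the $l_\pm$-semistable loci inside $t_0(\Omega_{\fM}[-1])$ for a suitable quasi-smooth and QCA derived stack $\fM$. The derived stack $\fM_n^{\dag}(S,\beta)_{\rm{qc}}$ used in Definitions~\ref{defi:PTt} and \ref{defi:PTt2} need not have a good moduli space on its classical truncation, so I would first enlarge it to the derived moduli stack $\fM'$ of perverse coherent systems mentioned in Subsection~\ref{subsec:percoh}, which contains $\fM_n^{\dag}(S,\beta)_{\rm{qc}}$ as an open derived substack and whose truncation admits a good moduli space. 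Using Lemma~\ref{lem:replace0} and Remark~\ref{rmk:indep}, the DT categories $\dDT^{\C}(\pP_n^{(k-1)}(X,\beta))$ and $\dDT^{\C}(\pP_n^{(k)}(X,\beta))$ can then be identified with DT categories of the $l_\pm$-semistable loci $\nN^{l_\pm\sss} \subset \nN = t_0(\Omega_{\fM'}[-1])$, for a pair $\lL_\pm = \lL \otimes (\lL')^{\pm \varepsilon}$ of real line bundles on $t_0(\fM')$ whose $\mu_{l_\pm}$-stability agrees with $\mu_{t\pm 0}^{\dag}$-stability near the wall $t=k$ (the existence of such $\lL, \lL'$ in this wall-crossing set-up is standard).

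Granted this setup, Theorem~\ref{thm:inclu} reduces everything to verifying Assumption~\ref{assum:Wx} at each closed point $x$ of the common $l$-semistable locus $\nN^{l\sss}$. By Lemma~\ref{lem:abig}, such a closed point corresponds to a strictly $\mu_k^{\dag}$-polystable object
\[
I = I_1 \oplus \oO_C(k-1)^{\oplus m}[-1],
\]
with $I_1$ a $\mu_k^{\dag}$-stable object of rank one and $m \ge 1$, so that $G_x = \Aut(I) = \C \times \mathrm{GL}_m$. The tangent space $W_x$ in the sense of (\ref{rep:Wx}) is computed from $\mathrm{Ext}^*_{\aA_X}(I,I)$, which decomposes according to the direct sum decomposition of $I$. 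Here the decisive input is super-rigidity: since $C \subset X$ is a $(-1,-1)$-curve, one has $\Ext^i(\oO_C(k-1),\oO_C(k-1))=0$ for $i = 1, 2$, so the $\mathrm{GL}_m \times \mathrm{GL}_m$-part of $\mathrm{Ext}^1(I,I)$ is entirely off-diagonal, and no obstruction comes from the $\oO_C(k-1)^{\oplus m}[-1]$-summand alone.

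Setting $V \cneq \Ext^1_{\aA_X}(I_1, \oO_C(k-1)[-1]) = \Hom_X(I_1, \oO_C(k-1))$ and using Serre duality on $\overline{X}$ to identify $\Ext^1_{\aA_X}(\oO_C(k-1)[-1], I_1) \cong V^\vee$ as $\Aut(I_1)$-representations, one obtains a $G_x$-equivariant decomposition
\[
W_x = \bS_x \oplus \bU_x, \quad \bS_x = (V \otimes \mathbb{C}^m) \oplus (V^\vee \otimes (\mathbb{C}^m)^\vee), \quad \bU_x = \hH^0(\mathbb{T}_{\xX_1^{l\sss}}|_{I_1}),
\]
where $\bS_x$ is manifestly self-dual (symmetric) and $\bU_x$ captures the directions intrinsic to the stable object $I_1$, which are untouched by the wall-crossing because $C$ does not deform in $X$. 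The remaining step is to verify the equality (\ref{Wsss}), namely $W_x^{l_-\sss} = \bS_x^{l_-\sss} \oplus \bU_x$. Here $\bU_x$ carries trivial $\mathrm{GL}_m$-weight (since it is an $\Aut(I_1)$-representation), so it lies in $W_x^{l_\pm \sss}$ automatically; the destabilizing one-parameter subgroups of $G_x$ at the wall all come from the $\mathrm{GL}_m$-factor and act only on $\bS_x$. Using Lemma~\ref{lem:agg2} together with the explicit description of HN filtrations at the wall $t=k$ (whose only possible subobjects/quotients of the form $\oO_C(k-1)[-1]$ come from the $\oO_C(k-1)^{\oplus m}[-1]$-summand, again by super-rigidity), one matches the Kempf--Ness stratification on $W_x$ coming from $\mu_{t_-}^{\dag}$-stability in $\aA_X$ with that on $\bS_x$, yielding (\ref{Wsss}).

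The main obstacle is the last step: one has to show that any $\mu_{t_-}^\dag$-destabilizing subobject or quotient of a nearby object of $I$ is detected, formally locally, purely inside $\bS_x$, with no contribution from $\bU_x$. This is where the super-rigidity hypothesis of the $(-1,-1)$-curve and the relative reducedness assumption on $\beta$ (via Lemma~\ref{lem:abig}) are essential: they force the destabilizing summand to be a direct sum of copies of $\oO_C(k-1)[-1]$, pinning the wall-crossing direction to the $V, V^\vee$ piece. Once (\ref{Wsss}) is verified, Theorem~\ref{thm:inclu} supplies the fully-faithful functor $\dDT^{\C}(\pP_n^{(k-1)}(X,\beta)) \hookrightarrow \dDT^{\C}(\pP_n^{(k)}(X,\beta))$ as desired.
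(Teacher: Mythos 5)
Your overall architecture coincides with the paper's: pass from $\fM_n^{\dag}(S,\beta)_{\rm qc}$ to the derived stack of perverse coherent systems, realize $\pP_n^{(k)}(X,\beta)$ and $\pP_n^{(k-1)}(X,\beta)$ as $l_\pm$-semistable loci in $\nN=t_0(\Omega_{\fP_n^{\rm per}(S,\beta)}[-1])$, and feed this into Theorem~\ref{thm:inclu}. But the crux of your argument, the verification of Assumption~\ref{assum:Wx}, miscomputes $W_x$. By definition (\ref{rep:Wx}), $W_x$ is the tangent space at $x$ of the \emph{ambient smooth formal-local model} $\xX_1^{l\sss}$, taken over the closed point $y$ of the good moduli space of the surface-level stack $\pP_n^{\rm per}(S,\beta)$; concretely it is $\Ext^1_{Q_y}(R,R)$ for the Ext-quiver $Q_y$ of the simple constituents at $y$ (where $S_0=\oO_C$, $S_1=\oO_C(-1)[1]$ are vertices and $\oO_C(k-1)$ appears only as the iterated extension $R_1$ with dimension vector $(0,k,k-1,0,\dots,0)$). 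It is \emph{not} $\Ext^1_{\aA_X}(I,I)$ for the polystable object $I=I_1\oplus\oO_C(k-1)^{\oplus m}[-1]$. Super-rigidity of the $(-1,-1)$-curve kills $\Ext^1_X(\oO_C(k-1),\oO_C(k-1))$, but it does not kill $\Ext^1_{Q_y}(R_1,R_1)$, which is nonzero for $k\ge 2$ (for the conifold-type quiver its dimension is $2k(k-1)$), so the "diagonal" $\GL_m$-block you discard is genuinely present in $W_x$. Moreover the two off-diagonal blocks are \emph{not} dual to each other: the Euler-pairing computation in the paper gives $\dim\Ext^1_{Q_y}(R_\infty,R_1)-\dim\Ext^1_{Q_y}(R_1,R_\infty)=k\ge 1$, and it is exactly this excess that forms $\bU_x=W^{\oplus k}$ — a summand on which $\GL(W)=\GL_m$ acts by the standard representation, contradicting your assertion that $\bU_x$ has trivial $\GL_m$-weight and that all destabilizing one-parameter subgroups act only on $\bS_x$. (Your Serre-duality step is also off by a degree: CY3 duality pairs $\Ext^1_{\aA_X}(I_1,\oO_C(k-1)[-1])=\Hom_X(I_1,\oO_C(k-1))$ with an $\Ext^2$-type group, not with $\Ext^1_{\aA_X}(\oO_C(k-1)[-1],I_1)$.) If $W_x$ really decomposed as a symmetric piece plus a $\GL_m$-trivial piece, the wall-crossing would be flop-like and one would expect an equivalence; the asymmetry of size $k$ is precisely what makes this a d-critical flip and what the correct choice of $(\bS_x,\bU_x)$ in the paper's Lemma~\ref{lem:assum:per} (with (\ref{Wsss}) checked against the character $(\det g)^{\pm\varepsilon}$ via~\cite[Lemma~5.1.12]{TocatDT}) accounts for. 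As written, your decomposition is not a decomposition of the actual $W_x$, so Assumption~\ref{assum:Wx} is not verified.

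Two further set-up points. First, you place the relative-reducedness hypothesis in the wrong spot: Lemma~\ref{lem:abig} only needs $a\gg 0$, whereas $f$-reducedness is what gives the identification $\pP_n^{\rm per}(X,\beta)\cong t_0(\Omega_{\fP_n^{\rm per}(S,\beta)}[-1])$ (Lemma~\ref{lem:fred}); without that isomorphism the $\mu_t^{\dag}$-semistable loci are not realized inside the $(-1)$-shifted cotangent of a quasi-smooth stack whose truncation has a good moduli space satisfying the formal neighborhood theorem, and Theorem~\ref{thm:inclu} does not apply. (Accordingly, both the paper and your argument only prove the conjecture in the $f$-reduced case, i.e.\ Theorem~\ref{thm:-1-1}.) Second, the existence of $\lL,\lL'$ with $l_t$-semistability matching $\mu_t^{\dag}$-stability is not merely "standard": the paper constructs $\lL_t$ explicitly as a determinant line bundle, proves the matching of semistable loci (Proposition~\ref{prop:sss}) by analyzing filtrations by objects of $\PPer_{\le 0}(X/Y)$, and separately produces a positive-definite class $b$ (Lemma~\ref{lem:posi}); these steps should not be omitted, though they are more routine than the Assumption~\ref{assum:Wx} verification.
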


\subsection{Perverse coherent sheaves}
We will describe the moduli spaces $P_n^{(k)}(X, \beta)$ in terms of 
perverse coherent sheaves introduced in~\cite{Br1, MR2057015}, defined as follows: 
\begin{defi}\emph{(\cite{Br1, MR2057015})}
	The subcategory of perverse coherent sheaves
\begin{align*}
	\PPer(S/T) 
	\subset \Dbc(S)
	\end{align*}
is defined to be the subcategory of objects $E \in \Dbc(S)$
satisfying the following conditions: 
\begin{enumerate}
	\item we have
	$\dR f_{\ast}E \in \Coh(T)$,
	\item we have $\Hom^{<0}(\oO_C(-1)[1], E)=\Hom^{<0}(E, \oO_C(-1)[1])=0$.  
\end{enumerate}
\end{defi}
The subcategory $\PPer(S/T)$ is the heart of a bounded t-structure 
on $\Dbc(S)$
which contains $\oO_S$. 
We will denote by $\pH^i(-)$ the $i$-th cohomology with 
respect to the above perverse t-structure. 
By setting 
\begin{align*}
	\eE\cneq \oO_S \oplus \oO_S(-C), \ 
	A_S \cneq f_{\ast}\eE nd(\eE),
	\end{align*}
we have the equivalence (see~\cite{MR2057015})
\begin{align}\label{equiv:pervS}
	\dR f_{\ast}\dR \hH om(\eE, -) \colon 
	\PPer(S/T) \stackrel{\sim}{\to} \Coh(A_S). 
\end{align}
We also have the following abelian subcategories for $i=0, 1$
\begin{align*}
	\PPer_{\le i} (S/T)\cneq \{
	E \in \PPer(S/T) : \dim \Supp(\dR f_{\ast}E) \le i \}. 
\end{align*}

The subcategories
\begin{align*}
	\PPer_{\le i}(X/Y) \subset \PPer(X/Y) \subset \Dbc(X)
	\end{align*}
are also defined in a similar way, using the 
flopping contraction $g \colon X \to Y$ instead of $f \colon S \to T$, 
imposing an additional condition that $\dR g_{\ast}E$ is compactly supported 
for $i=1$. 
Similarly to (\ref{equiv:pervS}), we have the equivalence 
\begin{align}\label{equiv:X}
	\dR g_{\ast} \dR \hH om(\pi^{\ast}\eE, -) \colon 
	\PPer(X/Y) \stackrel{\sim}{\to} \Coh(A_X)
\end{align}
where $A_X=g_{\ast}\eE nd(\pi^{\ast}\eE)$. 
In particular, the push-forward gives a functor 
\begin{align*}
	\pi_{\ast} \colon \PPer_{\le i}(X/Y) \to \PPer_{\le i}(S/T). 
\end{align*}
For $i=0$, 
the abelian categories $\PPer_{\le 0}(S/T)$, $\PPer_{\le 0}(X/Y)$
are generated by their simple objects, 
and described as the extension closures 
\begin{align}\label{per0:gen}
	\PPer_{\le 0}(S/T)
	=\langle S_0, S_1, \oO_{x} : x \in S \setminus C \rangle_{\rm{ex}}, \ 
	 	\PPer_{\le 0}(X/Y)
	 =\langle S_0, S_1, \oO_{x} : x \in X \setminus C \rangle_{\rm{ex}}
\end{align}
where $S_0 \cneq \oO_C$
and $S_1 \cneq \oO_C(-1)[1]$. 

\begin{rmk}\label{rmk:tilting}
	By~\cite{MR2057015}, the heart $\PPer_{\le 1}(X/Y)$ is obtained as a tilting 
	of $\Coh_{\le 1}(X)$ by the torsion pair $(\tT, \fF)$, i.e. 
	\begin{align*}
		\PPer_{\le 1}(X/Y)=\langle \fF[1], \tT \rangle_{\rm{ex}} \subset 
		\Dbc(X)
	\end{align*}
	where $\fF$ consists of sheaves $F$ with $g_{\ast}F=0$. 
	By taking Harder-Narasimhan filtration, we see that 
	$\fF$ is the extension closure of objects 
	$\oO_C(m)$ with $m\le -1$. 
\end{rmk}

\subsection{Moduli stacks of perverse coherent systems}\label{subsec:percoh}
The notion of perverse coherent systems is defined as follows. 
\begin{defi}\label{def:psystem}
	A \textit{perverse coherent system} is a pair
	\begin{align}\label{pcoh}
		(F, s), \ F \in \PPer_{\le 1}(S/T), \ 
		s \colon \oO_S \to F. 
	\end{align}
	A perverse coherent system $(F, s)$
	is called a \textit{pure quotient} if 
	$s$ is surjective in $\PPer(S/T)$ and 
	$F$ does not have a non-zero subobject in $\PPer_{\le 0}(S/T)$.  
\end{defi}
We denote by 
\begin{align*}
	\fM_n^{\mathrm{per}\dag}(S, \beta)
	\end{align*}
the derived moduli stack of perverse
coherent systems $(F, s)$ 
where $F \in \PPer_{\le 1}(S/T)$ 
and $s \colon \oO_S \to F$, 
satisfying $([F], \chi(F))=(\beta, n) \in \mathrm{NS}(S) \oplus \mathbb{Z}$. 
\begin{lem}\label{lem:pqsmooth}
	The derived stack $\fM_n^{\mathrm{per}\dag}(S, \beta)$ is quasi-smooth. 
	\end{lem}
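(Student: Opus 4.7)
The approach is to compute the tangent complex of $\fM_n^{\mathrm{per}\dag}(S,\beta)$ at a geometric point and verify that it has cohomology concentrated in degrees $[-1,1]$, which characterizes quasi-smoothness. The argument runs in close parallel to the proof of quasi-smoothness of $\fM_n^{\dag}(S,\beta)$ in Theorem~4.1.3 of~\cite{TocatDT}, with $\Coh_{\le 1}(S)$ replaced by the tilted heart $\PPer_{\le 1}(S/T)$, and the necessary Ext-vanishings obtained via the tilting equivalence (\ref{equiv:pervS}).

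First I would set up the derived enhancement: using (\ref{equiv:pervS}), the stack $\fM_n^{\mathrm{per}\dag}(S,\beta)$ is realized as a derived open substack of the derived moduli of pairs $P_0 \to M$ in $\Coh(A_S)$, where $P_0 = \dR f_{\ast}\hH om(\eE, \oO_S)$ corresponds to $\oO_S$. Equivalently one can regard it as a derived open substack of the derived moduli of pairs $\oO_S \to E$ with $E \in \Dbc(S)$ of the prescribed numerical class; the $\PPer_{\le 1}$ condition (vanishing of perverse cohomology outside degree zero together with the dimension bound on $\dR f_{\ast}E$) is open in families. At a $\mathbb{C}$-point $(F,s)$, the standard deformation-theoretic computation gives a distinguished triangle
\[
\RHom_S(F,F) \xrightarrow{\circ s} \RHom_S(\oO_S, F) \to \mathbb{T}_{\fM_n^{\mathrm{per}\dag}(S,\beta)}\bigl|_{(F,s)},
\]
identifying the tangent complex with $\RHom_S(I^\bullet, F)[1]$ for $I^\bullet = [\oO_S \xrightarrow{s} F]$.

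The long exact sequence associated to this triangle reduces the claim to verifying $\Ext^i_S(\oO_S, F) = 0$ for $i \ge 2$ and $\Ext^i_S(F,F) = 0$ for $i \ge 3$ (the vanishings for $i<0$ are automatic since $\oO_S$ and $F$ both lie in the heart $\PPer(S/T)$). For the first, $\Ext^i_S(\oO_S, F) = H^i(T, \dR f_{\ast}F)$, and the $\PPer_{\le 1}$ condition forces $\dR f_{\ast}F$ to be a coherent sheaf on $T$ of dimension $\le 1$, so $H^i$ vanishes for $i\ge 2$. For the second, I would transport via (\ref{equiv:pervS}) to get $\Ext^i_S(F,F) \cong \Ext^i_{A_S}(\Phi F, \Phi F)$; since $\eE = \oO_S \oplus \oO_S(-C)$ is a tilting bundle of $\PPer(S/T)$ on the smooth surface $S$ and $T$ is a smooth surface, $A_S = f_{\ast}\eE nd(\eE)$ has global dimension $2$, giving the required vanishing for $i \ge 3$. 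Combining these, the cohomology of the tangent complex sits in $[-1,1]$.

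The main technical point, rather than any deep obstacle, is the bookkeeping of the derived enhancement and the verification that $\PPer_{\le 1}(S/T)$ cuts out an open condition on the derived moduli of pairs in $\Dbc(S)$; the rest is a direct transport via tilting of the Ext analysis that underlies the analogous PT statement in~\cite{TocatDT}. Consequently $\mathbb{L}_{\fM_n^{\mathrm{per}\dag}(S,\beta)}$ is perfect of Tor-amplitude $[-1,1]$, proving quasi-smoothness.
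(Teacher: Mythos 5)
Your overall strategy coincides with the paper's: identify the tangent complex at a pair $(\oO_S\to F)$ via the triangle
\begin{align*}
\RHom_S(F,F)\to \RHom_S(\oO_S,F)\to \mathbb{T}_{\fM_n^{\mathrm{per}\dag}(S,\beta)}\big|_{(F,s)},
\end{align*}
and reduce quasi-smoothness to the two vanishings $H^{\ge 2}(S,F)=0$ and $\Ext^{\ge 3}_S(F,F)=0$ for $F\in\PPer_{\le 1}(S/T)$; your treatment of the first vanishing (through $\dR f_{\ast}F\in\Coh_{\le 1}(T)$) is exactly the paper's. The gap is in the second vanishing. You justify it by asserting that $A_S=f_{\ast}\eE nd(\eE)$ has global dimension $2$ ``since $\eE$ is a tilting bundle on the smooth surface $S$ and $T$ is a smooth surface.'' That inference is not valid as stated: the endomorphism algebra of a tilting bundle on a smooth surface need not have global dimension $2$ (it is only finite in general, and there are tilting bundles on rational surfaces whose endomorphism algebras have global dimension $3$), so smoothness of $S$ and $T$ together with the tilting property does not by itself bound $\Ext^i$ between objects of the heart $\PPer(S/T)\simeq\Coh(A_S)$ by $i\le 2$. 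In this particular relative situation (fibres of dimension $\le 1$) the bound does hold, but it must be proved or precisely cited, and neither is done in your proposal; as written, the crucial vanishing $\Ext^{\ge 3}_S(F,F)=0$ is unestablished.

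The paper closes exactly this point without ever invoking $A_S$: by Serre duality $\Hom(F,F[i])\cong\Hom(F,F\otimes\omega_S[2-i])^{\vee}$, and since the torsion-free part $\fF$ of the tilting torsion pair (the extension closure of the $\oO_C(m)$, $m\le -1$) satisfies $\fF\otimes\omega_S\subset\fF$ while $\tT\otimes\omega_S\subset\Coh_{\le 1}(S)$, one gets
$\PPer_{\le 1}(S/T)\otimes\omega_S\subset\langle \PPer_{\le 1}(S/T), \PPer_{\le 1}(S/T)[-1]\rangle_{\rm{ex}}$,
and the dual Hom-group vanishes for $i\ge 3$ because there are no negative-degree morphisms out of an object of the heart. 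If you prefer to phrase the conclusion as ``$A_S$-modules have no $\Ext^{\ge 3}$,'' the honest proof of that statement is precisely this Serre-duality/torsion-pair computation (applied to arbitrary objects of $\PPer(S/T)$), so you should either carry it out or give a reference tailored to contractions with one-dimensional fibres. Your additional remarks on the derived enhancement and the openness of the $\PPer_{\le 1}$-condition are harmless but are not where the content of the lemma lies.
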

\begin{proof}
The tangent complex of $\fM_n^{\mathrm{per}\dag}(S, \beta)$
at a perverse coherent system $(\oO_S \to F)$ is given by 
$\dR \Gamma(\oO_S\to F, F)$. 
Therefore it is enough to show that 
$\dR \Gamma^{\ge 2}(F)=\Hom^{\ge 3}(F, F)=0$
for any $F \in \PPer_{\le 1}(S/T)$. 
  The first vanishing holds since 
  $\dR f_{\ast}F \in \Coh_{\le 1}(T)$. 
  As for the second vanishing, 
  by the Serre duality we have 
  \begin{align}\label{Fomega}
  	\Hom(F, F[i])=\Hom(F, F \otimes \omega_S[2-i])^{\vee}.
  	  	\end{align}
 Lett 
  $(\tT, \fF)$ be a torsion pair of $\Coh_{\le 1}(S)$ as in Remark~\ref{rmk:tilting}. 
  Then $\fF \otimes \omega_S \subset \fF$
  and $\tT \otimes \omega_S \subset \Coh_{\le 1}(X)$, so we have 
  \begin{align*}
  	\PPer_{\le 1}(S/T) \otimes \omega_S \subset \langle 
  	\PPer_{\le 1}(S/T), \PPer_{\le 1}(S/T)[-1] \rangle_{\rm{ex}}. 
  	\end{align*}
  Therefore (\ref{Fomega}) vanishes for $i\ge 3$. 
\end{proof}
We have the derived open substack 
\begin{align}\notag
	\fP_n^{\rm{per}}(S, \beta) \subset \fM_n^{\mathrm{per}\dag}(S, \beta)
\end{align}
corresponding to perverse coherent systems (\ref{pcoh})
such that $\pcok(s) \in \PPer_{\le 0}(S/T)$, 
where $\pcok(-)$ denotes the cokernel in the heart $\PPer(S/T)$. 
Its classical truncation is denoted by 
$\pP_n^{\rm{per}}(S, \beta)$. 
\begin{lem}\label{lem:persys}
The moduli stack 
$\pP_n^{\rm{per}}(S, \beta)$ admits a good moduli space
\begin{align}\label{gmoduli:P}
	\pP_n^{\rm{per}}(S, \beta) \to P_n^{\rm{per}}(S, \beta)
\end{align}
where each closed point of $P_n^{\rm{per}}(S, \beta)$ corresponds to a direct sum 
\begin{align}\label{points:per2}
	I=I_0 \oplus 
	(V_0 \otimes S_0[-1]) \oplus( V_1 \otimes S_1[-1]) \oplus \bigoplus_{i=1}^k (W_i \otimes \oO_{x_i}[-1])
\end{align}
where $I_0=(\oO_S \stackrel{s_0}{\to}F_0)$ for a pure quotient $(F_0, s_0)$, 
$F[-1] \cneq (0 \to F)$ for $F \in \PPer_{\le 1}(S/T)$
and $V_0, V_1, W_i$ are finite dimensional vector spaces 
and $x_1, \ldots, x_i \in S \setminus C$
are distinct points.
Moreover the good moduli space morphism (\ref{gmoduli:P}) satisfies the 
formal neighborhood theorem. 
\end{lem}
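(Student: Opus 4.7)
The plan is to treat perverse coherent systems as objects in the extension-closed abelian subcategory
\[
\aA_S^{\mathrm{per}} \cneq \langle \oO_S, \PPer_{\le 1}(S/T)[-1] \rangle_{\mathrm{ex}} \subset D^b(S),
\]
in complete analogy with the category $\aA_X$ of D0-D2-D6 bound states used for PT moduli on $X$. A perverse coherent system $(F,s)$ corresponds to the two-term complex $I=(\oO_S \stackrel{s}{\to} F) \in \aA_S^{\mathrm{per}}$, and the \emph{pure quotient} condition in Definition~\ref{def:psystem} is exactly the PT-type limit stability: surjectivity of $s$ in $\PPer(S/T)$ together with $F$ having no non-zero subobject in $\PPer_{\le 0}(S/T)$ is equivalent to $\Hom(I, E[-1])=0$ for all $E \in \PPer_{\le 0}(S/T)$ and $\Hom(E[-1], I)=0$ for all $E \in \PPer_{\le 0}(S/T)$, in analogy with Lemma~\ref{lem:agg2}. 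Via the equivalence (\ref{equiv:pervS}), one may equivalently regard $\pP_n^{\mathrm{per}}(S,\beta)$ as a moduli stack of PT-type stable pairs for the finite $\oO_T$-algebra $A_S$, reducing the problem to a setting already well understood in the non-commutative case.

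The first step is to verify the existence of the good moduli space (\ref{gmoduli:P}). The plan is to apply the criterion of Alper--Halpern-Leistner--Heinloth~\cite{AHLH}: it suffices to check $\Theta$-reductivity and $S$-completeness of $\pP_n^{\mathrm{per}}(S,\beta)$. Both conditions are properness-type statements about flat families over discrete valuation rings, and both can be translated into assertions about families of $A_S$-modules with a framing; they then follow as in the classical PT case (e.g.~\cite[Theorem~7.25]{AHLH} applied to the tilted heart $\Coh(A_S)$), because $A_S$ is a finite $\oO_T$-algebra and the PT-type pure-quotient condition is open in flat families and persists under taking limits after replacing by the Harder-Narasimhan-reduction.

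The second step is to describe the closed points. By the general Hilbert-Mumford-type description for good moduli spaces, the closed points of $P_n^{\mathrm{per}}(S,\beta)$ parametrise \emph{polystable} objects of $\aA_S^{\mathrm{per}}$ with the PT-type stability. A polystable object splits canonically as $I_0 \oplus J$, where $I_0=(\oO_S \to F_0)$ is stable (hence $F_0$ is a pure quotient) and $J$ is a polystable object of $\PPer_{\le 0}(S/T)[-1]$. Using the description (\ref{per0:gen}) of the simple objects of $\PPer_{\le 0}(S/T)$, namely $S_0=\oO_C$, $S_1=\oO_C(-1)[1]$ and $\oO_x$ for $x \in S \setminus C$, the summand $J$ has the stated form $V_0 \otimes S_0[-1]\oplus V_1 \otimes S_1[-1] \oplus \bigoplus_i W_i \otimes \oO_{x_i}[-1]$, giving the description (\ref{points:per2}).

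For the last step, the formal neighborhood theorem, the plan is to imitate the argument of~\cite[Section~7.4]{TocatDT} referenced in Remark~\ref{rmk:formal}. Since $\fM_n^{\mathrm{per}\dag}(S,\beta)$ is quasi-smooth by Lemma~\ref{lem:pqsmooth} and a closed polystable point $I$ has tangent complex $\dR\Hom^\ast(I,I)$ concentrated in degrees $[0,2]$, the $A_\infty$-structure on $\dR\Hom^\ast(I,I)$ (inherited from $D^b(S)$) produces a $G_I$-equivariant Kuranishi map $\kappa_I \colon \widehat{\Ext}^1(I,I) \to \Ext^2(I,I)$ whose derived zero locus is equivalent to the formal fibre $\widehat{\fM}^{\mathrm{per}\dag}_{n,I}(S,\beta)$; the same construction works verbatim because perverse coherent systems live in the heart of a bounded t-structure on $D^b(S)$. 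The main obstacle is the first step, namely verifying $S$-completeness and $\Theta$-reductivity for the pure-quotient condition, but the reduction to the $\Coh(A_S)$-picture via (\ref{equiv:pervS}) and the results of~\cite{MR2057015, Calab} for the non-commutative resolution make this a routine (if technical) check.
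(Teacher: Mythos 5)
Your proposal follows essentially the same route as the paper: Toda likewise identifies $\pP_n^{\rm{per}}(S, \beta)$ with the stack of semistable objects in the framed abelian category $\PPer^{\dag}_{\le 1}(S/T)$ (equivalently, via (\ref{equiv:pervS}), framed $A_S$-modules with zero-dimensional cokernel), obtains the good moduli space from \cite{AHLH}, reads off closed points from the polystable objects using the canonical filtration and the simple objects in (\ref{per0:gen}), and deduces the formal neighborhood theorem by the $A_\infty$/Kuranishi argument of \cite[Lemma~7.4.3]{TocatDT}. Two small cautions: the stack is the \emph{semistable} locus (cokernel in $\PPer_{\le 0}(S/T)$), not a stack of PT-stable pairs for $A_S$ --- it has strictly polystable points with positive-dimensional stabilizers, which is the whole point --- and the paper's Remark~\ref{rmk:funcl} observes that \cite[Theorem~7.25]{AHLH} cannot be quoted verbatim because the stability function $p_{\nu}$ is not additive on the Grothendieck group, a subtlety your appeal to ``the classical PT case'' glosses over, although your alternative of directly verifying $\Theta$-reductivity and $S$-completeness would sidestep it.
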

\begin{proof}	
	Let $\PPer^{\dag}_{\le 1}(S/T)$
	be the abelian category of pairs 
	$(\oO_S^{\oplus r} \to F)$
	for $r \in \mathbb{Z}_{\ge 0}$ and $F \in \PPer_{\le 1}(S/T)$. 
	Similarly 
	to 
	the moduli stack at MNOP/PT wall in~\cite[Subsection~4.2.1]{TocatDT}, 
	the moduli stack $\pP_n^{\rm{per}}(S, \beta)$
	is identified with the 
	moduli stack 
	of semistable objects in $\PPer^{\dag}_{\le 1}(S/T)$
with respect to the function 
	\begin{align}\label{slope:Phi}
		p_{\nu} \colon 
	(\oO_S^{\oplus r} \to F) \mapsto 
	\begin{cases}
		0, & r>0, \\
		[f_{\ast}F] \cdot h, & r=0. 
		\end{cases}
		\end{align}
	Namely a perverse coherent system $I=(\oO_S \to F)$ is an object 
	in $\pP_n^{\rm{per}}(S, \beta)$ if and only if 
	for any exact sequence 
	$0 \to I_1 \to I \to I_2 \to 0$ in $\PPer^{\dag}_{\le 1}(S/T)$
	we have $p_{\nu}(I_1) \le 0 \le p_{\nu}(I_2)$. 
	Then the existence of good moduli space follows from~\cite[Theorem~7.25]{AHLH}  
	(also see Remark~\ref{rmk:funcl}). 

For any perverse coherent system $(F, s)$
with $\pcok(s) \in \PPer_{\le 0}(S/T)$, 
the object $E=(\oO_S \stackrel{s}{\to} F)$
admits a unique filtration 
\begin{align*}
	E_1 \subset E_2 \subset E, \ 
	E_1=(0 \to F''), \ 
	E_2/E_1=(\oO_S \stackrel{s'}{\to}F'), \ 
	E/E_2=(0 \to F''')
\end{align*} 
where $F'', F'''$ are objects in $\PPer_{\le 0}(S/T)$ and 
$(F', s')$ is a pure quotient. 
The objects $(0 \to F'')$, $(0 \to F''')$ are 
semistable with respect to (\ref{slope:Phi}), 
$(\oO_S \stackrel{s'}{\to}F')$
is stable with respect to (\ref{slope:Phi}), and all of them 
satisfy $p_{\nu}(-)=0$. 
Therefore any stable object with respect to $p_{\nu}$ is either 
a pure quotient or $F'[-1]$ for a simple object in $\PPer_{\le 0}(S/T)$. 
Then from the description of simple objects
of $\PPer_{\le 0}(S/T)$ in (\ref{per0:gen}), 
we obtain the description of polystable objects (\ref{points:per2}). 
The formal neighborhood theorem holds by the argument of~\cite[Lemma~7.4.3]{TocatDT}. 
\end{proof}

\begin{rmk}\label{rmk:funcl}
	We need to take a little case in applying~\cite[Theorem~7.25]{AHLH}, since 
	the function (\ref{slope:Phi}) is not additive on $K(\PPer_{\le 1}^{\dag}(S/T))$. 
	However the argument in \textit{loc.~cit.} is applied as follows. 
	For a map $f \colon \Theta \to \fM_n^{\rm{per}\dag}(S, \beta)$
	corresponding to a $\mathbb{Z}$-weighted filtration 
	 $I_{\bullet}=\cdots \to I_{w+1} \to I_w \to I_{w-1} \to \cdots$
	in $\PPer^{\dag}_{\le 1}(S/T)$, 
	we set 
\begin{align*}
	l(f) \cneq 
	\sum_{w \in \mathbb{Z}} w \cdot p_{\nu}(I_{w}/I_{w+1}).
	\end{align*}
	For $I=(\oO_S^{\oplus r} \to F) \in \PPer^{\dag}_{\le 1}(S/T)$, 
	we set $\rk(I) \cneq r$. 
	Since there is no scaling automorphism if $r>0$, 
	we have $\rk(I_{0}/I_{1})=1$ and $\rk(I_w/I_{w+1})=0$ if $w\neq 0$. 
	Then we immediately see that 
	$I=(\oO_S \stackrel{s}{\to} F)$ in $\fM_n^{\rm{per}\dag}(S, \beta)$
		satisfies $\pcok(s) \in \PPer_{\le 0}(S/T)$ 
	if and only if for any map $f \colon \Theta \to 
	\fM_n^{\rm{per}\dag}(S, \beta)$ with $f(1) \sim I$, 
	we have $l(f) \le 0$. 
	The function $l$ obviously satisfies the condition in~\cite[Remark~6.16]{AHLH}, so 
	we can apply~\cite[Theorem~7.25]{AHLH} as mentioned in \textit{loc.~cit.}. 
	A similar argument also applies to the construction of 
	the good moduli space (\ref{gmoduli:Pn}). 
	\end{rmk}

The notion of perverse coherent systems and pure quotients 
are similarly defined for $\PPer(X/Y)$. 
We denote by $\pP_n^{\rm{per}}(X, \beta)$ the classical moduli stack 
of perverse coherent systems 
$(F, s)$ for $F \in \PPer(X/Y)$ 
and $s \colon \oO_X \to F$
satisfies $\pcok(s) \in \PPer_{\le 0}(X/Y)$, where 
$\pcok(-)$ is the cokernel in $\PPer(X/Y)$. 
Similarly to Lemma~\ref{lem:persys}, we have 
the following: 
\begin{lem}\label{lem:persys2}
	The moduli stack 
	$\pP_n^{\rm{per}}(X, \beta)$ admits a good moduli space 
\begin{align}\label{perp:gmoduli}
	\pP_n^{\rm{per}}(X, \beta) \to P_n^{\rm{per}}(X, \beta)
\end{align}
whose closed points correspond to 
direct sums of the form 
\begin{align}\label{points:per3}
	I=
	I_0 \oplus 
	(V_0 \otimes S_0[-1])\oplus (V_1 \otimes S_1[-1]) \oplus \bigoplus_{i=1}^k (W_i \otimes \oO_{x_i}[-1])
\end{align}
where 
$I_0=(\oO_X \stackrel{s_0}{\to}F_0)$ for a pure quotient $(F_0, s_0)$, 
$V_0, V_1, W_i$
are finite dimensional vector spaces
and $x_1, \ldots, x_k \in X \setminus C$ are distinct points. 
\end{lem}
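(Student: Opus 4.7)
The plan is to mimic the proof of Lemma~\ref{lem:persys} almost verbatim, with the surface contraction $f \colon S \to T$ replaced by the flopping contraction $g \colon X \to Y$, and to use the same device (Remark~\ref{rmk:funcl}) to apply~\cite[Theorem~7.25]{AHLH} despite the non-additivity of the relevant slope function. The only genuine difference from the surface case is that $X$ is non-compact; but because $\PPer_{\le 1}(X/Y)$ by definition consists of perverse sheaves whose $\dR g_{\ast}(-)$ is compactly supported, this does not cause any trouble.

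First I would introduce the abelian category $\PPer_{\le 1}^{\dag}(X/Y)$ of pairs $(\oO_X^{\oplus r} \to F)$ with $r \in \mathbb{Z}_{\ge 0}$ and $F \in \PPer_{\le 1}(X/Y)$, and the function
\begin{align*}
p_{\nu} \colon (\oO_X^{\oplus r} \to F) \mapsto
\begin{cases} 0, & r>0, \\ [(f \circ g)_{\ast} F] \cdot h, & r=0, \end{cases}
\end{align*}
analogous to (\ref{slope:Phi}). Then I would identify $\pP_n^{\rm{per}}(X, \beta)$ with the moduli of $p_{\nu}$-semistable objects with $(1, \beta, n)$: the condition $\pcok(s) \in \PPer_{\le 0}(X/Y)$ is equivalent to saying that in any short exact sequence $0 \to I_1 \to I \to I_2 \to 0$ in $\PPer_{\le 1}^{\dag}(X/Y)$, one has $p_{\nu}(I_1) \le 0 \le p_{\nu}(I_2)$, by the same argument used for $S/T$.

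Next, applying~\cite[Theorem~7.25]{AHLH} with the function $l(f)$ of Remark~\ref{rmk:funcl} adapted from the $S/T$ case, I obtain the good moduli space~(\ref{perp:gmoduli}). For the classification of closed points, any polystable $I$ with $p_{\nu}(I)=0$ decomposes as a direct sum of stable objects each of $p_{\nu}$-slope zero. Using the same filtration argument as in Lemma~\ref{lem:persys}, the stable summands are either a pure quotient $(\oO_X \stackrel{s_0}{\to} F_0)$ (one summand only, since $r=1$) or of the form $F'[-1]$ for $F'$ a simple object of $\PPer_{\le 0}(X/Y)$; by (\ref{per0:gen}) the latter are $S_0, S_1$, and $\oO_x$ for $x \in X \setminus C$, yielding precisely the form (\ref{points:per3}).

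The only step requiring vigilance is checking that the construction genuinely lands in the classical stack $\pP_n^{\rm{per}}(X, \beta)$ (rather than inside the larger $\fM_n^{\rm{per}\dag}(X, \beta)$, which one would still need to verify is quasi-smooth by the analogue of Lemma~\ref{lem:pqsmooth}); this is routine since $\PPer_{\le 1}(X/Y) \otimes \omega_X \simeq \PPer_{\le 1}(X/Y)$ as $\omega_X \cong \oO_X$ on the Calabi-Yau $X$, making the Serre-duality step even cleaner than for surfaces. The expected main obstacle, which is actually benign, is simply bookkeeping between the perverse hearts on $X$ and $S$ so that the classification of polystable objects is complete; the point-set description (\ref{points:per3}) of polystable objects is forced by the fact that the simple objects of $\PPer_{\le 0}(X/Y)$ are listed in (\ref{per0:gen}).
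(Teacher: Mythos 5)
Your proposal follows essentially the same route as the paper, which proves this lemma precisely by repeating the argument of Lemma~\ref{lem:persys} with $g \colon X \to Y$ in place of $f \colon S \to T$: identify $\pP_n^{\rm{per}}(X,\beta)$ with the $p_{\nu}$-semistable pairs in $\PPer^{\dag}_{\le 1}(X/Y)$, invoke~\cite[Theorem~7.25]{AHLH} via the device of Remark~\ref{rmk:funcl}, and read off the polystable objects from the simple objects of $\PPer_{\le 0}(X/Y)$ listed in (\ref{per0:gen}). The only blemishes are cosmetic: the slope should be taken via the natural map $X \to T$ (your ``$(f\circ g)_{\ast}$'' does not typecheck, though the intent is clear), and the quasi-smoothness/Serre-duality aside is unnecessary since the statement concerns only the classical stack.
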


We note that under the equivalence (\ref{equiv:X}), 
the moduli stack $\pP_n^{\rm{per}}(X, \beta)$
is identified with the moduli 
stack of 
pairs 
\begin{align}\label{pair:NC}
	(F, s), \ 
	F \in \Coh_{\le 1}(A_X), \ 
	s \colon g_{\ast}\pi^{\ast}\eE^{\vee} \to F
\end{align}
such that $F \in \Coh_{\le 1}(A_X)$ has 
a one dimensional 
support on $Y$, 
$s$ is a morphism in $\Coh(A_X)$ 
with 
$\Cok(s) \in \Coh_{\le 0}(A_X)$.

\begin{lem}\label{lem:ncan}
	The moduli stack $\pP_n^{\rm{per}}(X, \beta)$
	is isomorphic to the moduli stack of 
	objects in the extension closure 
	\begin{align}\label{ext:NC}
		I\in \langle \oO_{\overline{X}}, \PPer_{\le 1}(X/Y)[-1] 
		\rangle_{\rm{ex}}
	\end{align}
	satisfying $\pH^1(I) \in \PPer_{\le 0}(X/Y)$
	and $\cl(I)=(1, \beta, n)$. 
	\end{lem}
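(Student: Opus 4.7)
The plan is to construct mutually inverse morphisms between the two moduli stacks via the usual cone construction. Given a $T$-family of perverse coherent systems $(F,s)$ with $F\in \PPer_{\le 1}(X/Y)$ relative to $T$ and $s\colon \oO_{X\times T}\to F$, form the two-term complex
\begin{align*}
I\cneq (\oO_{\overline{X}\times T}\stackrel{s}{\to}F)
\end{align*}
with $\oO_{\overline{X}\times T}$ placed in cohomological degree $0$ and $F$ (extended by zero from $X\times T$ to $\overline{X}\times T$, which coincides with the proper pushforward since $F$ is compactly supported in $X$) placed in cohomological degree $1$. This $I$ sits in a natural distinguished triangle $F[-1]\to I\to \oO_{\overline{X}\times T}$, exhibiting it family-wise as an extension in $\langle \oO_{\overline{X}},\PPer_{\le 1}(X/Y)[-1]\rangle_{\rm{ex}}$. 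Applying perverse cohomology to this triangle and using $\oO_{\overline{X}}|_X=\oO_X\in \PPer(X/Y)$ yields the long exact sequence
\begin{align*}
0\to \pH^0(I)\to \oO_X\stackrel{s}{\to}F\to \pH^1(I)\to 0,
\end{align*}
so the condition $\pH^1(I)\in \PPer_{\le 0}(X/Y)$ is equivalent to $\pcok(s)\in \PPer_{\le 0}(X/Y)$, and additivity of $\cl$ gives $\cl(I)=(1,[F],\chi(F))=(1,\beta,n)$.

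Conversely, given $I$ in the extension closure with $\cl(I)=(1,\beta,n)$ and $\pH^1(I)\in \PPer_{\le 0}(X/Y)$, the rank-one condition forces the existence of a canonical quotient map $I\twoheadrightarrow \oO_{\overline{X}}$ whose kernel has the form $F[-1]$ with $F\in \PPer_{\le 1}(X/Y)$; the boundary morphism $\oO_{\overline{X}}\to F$ of the resulting triangle, restricted along $X\subset \overline{X}$, produces the section $s\colon \oO_X\to F$. The two constructions are manifestly inverse on $T$-points, and the perverse cohomology long exact sequence above shows that the condition on $\pH^1(I)$ is precisely the pure-quotient condition on $(F,s)$ needed to land in $\pP_n^{\rm{per}}(X,\beta)$.

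The main obstacle is functoriality in families: specifically, verifying that the "rank-one quotient" $I\twoheadrightarrow \oO_{\overline{X}}$ is intrinsic to $I$ and compatible with arbitrary base change on $T$, so that the inverse construction assembles into an honest morphism of stacks rather than just a pointwise bijection. The cleanest route is to apply the Van den Bergh tilting equivalence (\ref{equiv:X}) $\dR g_\ast\dR \hH om(\pi^\ast \eE,-)\colon \PPer(X/Y)\stackrel{\sim}{\to}\Coh(A_X)$, under which perverse coherent systems on $X$ become honest coherent systems (\ref{pair:NC}) of $A_X$-modules on the non-commutative scheme $(Y,A_X)$ and the extension closure $\langle \oO_{\overline{X}},\PPer_{\le 1}(X/Y)[-1]\rangle_{\rm{ex}}$ becomes the analog of $\aA_X$ for $A_X$-modules. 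The lemma then reduces to the standard identification of PT-type pair moduli spaces with rank-one objects in the corresponding extension closure, which was established for ordinary coherent sheaves in~\cite{TocatDT} and transports verbatim to the non-commutative setting since only the formal properties of a tilting pair on $\Coh_{\le 1}$ enter the argument.
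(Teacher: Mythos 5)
Your proposal is correct and, in its final form, coincides with the paper's proof: the paper likewise identifies $\pP_n^{\rm{per}}(X,\beta)$ with the moduli stack of pairs (\ref{pair:NC}) via the tilting equivalence (\ref{equiv:X}) and then invokes the argument of \cite[Lemma~4.2.2]{TocatDT} applied to the non-commutative scheme $(Y,A_X)$. Your preliminary cone-construction sketch serves as motivation, and you correctly flag the family-functoriality issue and resolve it by exactly this reduction, so the approaches agree.
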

\begin{proof}
	From the identification of $\pP_n^{\rm{per}}(X, \beta)$ with 
	the moduli stack of pairs (\ref{pair:NC}), 
	the lemma follows from the same arguments
	of~\cite[Lemma~4.2.2]{TocatDT}
	applied for the non-commutative scheme $(Y, A_{X})$. 
	\end{proof}

\begin{lem}\label{lem:gmoduli:perv}
	For $t\ge 1/2$, we have
	the open immersion
	\begin{align}\label{open:perv}
		\pP_n^t(X, \beta) \subset \pP_n^{\rm{per}}(X, \beta). 
	\end{align}
\end{lem}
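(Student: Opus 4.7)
The plan is to combine Lemma~\ref{lem:agg2}'s characterization of $\mu_t^\dag$-semistability in $\aA_X$ with Lemma~\ref{lem:ncan}'s identification of $\pP_n^{\rm per}(X, \beta)$ as the moduli of objects in $\langle \oO_{\overline{X}}, \PPer_{\le 1}(X/Y)[-1]\rangle_{\rm{ex}}$ satisfying $\pH^1 \in \PPer_{\le 0}(X/Y)$, treating the set-theoretic containment and the openness in turn.

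For the containment, let $I \in \aA_X$ be $\mu_t^\dag$-semistable with $\cl(I) = (1, \beta, n)$ and $t \ge 1/2$, and write $I_Z = \hH^0(I)$, $G = \hH^1(I)$. Applying $\RHom(-, \oO_C(m)[-1])$ to the triangle $I_Z \to I \to G[-1]$ yields $\Hom(I, \oO_C(m)[-1]) \cong \Hom(G, \oO_C(m))$ (the negative $\Ext$'s of coherent sheaves vanish), so condition~(iii) of Lemma~\ref{lem:agg2} applied to all $m \le -1$ (which lie in the range $t > m+1$ when $t \ge 1/2$) gives $\Hom(G, \oO_C(m)) = 0$. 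By Remark~\ref{rmk:tilting} combined with the classification of coherent sheaves on $C \cong \mathbb{P}^1$, every non-zero object of $\fF$ decomposes as $\bigoplus \oO_C(m_i)$ with $m_i \le -1$ (torsion sheaves $k(x)$ for $x \in C$ satisfy $g_\ast k(x) \ne 0$ and hence lie in $\tT$ rather than $\fF$) and so surjects onto some $\oO_C(m)$; thus $G \in \tT$. Moreover $\oO_Z \in \tT$ automatically, since any non-zero quotient $\oO_Z \twoheadrightarrow \oO_C(m)$ with $m \le -1$ would pull back along $\oO_X \twoheadrightarrow \oO_Z$ to a non-zero element of $H^0(\oO_C(m)) = 0$. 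With $\oO_Z, G \in \tT$, the perverse cohomology long exact sequence for $I_Z \to I \to G[-1]$ gives $\pH^i(I) = 0$ for $i \ne 0, 1$ and a short exact sequence $0 \to F'_{I_Z}[1] \to \pH^1(I) \to G \to 0$ in $\PPer(X/Y)$, where $F'_{I_Z} \in \fF$ denotes the maximal $\fF$-quotient of $I_Z$. Since $\dR g_\ast(F'_{I_Z}[1]) = R^1 g_\ast F'_{I_Z}$ is supported at $p = g(C)$ and hence zero-dimensional, condition~(i) of Lemma~\ref{lem:agg2} combined with this sequence yields $\pH^1(I) \in \PPer_{\le 0}(X/Y)$; Lemma~\ref{lem:ncan} then identifies $I$ as a point of $\pP_n^{\rm per}(X, \beta)$.

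For openness, condition~(i) of Lemma~\ref{lem:agg2} is automatic on $\pP_n^{\rm per}(X, \beta)$ from the defining constraint $\pH^1(\mathcal{I}) \in \PPer_{\le 0}$ on the universal family $\mathcal{I}$; the remaining conditions~(ii)--(iv) are Hom-vanishings with fixed reference objects $\oO_C(m)[-1]$ and $\oO_x[-1]$, each defining an open substack by upper semi-continuity of $\dim \Hom$ on the universal family over $\pP_n^{\rm per}(X, \beta) \times \overline{X}$, with (iv) (indexed by $x \in X \setminus C$) reducing to purity of $\hH^1(\mathcal{I})$ along $X \setminus C$, also an open condition. The main obstacle I anticipate is the perverse cohomology bookkeeping in the containment step --- correctly identifying $\pH^1(I)$ via the triangle $I_Z \to I \to G[-1]$ and verifying that the $\fF$-contribution $F'_{I_Z}[1]$ from the ideal sheaf is harmless under $\dR g_\ast$; once that perverse-structure analysis is secured, both the containment and the openness follow routinely.
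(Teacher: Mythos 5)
Your argument is correct in substance, but it genuinely reroutes the paper's proof. The paper never invokes Lemma~\ref{lem:agg2}: it re-runs the slope estimate from the proof of Lemma~\ref{lem:abig} to show every Harder--Narasimhan factor of $\hH^1(I)$ has slope $\ge t$, hence for $a\gg 0$ is a direct sum of $\oO_C(k-1)$ with $k\ge 1$, so that $\hH^1(I)\in\PPer_{\le 0}(X/Y)$; it then checks $\oO_Z\in\PPer_{\le 1}(X/Y)$ directly (via $R^1g_{\ast}\oO_X=0$ and $\Hom(\oO_Z,\oO_C(-1))=0$) and observes that the cokernel of $\oO_X\to\oO_Z$ in $\PPer(X/Y)$ is generically trivial off $C$, hence lies in $\PPer_{\le 0}(X/Y)$, before concluding with Lemma~\ref{lem:ncan}. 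You instead import conditions (i) and (iii) of Lemma~\ref{lem:agg2} (so the $a\gg 0$ slope estimate enters only through that lemma), deduce $G=\hH^1(I)\in\tT$, and compute $\pH^1(I)$ from the triangle $I_Z\to I\to G[-1]$, keeping track of the maximal $\fF$-quotient of the ideal sheaf. Your route buys economy, since the numerical argument is not repeated; the paper's buys a sharper structural statement about $\hH^1(I)$ and avoids perverse truncation of the non-compactly-supported sheaf $I_Z$. Both end at Lemma~\ref{lem:ncan}, and your treatment of openness is no thinner than the paper's, which leaves it implicit.

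Three repairs you should make. First, your claim that every non-zero object of $\fF$ decomposes as $\bigoplus_i\oO_C(m_i)$ is false: Remark~\ref{rmk:tilting} only gives the extension closure, and $\fF$ contains indecomposable sheaves supported on thickenings of $C$ (non-split extensions of $\oO_C(-2)$ by $\oO_C(-1)$ exist because $H^0(C, N_{C/X}\otimes\oO_C(1))\neq 0$). What you actually need survives: every non-zero $F\in\fF$ admits a filtration with factors $\oO_C(m)$, $m\le -1$, hence surjects onto some such $\oO_C(m)$, and $\fF$ is closed under subsheaves since $g_{\ast}$ is left exact, so $\Hom(G,\oO_C(m))=0$ for all $m\le -1$ still forces $G\in\tT$. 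Second, Lemma~\ref{lem:ncan} requires, besides $\pH^1(I)\in\PPer_{\le 0}(X/Y)$, that $I$ lie in the extension closure $\langle\oO_{\overline{X}},\PPer_{\le 1}(X/Y)[-1]\rangle_{\rm ex}$; you have all the ingredients ($\oO_Z, G\in\tT\subset\PPer_{\le 1}(X/Y)$ together with the triangles $\oO_Z[-1]\to I_Z\to\oO_{\overline{X}}$ and $I_Z\to I\to G[-1]$) --- this is presumably why you proved $\oO_Z\in\tT$, which is otherwise unused --- but the conclusion should be drawn explicitly. Third, the torsion pair in Remark~\ref{rmk:tilting} is stated on $\Coh_{\le 1}(X)$, so identifying $\ptau^{\ge 1}I_Z=F'_{I_Z}[1]$ for the rank-one sheaf $I_Z$ uses its (standard, but unstated) extension to $\Coh(X)$; say so. A minor further caveat: condition (iv) of Lemma~\ref{lem:agg2} is not literally purity of $\hH^1(I)$ away from $C$, since $\Hom(\oO_x[-1],I)$ also receives a contribution from $\Ext^1(\oO_x,\hH^0(I))$, i.e. it constrains embedded points of $Z$ as well; for openness this does not matter, as semistability is an open condition in families.
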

\begin{proof}
	Let $I \in \aA_X$ be an object corresponding to a point in $\pP_n^t(X, \beta)$. 
	We have the exact sequence
	\begin{align*}
		0 \to \hH^0(I) \to I \to \hH^1(I)[-1] \to 0
	\end{align*}
	in $\aA_X$ 
	such that any Harder-Narasimhan factor $F$ of $\hH^1(I)$ satisfies 
	$\chi(F)/H \cdot [F] \ge t$. 
	Then as in the proof of Lemma~\ref{lem:abig},
	a choice $a \gg 0$ implies that $F$ is supported on $C$, 
	hence $F$ is a direct sum 
	of $\oO_C(k-1)$ for $k \ge 1$. 
	In particular we have $\hH^1(I) \in \PPer_{\le 0}(X/Y)$. 
	
	We also have $\hH^0(I) \cong I_Z$ for a compactly supported
	 closed subscheme
	$Z \subset X$ 
	with $\dim Z \le 1$. 
	Since 
	$R^1 g_{\ast}\oO_X=0$, we have 
	$R^1 g_{\ast}\oO_Z=0$.
	Moreover $\Hom(\oO_Z, \oO_C(-1))=0$ since 
	$\oO_X \twoheadrightarrow \oO_Z$ is a surjection of coherent sheaves
	and $\Hom(\oO_X, \oO_C(-1))=0$. 
	It follows that $\oO_Z \in \PPer_{\le 1}(X/Y)$. 
	Therefore   
	the morphism $\oO_X \to \oO_Z$ is a morphism in 
	$\PPer(X/Y)$
	which is generically surjective outside $C$, 
	hence its   
	cokernel in $\PPer(X/Y)$ is an object in 
	$\PPer_{\le 0}(X/Y)$. 
	It follows that $I$ is an object in (\ref{ext:NC})
	satisfying $\pH^1(I) \in \PPer_{\le 0}(X/Y)$, 
	so we obtain the the desired open immersion from Lemma~\ref{lem:ncan}. 
\end{proof}

\begin{lem}\label{lem:agg3}
	In the setting of Lemma~\ref{lem:gmoduli:perv}, 
	an object $I=(\oO_{\overline{X}} \stackrel{s}{\to} F)$ in 
	$\pP_n^{\rm{per}}(X, \beta)$ is an object in 
	$\pP_n^t(X, \beta)$ if and only if the following conditions 
	are satisfied: 
	\begin{enumerate}
		\item $F$ is a coherent sheaf, 
		\item $\Hom(\oO_C(m), F)=0$ for $m+1>t$, 
		\item $\Hom(I, \oO_C(m)[-1])=0$ for $t>m+1 \ge 1$, 
		\item $\Hom(\oO_x, F)=0$ for $x \in X \setminus C$.  
		\end{enumerate}
	\end{lem}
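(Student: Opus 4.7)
The plan is to deduce the lemma from Lemma~\ref{lem:agg2}, which characterizes $\mu_t^\dag$-semistability in $\aA_X$ by four vanishing conditions, by translating these conditions from the $\aA_X$-side into the $\pP_n^{\rm{per}}(X,\beta)$-side via the tilting relation of Remark~\ref{rmk:tilting} and the extension-closure characterization of perverse coherent systems from Lemma~\ref{lem:ncan}. The bridge is the observation that for $F \in \PPer_{\le 1}(X/Y)$ with canonical torsion decomposition $0 \to \fF'[1] \to F \to \tT' \to 0$ (w.r.t.\ the pair $(\fF,\tT)$ on $\Coh_{\le 1}(X)$), condition (i) that $F$ is a coherent sheaf is equivalent to $\fF' = 0$, i.e.\ $F \in \tT \subset \Coh_{\le 1}(X)$. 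When this holds, $I = (\oO_{\overline X} \to F)$ is an honest two-term object of $\aA_X$ with $\hH^0(I) = \ker(s) = I_Z$ and $\hH^1(I) = \cok(s)$.

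For the forward direction, if $I \in \pP_n^t(X,\beta)$, then $I$ is $\mu_t^\dag$-semistable, so the argument of Lemma~\ref{lem:gmoduli:perv} shows $\hH^1(I)$ is a sum of $\oO_C(k-1)$ for $k \ge 1$, which lies in $\tT$; combined with $\hH^0(I) = I_Z \in \Coh$ this forces $F$ to be a coherent sheaf, giving (i). The remaining vanishings (ii)--(iv) follow from Lemma~\ref{lem:agg2}(ii)--(iv) via the standard truncation triangle $\hH^0(I) \to I \to \hH^1(I)[-1]$ and the torsion-freeness of $I_Z$ (which kills $\Hom(\oO_C(m), I_Z)$ and $\Hom(\oO_x, I_Z)$), yielding identifications
\begin{align*}
\Hom(\oO_C(m)[-1], I) &\cong \Hom(\oO_C(m), \cok(s)), \\
\Hom(I, \oO_C(m)[-1]) &\cong \Hom(\cok(s), \oO_C(m)), \\
\Hom(\oO_x[-1], I) &\cong \Hom(\oO_x, \cok(s)),
\end{align*}
which are then promoted to vanishings on $F$ using the exact sequence $0 \to \oO_Z \to F \to \cok(s) \to 0$.

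For the backward direction, assume (i)--(iv). By (i), $I \in \aA_X$ with $\hH^1(I) = \cok(s)$, and I verify the four conditions of Lemma~\ref{lem:agg2}. Conditions (ii), (iv) and the $m+1 \ge 1$ portion of (iii) translate directly from our (ii), (iv), (iii) via the same Hom-identifications as above. The remaining $m \le -1$ case of Lemma~\ref{lem:agg2}(iii) is automatic from the perverse constraint $\pH^1(I) \in \PPer_{\le 0}(X/Y)$: using the explicit generators $S_0, S_1, \oO_x$ in~(\ref{per0:gen}) one sees $\Hom(\pH^1(I), \oO_C(m)) = 0$ for $m \le -1$, and this forces $\Hom(I, \oO_C(m)[-1]) = 0$ via a long exact sequence argument. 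Finally, $g_\ast \hH^1(I)$ being zero-dimensional (condition (i) of Lemma~\ref{lem:agg2}) follows by combining the perverse condition $\pH^1(I) \in \PPer_{\le 0}(X/Y)$ (which by definition has zero-dimensional support on $Y$) with our (iv) and (i), which rule out embedded 0-dimensional contributions off $C$.

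The main technical obstacle is the careful reconciliation of three flavors of ``cokernel'': the sheaf cokernel $\cok(s)$, the perverse cokernel $\pH^1(I) = \pcok(s)$, and the object $F$ itself. Under (i), $F$ differs from $\cok(s)$ by the subsheaf $\oO_Z = \im(s)$, and from $\pcok(s)$ by a potential shift involving $\ker(s)$; a careful analysis of the associated short exact sequences and the support of $Z$ (which by the compactness/$\PPer$-conditions is compactly supported with at most one-dimensional contributions on $C$) is needed to ensure that the Hom vanishings propagate correctly in both directions.
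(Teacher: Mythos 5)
Your overall skeleton matches the paper's: reduce to Lemma~\ref{lem:agg2}, kill the $m\le -1$ part of Lemma~\ref{lem:agg2}(iii) by perverse-degree reasons, and verify that $g_\ast\hH^1(I)$ is zero-dimensional. But two of your steps do not work as written. First, the Hom-translations are routed through the wrong triangle. From the truncation triangle $\hH^0(I)\to I\to \hH^1(I)[-1]$ one only gets exact sequences such as
\begin{align*}
\Ext^1(\oO_C(m), I_Z) \to \Hom(\oO_C(m)[-1], I) \to \Hom(\oO_C(m), \Cok(s)) \to \Ext^2(\oO_C(m), I_Z),
\end{align*}
and the outer terms do not vanish in general (torsion-freeness of $I_Z$ only kills $\Hom(\oO_C(m),I_Z)$), so your claimed isomorphisms $\Hom(\oO_C(m)[-1],I)\cong\Hom(\oO_C(m),\Cok(s))$ and $\Hom(\oO_x[-1],I)\cong\Hom(\oO_x,\Cok(s))$ are unjustified; moreover the ``promotion'' from $\Cok(s)$ to $F$ via $0\to\oO_Z\to F\to\Cok(s)\to 0$ would need $\Hom(\oO_C(m),\oO_Z)=\Hom(\oO_x,\oO_Z)=0$, which is not available a priori (for instance $Z$ can be thickened along $C$, giving nonzero maps $\oO_C(m)\to\oO_Z$). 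The correct translation, which is what the paper does, is to apply $\Hom(\oO_C(m),-)$, resp.\ $\Hom(\oO_x,-)$, to the exact sequence $0\to F[-1]\to I\to \oO_{\overline{X}}\to 0$ in $\aA_X$: since $\Hom(\oO_C(m),\oO_{\overline{X}})=\Ext^1(\oO_C(m),\oO_{\overline{X}})=0$ (and likewise for $\oO_x$), this gives $\Hom(\oO_C(m)[-1],I)\cong\Hom(\oO_C(m),F)$ and $\Hom(\oO_x[-1],I)\cong\Hom(\oO_x,F)$ on the nose, with no detour through $\Cok(s)$ or $\oO_Z$; note also that condition (iii) needs no translation at all, being literally the same Hom group in both lemmas.

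Second, and more seriously, you never actually prove condition (i) of Lemma~\ref{lem:agg2}, i.e.\ that $g_\ast\hH^1(I)$ is zero-dimensional, which is the real content of the ``if'' direction. The hypothesis only gives $\pH^1(I)=\pcok(s)\in\PPer_{\le 0}(X/Y)$; ``ruling out embedded zero-dimensional contributions off $C$'' is not the issue, since what must be excluded is a one-dimensional pushforward of the \emph{sheaf-theoretic} cokernel $\hH^1(I)=\Cok(s)$. You correctly identify the reconciliation of the three cokernels as the main obstacle, but you leave it unresolved. The paper closes this gap as follows: when $F$ is a sheaf, both $\Imm(s)=\oO_Z$ and $\Cok(s)$ lie in $\Coh(X)\cap\PPer(X/Y)$ (for $\oO_Z$ by the argument of Lemma~\ref{lem:gmoduli:perv}), so $0\to(\oO_X\to\Imm(s))\to I\to(0\to\Cok(s))\to 0$ is an exact sequence of perverse coherent systems; the induced surjection $\pH^1(I)\twoheadrightarrow\Cok(s)$ in $\PPer(X/Y)$ then forces $\Cok(s)\in\PPer_{\le 0}(X/Y)$, whence $g_\ast\hH^1(I)$ is zero-dimensional. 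Your treatment of the $m\le -1$ case of Lemma~\ref{lem:agg2}(iii) is fine (it amounts to the paper's observation that $\oO_C(m)[-1]\in\PPer_{\le 0}(X/Y)[-2]$ while $I$ has perverse amplitude $[0,1]$), but without the two fixes above the proof is incomplete.
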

\begin{proof}
	The only if direction is obvious, so we only prove the 
	if direction. 
	Supopse that $I$ satisfies (i) to (iv). 
	Note that the condition (ii) is 
	equivalent to 
	$\Hom(\oO_C(m)[-1], I)=0$
	for $m+1>t$ 
	by applying $\Hom(\oO_C(m), -)$ to the 
	exact sequence 
	\begin{align*}
		0 \to F[-1] \to I \to \oO_X \to 0
		\end{align*} in $\aA_X$. 
	Similarly (iii) is equivalent to $\Hom(\oO_x[-1], I)=0$ for $x \in X \setminus C$. 
	Therefore by Lemma~\ref{lem:agg2}, it is enough 
	to show that $g_{\ast}\hH^1(I)$ is zero dimensional 
	and $\Hom(I, \oO_C(m)[-1])=0$ for $m \le -1$. 
	The second condition is obvoius since 
	for $m \le -1$ we have $\oO_C(m)[-1] \in \PPer_{\le 0}(X/Y)[-2]$.
	As for the first condition, 
	let us take the exact sequence
	$0 \to \Imm(s) \to F \to \Cok(s) \to 0$ in $\Coh(X)$. 
	As $F$ is a sheaf, we have $\Cok(s) \in \Coh(X) \cap \PPer(X/Y)$. 
	By the argument of Lemma~\ref{lem:gmoduli:perv}, we also 
	have $\Imm(s) \in \Coh(X) \cap \PPer(X/Y)$. 
	It follows that we have the exact sequence of 
	perverse coherent systems
	\begin{align*}
		0 \to (\oO_X \to \Imm(s)) \to (\oO_X \stackrel{s}{\to}F) 
		\to (0 \to \Cok(s)) \to 0. 
		\end{align*}
	Therefore we obtain the surjection 
	$\pH^1(I) \twoheadrightarrow \Cok(s)$
	in $\PPer(X/Y)$. 
	As $\pH^1(I) \in \PPer_{\le 0}(X/Y)$, 
	we have $\Cok(s) \in \PPer_{\le 0}(X/Y)$, 
	hence $\hH^1(I) =\Cok(s)$ satisfies that $g_{\ast}\hH^1(I)$ is 
	zero dimensional. 
	\end{proof}

We also define the open substack
\begin{align}\label{Pn:nc}
	P_n^{\rm{nc}}(X, \beta) 
	\subset \pP_n^{\rm{per}}(X, \beta)
	\end{align}
to be 
consisting of pairs $(F, s)$ in $\pP_n^{\rm{per}}(X, \beta)$
satisfying
$\Hom(\PPer_{\le 0}(X/Y), F)=0$. 
Then under the equivalence (\ref{equiv:X}),
the above stack is identified
with the moduli stack of pairs (\ref{pair:NC})
such that 
$F \in \Coh_{\le 1}(A_X)$ is pure one dimensional and 
$s$ is surjective in dimension one. 
Therefore $P_n^{\rm{nc}}(X, \beta)$ is 
regraded as an analogue of PT stable pair 
moduli space for the non-commutative scheme $(Y, A_X)$.  

\begin{lem}\label{lem:PTnc}
	For $t=1-0$, we have the isomorphism 
	\begin{align*}
		P_n^{t=1-0}(X, \beta) \cong P_n^{\rm{nc}}(X, \beta). 
		\end{align*}
	\end{lem}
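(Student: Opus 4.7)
The plan is to realize both $P_n^{t=1-0}(X,\beta)$ and $P_n^{\rm{nc}}(X,\beta)$ as the same open substack of $\pP_n^{\rm{per}}(X,\beta)$, using the open embedding $\pP_n^t(X,\beta)\subset\pP_n^{\rm{per}}(X,\beta)$ from Lemma~\ref{lem:gmoduli:perv} and the pointwise characterization in Lemma~\ref{lem:agg3}, and then matching the defining conditions.

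First I would specialize Lemma~\ref{lem:agg3} to $t=1-\varepsilon$ with $0<\varepsilon\ll 1$: condition (ii) becomes $\Hom(\oO_C(m), F)=0$ for every integer $m\ge 0$, condition (iii) is vacuous since no integer $m$ satisfies $1-\varepsilon>m+1\ge 1$, and (i), (iv) remain as stated. On the other side, using (\ref{per0:gen}) with $S_0=\oO_C$ and $S_1=\oO_C(-1)[1]$, the defining condition $\Hom(\PPer_{\le 0}(X/Y), F)=0$ of $P_n^{\rm{nc}}(X,\beta)$ unpacks to $\Hom(S_0, F)=\Hom(S_1, F)=0$ together with $\Hom(\oO_x, F)=0$ for $x\in X\setminus C$. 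A preliminary observation is that this ``nc'' condition forces $F$ to be a coherent sheaf: via Remark~\ref{rmk:tilting} write $F\in\PPer_{\le 1}(X/Y)$ as $0\to F'[1]\to F\to F''\to 0$ with $F'\in\fF$ generated by $\oO_C(m)$ for $m\le -1$ and $F''\in\tT\subset\Coh_{\le 1}(X)$; since $\dR g_{\ast}(F'[1])=0$ the subobject $F'[1]$ lies in $\PPer_{\le 0}(X/Y)$, and the hypothesis forces $F'=0$.

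With $F$ now a sheaf, the vanishing $\Hom(S_1, F)=\Ext^{-1}(\oO_C(-1), F)=0$ is automatic, so the ``nc'' condition reduces to $\Hom(\oO_C, F)=0$ plus $\Hom(\oO_x, F)=0$ for $x\in X\setminus C$. The core step is then the equivalence, for sheaves $F$, between $\Hom(\oO_C(m), F)=0$ for all $m\ge 0$ and $\Hom(\oO_C, F)=0$. The $m=0$ case is trivial. For $m\ge 1$, the essential intermediate input is $\Hom(\oO_p, F)=0$ for $p\in C$, which follows from $\Hom(\oO_C, F)=0$: applying $\Hom(-, F)$ to $0\to\oO_C(-1)\to\oO_C\to\oO_p\to 0$ produces an injection $\Hom(\oO_p, F)\hookrightarrow\Hom(\oO_C, F)=0$. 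Then the short exact sequence $0\to\oO_C\to\oO_C(m)\to\bigoplus_{i=1}^m\oO_{p_i}\to 0$, obtained from a section of $\oO_C(m)$ vanishing at $m$ distinct points $p_i\in C$, gives $\Hom(\oO_C(m), F)=0$.

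The only real subtlety is that condition (iv) of Lemma~\ref{lem:agg3} only covers $\oO_x$ for $x\in X\setminus C$, so the vanishing at points of $C$ has to be extracted from the $\oO_C$ condition as above rather than being assumed directly. Once the pointwise equivalence of the two sets of conditions is established, both $\pP_n^{t=1-\varepsilon}(X,\beta)$ and $P_n^{\rm{nc}}(X,\beta)$ are cut out by the same open conditions inside $\pP_n^{\rm{per}}(X,\beta)$, hence agree as substacks, and passing to good moduli spaces gives the claimed isomorphism.
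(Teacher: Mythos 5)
Your proof is correct and follows essentially the same route as the paper: both arguments identify the two spaces inside $\pP_n^{\rm{per}}(X, \beta)$ by matching the condition $\Hom(\PPer_{\le 0}(X/Y), F)=0$ against the criteria of Lemma~\ref{lem:agg3} at $t=1-0$ (the paper invokes $\oO_C(m) \in \PPer_{\le 0}(X/Y)$ for $m \ge 0$ directly, whereas you re-derive the needed vanishing from the generators via skyscrapers, which is the same content). One small slip: $\dR g_{\ast}(F'[1])$ need \emph{not} vanish (e.g.\ $R^1 g_{\ast}\oO_C(-2) \neq 0$); the correct reason that $F'[1] \in \PPer_{\le 0}(X/Y)$ is that $\fF[1]$ lies in the tilted heart and $\dR g_{\ast}(F'[1])$ is supported at the conifold point (equivalently, each $\oO_C(m)[1]$, $m \le -1$, is an iterated extension of $S_0, S_1$), so your conclusion stands.
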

\begin{proof}
	For $F \in \PPer_{\le 1}(X/Y)$, 
	by Remark~\ref{rmk:tilting}
	there is an exact sequence 
	\begin{align*}
		0 \to \hH^{-1}(F)[1] \to F \to \hH^0(F) \to 0
		\end{align*}
	in $\PPer_{\le 1}(X/Y)$
	such that $\hH^{-1}(F)[1] \in \PPer_{\le 0}(X/Y)$. 
	Therefore the condition 
	$\Hom(\PPer_{\le 0}(X/Y), F)$ implies that 
	$F$ is a sheaf. 
The above condition also implies that 
	\begin{align*}
		\Hom(\oO_C(m), F)=0  \ (m\ge 0), \ 
		\Hom(\oO_x, F)=0  \ (x \in X \setminus C).
		\end{align*}
It follows that a pair $(F, s)$ in $P_n^{\rm{nc}}(X, \beta)$
is a $\mu_t^{\dag}$-semistable object in $\aA_X$
by Lemma~\ref{lem:agg3}. 

Conversely for a pair $(F, s)$ in $P_n^{t=1-0}(X, \beta)$, 
we have $\Hom(\PPer_{\le 0}(X/Y), F)=0$ by 
Lemma~\ref{lem:agg3} and the description of 
generators of $\PPer_{\le 0}(X/Y)$ in (\ref{per0:gen}). 
Therefore it is a pair in $P_n^{\rm{nc}}(X, \beta)$. 
 	\end{proof}

By the above lemma, the sequence of d-critical flips in (\ref{seq:PT})
is in this case a sequence
\begin{align*}
	P_n(X, \beta)=P_n^{(k)}(X, \beta) \dashrightarrow 
	P_n^{(k-1)}(X, \beta) \dashrightarrow \cdots \dashrightarrow P_n^{(0)}(X, \beta)=P_n^{\rm{nc}}(X, \beta)
\end{align*}
where $k\gg 0$, 
which connects commutative stable pair moduli space $P_n(X, \beta)$
with non-commutative stable pair moduli space $P_n^{\rm{nc}}(X, \beta)$
by d-critical minimal model program. 

\subsection{The case of relative reduced curve classes}
We define the following curve classes on $S$: 
\begin{defi}\label{def:f-red}
	A class $\beta \in \mathrm{NS}(S)$ is called $f$-reduced if 
	$f_{\ast}\beta$ is a reduced class, i.e. 
	any effective divisor on $T$ with class
	$f_{\ast}\beta \in \mathrm{NS}(T)$ is a reduced divisor. 
\end{defi}
If an effective divisor $D$ on $S$ is of class $\beta$
which is $f$-reduced, then $D$ is of the form 
$k[C]+[C']$ where $k \in \mathbb{Z}_{\ge 0}$ and 
$C'$ is an effective divisor which does not contain $C$. 
We have the following lemma. 
\begin{lem}\label{lem:fred}
	Suppose that $\beta$ is $f$-reduced. 
	Then we have the isomorphism over $\pP_n^{\rm{per}}(S, \beta)$
	\begin{align}\label{isom:pervP}
		\pP_n^{\rm{per}}(X, \beta) \stackrel{\cong}{\to} t_0(\Omega_{\fP_n^{\rm{per}}(S, \beta)}[-1]). 
	\end{align}
\end{lem}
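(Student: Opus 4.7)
The plan is to construct a natural morphism $\Phi \colon \pP_n^{\rm{per}}(X,\beta) \to t_0(\Omega_{\fP_n^{\rm{per}}(S,\beta)}[-1])$ over $\pP_n^{\rm{per}}(S,\beta)$ and then to prove it is an isomorphism under the $f$-reducedness assumption. The underlying philosophy is the Higgs-bundle/spectral correspondence: a compactly supported sheaf on the local surface $X = \mathrm{Tot}_S(\omega_S)$ is equivalent to a pair $(F, \theta)$ consisting of a sheaf $F$ on $S$ together with an $\omega_S$-valued Higgs field $\theta \colon F \to F \otimes \omega_S$, and Serre duality on $S$ interprets $\theta$ as a class in the $(-1)$-shifted cotangent direction over the moduli of pairs on $S$. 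This mirrors the construction of \cite[Theorem~4.1.3]{TocatDT}, which identifies $t_0(\Omega_{\fM_n^{\dag}(S,\beta)}[-1])$ with a component of the moduli stack of D0-D2-D6 bound states on $X$ in the non-perverse setting.

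To construct $\Phi$, I would take $(F_X, s_X) \in \pP_n^{\rm{per}}(X,\beta)$, push forward under $\pi \colon X \to S$ to obtain $F = \pi_{\ast} F_X$ and $s = \pi_{\ast} s_X|_{\oO_S}$, and encode the remaining $\omega_S^{\vee}$-action as a Higgs field $\theta \colon F \to F \otimes \omega_S$. Checking that $(F, s) \in \pP_n^{\rm{per}}(S,\beta)$ reduces to verifying that $\pi_{\ast}$ preserves the perverse $t$-structures and the $\pcok$-condition, which follows from the commutative diagram relating the two flopping contractions $g$ and $f$ together with affineness of $\pi$. Applying $\dR\Hom(-, F)$ to the cone triangle $I \to \oO_S \to F$ associated to the universal pair and Serre-dualizing gives a canonical short exact sequence
\begin{align*}
0 \to \Hom_S(F, F \otimes \omega_S) \to \Ext^1_S(I, F)^{\vee} \to \Ker(\hH^1(F \otimes \omega_S) \to \Ext^1_S(F, F \otimes \omega_S)) \to 0,
\end{align*}
which identifies $\theta$ with a class in $\Ext^1_S(I, F)^{\vee} \cong \hH^1(\mathbb{T}_{\fP_n^{\rm{per}}(S,\beta)}|_{(F,s)})^{\vee}$, i.e., a point in the fiber of $t_0(\Omega_{\fP_n^{\rm{per}}(S,\beta)}[-1])$ over $(F,s)$. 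Performing this construction in families using the universal pair on $\pP_n^{\rm{per}}(X,\beta) \times X$ produces the desired morphism $\Phi$.

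To show $\Phi$ is an isomorphism, I would verify bijectivity on closed points and then that $\Phi$ is formally \'etale. The $f$-reducedness enters at two places. First, it forces the extra kernel term in the short exact sequence above to vanish, equivalently the surjectivity of the composition-with-$s$ map $\Ext^1_S(F, F) \to \hH^1(F)$; this is verified by analysing the structure of $F \in \PPer_{\le 1}(S/T)$ with $[F] = \beta$ and $f_{\ast}\beta$ reduced, using that deformations of $F$ obtained by varying its horizontal support already span $\hH^1(F)$. Second, it guarantees that every Higgs field $\theta$ on $(F, s)$ assembles with $(F,s)$ into a genuine perverse coherent system on $X$, i.e., the resulting $\oO_X$-module lies in $\PPer_{\le 1}(X/Y)$ with cokernel of the induced section in $\PPer_{\le 0}(X/Y)$. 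The formal-\'etaleness check can be reduced to a computation on closed points via the formal neighborhood theorem (Definition~\ref{defi:formalneigh}) and Lemma~\ref{lem:fmloc}.

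The hardest step will be the uniform vanishing of the kernel $\Ker(\hH^1(F \otimes \omega_S) \to \Ext^1_S(F, F \otimes \omega_S))$ across all closed points of $\pP_n^{\rm{per}}(S,\beta)$, including the strictly polystable configurations described in Lemma~\ref{lem:persys} which contain summands of the form $\oO_C$, $\oO_C(-1)[1]$, or zero-dimensional sheaves. For such degenerate configurations a direct computation using the explicit Ext-quiver structure is required, and the $f$-reducedness is used to rule out excessive vertical degeneration of the horizontal support of $F$. A convenient strategy is to work on formal fibers of the good moduli space morphism $\pP_n^{\rm{per}}(S,\beta) \to P_n^{\rm{per}}(S,\beta)$, reduce to a Kuranishi model via Definition~\ref{defi:formalneigh}, and verify the identification of Higgs-field data with the $(-1)$-shifted cotangent explicitly on each formal chart.
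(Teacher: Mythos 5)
Your overall skeleton (pushforward to $S$, Higgs-field/spectral interpretation, deformation-theoretic identification of the fibre of $t_0(\Omega[-1])$ with $\Hom(F,F\otimes\omega_S)$ plus a correction term, reducedness to kill the correction) is the right kind of argument, and you correctly locate the discrepancy term (up to a slip: the third term should be $\Ker(\Ext^1_S(F,\omega_S)\to\Ext^1_S(F,F\otimes\omega_S))$ with $\Ext^1_S(F,\omega_S)\cong H^1(S,F)^{\vee}$, consistent with your dual reformulation). But there are two genuine gaps. First, the Higgs-field encoding over $S$ is not available for the perverse hearts. Objects of $\PPer_{\le 1}(X/Y)$ are honest complexes: by Remark~\ref{rmk:tilting} their $\hH^{-1}$ lies in the extension closure of $\oO_C(m)$, $m\le -1$, and such objects genuinely occur in $\pP_n^{\rm per}(X,\beta)$ (the polystable points of Lemma~\ref{lem:persys2} contain $\oO_C(-1)[1]$-summands). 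For a complex, a morphism $\theta\colon F\to F\otimes\omega_S$ in $D^b(S)$ does not determine an $\oO_X$-module structure (one needs strict, chain-level module data, not a single derived-category map), and moreover $F\otimes\omega_S$ need not even lie in the heart (cf.\ the proof of Lemma~\ref{lem:pqsmooth}, where only $\PPer_{\le1}(S/T)\otimes\omega_S\subset\langle\PPer_{\le1},\PPer_{\le1}[-1]\rangle_{\rm ex}$ is available). So your key step ``every Higgs field assembles into a perverse coherent system on $X$'' is exactly the point that is unjustified. The paper's proof exists to avoid this: it first applies the tilting equivalences (\ref{equiv:pervS}), (\ref{equiv:X}) and Lemma~\ref{lem:ncan} to replace perverse objects by honest $A_X$-modules on $Y$ (affine over $T$), where the spectral construction is legitimate, and then invokes the argument of [TocatDT, Lemma~5.5.4] for the non-commutative scheme $(Y,A_X)$.

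Second, you invoke $f$-reducedness on the wrong space. On $S$ the class $\beta$ is not reduced: it can contain an arbitrary multiple of $C$, and the problematic summands ($\oO_C$, $\oO_C(-1)[1]$, and fibre-direction thickenings along $C$) live precisely in the direction that $f$-reducedness does not constrain, since $f$ contracts $C$. So an argument of the form ``deformations varying the reduced horizontal support span $H^1(F)$'' cannot establish the required vanishing at the strictly polystable points of Lemma~\ref{lem:persys}, and your heuristic that reducedness ``rules out excessive vertical degeneration'' is false as stated. Reducedness only becomes effective after pushing the problem to $Y$ (equivalently $T$), where the support of the corresponding $A_X$-module really is a reduced curve because $f_{\ast}\beta$ is reduced; this is the structural reason the paper routes the proof through $(Y,A_X)$ rather than through $\omega_S$-valued Higgs fields on $S$. (Minor additional point: Lemma~\ref{lem:fmloc} concerns Hom-spaces in singular-support quotient categories, not moduli stacks, so it is not the right tool for your formal-\'etaleness step.) As written, then, the proposal does not close: to repair it you would essentially have to reconstruct the tilting/NCCR reduction that constitutes the paper's proof.
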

\begin{proof}
Similarly to Lemma~\ref{lem:ncan}, 
the left hand side is identified with the moduli stack
of pairs (\ref{pair:NC}) 
such that $F$ has a reduced one dimensional support on $Y$
and $\Cok(s) \in \Coh_{\le 0}(A_X)$. 
Then 
the lemma 
follows from the argument of 
~\cite[Lemma~5.5.4]{TocatDT} for the 
non-commutative scheme $(Y, A_X)$. 
\end{proof}

For $t \ge 1/2$, we define the conical closed substack 
\begin{align*}
	\zZ^{{\rm{per}}, t\us} \subset  t_0(\Omega_{\fP_n^{\rm{per}}(S, \beta)}[-1])
	\end{align*}
to be the complement of the open embedding (\ref{open:perv}) via the 
isomorphism in Lemma~\ref{lem:fred}. 
We have the following alternative description of DT categories 
in this case: 
\begin{lem}\label{lem:altenative}
	Suppose that $\beta$ is $f$-reduced. 
	Then for $t\ge 1/2$, there is an equivalence 
	\begin{align*}
		\dDT^{\C}(\pP_n^t(X, \beta)) \stackrel{\sim}{\to}
		\Dbc(\fP_n^{\rm{per}}(S, \beta))/\cC_{\zZ^{{\rm{per}}, t\us} }. 
		\end{align*}
	\end{lem}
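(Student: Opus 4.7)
The plan is to identify both singular-support quotients with a common DT category obtained via Lemma~\ref{lem:replace0} applied to a suitable open derived substack $\fU$ shared by both $\fM_n^{\dag}(S,\beta)$ and $\fP_n^{\rm{per}}(S,\beta)$.

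First, both derived stacks parameterize pairs $(\oO_S\to F)$ with $F$ in different hearts of $\Dbc(S)$, and I would construct a common open derived substack $\fU$ of both by imposing the condition that $F$ lies simultaneously in $\Coh_{\le 1}(S)$ and $\PPer_{\le 1}(S/T)$ (concretely, $F$ is a coherent sheaf with $\Hom(F,\oO_C(-1))=0$) together with the cokernel condition $\Cok(s)\in \PPer_{\le 0}(S/T)$. Since all these conditions are open, $\fU$ sits as an open derived substack inside both $\fM_n^{\dag}(S,\beta)$ and $\fP_n^{\rm{per}}(S,\beta)$. Let $\fU_{\rm{qc}}\subset \fU$ be a quasi-compact open derived substack containing all classical points of $\pP_n^t(X,\beta)$.

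Second, I would verify the hypothesis of Lemma~\ref{lem:replace0} on each side. On the sheaf side, a classical point in $t_0(\fM_n^{\dag}(S,\beta)_{\rm{qc}})\setminus U$ parameterizes a pair $(\oO_S\to F)$ with $F\in\Coh_{\le 1}(S)$ satisfying $\Hom(F,\oO_C(-1))\neq 0$. Pulling back via $\pi$ and lifting to a point $I$ of $t_0(\Omega_{\fM_n^{\dag}(S,\beta)_{\rm{qc}}}[-1])$, the nonzero map $F\to\oO_C(-1)$ induces a morphism $I\to \oO_C(k)[-1]$ in $\aA_X$ with $k\le -1$, which violates condition (iii) of Lemma~\ref{lem:agg2} for $t\ge 1/2$; hence $p_0^{-1}(t_0(\fM_n^{\dag}(S,\beta)_{\rm{qc}})\setminus U)\subset \zZ^{t\us}$. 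Dually on the perverse side, a classical point in the complement corresponds to a perverse pair with $\hH^{-1}(F)\ne 0$, which is an extension of sheaves $\oO_C(k)[1]$ with $k\le -1$; using Lemma~\ref{lem:fred} to identify $\pP_n^{\rm{per}}(X,\beta)$ with $t_0(\Omega_{\fP_n^{\rm{per}}(S,\beta)}[-1])$, any corresponding $I$ acquires a nonzero $\Hom(I,\oO_C(m)[-1])$ with $m+1<t$, violating condition (iii) of Lemma~\ref{lem:agg3}, and hence lies in $\zZ^{{\rm{per}},t\us}$.

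With both inclusions verified, two applications of Lemma~\ref{lem:replace0} yield
\begin{align*}
\Dbc(\fM_n^{\dag}(S,\beta)_{\rm{qc}})/\cC_{\zZ^{t\us}} \simeq \Dbc(\fU_{\rm{qc}})/\cC_{\zZ_{\fU}} \simeq \Dbc(\fP_n^{\rm{per}}(S,\beta))/\cC_{\zZ^{{\rm{per}},t\us}},
\end{align*}
where $\zZ_{\fU}\cneq t_0(\Omega_{\fU_{\rm{qc}}}[-1])\setminus \pP_n^t(X,\beta)$ and the middle identification uses the fact that both $\zZ^{t\us}$ and $\zZ^{{\rm{per}},t\us}$ restrict to $\zZ_{\fU}$ on $t_0(\Omega_{\fU_{\rm{qc}}}[-1])$, which follows from Lemma~\ref{lem:fred} and the open immersion in Lemma~\ref{lem:gmoduli:perv}.

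The main obstacle is the verification in the second step: checking that every classical point in the complement of $\fU$ inside either $\fM_n^{\dag}(S,\beta)_{\rm{qc}}$ or $\fP_n^{\rm{per}}(S,\beta)$ lifts, under $p_0$, only to points in the appropriate unstable locus. This uses the slope hypothesis $t\ge 1/2$ (and the choice $a\gg 0$ from Lemma~\ref{lem:abig}) through Lemmas~\ref{lem:agg2} and \ref{lem:agg3}, as well as the $f$-reducedness of $\beta$ through Lemma~\ref{lem:fred} to make the perverse description of the $(-1)$-shifted cotangent available.
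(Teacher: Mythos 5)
Your strategy is essentially the paper's: your $\fU$ is exactly the open substack $\mathfrak{Q}_n^{\rm{per}}(S,\beta)\subset \fP_n^{\rm{per}}(S,\beta)$ of perverse coherent systems whose underlying object is a sheaf, and the paper also obtains the equivalence by observing that this substack can serve as $\fM_n^{\dag}(S,\beta)_{\rm{qc}}$ in Definition~\ref{defi:PTt2} (via Remark~\ref{rmk:indep}) and then applying Lemma~\ref{lem:replace0} on the perverse side. So the skeleton is correct; the problems are in your verification of the hypothesis of Lemma~\ref{lem:replace0} on the sheaf side.

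First, the complement of $\fU$ in $\fM_n^{\dag}(S,\beta)_{\rm{qc}}$ is not exhausted by pairs with $\Hom(F,\oO_C(-1))\neq 0$: since your $\fU$ also imposes the cokernel condition, the complement contains pairs with $F\in\PPer_{\le 1}(S/T)$ but $\pcok(s)\notin\PPer_{\le 0}(S/T)$, and you never show that points of the $(-1)$-shifted cotangent lying over these are $\mu_t^{\dag}$-unstable. Second, in the case you do treat, the claim that a nonzero $\oO_S$-linear map $\psi\colon \pi_{\ast}F_X\to\oO_C(-1)$ \emph{induces} a morphism $I\to\oO_C(k)[-1]$ in $\aA_X$ is not immediate: $\psi$ need not be compatible with the $\pi_{\ast}\oO_X$-module structure on $\pi_{\ast}F_X$, so it does not directly lift to an $\oO_X$-linear map out of $F_X$. (It can be repaired: letting $\theta\colon \pi_{\ast}F_X\to\pi_{\ast}F_X\otimes\omega_S$ be the tautological multiplication map and $j$ maximal with $\psi\circ\theta^{j}\neq 0$, the map $\psi\circ\theta^{j}\colon \pi_{\ast}F_X\to\oO_C(-1-j)$ is $\oO_X$-linear, giving a nonzero element of $\Hom_{\aA_X}(I,\oO_C(k)[-1])$ with $k\le -1$ and contradicting Lemma~\ref{lem:agg2}(iii).) Both issues disappear if you argue the contrapositive, which is what the paper's one-line proof implicitly does: by Lemma~\ref{lem:gmoduli:perv} every point of $\pP_n^t(X,\beta)$ lies in $\pP_n^{\rm{per}}(X,\beta)$, by Lemma~\ref{lem:fred} (this is where $f$-reducedness enters) the latter is $t_0(\Omega_{\fP_n^{\rm{per}}(S,\beta)}[-1])$ and hence lies over $\pP_n^{\rm{per}}(S,\beta)$, and by Lemma~\ref{lem:agg3}(i) together with exactness of $\pi_{\ast}$ it lies over the sheaf locus $\fU$; this gives the required inclusion of $p_0^{-1}$ of the complement of $\fU$ into the unstable locus on both sides simultaneously. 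On the perverse side your citation should be to condition (i) of Lemma~\ref{lem:agg3} (or simply to the fact that objects of $\aA_X$ have sheaf-theoretic $F$), not to condition (iii), whose range is $t>m+1\ge 1$; the conclusion there is nevertheless correct.
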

\begin{proof}
	Let $\mathfrak{Q}_n^{\rm{per}}(S, \beta) \subset \fP_n^{\rm{per}}(S, \beta)$
	be the derived open substack of $(\oO_S \to F)$ such that 
	$F \in \Coh(S)$. 
	Then by Lemma~\ref{lem:gmoduli:perv} and Lemma~\ref{lem:fred}, 
	we can take $\fM_n^{\dag}(S, \beta)_{\rm{qc}}$ to be 
	$\mathfrak{Q}_n^{\rm{per}}(S, \beta)$
	in Definition~\ref{defi:PTt2}. 
	Then the lemma follows from Lemma~\ref{lem:replace0}. 
	\end{proof}

Below we describe the open substack (\ref{open:perv}) in terms of semistable 
points with respect to some $\mathbb{R}$-line bundle on $\pP_n^{\rm{per}}(X, \beta)$. 
We denote by 
\begin{align*}
	(\oO_{S \times \pP} \to \fF_{\pP}) 
	\in \Dbc(S \times \pP_n^{\rm{per}}(S, \beta))
\end{align*}
the universal perverse coherent 
systems. 
For $t \ge 1/2$, 
we define the following $\mathbb{R}$-line bundle on 
$\pP_n^{\rm{per}}(S, \beta)$
\begin{align*}
\lL_{t} \cneq \det \dR p_{\pP_{\ast}}(\fF_{\pP})^{t-1} \otimes 
	\det \dR p_{\pP_{\ast}}(\fF_{\pP} \boxtimes \oO_S(C))^{-t}. 
\end{align*}
Here $p_{\pP} \colon S \times \pP_n^{\rm{per}}(S, \beta) \to 
\pP_n^{\rm{per}}(S, \beta)$ is the projection. 
We also denote its pull-back to the $(-1)$-shifted cotangent
$t_0(\Omega_{\fP_n^{\rm{per}}(S, \beta)}[-1])$
by $\lL_{t}$, and also regard it
as a $\mathbb{R}$-line bundle on 
$\pP_n^{\rm{per}}(X, \beta)$
via the isomorphism (\ref{isom:pervP}). 
By setting $l_t \cneq c_1(\lL_t)$, we have the following proposition: 
\begin{prop}\label{prop:sss}
	The open substack (\ref{open:perv}) is the $l_{t}$-semistable locus, i.e. 
	\begin{align}\label{id:PM:sss}
			\pP_n^t(X, \beta) = \pP_n^{\rm{per}}(X, \beta)^{l_{t}\sss}. 	
	\end{align}
\end{prop}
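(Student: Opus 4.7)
My plan is to prove the set-theoretic identity (\ref{id:PM:sss}) by invoking the Hilbert-Mumford criterion for $l_t$-semistability and matching it with the characterization of $\mu_t^{\dag}$-semistability supplied by Lemma~\ref{lem:agg3}. First I would recall that $I \in \pP_n^{\rm{per}}(X, \beta)$ is $l_t$-semistable iff $q^{-1}f^{\ast}l_t \ge 0$ for every map $f \colon \Theta \to \pP_n^{\rm{per}}(X, \beta)$ with $f(1) \cong I$. Such maps correspond to $\mathbb{Z}$-weighted filtrations $I_\bullet$ of $I$ in the abelian category of perverse coherent systems on $X$, subject to the graded object also lying in the open locus $\pP_n^{\rm{per}}$; this last requirement translates to the constraint $\pcok \in \PPer_{\le 0}(X/Y)$ on the graded pair.

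Next I would compute the weight for a general filtration. Using Riemann-Roch to obtain $\chi(G(C)) = \chi(G) + C \cdot [G]$ for any one-dimensional $G$, the $\lL_t$-weight at the graded object of a filtration with weights $w_1 > \cdots > w_k \ge 0$ on graded pieces is $\sum_i w_i \ell_t(G_i)$, where $\ell_t(G) \cneq -\chi(G) - t\, C \cdot [G]$ is an additive invariant on $\PPer_{\le 1}(X/Y)$ and $G_i$ denotes the $F$-part of the $i$-th graded piece. The problem then reduces to matching $\ell_t$-positivity with conditions (i)--(iv) of Lemma~\ref{lem:agg3}, via the equivalence (\ref{equiv:X}).

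For each failing condition I would construct an explicit $\Theta$-destabilizer. Condition (i) is tested by the rank-zero sub $\oO_C(-1)[1]$, yielding $\ell_t = -t < 0$. Condition (iv) is tested by $(0 \to \oO_x)$ with $\ell_t(\oO_x) = -1 < 0$. The subtler conditions (ii) and (iii) require refining the naive two-step filtrations $\oO_C(m) \hookrightarrow F$ or $F \twoheadrightarrow \oO_C(m)$, since for $m \ne 0$ the direct graded limit falls outside $\pP_n^{\rm{per}}$ (as $\oO_C(m) \notin \PPer_{\le 0}$). By inserting the canonical chain $\oO_C \subset \oO_C(1) \subset \cdots \subset \oO_C(m)$ whose successive quotients are $\oO_C$ and skyscrapers at points of $C$ -- all in $\PPer_{\le 0}$ -- and choosing a steep weighting that emphasises the $\oO_C$-piece, one produces valid $\Theta$-destabilizers in $\pP_n^{\rm{per}}$ exactly when the corresponding $\mu_t^{\dag}$-condition is violated. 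Conversely, granted all four conditions and additivity of $\ell_t$, every filtration whose graded stays in $\pP_n^{\rm{per}}$ has non-negative total weight.

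The main obstacle will be the bookkeeping between the open-stack requirement $\pcok \in \PPer_{\le 0}$ at the graded limit and the available destabilizing filtrations: a straightforward two-step filtration from a Hom-nonvanishing frequently exits $\pP_n^{\rm{per}}$ at $t=0$, so the argument proceeds via refined multi-step filtrations whose weight analysis exploits the $a \gg 0$ choice of polarization (Lemma~\ref{lem:abig}) to confine all walls to $C$-supported subobjects -- precisely the regime where the $C$-only data encoded in $\lL_t$ captures $\mu_t^{\dag}$-slope comparisons.
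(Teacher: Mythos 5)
Your overall strategy (Hilbert--Mumford weights for maps $\Theta \to \pP_n^{\rm{per}}(X,\beta)$ matched against the four conditions of Lemma~\ref{lem:agg3}, with the per-piece weight $\ell_t(G)=-\chi(G)-t\,C\cdot[G]$) is the same as the paper's, and your computation of $\ell_t$ agrees with the weight of $\lL_t$. But your treatment of conditions (ii) and (iii) -- which you yourself identify as the crux -- rests on a false premise and a step that fails. You claim $\oO_C(m)\notin\PPer_{\le 0}(X/Y)$ for $m\neq 0$, but $\PPer_{\le 0}(X/Y)$ is the extension closure of $\oO_C$, $\oO_C(-1)[1]$ and $\oO_x$ ($x\notin C$), and since skyscrapers $\oO_p$ at $p\in C$ are extensions of $\oO_C(-1)[1]$ by $\oO_C$ in the heart, one gets $\oO_C(m)\in\PPer_{\le 0}(X/Y)$ for every $m\ge 0$ (and $\oO_C(m)[1]\in\PPer_{\le 0}(X/Y)$ for $m\le -1$). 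So the naive two-step filtrations do stay in the stack, and no refinement is needed. Worse, your proposed fix is wrong in direction: in the refined chain the innermost piece $\oO_C$ must carry the largest weight, so a ``steep weighting emphasising the $\oO_C$-piece'' gives total weight approximately $a_0\,\ell_t(\oO_C)=a_0(t-1)\ge 0$ for $t\ge 1$, which does not destabilize; the weighting that works is the equal one, i.e.\ exactly the two-step filtration you discarded. (A smaller slip of the same kind: for condition (i) the available subobject is $\oO_C(m)[1]$ for some $m\le -1$ coming from $\hH^{-1}(F)$, not necessarily $\oO_C(-1)[1]$; its weight $m+1-t$ is still negative, so this is harmless.)

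The second gap is the converse direction, which you dispose of in one sentence (``granted all four conditions and additivity of $\ell_t$, every filtration \ldots has non-negative total weight''). Additivity alone cannot give this: individual graded pieces of an admissible filtration can perfectly well have $\ell_t<0$ (e.g.\ $\oO_x$, skyscrapers on $C$, or $\oO_C(m)$ with $m+1>t$), so non-negativity is not termwise on graded pieces. The paper's argument here is the substantive half of the proof: rewrite $q^{-1}f^{\ast}l_t$ by Abel summation as a sum of $\ell_t$ of the subobjects $I_w$ ($w>0$) and of $-\ell_t$ of the quotients $I/I_w$ ($w<0$), and then use conditions (i)--(iv) to show that each such subobject is a sheaf lying in the extension closure of $\oO_C(m)$ with $m+1\le t$ (so $\ell_t\ge 0$), while each such quotient lies in the extension closure of $\oO_C(m)$ with $m+1\ge t$, $\oO_x$ with $x\notin C$, and $\oO_C(m)[1]$ with $m\le -1$ (so $-\ell_t\ge 0$). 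Without this structural analysis of which subquotients the conditions of Lemma~\ref{lem:agg3} actually permit, the inequality $q^{-1}f^{\ast}l_t\ge 0$ does not follow, so as written your proposal does not establish the inclusion $\pP_n^t(X,\beta)\subset \pP_n^{\rm{per}}(X,\beta)^{l_t\sss}$.
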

\begin{proof} 
	Let $I=(\oO_X \to F)$ be an object corresponding 
	to a point in $\pP_n^{\rm{per}}(X, \beta)$ which is $l_t$-semistable. 
	Suppose that it is not an object in $\pP_n^t(X, \beta)$. 
	Then by Lemma~\ref{lem:agg3}, it violates one of the conditions (i) to (iv). 
	Suppose that it violates (i), i.e. $F$ is not a sheaf. 
	Then by Remark~\ref{rmk:tilting}, 
	there is an exact sequence
	\begin{align*}
		0 \to \oO_C(m)[1] \to F \to F' \to 0
		\end{align*}
	in $\PPer_{\le 1}(X/Y)$ 
	for some $m\le -1$. 
	So there is a map $f \colon \Theta \to \pP_n^{\rm{per}}(X, \beta)$
	such that $f(1) \sim I$ and 
	\begin{align*}
		f(0) \sim (0 \to \oO_C(m)[1]) \oplus (\oO_X \to F'), 
		\end{align*}
	where $(0 \to \oO_C(m)[1])$ has $\C$-weight $1$ and $(\oO_X \to F')$ has $\C$-weight zero. 
	Then we have 
	\begin{align*}
		q^{-1}f^{\ast}l_t &=(t-1)\chi(\oO_C(m)[1])-t \chi(\oO_C(m-1)[1]) \\
		&=-t+m+1<0
		\end{align*}
	which violates the $l_t$-semistability. 
	Similarly one can show that if $I$ violates one of
	other conditions in Lemma~\ref{lem:agg3} then it violates the $l_t$-semistability. 
	Therefore $I$ is an object in $\pP_n^t(X, \beta)$. 
	
	Conversely suppose that $I=(\oO_X \to F)$ is an object in $\pP_n^t(X, \beta)$, 
	and take a map $f \colon \Theta \to \pP_n^{\rm{per}}(X, \beta)$
	with $f(1) \sim I$. 
	The map $f$ corresponds to a $\mathbb{Z}$-weighted filtration 
	of perverse coherent systems
	\begin{align*}
		I_{\bullet}=( \cdots \to I_{w+1} \to I_{w} \to I_{w-1} \to \cdots )
		\end{align*}
	such that 
	$I_0/I_1$ is of the form $(\oO_X \to F_0)$
	and $I_{w}/I_{w+1}$ for $w\neq 0$ is of the form 
	$(0 \to F_{w})$ for $F_{w} \in \PPer_{\le 0}(X/Y)$. 
	We have 
	\begin{align}\notag
		q^{-1}f^{\ast}l_t &=
		\sum_{w \in \mathbb{Z} \setminus \{0\}}
		w \cdot \left\{ (t-1)\chi(I_{w}/I_{w+1})-t \chi(I_w/I_{w+1}(C))\right\} \\
		\label{q-1l}&=\sum_{w<0} -(t-1)\chi(I/I_{w})+t \chi(I/I_w(C))
		+\sum_{w>0}(t-1)\chi(I_w)-t\chi(I_w(C)). 
		\end{align}
	For $w>0$, the object $I_w$ is of the form $(0 \to F_w')$ for $F_w' \in \PPer_{\le 0}(X/Y) \cap \Coh(X)$
	satisfying $\Hom(\oO_C(m), F_w')=0$ for $m+1>t$ and $\Hom(\oO_x, F_w')=0$
	for $x \notin X\setminus C$ 
	by Lemma~\ref{lem:agg3}. 
	Therefore $F_{w}'$ lies in the extension closure of $\oO_C(m)$ for $m+1 \le t$. 
	Since we have 
	\begin{align*}
		(t-1)\chi(\oO_C(m))-t \chi(\oO_C(m-1))=t-m-1 \ge 0,
		\end{align*}
	the second sum in (\ref{q-1l}) is non-negative. 
	Similary $I/I_w$ for $w<0$ is of the form 
	$(0 \to F_w')$ where $F_{w}'$ lies in the extension closure of 
	$\oO_C(m)$ for $m+1 \ge t$, $\oO_x$ for $x \in X \setminus C$
	and $\oO_C(m)[1]$ for $m\le -1$. 
	Therefore the first sum in (\ref{q-1l}) is also non-negative, 
	and we conclude that $I$ is $l_t$-semistable. 
\end{proof}

We also set $b \in H^4(\pP_n^{\rm{per}}(S, \beta), \mathbb{Q})$ by 
\begin{align}\label{b:pos}
	b\cneq \ch_2(\dR p_{\pP\ast}(\fF_{\pP}))
	+\ch_2(\dR p_{\pP\ast}(\fF_{\pP} \boxtimes \oO_S(C))). 
	\end{align}
\begin{lem}\label{lem:posi}
	The element (\ref{b:pos}) is positive definite in the sense 
	of Definition~\ref{defi:bpos}. 
	\end{lem}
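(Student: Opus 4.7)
The plan is to unpack positivity into a concrete statement about Euler characteristics of the weight components of a graded perverse coherent system, and then reduce it to the explicit simples of $\PPer_{\le 0}(S/T)$. A non-degenerate map $f \colon B\C \to \pP_n^{\rm{per}}(S,\beta)$ corresponds to a $\C$-equivariant perverse coherent system $(\oO_S \stackrel{s}{\to} F)$ in which the structure sheaf $\oO_S$ is taken in trivial weight; equivariance of $s$ forces $s$ to factor through the weight-zero summand $F_0$, so $F$ decomposes as $F = \bigoplus_{w \in \mathbb{Z}} F_w$ with each $F_w \in \PPer_{\le 1}(S/T)$, and non-degeneracy of $f$ is equivalent to the existence of some $F_w \ne 0$ with $w \ne 0$. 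Pulling back the universal perverse coherent sheaf along $(\id_S, f)$ yields the graded sheaf $\bigoplus_w F_w \otimes \mathbb{C}_w$ on $S \times B\C$, where $\mathbb{C}_w$ denotes the weight-$w$ character. Since $\ch_2(\mathbb{C}_w) = \tfrac{1}{2} w^2 q^2$ for $q = c_1(\oO(1))$ on $B\C$, a direct Grothendieck--Riemann--Roch calculation gives
\begin{align*}
q^{-2} f^* b \;=\; \tfrac{1}{2} \sum_{w \in \mathbb{Z}} w^2 \bigl( \chi(F_w) + \chi(F_w(C)) \bigr).
\end{align*}

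The crucial structural input is the defining condition $\pcok(s) \in \PPer_{\le 0}(S/T)$ of $\pP_n^{\rm{per}}(S,\beta)$. Since $s$ factors through $F_0$, the cokernel in the abelian category $\PPer(S/T)$ splits as $\pcok(s) \cong F_0/s(\oO_S) \oplus \bigoplus_{w \ne 0} F_w$, which forces $F_w \in \PPer_{\le 0}(S/T)$ for every $w \ne 0$. By the description (\ref{per0:gen}) combined with the Artinian structure of $\PPer_{\le 0}(S/T)$ coming from the non-commutative model (\ref{equiv:pervS}) (whose fiber $A_S|_p$ at the point $p=f(C)$ is a finite-dimensional path algebra with simples corresponding to $S_0 = \oO_C$ and $S_1 = \oO_C(-1)[1]$), the class $[F_w]$ in $K_0(\PPer_{\le 0}(S/T))$ is a non-negative integer combination of $[S_0]$, $[S_1]$, and $[\oO_x]$ for $x \in S \setminus C$.

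A direct Riemann--Roch calculation, using $\oO_S(C)|_C = \oO_C(-1)$ from $C^2 = -1$, gives
\begin{align*}
\chi(S_0) + \chi(S_0(C)) = 1, \quad \chi(S_1) + \chi(S_1(C)) = 1, \quad \chi(\oO_x) + \chi(\oO_x(C)) = 2,
\end{align*}
all strictly positive. By additivity of $\chi(-) + \chi((-)(C))$ in exact sequences, we get $\chi(F_w) + \chi(F_w(C)) \ge 0$ with equality if and only if $F_w = 0$; combined with non-degeneracy of $f$, this yields $q^{-2} f^* b > 0$. The main obstacle is the structural step in the second paragraph --- verifying that the pure cokernel condition genuinely passes to each $\C$-weight component, and that the semigroup of effective classes in $K_0(\PPer_{\le 0}(S/T))$ is generated by the three types of simple classes above --- so that the additivity of $\chi(-) + \chi((-)(C))$ delivers the correct sign on each summand.
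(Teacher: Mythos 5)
Your proposal is correct and follows essentially the same route as the paper: decompose the $\C$-equivariant perverse coherent system into weight pieces, note that equivariance and the condition $\pcok(s) \in \PPer_{\le 0}(S/T)$ force all nonzero-weight pieces $F_w$ to lie in $\PPer_{\le 0}(S/T)$, and reduce positivity of $q^{-2}f^{\ast}b$ to positivity of $\chi(F_w)+\chi(F_w(C))$ for $F_w \neq 0$. The only (harmless) divergence is at this last step, where the paper identifies $\chi(F_w)+\chi(F_w(C))$ with the length of the nonzero zero-dimensional sheaf $\dR f_{\ast}\dR\hH om(\eE, F_w)$ via the equivalence (\ref{equiv:pervS}), whereas you evaluate the pairing on the simple generators $S_0$, $S_1$, $\oO_x$ of (\ref{per0:gen}) and use additivity; both arguments are valid.
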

\begin{proof}
	Let $f \colon B\C \to \pP_n^{\rm{per}}(S, \beta)$ be a non-degenerate
	morphism. 
	The morphism $f$ corresponds to a perverse coherent system 
	$I$
	with $\C$-automorphisms. By taking the decomposition into 
	$\C$-weight part, it decomposes into 
	\begin{align*}
	I=(\oO_S \to F_0) \oplus \bigoplus_{i=1}^k F_i[-1]. 
		\end{align*}
	Here
	$(\oO_S \to F_0)$ has weight $0$ and 
	$F_i \in \ \PPer_{\le 0}(S/T)$ 
	has non-zero weight $w_i$. 
	It follows that 
	\begin{align}\label{q-2:b}
		q^{-2} f^{\ast}b=\sum_{i=1}^k w_i^2 \cdot \chi(F_i \otimes (\oO_S
		\oplus \oO_S(C))). 
		\end{align}
	As $f$ is non-degenerate, we have $k\ge 1$. 
We also have 
\begin{align*}
	\chi(F_i \otimes (\oO_S
	\oplus \oO_S(C)))
	=\chi(\dR f_{\ast}\dR \hH om(\eE, F_i))>0
	\end{align*}	
since $\dR f_{\ast} \dR \hH om(\eE, F_i)$ is a zero 
dimensional sheaf, which is non-zero by the equivalence (\ref{equiv:pervS}). 
Therefore (\ref{q-2:b}) is positive, and the lemma holds. 
	\end{proof}

\begin{rmk}\label{rmk:Ppull}
	We have the $\Theta$-stratifications of $\pP_n^{\rm{per}}(X, \beta)$
	and $\fP_n^{\rm{per}}(S, \beta)$
	with respect to $(l_t, b)$
	\begin{align*}
		&\pP_n^{\rm{per}}(X, \beta)=\sS_1^{\Omega} \sqcup \cdots 
		\sqcup \sS_N^{\Omega} \sqcup \pP_n^t(X, \beta), \\ 
		&\fP_n^{\rm{per}}(S, \beta)=\fS_1 \sqcup \cdots 
		\sqcup \fS_M \sqcup \fP_n^t(S, \beta). 
		\end{align*}
	The second $\Theta$-stratification 
	does not necessary satisfy the weight condition in Remark~\ref{rmk:sod}
	so that it is not pulled-back to the first 
	$\Theta$-stratification. 
	Indeed for $t\gg 0$, 
	a PT stable pair on $X$ does not push-forward to a 
	PT stable pair on $S$, since there exist stable pairs on $X$ 
	thickened into the fiber direction of $X \to S$ along $C$. 
	So $\pP_n^t(X, \beta)$ is strictly bigger than the pull-back of 
	$\pP_n^t(S, \beta)$, and 
	$\dDT^{\C}(\pP_n^t(X, \beta))$ is not necessary 
	equivalent to $\Dbc(\fP_n^t(S, \beta))$. 
	\end{rmk}

\begin{lem}\label{lem:assum:per}
	Assumption~\ref{assum:Wx} is satisfied for 
	$\nN=\pP_n^{\rm{per}}(X, \beta)$, 
	$l=l_t$ for $t=k\in \mathbb{Z}_{\ge 1}$
	and $l_{\pm}=l_{t\pm \varepsilon}$ for $0<\varepsilon \ll 1$. 	
	\end{lem}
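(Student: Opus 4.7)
The plan is to verify Assumption~\ref{assum:Wx} by describing $W_x$ explicitly at each closed point $x \in \pP_n^t(X, \beta)$ in terms of the Ext-quiver of the polystable object, and then to exploit the rigidity of the brick $E = \oO_C(k-1)[-1]$ on the Calabi-Yau 3-fold $X$ together with the $(-1)$-shifted symplectic structure on $\nN$. By Lemma~\ref{lem:abig} and Proposition~\ref{prop:sss}, every such closed point corresponds to a $\mu_k^{\dag}$-polystable object $J = I_1 \oplus V \otimes E$, where $I_1$ is $\mu_k^{\dag}$-stable, $V = \mathbb{C}^m$ for some $m \ge 1$, and $\Hom(I_1, E) = \Hom(E, I_1) = 0$ by the stability of the two summands of the same slope. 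Hence the stabilizer equals $G_x = \Aut(J) = \C \times \GL(V)$, with the first factor being $\Aut(I_1)$.

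The key geometric input is the rigidity of $E$ on $X$, coming from the fact that $C$ is a $(-1,-1)$-curve. Using the Koszul resolution of $\oO_C$ inside $X$ via the normal bundle $N_{C/X} = \oO_C(-1)^{\oplus 2}$, one obtains
\begin{align*}
\RHom_X(E, E) = \dR \Gamma\bigl(C, \oO_C \oplus \oO_C(-1)^{\oplus 2}[-1] \oplus \oO_C(-2)[-2]\bigr),
\end{align*}
giving $\Ext^0_X(E,E) = \Ext^3_X(E,E) = \mathbb{C}$ and $\Ext^1_X(E,E) = \Ext^2_X(E,E) = 0$, the expected vanishing for a rigid spherical brick on a CY3. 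Combined with the formal-neighborhood description of $\fP_n^{\rm{per}}(S,\beta)$ from Lemma~\ref{lem:persys} and the isomorphism $\pP_n^{\rm{per}}(X, \beta) \cong t_0(\Omega_{\fP_n^{\rm{per}}(S,\beta)}[-1])$ of Lemma~\ref{lem:fred}, the tangent space $W_x$ splits according to the Ext-quiver of $J$. The $\End(V)$-isotypic summand vanishes because the corresponding $\Ext$-groups on $S$ (namely $\Ext^i_S(\oO_C(k-1), \oO_C(k-1))$ for $i \ge 1$) vanish by the analogous Koszul computation on the surface. Accordingly I set
\begin{align*}
W_x = \bU_x \oplus \bS_x, \quad \bU_x := W_x^{11}, \quad \bS_x := W_x^{12} \oplus W_x^{21},
\end{align*}
where $W_x^{11}$ is the ``$I_1$--$I_1$'' summand and $W_x^{12}$, $W_x^{21}$ are the $V$-isotypic and $V^{\vee}$-isotypic parts, respectively, all as $G_x$-representations.

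The subrepresentation $\bU_x$ carries the trivial $G_x$-action: the central $\C = \Aut(I_1)$ acts by $\lambda \cdot \lambda^{-1} = 1$ on its own self-Ext-groups, and $\GL(V)$ does not appear. Therefore every one-parameter subgroup of $G_x$ fixes $\bU_x$ pointwise, and the Hilbert-Mumford criterion for $l_-$-semistability on $W_x$ depends only on the $\bS_x$-component, giving $W_x^{l_- \sss} = \bS_x^{l_- \sss} \oplus \bU_x$ as required. The self-duality $\bS_x \cong \bS_x^{\vee}$ is induced by the canonical symplectic pairing on the smooth Darboux chart $\xX_1 = [(\hH^0(\mathbb{T}_{\fM}|_{y}) \oplus \hH^1(\mathbb{T}_{\fM}|_{y})^{\vee})/G_y]$ underlying the $(-1)$-shifted symplectic structure on $\nN$: it pairs the $V$-isotypic summand with the $V^{\vee}$-isotypic summand, producing a $G_x$-equivariant isomorphism $W_x^{12} \cong (W_x^{21})^{\vee}$, whence $\bS_x \cong \bS_x^{\vee}$. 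The main technical obstacle will be making this symplectic pairing argument precise: one must verify that the Darboux pairing restricts to a \emph{nondegenerate} pairing between the off-diagonal Ext-quiver pieces, which amounts to combining the CY3 Serre duality $\Ext^i_X(I_1, E) \cong \Ext^{3-i}_X(E, I_1)^{\vee}$ with the Koszul-dual identification of $\Ext^{\bullet}_{\aA_X}$ in terms of tangent and shifted cotangent of $\fP_n^{\rm{per}}(S, \beta)$, following the strategy of \cite[Proposition~5.1.7]{TocatDT}.
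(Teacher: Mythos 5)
There is a genuine gap at the heart of your argument, namely the claimed self-duality of $\bS_x := W_x^{12}\oplus W_x^{21}$ via a symplectic pairing. The space $W_x$ is \emph{not} the deformation space of the polystable object on the Calabi--Yau threefold (which is where the $(-1)$-shifted symplectic structure and CY3 Serre duality live); it is the tangent space at $x$ of the ambient smooth chart $\xX_1^{l\sss}$, i.e. $\Ext^1_{Q_y}(R,R)$ for the Ext-quiver $Q_y$ of the collection attached to the closed point $y\in P_n^{\rm{per}}(S,\beta)$ lying under $x$, where $R=R_\infty\oplus(R_1\otimes W)$ and $R_1$ (corresponding to $\oO_C(k-1)$) has dimension vector $(0,k,k-1,0,\ldots,0)$. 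On this chart there is no nondegenerate pairing between the off-diagonal isotypic pieces: the Euler-form computation for $Q_y$ gives $\dim\Ext^1_{Q_y}(R_\infty,R_1)-\dim\Ext^1_{Q_y}(R_1,R_\infty)=k\ge 1$, so the $W$-isotypic and $W^{\vee}$-isotypic parts of $W_x$ have different dimensions and cannot be dual as $G_x$-representations. (CY3 duality pairs $\Ext^1_X(I_1,E)$ with $\Ext^2_X(E,I_1)$, which is not what enters $W_x$.) This asymmetry, equal to $k$, is precisely why the wall-crossing is a d-critical flip rather than a flop, so no refinement of your pairing argument can close the gap. Two further points are off for the same reason: the $\End(V)$-isotypic summand of $W_x$ is $\Ext^1_{Q_y}(R_1,R_1)\otimes\End(W)$, which does not vanish for $k\ge 2$ even though $\Ext^1_X(E,E)=0$ (rigidity controls the critical locus, not the chart); and the stabilizer is $G_x=\GL(W)$ rather than $\C\times\GL(W)$, since the framing rigidifies $I_1$.

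The correct decomposition is the opposite of yours: one puts into $\bS_x$ the summand $\Ext^1_{Q_y}(R_\infty,R_\infty)$, the $\End(W)$-isotypic part, and \emph{matched} copies $\Ext^1_{Q_y}(R_1,R_\infty)\otimes W\oplus\Ext^1_{Q_y}(R_1,R_\infty)\otimes W^{\vee}$, which is manifestly symmetric, while the non-symmetric excess $\bU_x=W^{\oplus k}$ (not a trivial representation) is split off. Condition (\ref{Wsss}) is then verified not by saying that $\bU_x$ is fixed by all one-parameter subgroups, but by using that $\lL_{t\pm\varepsilon}$ restricts on $G_x=\GL(W)$ to the character $g\mapsto(\det g)^{\pm\varepsilon}$ and invoking the criterion of \cite[Lemma~5.1.12]{TocatDT}. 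You would need to redo your last two steps along these lines, after first establishing the quiver description of $W_x$ and the exact sequence $0\to\mathbf{e}_0^{\oplus k}\to R_1\to\mathbf{e}_1^{\oplus k-1}\to 0$ that feeds the Euler-form count.
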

\begin{proof}
	Let us take a closed point $y \in P_n^{\rm{per}}(S, \beta)$
	corresponding to a polystable object (\ref{points:per2}), and 
	we use the same symbol $y \in \pP_n^{\rm{per}}(S, \beta)$
	for the corresponding closed point. 
	Then as in~\cite[Remark~5.4.2]{TocatDT}, the
	$G_y$-representation 
	\begin{align}\notag
		\hH^0(\mathbb{T}_{\fP_n^{\rm{per}}(S, \beta)}|_{y}) \oplus 
		\hH^1(\mathbb{T}_{\fP_n^{\rm{per}}(S, \beta)}|_{y})^{\vee}
		\end{align} 
	is the space of 
	representations of the Ext-quiver $Q_y$ associated with the collection 
	in $\Dbc(\overline{X})$
	\begin{align*}
		\{I_0'=(\oO_{\overline{X}} 
		\to i_{\ast}F_0), S_0[-1], S_1[-1], i_{\ast}\oO_{x_1}[-1], \ldots, i_{\ast}\oO_{x_k}[-1]\} 
		\end{align*}
	where $i \colon S \hookrightarrow X$ is the zero section, 
	with dimension vector $\vec{v}$
	given by 
	\begin{align*}
		\vec{v}=
		(1, \dim V_0, \dim V_1, \dim W_1, \ldots, \dim W_k).
		\end{align*} 
For $t=k \in \mathbb{Z}_{\ge 1}$, 
let us take a closed point
 $x \in \pP_n^{\rm{per}}(X, \beta)^{l_t \sss}$ which 
 maps to $y$ by the composition 
 \begin{align*}
 	\pP_n^{\rm{per}}(X, \beta) \stackrel{\pi_{\ast}}{\to}
 	 \pP_n^{\rm{per}}(S, \beta) \stackrel{\pi_{\pP}}{\to}
 	  P_n^{\rm{per}}(S, \beta). 
 	\end{align*}
 By Lemma~\ref{lem:abig} and Proposition~\ref{prop:sss}, 
 the point $x$ is represented by a $\mu_t^{\dag}$-polystable perverse 
 coherent system of the form (\ref{polyI}). 
 The corresponding closed point 
 \begin{align*}
 	x \in 	\left(\hH^0(\mathbb{T}_{\fP_n^{\rm{per}}(S, \beta)}|_{y}) \oplus 
 	\hH^1(\mathbb{T}_{\fP_n^{\rm{per}}(S, \beta)}|_{y})^{\vee}\right)^{l_t \sss}
 	\end{align*}
is represented by a polystable $Q_y$-representation of the form 
\begin{align*}
	R=R_{\infty} \oplus (R_1 \otimes W).
	\end{align*}
Here  
$W=\mathbb{C}^m$, 
$R_{\infty}$ is a $Q_y$-representation 
of dimension vector of the form 
$(1, \ast, \ldots, \ast)$,
and $R_1$
is a $Q_y$-representation 
corresponding to $\oO_C(k-1)$. 
From the exact sequence 
$0 \to \oO_C(-1)^{\oplus k-1} \to \oO_C^{\oplus k} \to \oO_C(k-1) \to 0$, 
the above $R_1$
fits into the exact sequence 
$0 \to \mathbf{e}_0^{\oplus k} \to R_1 \to 
\mathbf{e}_1^{\oplus k-1} \to 0$, 
where $\mathbf{e}_i$ is the simple $Q_y$-representation
corresponding to $S_i[-1]$. 
In particular, $R_1$ has dimension vector $(0, k, k-1, 0, \ldots, 0)$. 
The $G_y$-character 
$\lL_{t\pm \varepsilon}|_{y}$ restricted to $G_x=\GL(W) \subset G_y$ is 
calculated as 
\begin{align}\label{ltx}
	\lL_{t\pm \varepsilon}|_{x} \colon \GL(W) \to \C, \ g \mapsto (\det g)^{\varepsilon}. 
	\end{align}
The $G_x$-representation $W_x$ in Assumption~\ref{assum:Wx}
is given by $W_x=\Ext_{Q_y}^1(R, R)$, that is 
\begin{align*}
	W_x 
	=\Ext^1_{Q_y}(R_{\infty}, R_{\infty}) &\oplus 
	\Ext^1_{Q_y}(R_{\infty}, R_1) \otimes W \\
	& \oplus 
	\Ext^1_{Q_y}(R_1, R_{\infty}) \otimes W^{\vee}
	\oplus \Ext^1_{Q_y}(R_{1}, R_1) \otimes \Hom(W, W). 
	\end{align*}
By the above 
computation of dimension vectors for $R_{\infty}$ and $R_1$, 
the 
 Euler pairing computation of $Q_y$-representations (see~\cite[Corollary~1.4.3]{Brion})
gives 
\begin{align*}
	\dim \Ext_{Q_y}^1(R_{\infty}, R_1)-\dim 
	\Ext_{Q_y}^1(R_1, R_{\infty})=k \ge 1. 
	\end{align*}
Therefore we have the decomposition of $G_x$-representations 
$W_x=\mathbb{S}_x \oplus \mathbb{U}_x$, 
\begin{align*}
	\mathbb{S}_x 
	=\Ext^1_{Q_y}(R_{\infty}, R_{\infty}) &\oplus 
	\Ext^1_{Q_y}(R_1, R_{\infty}) \otimes W \\
	& \oplus 
	\Ext^1_{Q_y}(R_1, R_{\infty}) \otimes W^{\vee}
	\oplus \Ext^1_{Q_y}(R_{1}, R_1) \otimes \Hom(W, W),
\end{align*}
and $\mathbb{U}_x=W^{\oplus k}$. 
Note that $\mathbb{S}_x$ is a symmetric $G_x$-representation. 
From the description of the character (\ref{ltx}), 
it is easy to see that (\ref{Wsss}) holds (see~\cite[Lemma~5.1.12]{TocatDT}). 
	\end{proof}

By the above arguments so far, 
we have the following main result in this section: 
\begin{thm}\label{thm:-1-1}
	Conjecture~\ref{conj:-1} is true if $\beta$ is $f$-reduced. 
	\end{thm}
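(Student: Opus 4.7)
The plan is to assemble the pieces of Section~\ref{sec:catwall} and invoke Theorem~\ref{thm:inclu} directly. Fix $k \in \mathbb{Z}_{\ge 1}$, set $t = k$, and take $\varepsilon > 0$ small enough that no wall of $\mu^{\dag}$-stability lies in $(t-\varepsilon, t) \cup (t, t+\varepsilon)$; by Lemma~\ref{lem:abig} this just requires $\varepsilon < 1$ after choosing $a \gg 0$. Then by definition $\pP_n^{t+\varepsilon}(X, \beta) = \pP_n^{(k)}(X, \beta)$ and $\pP_n^{t-\varepsilon}(X, \beta) = \pP_n^{(k-1)}(X, \beta)$, so Conjecture~\ref{conj:-1} reduces to producing the fully-faithful functor at the single wall $t = k$.

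The main step is to view the wall-crossing inside the moduli stack $\fP_n^{\rm{per}}(S, \beta)$ of perverse coherent systems. First I would invoke Lemma~\ref{lem:pqsmooth} together with Lemma~\ref{lem:persys} to see that $\fP_n^{\rm{per}}(S, \beta)$ is a quasi-smooth (and obviously QCA) derived stack whose classical truncation admits a good moduli space $\pP_n^{\rm{per}}(S, \beta) \to P_n^{\rm{per}}(S, \beta)$ satisfying the formal neighborhood theorem of Definition~\ref{defi:formalneigh}. Crucially, because $\beta$ is $f$-reduced, Lemma~\ref{lem:fred} gives the isomorphism
\begin{align*}
	\pP_n^{\rm{per}}(X, \beta) \stackrel{\cong}{\to} t_0(\Omega_{\fP_n^{\rm{per}}(S, \beta)}[-1]),
\end{align*}
so the stack $\nN$ of Theorem~\ref{thm:inclu} is precisely $\pP_n^{\rm{per}}(X, \beta)$. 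Next, I would take $l = l_t$ and $b$ as constructed just before Lemma~\ref{lem:posi}; Proposition~\ref{prop:sss} identifies $\nN^{l_{\pm}\sss}$ with $\pP_n^{t \pm \varepsilon}(X, \beta)$, and Lemma~\ref{lem:posi} gives positive definiteness of $b$, yielding the $\Theta$-stratifications (\ref{Theta:pm}) with respect to $(l_{\pm}, b)$ via Theorem~\ref{thm:theta}.

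The key compatibility to verify is the symmetric/unipotent decomposition of Assumption~\ref{assum:Wx}, which is exactly Lemma~\ref{lem:assum:per} for this setup (note $t = k \in \mathbb{Z}_{\ge 1}$ is an integral wall by Lemma~\ref{lem:abig}). Having checked all hypotheses, Theorem~\ref{thm:inclu} then produces the fully-faithful functor
\begin{align*}
	\dDT^{\C}(\nN^{l_{-}\sss}) \hookrightarrow \dDT^{\C}(\nN^{l_{+}\sss}).
\end{align*}
Finally I would translate this back to the PT moduli: by Lemma~\ref{lem:altenative} the singular-support quotient defining each side of the above inclusion coincides with $\dDT^{\C}(\pP_n^{t\pm\varepsilon}(X, \beta))$ in the sense of Definition~\ref{defi:PTt2} (here Remark~\ref{rmk:indep} on independence of the choice of quasi-compact open substack is used). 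Identifying $\pP_n^{t+\varepsilon}(X, \beta) = P_n^{(k)}(X, \beta)$ and $\pP_n^{t-\varepsilon}(X, \beta) = P_n^{(k-1)}(X, \beta)$ gives the desired functor.

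The only real obstacle is already absorbed into the preparatory lemmas: the delicate point is that a $\Theta$-stratification on $\fP_n^{\rm{per}}(S, \beta)$ does not pull back to one on $\pP_n^{\rm{per}}(X, \beta)$ (cf.~Remark~\ref{rmk:Ppull}), so the semistable locus $\pP_n^t(X, \beta)$ is genuinely larger than the pull-back of $\pP_n^t(S, \beta)$ and one cannot reduce to the classical window theorem on $\fP_n^{\rm{per}}(S, \beta)$. This is precisely the reason the general machinery of Theorem~\ref{thm:inclu} (rather than \cite[Theorem~2.3.1]{HalpK32}) is needed; once that machinery is in place, the remaining verifications are purely geometric and are handled by the lemmas of Section~\ref{sec:catwall}.
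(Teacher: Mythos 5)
Your proposal is correct and follows essentially the same route as the paper: the paper's proof is precisely the assembly of Lemma~\ref{lem:altenative}, Proposition~\ref{prop:sss}, Lemma~\ref{lem:posi} and Lemma~\ref{lem:assum:per}, after which Theorem~\ref{thm:inclu} applies. Your write-up simply makes explicit the intermediate identifications (via Lemmas~\ref{lem:pqsmooth}, \ref{lem:persys}, \ref{lem:fred} and Remark~\ref{rmk:indep}) that the paper leaves implicit.
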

\begin{proof}
	By Lemma~\ref{lem:altenative}, Proposition~\ref{prop:sss}, Lemma~\ref{lem:posi} and Lemma~\ref{lem:assum:per}, 
	the result follows from Theorem~\ref{thm:inclu}. 
	\end{proof}

		\bibliographystyle{amsalpha}
	\bibliography{math}

	\vspace{5mm}
	
	Kavli Institute for the Physics and 
	Mathematics of the Universe (WPI), University of Tokyo,
	5-1-5 Kashiwanoha, Kashiwa, 277-8583, Japan.

	\textit{E-mail address}: yukinobu.toda@ipmu.jp
\end{document}